    \newtheorem{thm}{Theorem}                     [section]
    \newtheorem{thm*}{Theorem}
    \newtheorem{prop}[thm]{Proposition}
    \newtheorem{lemma}[thm]{Lemma}
    \newtheorem{cor}[thm]{Corollary}
    \newtheorem{lemma*}{Lemma}    
    \newtheorem{defn}[thm]{Definition}                 
    \newtheorem{example}[thm]{Example}                 
    \newtheorem{rems}[thm]{Remark}                     
    \newtheorem{rems*}{Remark}   
\newcommand{\ndef}{\newcommand*}
\def\rndef{\renewcommand}
\ndef{\myaddress}[1]{\begin{center} \it\small #1 \end{center}}
\ndef{\clA}{{\mathcal A}} \ndef{\rmA}{{\mathrm A}} \ndef{\mbA}{{\mathbb A}} \ndef{\bfA}{{\mathbf A}} \ndef{\euA}{{\EuScript A}} \ndef{\frA}{{\mathfrak A}}
\ndef{\clB}{{\mathcal B}} \ndef{\rmB}{{\mathrm B}} \ndef{\mbB}{{\mathbb B}} \ndef{\bfB}{{\mathbf B}} \ndef{\euB}{{\EuScript B}} \ndef{\frB}{{\mathfrak B}}
\ndef{\clC}{{\mathcal C}} \ndef{\rmC}{{\mathrm C}} \ndef{\mbC}{{\mathbb C}} \ndef{\bfC}{{\mathbf C}} \ndef{\euC}{{\EuScript C}} \ndef{\frC}{{\mathfrak C}}
\ndef{\clD}{{\mathcal D}} \ndef{\rmD}{{\mathrm D}} \ndef{\mbD}{{\mathbb D}} \ndef{\bfD}{{\mathbf D}} \ndef{\euD}{{\EuScript D}} \ndef{\frD}{{\mathfrak D}}
\ndef{\clE}{{\mathcal E}} \ndef{\rmE}{{\mathrm E}} \ndef{\mbE}{{\mathbb E}} \ndef{\bfE}{{\mathbf E}} \ndef{\euE}{{\EuScript E}} \ndef{\frE}{{\mathfrak E}}
\ndef{\clF}{{\mathcal F}} \ndef{\rmF}{{\mathrm F}} \ndef{\mbF}{{\mathbb F}} \ndef{\bfF}{{\mathbf F}} \ndef{\euF}{{\EuScript F}} \ndef{\frF}{{\mathfrak F}}
\ndef{\clG}{{\mathcal G}} \ndef{\rmG}{{\mathrm G}} \ndef{\mbG}{{\mathbb G}} \ndef{\bfG}{{\mathbf G}} \ndef{\euG}{{\EuScript G}} \ndef{\frG}{{\mathfrak G}}
\ndef{\clH}{{\mathcal H}} \ndef{\rmH}{{\mathrm H}} \ndef{\mbH}{{\mathbb H}} \ndef{\bfH}{{\mathbf H}} \ndef{\euH}{{\EuScript H}} \ndef{\frH}{{\mathfrak H}}
\ndef{\clI}{{\mathcal I}} \ndef{\rmI}{{\mathrm I}} \ndef{\mbI}{{\mathbb I}} \ndef{\bfI}{{\mathbf I}} \ndef{\euI}{{\EuScript I}} \ndef{\frI}{{\mathfrak I}}
\ndef{\clJ}{{\mathcal J}} \ndef{\rmJ}{{\mathrm J}} \ndef{\mbJ}{{\mathbb J}} \ndef{\bfJ}{{\mathbf J}} \ndef{\euJ}{{\EuScript J}} \ndef{\frJ}{{\mathfrak J}}
\ndef{\clK}{{\mathcal K}} \ndef{\rmK}{{\mathrm K}} \ndef{\mbK}{{\mathbb K}} \ndef{\bfK}{{\mathbf K}} \ndef{\euK}{{\EuScript K}} \ndef{\frK}{{\mathfrak K}}
\ndef{\clL}{{\mathcal L}} \ndef{\rmL}{{\mathrm L}} \ndef{\mbL}{{\mathbb L}} \ndef{\bfL}{{\mathbf L}} \ndef{\euL}{{\EuScript L}} \ndef{\frL}{{\mathfrak L}}
\ndef{\clM}{{\mathcal M}} \ndef{\rmM}{{\mathrm M}} \ndef{\mbM}{{\mathbb M}} \ndef{\bfM}{{\mathbf M}} \ndef{\euM}{{\EuScript M}} \ndef{\frM}{{\mathfrak M}}
\ndef{\clN}{{\mathcal N}} \ndef{\rmN}{{\mathrm N}} \ndef{\mbN}{{\mathbb N}} \ndef{\bfN}{{\mathbf N}} \ndef{\euN}{{\EuScript N}} \ndef{\frN}{{\mathfrak N}}
\ndef{\clO}{{\mathcal O}} \ndef{\rmO}{{\mathrm O}} \ndef{\mbO}{{\mathbb O}} \ndef{\bfO}{{\mathbf O}} \ndef{\euO}{{\EuScript O}} \ndef{\frO}{{\mathfrak O}}
\ndef{\clP}{{\mathcal P}} \ndef{\rmP}{{\mathrm P}} \ndef{\mbP}{{\mathbb P}} \ndef{\bfP}{{\mathbf P}} \ndef{\euP}{{\EuScript P}} \ndef{\frP}{{\mathfrak P}}
\ndef{\clQ}{{\mathcal Q}} \ndef{\rmQ}{{\mathrm Q}} \ndef{\mbQ}{{\mathbb Q}} \ndef{\bfQ}{{\mathbf Q}} \ndef{\euQ}{{\EuScript Q}} \ndef{\frQ}{{\mathfrak Q}}
\ndef{\clR}{{\mathcal R}} \ndef{\rmR}{{\mathrm R}} \ndef{\mbR}{{\mathbb R}} \ndef{\bfR}{{\mathbf R}} \ndef{\euR}{{\EuScript R}} \ndef{\frR}{{\mathfrak R}}
\ndef{\clS}{{\mathcal S}} \ndef{\rmS}{{\mathrm S}} \ndef{\mbS}{{\mathbb S}} \ndef{\bfS}{{\mathbf S}} \ndef{\euS}{{\EuScript S}} \ndef{\frS}{{\mathfrak S}}
\ndef{\clT}{{\mathcal T}} \ndef{\rmT}{{\mathrm T}} \ndef{\mbT}{{\mathbb T}} \ndef{\bfT}{{\mathbf T}} \ndef{\euT}{{\EuScript T}} \ndef{\frT}{{\mathfrak T}}
\ndef{\clU}{{\mathcal U}} \ndef{\rmU}{{\mathrm U}} \ndef{\mbU}{{\mathbb U}} \ndef{\bfU}{{\mathbf U}} \ndef{\euU}{{\EuScript U}} \ndef{\frU}{{\mathfrak U}}
\ndef{\clV}{{\mathcal V}} \ndef{\rmV}{{\mathrm V}} \ndef{\mbV}{{\mathbb V}} \ndef{\bfV}{{\mathbf V}} \ndef{\euV}{{\EuScript V}} \ndef{\frV}{{\mathfrak V}}
\ndef{\clW}{{\mathcal W}} \ndef{\rmW}{{\mathrm W}} \ndef{\mbW}{{\mathbb W}} \ndef{\bfW}{{\mathbf W}} \ndef{\euW}{{\EuScript W}} \ndef{\frW}{{\mathfrak W}}
\ndef{\clX}{{\mathcal X}} \ndef{\rmX}{{\mathrm X}} \ndef{\mbX}{{\mathbb X}} \ndef{\bfX}{{\mathbf X}} \ndef{\euX}{{\EuScript X}} \ndef{\frX}{{\mathfrak X}}
\ndef{\clY}{{\mathcal Y}} \ndef{\rmY}{{\mathrm Y}} \ndef{\mbY}{{\mathbb Y}} \ndef{\bfY}{{\mathbf Y}} \ndef{\euY}{{\EuScript Y}} \ndef{\frY}{{\mathfrak Y}}
\ndef{\clZ}{{\mathcal Z}} \ndef{\rmZ}{{\mathrm Z}} \ndef{\mbZ}{{\mathbb Z}} \ndef{\bfZ}{{\mathbf Z}} \ndef{\euZ}{{\EuScript Z}} \ndef{\frZ}{{\mathfrak Z}}
\ndef{\tA}{{\widetilde A}} \ndef{\tcA}{{\widetilde\clA}} \ndef{\ttcA}{\widetilde{\tcA}} \ndef{\sfA}{{\textsf A}} \ndef{\ttA}{\widetilde{\tA}} \ndef{\dzA}{{A^\sharp}}
\ndef{\tB}{{\widetilde B}} \ndef{\tcB}{{\widetilde\clB}} \ndef{\ttcB}{\widetilde{\tcB}} \ndef{\sfB}{{\textsf B}} \ndef{\ttB}{\widetilde{\tB}} \ndef{\dzB}{{B^\sharp}}
\ndef{\tC}{{\widetilde C}} \ndef{\tcC}{{\widetilde\clC}} \ndef{\ttcC}{\widetilde{\tcC}} \ndef{\sfC}{{\textsf C}} \ndef{\ttC}{\widetilde{\tC}} \ndef{\dzC}{{C^\sharp}}
\ndef{\tD}{{\widetilde D}} \ndef{\tcD}{{\widetilde\clD}} \ndef{\ttcD}{\widetilde{\tcD}} \ndef{\sfD}{{\textsf D}} \ndef{\ttD}{\widetilde{\tD}} \ndef{\dzD}{{D^\sharp}}
\ndef{\tE}{{\widetilde E}} \ndef{\tcE}{{\widetilde\clE}} \ndef{\ttcE}{\widetilde{\tcE}} \ndef{\sfE}{{\textsf E}} \ndef{\ttE}{\widetilde{\tE}} \ndef{\dzE}{{E^\sharp}}
\ndef{\tF}{{\widetilde F}} \ndef{\tcF}{{\widetilde\clF}} \ndef{\ttcF}{\widetilde{\tcF}} \ndef{\sfF}{{\textsf F}} \ndef{\ttF}{\widetilde{\tF}} \ndef{\dzF}{{F^\sharp}}
\ndef{\tG}{{\widetilde G}} \ndef{\tcG}{{\widetilde\clG}} \ndef{\ttcG}{\widetilde{\tcG}} \ndef{\sfG}{{\textsf G}} \ndef{\ttG}{\widetilde{\tG}} \ndef{\dzG}{{G^\sharp}}
\ndef{\tH}{{\widetilde H}} \ndef{\tcH}{{\widetilde\clH}} \ndef{\ttcH}{\widetilde{\tcH}} \ndef{\sfH}{{\textsf H}} \ndef{\ttH}{\widetilde{\tH}} \ndef{\dzH}{{H^\sharp}}
\ndef{\tI}{{\widetilde I}} \ndef{\tcI}{{\widetilde\clI}} \ndef{\ttcI}{\widetilde{\tcI}} \ndef{\sfI}{{\textsf I}} \ndef{\ttI}{\widetilde{\tI}} \ndef{\dzI}{{I^\sharp}}
\ndef{\tJ}{{\widetilde J}} \ndef{\tcJ}{{\widetilde\clJ}} \ndef{\ttcJ}{\widetilde{\tcJ}} \ndef{\sfJ}{{\textsf J}} \ndef{\ttJ}{\widetilde{\tJ}} \ndef{\dzJ}{{J^\sharp}}
\ndef{\tK}{{\widetilde K}} \ndef{\tcK}{{\widetilde\clK}} \ndef{\ttcK}{\widetilde{\tcK}} \ndef{\sfK}{{\textsf K}} \ndef{\ttK}{\widetilde{\tK}} \ndef{\dzK}{{K^\sharp}}
\ndef{\tL}{{\widetilde L}} \ndef{\tcL}{{\widetilde\clL}} \ndef{\ttcL}{\widetilde{\tcL}} \ndef{\sfL}{{\textsf L}} \ndef{\ttL}{\widetilde{\tL}} \ndef{\dzL}{{L^\sharp}}
\ndef{\tM}{{\widetilde M}} \ndef{\tcM}{{\widetilde\clM}} \ndef{\ttcM}{\widetilde{\tcM}} \ndef{\sfM}{{\textsf M}} \ndef{\ttM}{\widetilde{\tM}} \ndef{\dzM}{{M^\sharp}}
\ndef{\tN}{{\widetilde N}} \ndef{\tcN}{{\widetilde\clN}} \ndef{\ttcN}{\widetilde{\tcN}} \ndef{\sfN}{{\textsf N}} \ndef{\ttN}{\widetilde{\tN}} \ndef{\dzN}{{N^\sharp}}
\ndef{\tO}{{\widetilde O}} \ndef{\tcO}{{\widetilde\clO}} \ndef{\ttcO}{\widetilde{\tcO}} \ndef{\sfO}{{\textsf O}} \ndef{\ttO}{\widetilde{\tO}} \ndef{\dzO}{{O^\sharp}}
\ndef{\tP}{{\widetilde P}} \ndef{\tcP}{{\widetilde\clP}} \ndef{\ttcP}{\widetilde{\tcP}} \ndef{\sfP}{{\textsf P}} \ndef{\ttP}{\widetilde{\tP}} \ndef{\dzP}{{P^\sharp}}
\ndef{\tQ}{{\widetilde Q}} \ndef{\tcQ}{{\widetilde\clQ}} \ndef{\ttcQ}{\widetilde{\tcQ}} \ndef{\sfQ}{{\textsf Q}} \ndef{\ttQ}{\widetilde{\tQ}} \ndef{\dzQ}{{Q^\sharp}}
\ndef{\tR}{{\widetilde R}} \ndef{\tcR}{{\widetilde\clR}} \ndef{\ttcR}{\widetilde{\tcR}} \ndef{\sfR}{{\textsf R}} \ndef{\ttR}{\widetilde{\tR}} \ndef{\dzR}{{R^\sharp}}
\ndef{\tS}{{\widetilde S}} \ndef{\tcS}{{\widetilde\clS}} \ndef{\ttcS}{\widetilde{\tcS}} \ndef{\sfS}{{\textsf S}} \ndef{\ttS}{\widetilde{\tS}} \ndef{\dzS}{{S^\sharp}}
\ndef{\tT}{{\widetilde T}} \ndef{\tcT}{{\widetilde\clT}} \ndef{\ttcT}{\widetilde{\tcT}} \ndef{\sfT}{{\textsf T}} \ndef{\ttT}{\widetilde{\tT}} \ndef{\dzT}{{T^\sharp}}
\ndef{\tU}{{\widetilde U}} \ndef{\tcU}{{\widetilde\clU}} \ndef{\ttcU}{\widetilde{\tcU}} \ndef{\sfU}{{\textsf U}} \ndef{\ttU}{\widetilde{\tU}} \ndef{\dzU}{{U^\sharp}}
\ndef{\tV}{{\widetilde V}} \ndef{\tcV}{{\widetilde\clV}} \ndef{\ttcV}{\widetilde{\tcV}} \ndef{\sfV}{{\textsf V}} \ndef{\ttV}{\widetilde{\tV}} \ndef{\dzV}{{V^\sharp}}
\ndef{\tW}{{\widetilde W}} \ndef{\tcW}{{\widetilde\clW}} \ndef{\ttcW}{\widetilde{\tcW}} \ndef{\sfW}{{\textsf W}} \ndef{\ttW}{\widetilde{\tW}} \ndef{\dzW}{{W^\sharp}}
\ndef{\tX}{{\widetilde X}} \ndef{\tcX}{{\widetilde\clX}} \ndef{\ttcX}{\widetilde{\tcX}} \ndef{\sfX}{{\textsf X}} \ndef{\ttX}{\widetilde{\tX}} \ndef{\dzX}{{X^\sharp}}
\ndef{\tY}{{\widetilde Y}} \ndef{\tcY}{{\widetilde\clY}} \ndef{\ttcY}{\widetilde{\tcY}} \ndef{\sfY}{{\textsf Y}} \ndef{\ttY}{\widetilde{\tY}} \ndef{\dzY}{{Y^\sharp}}
\ndef{\tZ}{{\widetilde Z}} \ndef{\tcZ}{{\widetilde\clZ}} \ndef{\ttcZ}{\widetilde{\tcZ}} \ndef{\sfZ}{{\textsf Z}} \ndef{\ttZ}{\widetilde{\tZ}} \ndef{\dzZ}{{Z^\sharp}}
\ndef{\bfa}{{\mathbf a}}
\ndef{\bfb}{{\mathbf b}}
\ndef{\bfc}{{\mathbf c}}
\ndef{\bfd}{{\mathbf d}}
\ndef{\euu}{{\EuScript u}}
  \ndef{\eps}{\varepsilon}
\let\geq\geqslant
\let\leq\leqslant
\ndef{\lims}[1]{\lim\limits_{#1}}
\ndef{\sums}[1]{\sum\limits_{#1}}
\ndef{\ints}[1]{\int_{#1}}
\ndef{\sups}[1]{\sup\limits_{#1}}
\ndef{\liminfty}[1]{\lims{#1\to\infty}}
\ndef{\suminf}[1]{\sums{#1=1}^\infty}
\ndef{\limo}[1]{\omega\mbox{-}\!\!\!\lims{#1\to\infty}}          
\ndef{\limL}[1]{\rmL\mbox{-}\!\!\!\lims{#1\to\infty}}            
\ndef{\limLOne}[1]{\clL_1\mbox{-}\!\lims{#1}}
\ndef{\tildelimo}[1]{\tilde\omega\mbox{-}\!\!\!\lims{#1\to\infty}}
\ndef{\slim}{\mathrm{s}\mbox{-}\!\!\lim}          
\ndef{\wlim}{\mathrm{w}\mbox{-}\!\lim}          
\ndef{\Aut}{\operatorname{Aut}}      
\ndef{\Ch}{\operatorname{ch}}        
\ndef{\End}{\operatorname{End}}      
\ndef{\Hom}{\operatorname{Hom}}      
\rndef{\ker}{\operatorname{ker}}      
\ndef{\coker}{\operatorname{coker}}      
\ndef{\im}{\operatorname{im}}        
\ndef{\Log}{\operatorname{Log}}      
\ndef{\OP}{\operatorname{OP}}        
\ndef{\Op}{\operatorname{Op}}        
\ndef{\Symb}{\operatorname{Symb}}    
\ndef{\Tr}{\operatorname{Tr}}        
\ndef{\Wres}{\operatorname{Wres}}    
\ndef{\cl}{\operatorname{cl}}        
\ndef{\com}{\operatorname{com}}
\ndef{\const}{\operatorname{const}}  
\ndef{\conv}{\operatorname{conv}}    
\rndef{\det}{\operatorname{det}}     
\ndef{\detFK}[1]{\Delta\brs{#1}} 
\ndef{\detFKrel}[2]{\Delta_{#2}\brs{#1}} 
\ndef{\adj}{\operatorname{adj}}    
\ndef{\diag}{\operatorname{diag}}    
\ndef{\dist}{\operatorname{dist}}    
\ndef{\dom}{\operatorname{dom}}      
\ndef{\ec}{\operatorname{ec}}        
\ndef{\id}{1}                        
\ndef{\ind}{\operatorname{ind}}      
\ndef{\mydeg}{\operatorname{deg}}    
\ndef{\op}{\operatorname{op}}
\ndef{\rank}{\operatorname{rank}}
\ndef{\res}{\operatorname{res}}      
\ndef{\rng}{\operatorname{ran}}      
\ndef{\sflow}{\operatorname{sf}}     
\ndef{\isf}{\operatorname{isf}}      
\ndef{\sign}{\operatorname{sign}}    
\ndef{\sgn}{\operatorname{sgn}}      
\ndef{\sing}{\operatorname{sing}}    
\ndef{\supp}{\operatorname{supp}}    
\ndef{\tr}{\operatorname{tr}}        
\ndef{\var}{\operatorname{var}}      
\ndef{\vol}{\operatorname{vol}}      
\ndef{\wn}{\operatorname{wn}}        
\ndef{\wres}{\operatorname{wres}}    
\rndef{\Im}{\operatorname{Im}}       
\rndef{\Re}{\operatorname{Re}}       
\ndef{\prng}[1]{\mathrm R_{#1}} 
\ndef{\pker}[1]{\mathrm N_{#1}} 
\ndef{\rprng}[2]{\mathrm R_{#1}^{#2}}           
\ndef{\rpker}[2]{\mathrm N_{#1}^{#2}}           
\ndef{\rsupp}[1]{\supp_r(#1)}
\ndef{\lsupp}[1]{\supp_l(#1)}
\ndef{\rslv}[1]{R_z(#1)}      
\ndef{\HH}{H}                 
\ndef{\tHH}{\tilde \HH}       
\ndef{\VV}{V}                 
\ndef{\Rz}{R_z}               
\ndef{\tRz}{\tR_z}            
\ndef{\psif}[1]{#1^{[1]}} 
\ndef{\WPlus}[1]{W_{#1}(\mbR)} 
\newcommand{\xia}{\xi^{(a)}}
\newcommand{\xis}{\xi^{(s)}}
\newcommand{\mua}{\mu^{(a)}}
\newcommand{\mus}{\mu^{(s)}}
\ndef{\bndl}{\xi}                         
\ndef{\bndlA}{\eta}                       
\ndef{\GlueMap}{\varphi}                  
\ndef{\ChartMap}{h}                       
\ndef{\chern}{\ensuremath{\mathrm{ch}}}
\ndef{\hilb}{\clH}                     
\ndef{\hilba}{\clH^{(a)}}                    
\ndef{\hilbs}{\clH^{(s)}}                    
   \ndef{\hilbasargument}{(\hilb)} 
\ndef{\LpH}[1]{\clL_{#1}\hilbasargument}       
\ndef{\saLpH}[1]{\clL_{sa}^{#1}\hilbasargument}       
\ndef{\clBH}{\clB\hilbasargument}              
\ndef{\ubBH}{\clB_1\hilbasargument}            
\ndef{\clCH}{\clC\hilbasargument}              
\ndef{\clKH}{\clK\hilbasargument}              
\ndef{\clFH}{\clF\hilbasargument}              
\ndef{\clUH}{\clU\hilbasargument}              
\ndef{\clCFH}{{\clC\clF}\hilbasargument}       
\ndef{\saBH}{\clB_{sa}\hilbasargument}         
\ndef{\saCH}{\clC_{sa}\hilbasargument}         
\ndef{\saFH}{\clF_{sa}\hilbasargument}         
\ndef{\saKH}{\clK_{sa}\hilbasargument}         
\ndef{\saCFH}{\clC\clF_{sa}\hilbasargument}    
\ndef{\clUFH}{\clU\clF\hilbasargument}         
\ndef{\Uinj}{\clU_{inj}\hilbasargument}        
\ndef{\UFinj}{\clU\clF_{inj}\hilbasargument}   
\ndef{\spproj}[2]{E^{#1}_{#2}}                      
\ndef{\spprojb}[2]{E^{#2}_{#1}}                     
\ndef{\LpN}[1]{\clL^{#1}(\clN,\tau)}     
\ndef{\saLpN}[1]{\clL^{#1}_{sa}(\clN,\tau)} 
\ndef{\rLpN}[1]{L^{#1}(\clN,\tau)}       
\ndef{\clAND}{(\clA,\clN,D)}             
\ndef{\clBA}{{\clB(\clA)}}
\ndef{\saKN}{{\clK_{sa}(\clN,\tau)}}          
\ndef{\clKN}{{\clK(\clN,\tau)}}          
\ndef{\clKtN}{{\clK(\tilde\clN,\tau)}}   
\ndef{\clFN}{{\clF(\clN,\tau)}}          
\ndef{\saFN}{{\clF_{sa}(\clN,\tau)}}     
\ndef{\clPN}{\clP(\clN)}                 
\ndef{\clQN}{\clQ(\clN,\tau)}            
\ndef{\infPN}{{\clP_\tau^\infty(\clN)}}  
\ndef{\clOF}[2]{\clF_{#1\mbox{-}#2}(\clN,\tau)}         
\ndef{\oind}[2]{{\rm \tau\mbox{-}ind}_{#1\mbox{-}#2}}   
\ndef{\tind}{\tau\mbox{-}\ind}                  
\ndef{\DInd}{\ind_{\clD,\tau}}           
\ndef{\BF}{Breuer-Fredholm}              
\ndef{\skewfred}[2]{$(#1\cdot #2)$ $\tau$\tire Fredholm}   
\ndef{\affl}{\eta}                       
\ndef{\vNa}{von Neumann algebra}         
\ndef{\nsf}{faithful normal semifinite } 
\ndef{\taubrs}[1]{\tau\brackets{#1}}     
\ndef{\sqbrs}[1]{[#1]}        
\ndef{\Sqbrs}[1]{\big[#1\big]}        
\ndef{\SqBrs}[1]{\Big[#1\Big]}        
\ndef{\domd}{\bigcap\limits_{n\ge 0} \dom\;\delta^n}         
\ndef{\DiffOP}{{\rm \clD}}
\ndef{\ADA}{\clA \cup [\clD,\clA]}
\ndef{\DixIdeal}[1]{\LpH{#1,\infty}}               
\ndef{\dixideal}{\ell^{1,\infty}}                  
\ndef{\WDixIdeal}{\LpH{1,\mathrm w}}               
\ndef{\DixIdealPos}[1]{\DixIdeal{#1}_+}            
\ndef{\DixIdealN}[1]{\LpN{#1,\infty}}              
\ndef{\DixIdealNPar}[2]{\clL^{#1,\infty}_{#2}(\clN,\tau)}    
\ndef{\DixIdealNPos}[1]{\LpN{#1,\infty}_+}                   
\ndef{\TrD}{\Tr_\omega}                                      
\ndef{\tauD}{{\tau_\omega}}                                  
\ndef{\ILogN}{\frac 1{\log(1+N)}}
\ndef{\DixNorm}[1]{\norm{#1}_{(1,\infty)}}                   
\ndef{\DixInt}[1]{\ints 0^t \mu_s(#1)\,ds}
\ndef{\DixIntL}[1]{\ints 0^{\lambda_{1/t}(#1)}\mu_s(#1)\,ds}
    \ndef{\SmallIdeal}{{\clL^{1, \mathrm w}}}
    \ndef{\SmallIdealMeas}{{\clL^{1, \mathrm w}_m}}
    \ndef{\DixIntII}[1]{\int_0^t \mu_s(#1)\,ds}
    \ndef{\DixIntf}[1]{\Phi_t(#1)}
    \ndef{\DixIntg}[1]{\Psi_t(#1)}
\ndef{\lpi}{\clL^{1,\pi}(\clN,\tau)}
\ndef{\strl}[1]{\stackrel \longrightarrow {#1}}
\ndef{\IIinfty}{$\mathrm{II}_\infty$\ }
\ndef{\fourier}[1]{\clF(#1)}          
\ndef{\HaarMeasBohrs}{\nu}            
\ndef{\BrownsMeas}{\mu}               
\ndef{\BohrCont}[1]{\tilde{#1}}       
\ndef{\APMean}{{M}}                   
\ndef{\CDSS}{{\clA_B}}                
\ndef{\matr}{{\rm Mat}}               
\ndef{\seque}[1]{\ensuremath{\{#1_n\}_{n=1}^\infty}}    
\ndef{\sequen}[2]{\ensuremath{\{#1_#2\}_{#2=1}^\infty}}    
\ndef{\Seque}[1]{\ensuremath{\left(#1_0,#1_1,#1_2,\dots\right)}}    
\ndef{\Cesaro}{H}                           
\ndef{\CesaroRPlus}{M}                      
\ndef{\Dilation}{D}                         
\ndef{\Shift}{T}                            
\ndef{\norm}[1]{\left\Vert#1\right\Vert}    
\ndef{\TrNorm}[1]{\norm{#1}_1}              
\ndef{\HSNorm}[1]{\norm{#1}_2}              
\ndef{\InftyNorm}[1]{\norm{#1}_\infty}      
\ndef{\normQN}[1]{\norm{#1}_{\clQN}}        
\ndef{\clLpnorm}[2]{\norm{#2}_{\clL^{#1}}}    
\ndef{\clLnorm}[1]{\clLpnorm{1}{#1}}    
\ndef{\ccurve}{\gamma}                      
\ndef{\abs}[1]{\left\lvert#1\right\rvert}   
\ndef{\set}[1]{\left\{#1\right\}}           
\ndef{\brackets}[1]{\left(#1\right)}        
\ndef{\brs}[1]{\brackets{#1}}               
\ndef{\Brs}[1]{\big(#1\big)}                
\ndef{\BRS}[1]{\Big(#1\Big)}                
\ndef{\scal}[2]{\left\la #1,#2\right\ra}               
\ndef{\precprec}{\prec\!\!\!\prec}
\ndef{\qeq}{\stackrel?=}
\ndef{\spectrum}[1]{\sigma_{#1}} 
\ndef{\spectruma}[1]{\sigma^{(a)}_{#1}} 
\ndef{\numrange}[1]{\mathrm{W}(#1)}                         
\rndef{\emptyset}{\varnothing}                              
\ndef{\csupp}{c}                           
\ndef{\closure}[1]{\overline{#1}}
\ndef{\linspan}[1]{\mathrm{span}\ {#1}}
\ndef{\bddborel}[1]{B(#1)}                 
\ndef{\charfunc}{\chi}
\ndef{\FrDer}{\euD}                        
\ndef{\LieDer}[1]{\pounds_{#1}\,}          
\ndef{\dds}{\left.\frac d{ds} \right|_{s = 0}}
\ndef{\ortcmp}[1]{#1^{\scriptscriptstyle \perp}}            
\ndef{\Laplace}{\Delta}                    
\ndef{\matrPQ}[3]
{
    \left(
      \begin{array}{cc}
        #1_{11} & #1_{12} \\
        #1_{21} & #1_{22}
      \end{array}
    \right)_{[#2,#3]}
}
\ndef{\margOK}{\marginpar{\bf \small OK}}
\newcounter{margcomcount}
\ndef{\margcom}[1]{\marginpar{\bf \small #1} \addtocounter{margcomcount}{1}
   \index{\indexcom{{\bf COMMENT: #1}}}}
\newcounter{margproof}
\ndef{\margproof}{\marginpar{\bf \small PROOF} \addtocounter{margproof}{1}
  \index{**** \indexcom{{\bf PROOF}}}}
\newcounter{margdetails}
\ndef{\margdetails}{\marginpar{\bf Details} \addtocounter{margdetails}{1}
  \index{**** \indexcom{{\bf DETAILS}}}}
\newcounter{margproofb}
\ndef{\margproofb}[1]{\marginpar{\bf \small Proof(B) #1} \addtocounter{margproofb}{1}
  \index{**** \indexcom{{\bf PROOF(B): #1}}}}
\newcounter{margdetailsb}
\ndef{\margdetailsb}[1]{\marginpar{\bf \small Details(B)} \addtocounter{margdetailsb}{1}
  \index{**** \indexcom{{\bf DETAILS(B): \\ #1}}}}
\newcounter{margdetailsc}
\ndef{\margdetailsc}[1]{\marginpar{\bf \small Details(C)} \addtocounter{margdetailsc}{1}
  \index{**** \indexcom{{\bf DETAILS(C): \\ #1}}}}
\newcounter{margcomcountb}
\ndef{\margcomb}[1]{\marginpar{\bf \small #1} \addtocounter{margcomcountb}{1}
   \index{\indexcom{{\bf COMMENT(B): \\ #1}}}}
  \rndef{\margcom}[1]{}
  \rndef{\margproof}{}
  \rndef{\margdetails}{}
  \rndef{\margproofb}[1]{}
  \rndef{\margdetailsb}[1]{}
  \rndef{\margdetailsc}[1]{}
  \rndef{\margcomb}[1]{}
\ndef{\mytimes}{\!\times\!}
\ndef{\sss}[1]{\subsubsection{}\label{#1}}
\rndef{\phi}{\varphi}
\ndef{\OpenUnitDisk}{D}
\ndef{\RHS}{RHS}                            
\ndef{\LHS}{LHS} 
\ndef{\ttt}{\Leftrightarrow}
\ndef{\then}{\Rightarrow}
\ndef{\tto}{\longrightarrow}
\ndef{\nno}{\nonumber\\}
\ndef{\newn}[1]{\index{#1} {\bfseries #1}}       
\ndef{\la}{\langle}
\ndef{\ra}{\rangle} \ndef{\dbar}{{\;\bar{\phantom{o}} \!\!\!\! d}}
\ndef{\stl}[1]{\stackrel{\vbox to 0pt{\vss\hbox{$\scriptstyle #1$}}}}
\ndef{\mathcomment}[1]{{\hfill \qquad\qquad\qquad\qquad\qquad\text{by (#1)}}}        
\ndef{\mathcomm}[1]{{\hfill \qquad\qquad\qquad\qquad\qquad\text{#1}}}        
\ndef{\details}[1]{\smallskip\begin{center} {\bf Here:}
#1\end{center}\medskip} \ndef{\indexcom}[1]{ --- #1}
\ndef{\longsim}{\ \sim \ }              
\ndef{\tire}{-}              
\ndef{\intinfinf}{\int_{-\infty}^\infty}
\ndef{\refnsftrace}{\cite[V.\,2.\,1]{TakI}} 
\ndef{\refaffloper}{\cite[IV.\,5, Exercise 3]{TakI}} 
\ndef{\refsemifinvNa}{\cite[V.\,1.\,21]{TakI}} 
\ndef{\reftaumeasurable}{\cite[Definition 1.2]{FK86PJM}} 
\ndef{\reftautraceclassaffl}{\cite[V.2, p.\,320]{TakI}} 
\ndef{\refinvoperideal}{\cite[Appendix A.2]{CP2}} 
\ndef{\reftautracenorm}{\cite[V.2, p.\,320]{TakI}} 
\ndef{\reftaucompact}{\cite{}} 
\ndef{\reftauFredholm}{\cite[Appendix B]{PR94JFA}} 
     \ndef{\npartial}{\slash\!\!\!\partial}
     \ndef{\Heis}{\operatorname{Heis}}
     \ndef{\Solv}{\operatorname{Solv}}
     \ndef{\Spin}{\operatorname{Spin}}
     \ndef{\SO}{\operatorname{SO}}
     \ndef{\Index}{\operatorname{index}}
             \ndef{\p}{\partial}
             \ndef{\dd}{|\clD|}
             \ndef{\n}{\parallel}
     \ndef{\gf}[2]{\genfrac{}{}{0pt}{}{#1}{#2}}
     \ndef{\ta}{\widetilde{\alpha}}
     \ndef{\tb}{\widetilde{\beta}}
     \ndef{\txi}{\widetilde{\xi}}
     \ndef{\tk}{\widetilde{K}}
     \ndef{\CGh}{\widetilde{\CG}}
     \ndef{\boe}{{\bf e}}\ndef{\bt}{{\bf t}}
     \ndef{\vth}{\vartheta}
     \ndef{\db}{\overline{\partial}}
     \ndef{\hV}{\hat{V}}
     \ndef{\cag}{{\clA^\Gamma}}
     \ndef{\sind}{\sigma{\rm -ind}}
\newcounter{slidecount}
\newcommand{\slide}[2]{
  \newpage
  \addtocounter{slidecount}{1}
  \renewcommand{\@oddhead}{{\small Advanced Mathematics 1A -- 2009 \hfil Slide #1-\arabic{slidecount}}}
  \begin{center} \bf #2 \end{center}
}
\let\LatexCite=\cite  
\let\ifnumref\iffalse 
\ndef{\ifuncited}[4]{\expandafter\ifx\csname used#4\endcsname\relax}
\ndef{\ifcited}[4]{\expandafter\ifx\csname used#4\endcsname\relax\else}
  \ndef{\papertitle}[1]{ \emph{#1}, }
  \ndef{\paperauthor}[2]{#2}  
  \ndef{\pbbi}[9]{%
      \ifcited{#1}{#2}{#3}{#5}%
        \ifnumref%
          \bibitem{#5}\paperauthor{#1}{#6},\papertitle{#7}#8.%
        \else%
          \advance #9 by 1%
          \ifnum#9<1%
            \bibitem[#4]{#5}\paperauthor{#1}{#6}, \papertitle{#7}#8.%
          \else%
            \bibitem[#4$_{\the#9}$]{#5}\paperauthor{#1}{#6},\papertitle{#7}#8.%
          \fi%
        \fi%
      \fi%
  }
  \ndef{\mbbi}[8]{%
     \ifcited{#1}{#2}{#3}{#5}%
        \ifnumref%
          \bibitem{#5}\paperauthor{#1}{#6},\papertitle{#7}#8.%
        \else%
          \bibitem[#4]{#5}\paperauthor{#1}{#6},\papertitle{#7}#8.%
        \fi%
     \fi%
  }
\ndef{\AddCite}[1]{%
   \ifuncited{0}{0}{0}{#1}%
     \expandafter\gdef\csname used#1\endcsname {}%
   \fi%
}
\def\ProcessCite#1,{%
     \ifx\relax#1%
         \let\next=\relax%
     \else%
         \AddCite{#1}%
         \let\next=\ProcessCite%
     \fi%
     \next%
}
\ndef{\AddCites}[1]{\ProcessCite#1,\relax,}
\ndef{\CiteWithoutExtension}[1]{%
   \AddCites{#1}%
   \LatexCite{#1}%
}
\def\CiteWithExtension[#1]#2{%
   \AddCites{#2}%
   \LatexCite[#1]{#2}%
}
\ndef{\CleverCite}{%
    \ifx\NChar[ %
       \let\MyCite=\CiteWithExtension %
    \else %
       \let\MyCite=\CiteWithoutExtension %
    \fi %
    \MyCite%
}
\renewcommand{\cite}{\futurelet\NChar\CleverCite}
      \ndef{\volume}[1]{{\bf #1}}
      \ndef{\VolYearPP}[3]{\ifnum#2=0 (to appear)\else\volume{#1} (#2), #3\fi}
      \ndef{\VolNoYearPP}[4]{\ifnum#3=0 (to appear)\else\volume{#1} #2 (#3), #4\fi}
      \ndef{\libcode}[1]{}
\ndef{\jnActaMath}[3]{Acta Math. \VolYearPP{#1}{#2}{#3}}                       
\ndef{\jnAdvMath}[3]{Adv. in~Math. \VolYearPP{#1}{#2}{#3}}                     
\ndef{\jnAlgAnal}[3]{Algebra i~Analiz \VolYearPP{#1}{#2}{#3}}
\ndef{\jnAmerJMath}[3]{Amer. J. Math. \VolYearPP{#1}{#2}{#3}}                  
\ndef{\jnAmerMathMonth}[3]{Amer. Math. Monthly \VolYearPP{#1}{#2}{#3}}         
\ndef{\jnAnnMath}[4]{Ann. of~Math. \VolNoYearPP{#1}{#2}{#3}{#4}}               
\ndef{\jnAnalMath}[3]{J. Anal. Math. \VolYearPP{#1}{#2}{#3}}                   
\ndef{\jnBullLondMathSoc}[3]{Bull. London Math. Soc. \VolYearPP{#1}{#2}{#3}}   
\ndef{\jnBullAMS}[3]{Bull. Amer. Math. Soc. \VolYearPP{#1}{#2}{#3}}   
\ndef{\jnCanMathBull}[3]{Canad. Math. Bull. \VolYearPP{#1}{#2}{#3}}            
\ndef{\jnCanMath}[3]{Canad. J.~Math. \VolYearPP{#1}{#2}{#3}}             
\ndef{\jnCommMathPhys}[3]{Comm. Math. Phys. \VolYearPP{#1}{#2}{#3}}             
\ndef{\jnCommPDE}[3]{Comm. Partial Differential Equations \VolYearPP{#1}{#2}{#3}}             
\ndef{\jnComptRendue}[3]{C.\,R.~Acad. Sci. Paris S\'er. A-B \VolYearPP{#1}{#2}{#3}}      
\ndef{\jnContMath}[3]{Contemporary Math. \VolYearPP{#1}{#2}{#3}}               %
\ndef{\jnDukeMJ}[3]{Duke Math. J. \VolYearPP{#1}{#2}{#3}}
\ndef{\jnDiffGeom}[3]{J.~Diff. Geom. \VolYearPP{#1}{#2}{#3}}                   
\ndef{\jnErgodicTheory}[3]{Ergodic Theory and Dynamical Systems \VolYearPP{#1}{#2}{#3}} 
\ndef{\jnFuncAnal}[3]{J.~Functional Analysis \VolYearPP{#1}{#2}{#3}}           
\ndef{\jnFunkAnalPril}[4]{Funct. Anal. Appl. \VolNoYearPP{#1}{#2}{#3}{#4}}  
\ndef{\jnGAFA}[3]{GAFA \VolYearPP{#1}{#2}{#3}}                                 
\ndef{\jnIHES}[3]{IHES Publ. Math. (Paris) \VolYearPP{#1}{#2}{#3}}             
\ndef{\jnIEOT}[3]{Integral Equations Operator Theory   \VolYearPP{#1}{#2}{#3}} 
\ndef{\jnIsrMath}[3]{Israel J.~Math. \VolYearPP{#1}{#2}{#3}}                   
\ndef{\jnKTheory}[3]{K-Theory \VolYearPP{#1}{#2}{#3}}                          
\ndef{\jnLetMathPhys}[3]{Lett. Math. Phys. \VolYearPP{#1}{#2}{#3}}             
\ndef{\jnMathAnn}[3]{Math. Ann. \VolYearPP{#1}{#2}{#3}}                        
\ndef{\jnMathAnalAppl}[3]{J.~Math. Anal. and Appl. \VolYearPP{#1}{#2}{#3}}     
\ndef{\jnMathNachr}[3]{Math. Nachr. \VolYearPP{#1}{#2}{#3}}
\ndef{\jnMathPhys}[3]{J. Math. Phys. \VolYearPP{#1}{#2}{#3}}
\ndef{\jnMathSocJap}[3]{J. Math. Soc. Japan \VolYearPP{#1}{#2}{#3}}
\ndef{\jnOperTheory}[3]{J.~Operator Theory \VolYearPP{#1}{#2}{#3}}             
\ndef{\jnPacJMath}[3]{Pacific J.~Math. \VolYearPP{#1}{#2}{#3}}                  
\ndef{\jnPositivity}[3]{Positivity \VolYearPP{#1}{#2}{#3}}
\ndef{\jnProcAmerMS}[3]{Proc. Amer. Math. Soc. \VolYearPP{#1}{#2}{#3}}         
\ndef{\jnProcCambPhilSoc}[3]{Math. Proc. Camb. Phil. Soc. \VolYearPP{#1}{#2}{#3}}
\ndef{\jnReineAngew}[3]{J.~Reine Angew. Math. \VolYearPP{#1}{#2}{#3}}          
\ndef{\jnTokyoMath}[3]{Tokyo J.~Math. \VolYearPP{#1}{#2}{#3}}
\ndef{\jnTopology}[3]{Topology \VolYearPP{#1}{#2}{#3}}
\ndef{\jnTransAmerMathSoc}[3]{Trans. Amer. Math. Soc. \VolYearPP{#1}{#2}{#3}}
\ndef{\jnIzvANSSSR}[3]{Izv. Akad. Nauk SSSR, Ser. Mat. \VolYearPP{#1}{#2}{#3}}
\ndef{\jnIzvVyshUchZav}[3]{Izv. Vyssh. Uch. Zav., Mat. \VolYearPP{#1}{#2}{#3} (Russian)}
\ndef{\jnIzdatLenUniv}[2]{Izdat. Leningrad. Univ., Leningrad, (#1), #2 (Russian)}
\ndef{\jnFieldsInsComm}[3]{Fields Inst. Comm. \VolYearPP{#1}{#2}{#3}}
\ndef{\jnDoklANSSSR}[3]{Dokl. Akad. Nauk SSSR \VolYearPP{#1}{#2}{#3}}
\ndef{\jnMatZametki}[3]{Matem. zametki \VolYearPP{#1}{#2}{#3}}
\ndef{\jnRussMathSurvey}[3]{Russian Math. Surveys \VolYearPP{#1}{#2}{#3}}
\ndef{\jnSibMathJ}[3]{Sib. Math.~J. \VolYearPP{#1}{#2}{#3}}
\ndef{\jnSovMath}[3]{J.~Soviet math. \VolYearPP{#1}{#2}{#3}}
\ndef{\jnTransMoscMathSoc}[3]{Trans. Moscow Math. Soc. \VolYearPP{#1}{#2}{#3}}
\ndef{\jnUMN}[3]{Uspekhi Mat. Nauk \VolYearPP{#1}{#2}{#3}}
\ndef{\bkTransMathMon}[2]{Trans. Math. Monographs, AMS, \volume{#1}, #2}
\ndef{\pbBirkhauser}[1]{Birkh\"auser, Boston, #1}
\ndef{\pbFactorial}[1]{Moscow, Factorial, #1}
\ndef{\pbGauthier}[1]{Gauthier-Villars, Paris, #1}
\ndef{\pbNauka}[1]{Moscow, Nauka, #1 (Russian)}
\ndef{\pbNaukaR}[1]{Москва, Наука, #1}
\ndef{\pbPrinceton}[1]{Princeton University Press, Princeton, New Jersey, #1}
\ndef{\pbPublPerish}[1]{Publish or Perish Inc., Berkeley, #1}
\ndef{\pbSpringer}[1]{Springer-Verlag, #1}
\ndef{\myauthor}[1]{\mbox{#1}}
\ndef{\Agmon}{\myauthor{Sh.\,Agmon}}
\ndef{\Ahiezer}{\myauthor{N.\,I.\,Ahiezer}}
\ndef{\Arazy}{\myauthor{J.\,Arazy}}
\ndef{\Aronszajn}{\myauthor{N.\,Aronszajn}}
\ndef{\Astashkin}{\myauthor{S.\,V.\,Astashkin}}
\ndef{\Atiyah}{\myauthor{M.\,Atiyah}}
\ndef{\Avron}{\myauthor{J.\,E.\,Avron}}
\ndef{\Azamov}{\myauthor{N.\,A.\,Azamov}}
\ndef{\Banach}{\myauthor{S.\,Banach}}
\ndef{\Benameur}{\myauthor{M-T.\,Benameur}}
\ndef{\Bennett}{\myauthor{C.\,Bennett}}
\ndef{\Berezin}{\myauthor{F.\,A.\,Berezin}}
\ndef{\Berline}{\myauthor{N.\,Berline}}
\ndef{\Birman}{\myauthor{M.\,Sh.\,Birman}}
\ndef{\Blackadar}{\myauthor{B.\,Blackadar}}
\ndef{\Bogolyubov}{\myauthor{N.\,N.\,Bogolyubov}}
\ndef{\Bonsall}{\myauthor{F.\,F.\,Bonsall}}
\ndef{\Bony}{\myauthor{J.\,F.\,Bony}}
\ndef{\BoosBavnbek}{\myauthor{B.\,Boo$\beta$-Bavnbek}}
\ndef{\Bott}{\myauthor{R.\,Bott}}
\ndef{\Branges}{\myauthor{L.\,de Branges}}
\ndef{\Bratteli}{\myauthor{O.\,Bratteli}}
\ndef{\Bredon}{\myauthor{G.\,E.\,Bredon}}
\ndef{\Breuer}{\myauthor{M.\,Breuer}}
\ndef{\Brown}{\myauthor{L.\,G.\,Brown}}
\ndef{\Bruneau}{\myauthor{V.\,Bruneau}}
\ndef{\Buslaev}{\myauthor{V.\,S.\,Buslaev}}
\ndef{\Carey}{\myauthor{A.\,L.\,Carey}}
\ndef{\CareyRW}{\myauthor{R.\,W.\,Carey}} 
\ndef{\Cartan}{\myauthor{H.\,Cartan}}
\ndef{\Chilin}{\myauthor{V.\,I.\,Chilin}}
\ndef{\Coburn}{\myauthor{L.\,A.\,Coburn}}
\ndef{\Connes}{\myauthor{A.\,Connes}}
\ndef{\Cornfeld}{\myauthor{I.\,P.\,Cornfeld}}
\ndef{\Daletskii}{\myauthor{Yu.\,L.\,Daletski\u\i}}   
\ndef{\Dixmier}{\myauthor{J.\,Dixmier}}
\ndef{\DoddsPG}{\myauthor{P.\,G.\,Dodds}}
\ndef{\DoddsTK}{\myauthor{T.\,K.\,Dodds}}
\ndef{\Douglas}{\myauthor{R.\,G.\,Douglas}}
\ndef{\Dubrovin}{\myauthor{B.\,A.\,Dubrovin}}
\ndef{\Dugundji}{\myauthor{J.\,Dugundji}}
\ndef{\Duncan}{\myauthor{J.\,Duncan}}
\ndef{\Dunford}{\myauthor{N.\,Dunford}}
\ndef{\Dykema}{\myauthor{K.\,J.\,Dykema}}
\ndef{\Edwards}{\myauthor{R.\,E.\,Edwards}}
\ndef{\Eilenberg}{\myauthor{S.\,Eilenberg}}
\ndef{\Entina}{\myauthor{S.\,B.\,\`Entina}}
\ndef{\Fack}{\myauthor{T.\,Fack}} 
\ndef{\Faddeev}{\myauthor{L.\,D.\,Faddeev}}
\ndef{\Farber}{\myauthor{M.\,Farber}}
\ndef{\Farforovskaya}{\myauthor{Yu.\,B.\,Farforovskaya}}
\ndef{\Federer}{\myauthor{H.\,Federer}}
\ndef{\Fedosov}{\myauthor{B.\,V.\,Fedosov}}
\ndef{\Figiel}{\myauthor{T.\,Figiel}} 
\ndef{\Figueroa}{\myauthor{H.\,Figueroa}}
\ndef{\Fillmore}{\myauthor{P.\,A.\,Fillmore}}
\ndef{\Fomenko}{\myauthor{A.\,T.\,Fomenko}} 
\ndef{\Fomin}{\myauthor{S.\,V.\,Fomin}}
\ndef{\Frohlich}{\myauthor{J.\,Fr\"ohlich}}
\ndef{\Fuglede}{\myauthor{B.\,Fuglede}}
\ndef{\Furutani}{\myauthor{K.\,Furutani}}
\ndef{\Gelfand}{\myauthor{I.\,M.\,Gelfand}}
\ndef{\Gesztesy}{\myauthor{F.\,Gesztesy}}     
\ndef{\Getzler}{\myauthor{E.\,Getzler}} 
\ndef{\Gilkey}{\myauthor{P.\,B.\,Gilkey}}
\ndef{\Gitler}{\myauthor{S.\,Gitler}}
\ndef{\Glazman}{\myauthor{I.\,M.\,Glazman}}
\ndef{\Glimm}{\myauthor{J.\,Glimm}}
\ndef{\Gohberg}{\myauthor{I.\,C.\,Gohberg}}
\ndef{\Goldshtein}{\myauthor{Ya.\,Goldshtein}}
\ndef{\Golze}{\myauthor{F.\,Golze}}
\ndef{\GraciaBondia}{\myauthor{J.\,M.\,Gracia-Bond\'{i}a}}
\ndef{\Greenleaf}{\myauthor{F.\,P.\,Greenleaf}}
\ndef{\Gromov}{\myauthor{M.\,Gromov}}
\ndef{\Gunning}{\myauthor{R.\,C.\,Gunning}}
\ndef{\Haagerup}{\myauthor{U.\,Haagerup}}
\ndef{\Haag}{\myauthor{R.\,Haag}}
\ndef{\Halmos}{\myauthor{Halmos}}
\ndef{\Hardy}{\myauthor{G.\,H.\,Hardy}}
\ndef{\Herbst}{\myauthor{I.\,W.\,Herbst}}
\ndef{\Higson}{\myauthor{N.\,Higson}}  
\ndef{\Hoermander}{\myauthor{L.\,Hoermander}} 
\ndef{\Hoffman}{\myauthor{K.\,Hoffman}} 
\ndef{\Ito}{\myauthor{K.\,Ito}}
\ndef{\Jaffe}{\myauthor{A.\,Jaffe}}
\ndef{\James}{\myauthor{I.\,M.\,James}}
\ndef{\Javrjan}{\myauthor{V.\,A.\,Javrjan}}
\ndef{\Jitomirskaya}{\myauthor{S.\,Jitomirskaya}}
\ndef{\Kadison}{\myauthor{R.\,V.\,Kadison}}
\ndef{\Kalton}{\myauthor{N.\,J.\,Kalton}} 
\ndef{\Kato}{\myauthor{T.\,Kato}} 
\ndef{\Kobayashi}{\myauthor{S.\,Kobayashi}}
\ndef{\Koplienko}{\myauthor{L.\,S.\,Koplienko}}
\ndef{\Korotyaev}{\myauthor{E.\,Korotyaev}}
\ndef{\Kosaki}{\myauthor{H.\,Kosaki}}
\ndef{\Kostrykin}{\myauthor{V.\,Kostrykin}}
\ndef{\Kotani}{\myauthor{S.\,Kotani}}
\ndef{\Krein}{\myauthor{Kre\u\i n}}
\ndef{\KreinMG}{\myauthor{M.\,G.\,Kre\u\i n}}
\ndef{\KreinSG}{\myauthor{S.\,G.\,Kre\u\i n}}
\ndef{\Kuroda}{\myauthor{S.\,T.\,Kuroda}}
\ndef{\Leichtnam}{\myauthor{E.\,Leichtnam}}
\ndef{\Lesch}{\myauthor{M.\,Lesch}}
\ndef{\Lesniewski}{\myauthor{A.\,Lesniewski}}
\ndef{\Levitan}{\myauthor{B.\,M.\,Levitan}}
\ndef{\Lidskii}{\myauthor{V.\,B.\,Lidskii}}
\ndef{\Lifshits}{\myauthor{I.\,M.\,Lifshits}}
\ndef{\Lindenstrauss}{\myauthor{J.\,Lindenstrauss}}
\ndef{\Loday}{\myauthor{J.-L.\,Loday}}
\ndef{\Lord}{\myauthor{S.\,Lord}}      
\ndef{\Lorentz}{\myauthor{G.\,Lorentz}}
\ndef{\Magnus}{\myauthor{W.\,Magnus}}
\ndef{\Makarov}{\myauthor{K.\,A.\,Makarov}}
\ndef{\MakarovN}{\myauthor{N.\,Makarov}}
\ndef{\Mathai}{\myauthor{V.\,Mathai}}         
\ndef{\McKean}{\myauthor{H.\,P.\,McKean}}
\ndef{\Mishchenko}{\myauthor{A.\,S.\,Mishchenko}}
\ndef{\Molchanov}{\myauthor{S.\,A.\,Molchanov}}
\ndef{\Moore}{\myauthor{C.\,C.\,Moore}}
\ndef{\Moscovici}{\myauthor{H.\,Moscovici}}  
\ndef{\Motovilov}{\myauthor{A.\,K.\,Motovilov}}
\ndef{\Moyer}{\myauthor{R.\,D.\,Moyer}}
\ndef{\Naboko}{\myauthor{S.\,N.\,Naboko}}
\ndef{\Narasimhan}{\myauthor{R.\,Narasimhan}}
\ndef{\Nomizu}{\myauthor{K.\,Nomizu}}
\ndef{\Novikov}{\myauthor{S.\,P.\,Novikov}}
\ndef{\Osterwalder}{\myauthor{K.\,Osterwalder}}
\ndef{\Patodi}{\myauthor{V.\,Patodi}}
\ndef{\Pagter}{\myauthor{B.\,de~Pagter}}  
\ndef{\Pastur}{\myauthor{L.\,A.\,Pastur}}  
\ndef{\Pavlov}{\myauthor{B.\,S.\,Pavlov}}
\ndef{\Pedersen}{\myauthor{G.\,K.\,Pedersen}}
\ndef{\Peller}{\myauthor{V.\,V.\,Peller}}
\ndef{\Perera}{\myauthor{V.\,S.\,Perera}}
\ndef{\Petunin}{\myauthor{Ju.\,I.\,Petunin}}
\ndef{\Phillips}{\myauthor{J.\,Phillips}}  
\ndef{\Piazza}{\myauthor{P.\,Piazza}}   
\ndef{\Pincus}{\myauthor{J.\,D.\,Pincus}}   
\ndef{\Poincare}{Poincar\'e}
\ndef{\Postnikov}{\myauthor{M.\,M.\,Postnikov}} 
\ndef{\Prinzis}{\myauthor{R.\,Prinzis}}
\ndef{\Privalov}{\myauthor{I.\,I.\,Privalov}}
\ndef{\Pushnitski}{\myauthor{A.\,B.\,Pushnitski}} 
\ndef{\Raeburn}{\myauthor{I.\,Raeburn}}
\ndef{\Raikov}{\myauthor{G.\,Raikov}}
\ndef{\Reed}{\myauthor{M.\,Reed}}
\ndef{\Rennie}{\myauthor{A.\,Rennie}}
\ndef{\Rickart}{\myauthor{C.\,E.\,Rickart}}
\ndef{\Riesz}{\myauthor{F.\,Riesz}}
\ndef{\Ringrose}{\myauthor{J.\,Ringrose}}
\ndef{\Rio}{\myauthor{R.\,del Rio}}
\ndef{\Robinson}{\myauthor{D.\,Robinson}}
\ndef{\Rossi}{\myauthor{H.\,Rossi}}
\ndef{\Rudin}{\myauthor{W.\,Rudin}}
\ndef{\Ruelle}{\myauthor{D.\,Ruelle}}
\ndef{\Ruzhansky}{\myauthor{M.\,Ruzhansky}}
\ndef{\Sakai}{\myauthor{Sh.\,Sakai}}
\ndef{\Sargsjan}{\myauthor{I.\,S.\,Sargsjan}}
\ndef{\Sato}{\myauthor{H.\,Sato}}
\ndef{\Schaeffer}{\myauthor{D.\,G.\,Schaeffer}}
\ndef{\Schluchtermann}{\myauthor{G.\,Schluchtermann}}
\ndef{\Schochet}{\myauthor{C.\,Schochet}}
\ndef{\SchroedingerE}{\myauthor{E.\,Schr\"odinger}}
\ndef{\Schroedinger}{\myauthor{Schr\"odinger}}
\ndef{\Schrohe}{\myauthor{E.\,Schrohe}}
\ndef{\Schwartz}{\myauthor{J.\,T.\,Schwartz}}
\ndef{\Sedaev}{\myauthor{A.\,A.\,Sedaev}}
\ndef{\Seiler}{\myauthor{R.\,Seiler}}
\ndef{\Semenov}{\myauthor{E.\,M.\,Semenov}}
\ndef{\Shabat}{\myauthor{B.\,V.\,Shabat}}
\ndef{\Shafarevich}{\myauthor{I.\,R.\,Shafarevich}}
\ndef{\Sharpley}{\myauthor{R.\,Sharpley}}
\ndef{\Shilov}{\myauthor{G.\,E.\,Shilov}}
\ndef{\Shirkov}{\myauthor{D.\,V.\,Shirkov}}
\ndef{\Shubin}{\myauthor{M.\,A.\,Shubin}}
\ndef{\Silverman}{\myauthor{H.\,Silverman}}
\ndef{\Simon}{\myauthor{B.\,Simon}}
\ndef{\Sinai}{\myauthor{Ya.\,G.\,Sinai}}
\ndef{\Singer}{\myauthor{I.\,M.\,Singer}}
\ndef{\Solomyak}{\myauthor{M.\,Z.\,Solomyak}}
\ndef{\Soloviev}{\myauthor{Yu.\,P.\,Soloviev}}
\ndef{\Spivak}{\myauthor{M.\,Spivak}}
\ndef{\Stein}{\myauthor{E.\,M.\,Stein}}
\ndef{\Stenkin}{\myauthor{V.\,V.\,Sten'kin}}
\ndef{\Stratila}{\myauthor{S.\,Stratila}}
\ndef{\Sucheston}{\myauthor{L.\,Sucheston}}
\ndef{\Sukochev}{\myauthor{F.\,A.\,Sukochev}}
\ndef{\Switzer}{\myauthor{R.\,M.\,Switzer}}
\ndef{\SzNagy}{\myauthor{B.\,Sz.-Nagy}}
\ndef{\Takesaki}{\myauthor{M.\,Takesaki}}
\ndef{\Taylor}{\myauthor{M.\,E.\,Taylor}}
\ndef{\Treves}{\myauthor{F.\,Treves}}
\ndef{\Troitsky}{\myauthor{E.\,V.\,Troitsky}}
\ndef{\Tzafriri}{\myauthor{L.\,Tzafriri}}
\ndef{\Varilly}{\myauthor{J.\,C.\,V\'{a}rilly}}
\ndef{\Vergne}{\myauthor{M.\,Vergne}}
\ndef{\Vladimirov}{\myauthor{V.\,S.\,Vladimirov}}
\ndef{\Voiculescu}{\myauthor{D.\,Voiculescu}}
\ndef{\Weiss}{\myauthor{G.\,Weiss}}
\ndef{\Wells}{\myauthor{R.\,O.\,Wells}}
\ndef{\Williams}{\myauthor{J.\,P.\,Williams}}
\ndef{\Winkler}{\myauthor{S.\,Winkler}}
\ndef{\Witten}{\myauthor{E.\,Witten}}
\ndef{\Wodzicki}{\myauthor{M.\,Wodzicki}}
\ndef{\Wojciechowski}{\myauthor{K.\,P.\,Wojciechowski}}
\ndef{\Yafaev}{\myauthor{D.\,R.\,Yafaev}}
\ndef{\Yosida}{\myauthor{K.\,Yosida}}
\ndef{\Zsido}{\myauthor{L.\,Zsido}}
\ndef{\mbCz}{\mbC^{(z)}}
\ndef{\mbCr}{\mbC^{(r)}}
\ndef{\yy}{y}
\newcommand{\LambHF}[1]{\Lambda(#1;F)}
\ndef{\ells}{{\ell_2}}
\ndef{\hlambda}{{\mathfrak h_\lambda}}
\ndef{\hlambdao}{{\mathfrak h_\lambda^{(0)}}}
\ndef{\hlambdar}{{\mathfrak h_\lambda^{(r)}}}
\begin{document}
\title[Singular SSF]{Singular spectral shift \\ and Pushnitski $\mu$-invariant}
\author{\Azamov}
\address{School of Computer Science, Engineering and Mathematics
   \\ Flinders University
   \\ Bedford Park, 5042, SA Australia.}
\email{azam0001@csem.flinders.edu.au}

\subjclass[2000]{ 
    Primary 47A55; 
    Secondary 47A11 
}
\begin{abstract} In this paper it is shown that in case of trace class perturbations
the singular part of Pushnitski $\mu$-invariant does not depend on the angle variable.
This gives an alternative proof of integer-valuedness of the singular part of the spectral shift
function. As a consequence, the Birman-Krein formula for trace class perturbations follows.
\end{abstract}
\maketitle

\bigskip

\tableofcontents

%
%

\section{Introduction}

Let $H_0$ be a self-adjoint operator, let $V$ be a trace-class self-adjoint operator
and let $H_r = H_0+rV,$ where $r \in \mbR.$
The Kato-Rosenblum theorem asserts that in this case there exist M\"oller's
wave operators
$$
  W_\pm(H_r,H_0) := s-\lim_{t \to \pm \infty} e^{itH_r}e^{-itH_0}P^{(a)}(H_0),
$$
where the limit is taken in the strong operator topology and where $P^{(a)}(H_0)$ is
the projection onto the absolutely continuous subspace $\hilba(H_0)$ of $H_0.$
The scattering matrix is defined by formula
$$
  \mathbf S (H_r,H_0) = W_+^*(H_r,H_0)W_-(H_r,H_0).
$$
The scattering matrix $\mathbf S (H_r,H_0)$ is a unitary operator from $\hilba(H_0)$ to $\hilba(H_0)$
which commutes with $H_0.$ It follows that in the direct integral decomposition
$$
  \euF \colon \hilba(H_0) \to \int^\oplus_{\hat\sigma} \hlambda\,d\lambda
$$
of the Hilbert space $\hilba(H_0),$ which diagonalizes $H_0:$
$$
  \euF(H_0 f)(\lambda) = \lambda \euF(f)(\lambda), \ \ \text{a.e.} \ \lambda,
$$
the operator $\mathbf S (H_r,H_0)$ has the form
$$
  \mathbf S (H_r,H_0) = \int^\oplus_{\hat\sigma} S(\lambda; H_r,H_0)\,d\lambda,
$$
where $S(\lambda; H_r,H_0)$ is a unitary matrix for a.e. $\lambda,$ called the scattering matrix,
and where $\hat \sigma$ is a core of the spectrum of $H_0.$

There is a formula for the scattering matrix, established rigorously by Birman and \`Entina in \cite{BE} (see also \cite{Ya})
(which was well-known to physicists well before \cite{BE}),
$$
  S(\lambda; H_r,H_0) = 1_\lambda -2\pi i Z(\lambda; G)(1_\lambda + JT_0(\lambda+i0))^{-1}Z^*(\lambda; G) \ \ \text{a.e.} \ \lambda \in \mbR,
$$
called the stationary formula, where the perturbation $V$ is factorized as $G^*JG$ with $G\colon \hilb \to \clK$ a Hilbert-Schmidt operator
from $\hilb$ to an auxiliary Hilbert space $\clK,$ $J\colon \clK \to \clK$ is a bounded self-adjoint operator, $T_0(\lambda+i0)$
is the Hilbert-Schmidt limit of $G^* (H_0-(\lambda+iy))^{-1}G$ as $y \to 0^+$ and $Z(\lambda; G)f = (\euF G^*f)(\lambda).$

The scattering matrix $S(\lambda; H_r,H_0)$ is a unitary matrix such that the matrix $S(\lambda; H_r,H_0) - 1_\lambda$
is trace class in $\hlambda.$ So, the spectrum of $S(\lambda; H_r,H_0)$ consist of the eigenvalues of $S(\lambda; H_r,H_0)$
(called scattering phases) with only one possible accumulation point $1.$ In \cite{Pu01FA}, \Pushnitski\ introduced the so-called $\mu$-invariant
$\mu(\theta,\lambda)$ which calculates the spectral flow of eigenvalues of $S(\lambda; H_r,H_0)$
through the point $e^{i\theta}.$ To be able to calculate the spectral flow it is necessary to find
a certain way to send the eigenvalues of the scattering matrix to $1.$ Roughly speaking, the $\mu$-invariant
calculates the spectral flow when the imaginary part $y$ of the spectral parameter $\lambda+iy$ is sent from $+\infty$ to $0.$

In \cite{Az2}, for a class of Schr\"odinger operators, another way was discovered to calculate the spectral flow of the scattering phases: by sending the coupling constant
$r$ from $1$ to $0.$ It was also observed in \cite{Az2} that these two ways may not necessarily give the same answer.
The second way of calculating the spectral flow of scattering phases was called the absolutely continuous part of the $\mu$-invariant
and denoted by $\mua(\theta,\lambda).$ One of the observations made in \cite{Az2} was that the difference between these two ways
to calculate the spectral flow of scattering phases was equal to (the density of) the so-called singular part of the spectral shift function
$\xis(\lambda; H_r,H_0).$

In the definition of $\mua(\theta;\lambda)$ for arbitrary self-adjoint operators, there is one significant technical difficulty: one has to make sure
that for a.e. $\lambda$ the scattering matrix $S(\lambda; H_r,H_0)$ exists for all $r \in [0,1].$
This difficulty was overcome in \cite{Az3v4}, where it was shown that (provided there is a fixed Hilbert-Schmidt
operator $F$ with trivial kernel in $\hilb,$ called a frame operator) $S(\lambda; H_r,H_0)$ exists for all $r$
except a discrete set, and that $S(\lambda; H_r,H_0)$ is a meromorphic function of the coupling constant $r$ which admits
analytic continuation to all $\mbR.$

In \cite{Pu01FA} the formula
$$
  \xi(\lambda; H_1, H_0) = - \frac 1{2\pi} \int_0^{2\pi}\mu(\theta, \lambda; H_1,H_0)\,d\theta
$$
was proved for a.e. $\lambda \in \mbR.$ In this paper I prove analogue of this formula for the absolutely continuous part of $\mu:$
$$
  \xia(\lambda; H_1, H_0) = - \frac 1{2\pi} \int_0^{2\pi}\mua(\theta, \lambda; H_1,H_0)\,d\theta.
$$
I also show that the difference
$$
  \mus(\theta, \lambda; H_1,H_0) := \mu(\theta, \lambda; H_1,H_0) - \mua(\theta, \lambda; H_1,H_0)
$$
does not depend on the angle variable $\theta.$ This implies that
$$
  \xis(\lambda) = - \mus(\theta, \lambda; H_1,H_0),
$$
and, as a consequence, that $\xis(\lambda)$ is a.e. integer-valued. In \cite{Az3v4} it was shown that integer-valuedness
of $\xis$ is equivalent to the Birman-Krein formula. As such, this paper gives a new proof of the Birman-Krein formula.

To make sure that the two ways to calculate spectral flow of scattering phases are indeed different,
one has to prove existence of non-trivial singular spectral shift functions $\xis.$ This has been done in \cite{Az6}.

The first five sections are devoted essentially to an exposition of Pushnitski $\mu$-invariant.
The approach of this paper to the notion of $\mu$-invariant is different from that of \cite{Pu01FA}.
In section 7 the $\mu$-invariant is used to give a new proof of integer-valuedness of $\xis.$

\section{The metric space $\euS_1(\mbT)$}
In this section we shall study the space $\euS_1(\mbT)$ of spectra of unitary operators $U,$ such that $U-1$ is
trace class. This space plays a special role for the notion of $\mu$-invariant. The space $\euS_1(\mbT)$ was introduced and studied in \cite{Pu01FA}
(where it was denoted by $X_1$), but in a different way. The treatment of $\euS_1(\mbT)$ given in this and subsequent sections is more suitable for what follows
in further sections.

Many proofs in this section are probably standard and/or quite simple.
I do not claim originality for them and include them for completeness.

The space $\euS_1(\mbT)$ is a metric space, and as such it is desirable to study it first from purely topological
point of view; this will clarify its applications to the study of $\mu$-invariant.

\newcommand{\mult}{\mathrm{mult}}
\subsection{Rigged sets and their enumerations}

A \emph{rigged subset} $S$ of a set $X$ is a set of elements of $X$ with assigned number $n=0,1,2,\ldots$ or infinity $\infty:$
$\set{(s,n)\colon s \in X, n =1,2,\ldots}.$ The number $n$ of the pair $(s,n)$ should be understood as \emph{multiplicity} of the element $s.$
If $(s,n) \in S$ with $n=0,$ the corresponding element $s$ is deemed not to belong to the rigged set $S,$
and we write $s \notin S;$ while $s \in S$ means that $n\geq 1.$
So, any element of $X$ can be considered as an element of $S$ with appropriate multiplicity.
The number $n$ will be called multiplicity of the element $s$ and will be denoted by $\mult(s) = \mult(s;S).$

Given two rigged subsets $S_1$ and $S_2$ of $X,$ we write $S_1 \leq S_2,$ if, for any $x \in X,$
$$
  \mult(x; S_1) \leq \mult(x; S_2).
$$
One can naturally define the sum $S_1+S_2$ and, in case of $S_2 \leq S_1,$ the difference $S_1-S_2$  of two rigged sets $S_1$ and $S_2$
by formulas
\begin{gather*}
  \mult(x; S_1 \pm S_2) = \mult(x; S_1) \pm \mult(x; S_2). 
\end{gather*}
It is also possible to define union and intersection of two rigged sets, but we don't need them.

A set $X$ can be treated as a rigged set all elements of which has multiplicity $1.$
To distinguish rigged sets from usual sets we use notation $\set{\ldots}^*$ for rigged sets.
If $S$ is a rigged set and if $T$ is a usual set, the inclusion $S \subset T$ will mean
that any element of $S$ (with non-zero multiplicity) belongs to $T.$ Also, by $S \cap T$ we denote the rigged
set
$$
  S \cap T := \set{x \in S \colon x \in T}^*,
$$
that is, $\mult(x; S \cap T) = \mult(x; S),$ if $x \in T;$ and $\mult(x; S \cap T) = 0,$ if $x \notin T.$

We define support $\supp(S)$ of a rigged subset $S$ of a (usual) set $X$ as a usual set by the formula
$$
  \supp(S) := \set{x \in X \colon \mult(x; S) > 0}.
$$
A rigged set is countable, if its support is countable. We consider only countable rigged sets.

By an enumeration of a countable rigged set $S$ we mean any sequence (or a double sequence)
of elements of~$S,$ in which every element~$s$ of~$S$ appears exactly $\mult(s)$ times.

Rank $\rank(S)$ of a rigged set $S$ is the sum of multiplicities of all its elements.

If $X$ is a topological space and $S$ is a rigged subset of $X,$ then, by definition, $x_0 \in X$ is an accumulation point of $S,$
if for any neighbourhood $U$ of $x_0$ the rank of $S \cap U$ is infinite. In particular, a point of infinite multiplicity
is an accumulation point.


\subsection{Definition of $\euS_1(\mbT)$}
Let $\mbX$ be a metric space with a fixed point $x_0 \in \mbX.$ By $\euS_\infty(\mbX) = \euS_\infty(\mbX, x_0)$
we denote the set of all rigged subsets of $\mbX$ with only one accumulation point $x_0 \in \mbT.$
In fact, we are interested in only two cases: $(\mbX, x_0) = (\mbR, 0)$ and, most of all, $(\mbX, x_0) = (\mbT, 1).$

The element $x_0$ itself is always supposed to have multiplicity $\infty:$ $\mult(x_0) = \infty.$
It follows that the support of any (rigged) set from $\euS_\infty(\mbX)$
can have only one accumulation point $x_0.$

Since $x_0$ has infinite multiplicity in any rigged set $S$ from $\euS_\infty(\mbX),$ we agree to say that the rank of $S \in \euS_\infty(\mbX)$
is the rank of the rigged set $S \setminus \set{x_0}.$ If the rank of $S$ is finite, we say that $S$ is finite or of finite rank.

If $S, T \in \euS_\infty(\mbX),$ then we let
\begin{equation} \label{F: d(S,T) def of}
  d(S,T) = \inf \sum\limits_{j=1}^\infty \abs{s_j-t_j},
\end{equation}
where the infimum is taken over all enumerations $(s_j)$ and $(t_j)$ of $S$ and $T,$ and where $d$ on the right hand side
denotes metric of the space $\mbX.$ The sum on the right hand side will be called \emph{distance between enumerations}.
By ${\bf x}_0$ we denote the element of $\euS_\infty(\mbT)$ which has only one element $x_0$ with infinite multiplicity.
By $\euS_1(\mbX)$ we denote the set of elements of $\euS_\infty(\mbX)$ for which the distance $d(S,{\bf x}_0)$ is finite.

Spectra of unitary operators $U$ of the class $1 + \clL_1(\hilb)$ belong to $\euS_1(\mbT).$
In this paper, this is the only reason for introduction and study of the space $\euS_1(\mbX).$
The only difference is that the eigenvalue $1$ of a unitary operator of the class $1 + \clL_1(\hilb)$ may have a finite multiplicity.
It is possible to treat multiplicity of an eigenvalue in a different way: as the limit of the total multiplicity of all eigenvalues
from a neighbourhood of a given one, when the radius of the neighbourhood goes to zero. This way, the spectrum of a unitary operator of the class
$1+\clL_1$ is exactly an element of $\euS_1(\mbT).$

The space $\euS_1(\mbT)$ (in a different shell) has been studied by other authors as well; for example, in \cite{Pu01FA},
A.\,Pushnitski studies a metric space $X_1$ which is in fact isometric to $\euS_1(\mbT),$ as we shall see.

In \cite{Kato}, T.\,Kato introduces metric between two finite groups $S$ and $T$ of isolated
eigenvalues (counting multiplicities) of the same power $N$ by the formula
$$
  d(S,T) = \max_{j \in 1...N} \abs{s_j-t_j}.
$$
For the space $\euS_1(\mbT)$ the metric (\ref{F: d(S,T) def of}) is more suitable.

\begin{lemma}
  The set $\euS_1(\mbX)$ with distance $d$ is a metric space.
  If $\mbX$ is separable, then $\euS_1(\mbX)$ is also separable.
\end{lemma}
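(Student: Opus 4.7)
I would verify the four metric axioms in turn and then deduce separability. Symmetry and non‑negativity are immediate from the symmetric, nonnegative form of the right‑hand side of (\ref{F: d(S,T) def of}). The triangle inequality and the identity of indiscernibles both need an argument, and separability follows from a truncation‑and‑approximation scheme.

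\textbf{Triangle inequality.} Given $S,T,U \in \euS_1(\mbX)$ and $\varepsilon>0$, choose enumerations $(s_j),(t_j)$ of $S,T$ with
\[
  \sum_j d(s_j,t_j) < d(S,T)+\varepsilon,
\]
and enumerations $(t'_j),(u_j)$ of $T,U$ with
\[
  \sum_j d(t'_j,u_j) < d(T,U)+\varepsilon.
\]
Since $(t_j)$ and $(t'_j)$ both enumerate the same rigged set $T$, there is a bijection $\sigma\colon \mbN\to\mbN$ with $t'_j=t_{\sigma(j)}$ for all $j$. Pairing $s_{\sigma(j)}$ with $u_j$ gives two new enumerations of $S$ and $U$, and by the triangle inequality in $\mbX$,
\[
  \sum_j d(s_{\sigma(j)},u_j) \le \sum_j d(s_{\sigma(j)},t_{\sigma(j)}) + \sum_j d(t'_j,u_j) < d(S,T)+d(T,U)+2\varepsilon.
\]
Taking the infimum and letting $\varepsilon\to 0$ gives $d(S,U)\le d(S,T)+d(T,U)$. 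I expect this alignment step via $\sigma$ to be the main technical point of the lemma, since rigged sets only determine their enumerations up to such a permutation.

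\textbf{Identity of indiscernibles.} Suppose $d(S,T)=0$. Fix any $x\in\mbX$ with $x\ne x_0$ and set $r:=\tfrac12 d(x,x_0)>0$. For each $\varepsilon<r\cdot \min(1,\mult(x;S))$, pick enumerations with $\sum_j d(s_j,t_j)<\varepsilon$. Each $s_j=x$ must be paired with some $t_j$ within distance $\varepsilon<r$ of $x$, hence $t_j=x$ (since no other element of $T$ can be so close to $x$, by the accumulation condition at $x_0$, provided $\varepsilon$ is smaller than the distance from $x$ to the nearest other point of $T$, which is positive because $x\ne x_0$). A counting argument then yields $\mult(x;S)\le\mult(x;T)$; by symmetry, equality. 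Since $x_0$ has infinite multiplicity in both $S$ and $T$ by convention, $S=T$.

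\textbf{Separability.} Let $D\subset\mbX$ be a countable dense subset. The collection $\euD$ of all finite rigged sets with support in $D\cup\{x_0\}$ (and $x_0$ of infinite multiplicity) is countable. Given $S\in\euS_1(\mbX)$ and $\varepsilon>0$, enumerate the non‑$x_0$ elements of $S$ as $s_1,s_2,\ldots$ with $\sum_j d(s_j,x_0)<\infty$, choose $N$ so large that $\sum_{j>N} d(s_j,x_0)<\varepsilon/2$, and choose $d_1,\ldots,d_N\in D$ with $d(s_j,d_j)<\varepsilon/(2N)$. Let $T\in\euD$ have support $\{d_1,\ldots,d_N,x_0\}$ with the obvious multiplicities. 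Comparing the enumerations $(s_1,\ldots,s_N,s_{N+1},\ldots)$ and $(d_1,\ldots,d_N,x_0,x_0,\ldots)$ (padding with $x_0$'s as needed) yields $d(S,T)<\varepsilon$. Thus $\euD$ is dense in $\euS_1(\mbX)$, proving separability.
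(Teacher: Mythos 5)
Your proof is correct and follows essentially the same route as the paper: the triangle inequality is obtained by aligning the two enumerations of the middle rigged set via a permutation (the paper phrases this as producing a re-indexed enumeration $c'$ of $S_3$, which is the same device as your $\sigma$), and separability comes from finite-rank rigged sets supported on a countable dense subset, exactly as in the paper. Your spelled-out argument for identity of indiscernibles is sound and merely fills in what the paper dismisses as obvious, though the condition $\varepsilon < r\cdot\min(1,\mult(x;S))$ is a harmless slip — what you actually use is that $\varepsilon$ be smaller than both $r$ and the distance from $x$ to the nearest other point of $T$, which you already note.
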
 \margcom{OK2}
\begin{proof} 
Obviously, the metric $d$ is non-degenerate and is symmetric. So, we need
to check only the triangle inequality $d(S_1,S_3) \leq d(S_1,S_2) + d(S_2,S_3).$
Let $a_1,a_2,\ldots$ be an enumeration of $S_1,$
let $c_1,c_2,\ldots$ be an enumeration of $S_3,$ and let $b_1',b_2',\ldots$ and $b_1'',b_2'',\ldots$
be two enumerations of $S_2.$ Then for some enumeration $c_1',c_2',\ldots$ of $c$
$$
  \sum \abs{a_j-b_j'} + \sum \abs{b_j''-c_j} = \sum \abs{a_j-b_j'} + \sum \abs{b_j'-c_j'} \geq \sum \abs{a_j-c_j'} \geq d(S_1,S_3).
$$
Since the two enumerations of $b$ are arbitrary, it follows that
$$
  d(S_1,S_2) + d(S_2,S_3) \geq d(S_1,S_3).
$$

If $A$ is a countable dense subset of $\mbX,$ then
it is easy to see that the set of all rigged sets in $\euS_1(\mbX),$ which are of finite rank and with elements from $A,$
is countable and is dense in $\euS_1(\mbX).$
\end{proof}

We note some properties of the distance $d.$

\begin{lemma} \label{L: d(S1+S2,T1+T2)<...}
For any $S_1, S_2, T_1, T_2 \in \euS_1(\mbX),$
\begin{equation}
  d(S_1+S_2,T_1+T_2) \leq d(S_1,T_1) + d(S_2,T_2).
\end{equation}
\end{lemma}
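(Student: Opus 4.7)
The plan is to build near-optimal enumerations of $S_1+S_2$ and $T_1+T_2$ by concatenating near-optimal enumerations of the respective summands, and observe that the distance between the concatenations is the sum of the two separate distances.

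Fix $\varepsilon>0$. By definition of $d$, choose enumerations $(a_j)$ of $S_1$ and $(b_j)$ of $T_1$ with
$$
  \sum_{j=1}^\infty |a_j-b_j| \leq d(S_1,T_1)+\tfrac{\varepsilon}{2},
$$
and enumerations $(c_j)$ of $S_2$ and $(e_j)$ of $T_2$ with
$$
  \sum_{j=1}^\infty |c_j-e_j| \leq d(S_2,T_2)+\tfrac{\varepsilon}{2}.
$$
The key point is that from the very definition $\mult(x;S_1+S_2)=\mult(x;S_1)+\mult(x;S_2)$, the interleaved sequence $(a_1,c_1,a_2,c_2,\ldots)$ is an enumeration of $S_1+S_2$, and similarly $(b_1,e_1,b_2,e_2,\ldots)$ is an enumeration of $T_1+T_2$ (the infinite multiplicity of $x_0$ is preserved since $\infty+\infty=\infty$).

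The distance between these two interleaved enumerations is exactly $\sum|a_j-b_j|+\sum|c_j-e_j|$, so $d(S_1+S_2,T_1+T_2)\leq d(S_1,T_1)+d(S_2,T_2)+\varepsilon$. Letting $\varepsilon\to 0$ gives the inequality.

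There is essentially no obstacle here: the only thing to check carefully is that concatenation of enumerations of $S_1$ and $S_2$ really produces an enumeration of $S_1+S_2$ in the sense of the paper, which is immediate from the additive definition of multiplicity for the sum of rigged sets together with the convention that $x_0$ appears with infinite multiplicity in every element of $\euS_1(\mbX)$.
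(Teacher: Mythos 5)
Your proof is correct and follows essentially the same route as the paper's: interleave (shuffle) enumerations of the summands to produce enumerations of the sums, observe that the distance between the interleaved pairs is the sum of the individual distances, and pass to the infimum. The only cosmetic difference is that you pick $\varepsilon$-near-optimal enumerations and let $\varepsilon\to 0$, whereas the paper proves the inequality for arbitrary enumerations and then takes the infimum directly; these are the same argument.
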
\margcom{OK2}
\begin{proof} Shuffling of any two enumerations $(s'_j)$ and $(s''_j)$ of $S_1$ and $S_2$ (respectively, of any two enumerations $(t'_j)$ and
$(t''_j)$ of $T_1$ and $T_2$) defines an enumeration $(s_j)$ of $S_1+S_2$ (respectively, an enumeration $(t_j)$ of $T_1+T_2$).
Clearly,
$$
  d(S_1+S_2,T_1+T_2) \leq d\Brs{(s_j),(t_j)} = d\Brs{(s'_j),(t'_j)} + d\Brs{(s''_j),(t''_j)}.
$$
Taking infimum over all enumerations $(s'_j),$ $(s''_j),$ $(t'_j)$ and $(t''_j)$ of $S_1,S_2,T_1$ and $T_2$ respectively, we get the required inequality.
\end{proof}

\begin{cor} If $S_n \to S$ and $T_n \to T$ in $\euS_1(\mbX),$ then $S_n+T_n \to S+T$ in $\euS_1(\mbX).$
\end{cor}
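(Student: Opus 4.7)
The plan is to deduce this corollary directly from Lemma \ref{L: d(S1+S2,T1+T2)<...}, so essentially no new ideas are required; the work has already been done in the lemma and all that remains is to interpret convergence via the distance $d$.

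Concretely, I would fix $\varepsilon>0$ and use the hypothesis $S_n\to S$, $T_n\to T$ in $\euS_1(\mbX)$ to choose $N$ so large that $d(S_n,S)<\varepsilon/2$ and $d(T_n,T)<\varepsilon/2$ for all $n\geq N$. Then I would substitute $S_1:=S_n$, $S_2:=T_n$, $T_1:=S$, $T_2:=T$ into the inequality of Lemma \ref{L: d(S1+S2,T1+T2)<...} to obtain
$$
  d(S_n+T_n,\,S+T) \leq d(S_n,S) + d(T_n,T) < \varepsilon
$$
for all $n \geq N$. This gives $S_n+T_n\to S+T$ in $\euS_1(\mbX)$.

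The only thing to double-check is that the sum $S_n+T_n$ still lies in $\euS_1(\mbX)$, i.e.\ that it has $x_0$ as its only accumulation point and that $d(S_n+T_n,\mathbf{x}_0)<\infty$. The first holds because the support of $S_n+T_n$ is the union of the supports of $S_n$ and $T_n$, each of which has $x_0$ as sole accumulation point; the second follows from another application of the lemma, namely $d(S_n+T_n,\mathbf{x}_0)=d(S_n+T_n,\mathbf{x}_0+\mathbf{x}_0)\leq d(S_n,\mathbf{x}_0)+d(T_n,\mathbf{x}_0)<\infty$ (using that $\mathbf{x}_0+\mathbf{x}_0=\mathbf{x}_0$ since $x_0$ already has infinite multiplicity).

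There is no real obstacle here; the proof is a one-line consequence of the preceding lemma, and the only mildly non-routine point is the sanity check that $\euS_1(\mbX)$ is closed under the formal sum operation, which itself reduces to the same lemma applied to the reference point $\mathbf{x}_0$.
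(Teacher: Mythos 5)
Your proof is correct and is exactly the intended argument: the corollary is an immediate consequence of Lemma \ref{L: d(S1+S2,T1+T2)<...}, which is why the paper states it without proof. The additional sanity check that $\euS_1(\mbX)$ is closed under sums is a reasonable thing to note, and your observation that $d(S_n+T_n,\mathbf{x}_0)\leq d(S_n,\mathbf{x}_0)+d(T_n,\mathbf{x}_0)$ handles it cleanly.
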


We often identify points of $\mbT$ with points of the interval $[0,2\pi],$ in which $0=2\pi.$

Given $S \in \euS_1(\mbX)$ and $\eps>0,$ we denote by $S(\eps)$ the rigged set of all points $x \in S,$
such that $d(x,x_0) < \eps.$

\begin{lemma} \label{L: S(eps) to S} For any $S \in \euS_1(\mbX)$ \
$
  \lim_{\eps \to 0} S(\eps) = S
$
\ in $\euS_1(\mbX).$
\end{lemma}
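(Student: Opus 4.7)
The plan is to prove $d(S, S(\eps)) \to 0$ by exhibiting a matched pair of enumerations whose total cost is bounded by a tail of the convergent series expressing $d(S, \mathbf{x}_0)$.

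First I would extract the quantitative content of $S \in \euS_1(\mbX)$. Since $\mathbf{x}_0$ has $x_0$ as its unique element (of infinite multiplicity), its only enumeration is the constant sequence, so
$$
  d(S, \mathbf{x}_0) = \sum_{\substack{s \in \supp(S) \\ s \neq x_0}} \mult(s; S)\, d(s, x_0),
$$
a convergent non-negative series. A standard tail estimate---picking for each $\delta > 0$ a finite set $F \subset \supp(S) \setminus \set{x_0}$ whose complement contributes less than $\delta$ to this sum, and then taking any $\eps < \min_{s \in F} d(s, x_0)$ (positive since $F$ is finite and $x_0 \notin F$)---shows that
$$
  \tau(\eps) := \sum_{\substack{s \in \supp(S) \\ 0 < d(s, x_0) < \eps}} \mult(s; S)\, d(s, x_0) \longrightarrow 0 \quad \text{as } \eps \to 0^+.
$$

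Next I would realise $d(S, S(\eps)) \leq \tau(\eps)$ by a concrete matching. The rigged sets $S$ and $S(\eps)$ agree outside a small neighbourhood of $x_0$, and both carry $x_0$ with infinite multiplicity by the $\euS_\infty(\mbX)$ convention. I partition the index set of an enumeration into three infinite blocks. On the first block, the common ``$\eps$-away'' points of $S$ and $S(\eps)$ are matched termwise (zero contribution). On the second block, the near-$x_0$ points by which $S$ and $S(\eps)$ differ, listed with their multiplicities on the $S$-side, are paired against copies of $x_0$ drawn from the infinite supply on the $S(\eps)$-side (total contribution exactly $\tau(\eps)$). On the third block, residual $x_0$'s on each side pair off with each other (zero contribution). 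Letting $\eps \to 0^+$ then yields the claim.

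I foresee no substantial obstacle. The only bookkeeping point is the availability of enough copies of $x_0$ in $S(\eps)$ to absorb the displaced near-$x_0$ mass of $S$, which is automatic from $\mult(x_0; S(\eps)) = \infty$; the rest reduces to the elementary absolute-convergence argument of the first step, already encoded in the hypothesis $S \in \euS_1(\mbX)$.
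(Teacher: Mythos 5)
The paper declares this lemma obvious and supplies no argument, so there is no official proof to compare against; your proposal correctly fills the gap. In substance, your three-block matching is a hands-on instance of Lemma \ref{L: d(S1+S2,T1+T2)<...} applied to the decompositions $S = S(\eps) + \bar S(\eps)$ and $S(\eps) = S(\eps) + {\bf x}_0$, where $\bar S(\eps)$ denotes the near-$x_0$ complement of $S(\eps)$ in $S$: that lemma gives $d(S, S(\eps)) \leq d(S(\eps),S(\eps)) + d(\bar S(\eps), {\bf x}_0) = \tau(\eps)$ at once, so citing it together with your tail estimate $\tau(\eps)\to 0$ would shorten the write-up, but the content is identical to your explicit enumeration. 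One caveat worth flagging: as printed, the definition of $S(\eps)$ reads ``$d(x,x_0)<\eps$,'' which would make $S(\eps)$ the (possibly infinite-rank) tail near $x_0$ and render both this lemma and (\ref{F: S(eps) if finite}) false; it is evidently a misprint for ``$d(x,x_0)>\eps$,'' and your argument silently uses the intended reading, so it is sound.
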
 \margcom{OK}
This is obvious.

\begin{lemma} \label{L: d(S(eps),T(eps)) to ...} For any $S,T \in \euS_1(\mbX)$ \
$
  \lim_{\eps \to 0} d(S(\eps),T(\eps)) = d(S,T).
$
\end{lemma}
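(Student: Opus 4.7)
The plan is to deduce the statement directly from the triangle inequality for $d$ (just established in the first lemma of the subsection) together with the preceding Lemma \ref{L: S(eps) to S}. The heart of the matter is the general fact that the distance function on any metric space is jointly $1$-Lipschitz; one simply applies this to the specific sequences $S(\eps) \to S$ and $T(\eps) \to T$.

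First I would check that $S(\eps)$ and $T(\eps)$ are themselves elements of $\euS_1(\mbX)$ so that the triangle inequality is applicable to them. This is automatic: by the standing convention $x_0$ is retained with infinite multiplicity in both, and their distances to ${\bf x}_0$ are bounded above by $d(S,{\bf x}_0)$ and $d(T,{\bf x}_0)$ respectively, both of which are finite by the assumption $S,T \in \euS_1(\mbX)$.

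Applying the triangle inequality in both directions yields
$$
  d(S(\eps),T(\eps)) \leq d(S(\eps),S) + d(S,T) + d(T,T(\eps)),
$$
$$
  d(S,T) \leq d(S,S(\eps)) + d(S(\eps),T(\eps)) + d(T(\eps),T),
$$
which together give
$$
  \bigl|d(S(\eps),T(\eps)) - d(S,T)\bigr| \leq d(S(\eps),S) + d(T(\eps),T).
$$
By Lemma \ref{L: S(eps) to S}, each term on the right-hand side tends to $0$ as $\eps \to 0$, and the claim follows.

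There is no substantial obstacle: once Lemma \ref{L: S(eps) to S} is in hand, the rest is the standard continuity of the metric. The only point that would require a moment's verification is the observation in the first paragraph that $S(\eps)$ and $T(\eps)$ live in $\euS_1(\mbX)$, and even this is immediate from the definitions.
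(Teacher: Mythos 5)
Your proof is correct and follows exactly the route the paper takes: the paper's proof is the one-line remark that the claim follows from Lemma \ref{L: S(eps) to S} together with continuity of the metric $d$, and your argument is precisely the standard unwinding of ``continuity of the metric'' via the triangle inequality, with the (correct, if minor) observation that $S(\eps), T(\eps) \in \euS_1(\mbX)$.
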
 \margcom{OK}
\begin{proof} This follows from Lemma \ref{L: S(eps) to S} and continuity of the metric $d.$
\end{proof}

The set of all finite rank sets from $\euS_1(\mbX)$ we denote by $\euS_1^o(\mbX).$ Clearly,
\begin{equation} \label{F: S(eps) if finite}
  S(\eps) \in \euS_1^o(\mbX) \ \text{for any} \ S \in \euS_1(\mbX) \ \text{and any} \ \eps>0.
\end{equation}

\subsection{The correspondence $S \leftrightarrow f_S$}
\label{SS: S to fS}

Let $\tilde X_1^o$ be the set of all left-continuous non-increasing functions (see \cite{Pu01FA})
$$
  f \colon \mbT\setminus \set{1} \to \mbZ
$$
with finitely many values. Let $X_1^o = \tilde X_1^o/\sim,$ where $f \sim g,$ if $f-g = \const.$
Let $\pi \colon \tilde X_1^o \to X_1^o$ be the natural projection.

To every $S \in \euS_1^o(\mbT)$ we assign a counting function $f_S \in X_1^o$
by the formula
$$
  f_S(e^{i\theta}) = \set{\# \set{s \in S \colon \theta < s}^* + n, n \in \mbZ}.
$$
If $f \in X_1^o,$ then assigning to every $\theta \in \mbT$ the jump of $f$ at $\theta$ we obtain an element of~$\euS_1^o.$
So, there is a natural one-to-one correspondence
$$
  \euS_1(\mbT) \leftrightarrow X_1^o.
$$

We introduce metric in $X_1^o,$ by the formula (see \cite{Pu01FA})
$$
  \rho_1(f, g) = \inf \set{\int_0^{2\pi} \abs{\tilde f(\theta) - \tilde g(\theta)}\,d\theta, \pi(\tilde f) = f, \pi(\tilde g) = g}.
$$
Note following trivial relations:
\begin{gather*}
  f_{S+T} = f_S + f_T; \\
  \text{if} \ T \leq S, \ \text{then} \ f_{S-T} = f_S - f_T.
\end{gather*}
If $\tilde f, \tilde g \in \tilde X_1^o$ and $f=\pi(\tilde f),$ $g=\pi(\tilde g),$ then there exists $n \in \mbZ,$ such that
$$
  \rho_1(f,g) = \int_0^{2\pi} \abs{\tilde f(\theta) - \tilde g(\theta) + n}\,d\theta.
$$
See \cite[(3.13)]{Pu01FA}.

\subsection{The space $\euS_1(\mbR)$ and the projection $p\colon \euS_1(\mbR) \to \euS_1(\mbT)$}

We say that an element~$S$ of $\euS_1(\mbR) = \euS_1(\mbR,0)$ is positive (negative), if all elements of $S$ are non-negative (non-positive).
We denote by $S_+$ (respectively, $S_-$) the positive (respectively, negative) part of $S \in \euS_1(\mbR),$ which is defined in an obvious way.
By $\abs{S}$ we denote the rigged set $S_+ + (-S_-).$

\begin{lemma} \label{L: S(pm) belong to euS(R)} If $S,T \in \euS_1(\mbR),$ then (a) $d(S,0) = d(S_+,0) + d(S_-,0).$ \ (b) $d(S,T) = d(S_+,T_+) + d(S_-,T_-).$
\end{lemma}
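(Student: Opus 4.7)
The plan is to deduce (a) from (b) by taking $T = 0$ (since $0_+ = 0_- = 0$), so I will focus on (b). For the inequality $d(S,T) \leq d(S_+,T_+) + d(S_-,T_-)$, I would appeal to the decomposition $S = S_+ + S_-$ and $T = T_+ + T_-$ (valid at the level of rigged sets because $0$ has infinite multiplicity on both sides and $\infty+\infty = \infty$), and then apply Lemma \ref{L: d(S1+S2,T1+T2)<...}.

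The substantive direction is $d(S,T) \geq d(S_+,T_+) + d(S_-,T_-)$. The key ingredient is the pointwise identity
$$
  \abs{a-b} = \abs{a_+ - b_+} + \abs{a_- - b_-}, \qquad a,b \in \mbR,
$$
where $a_+ = \max(a,0)$ and $a_- = \max(-a,0)$; this is verified by checking the four sign combinations of $(a,b)$. Given any enumerations $(s_j)$ of $S$ and $(t_j)$ of $T$, set $s_j^{\pm} := \max(\pm s_j, 0)$ and analogously for $t$. Termwise application of the identity then gives
$$
  \sum_j \abs{s_j - t_j} \;=\; \sum_j \abs{s_j^+ - t_j^+} \;+\; \sum_j \abs{s_j^- - t_j^-}.
$$
Provided $(s_j^+)_j$, $(s_j^-)_j$, $(t_j^+)_j$, $(t_j^-)_j$ are valid enumerations of $S_+$, $-S_-$, $T_+$, $-T_-$ respectively, each right-hand sum is bounded below by $d(S_+,T_+)$ and $d(S_-,T_-)$; taking the infimum over $(s_j),(t_j)$ yields the claim.

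The one point that needs care is the verification just invoked: that splitting an enumeration into its positive and negative components produces valid enumerations of the corresponding sign parts. Positive elements of $S$ are retained with their multiplicities in $(s_j^+)$, and $0$ is inserted at every index with $s_j \leq 0$; since $0$ has infinite multiplicity in $S$ by the standing convention for $\euS_1(\mbR)$, there are infinitely many such indices, so $(s_j^+)$ is indeed an enumeration of $S_+$, and similarly for the other three. I expect this bookkeeping---which is entirely underwritten by the infinite multiplicity of the base point---to be the only mildly nontrivial step of the proof.
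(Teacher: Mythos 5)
Your proposal is correct and follows essentially the same route as the paper: the $\leq$ direction is the shuffling argument (which you package by citing Lemma \ref{L: d(S1+S2,T1+T2)<...}), and your termwise identity $\abs{a-b} = \abs{a_+-b_+} + \abs{a_--b_-}$ is exactly what justifies the paper's replacement of each mixed-sign pair $(s_j,t_j)$ by $(s_j,0)$ and $(0,t_j)$ without changing the distance, so the $\geq$ direction is also the same argument in different notation. Your explicit remark about using the infinite multiplicity of $0$ to absorb the inserted zeros is a welcome clarification that the paper leaves implicit.
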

\begin{proof} \margcom{OK}
(b) A pair of enumerations of the pair $(S_+, T_+)$ and that of the pair $(S_-, T_-)$ by shuffling define an enumeration of the pair $(S,T).$
It follows from the definition of $d(S,T),$ that $d(S,T) \geq d(S_+,T_+) + d(S_-,T_-).$

On the other hand, if two enumerations $(s_j)$ and $(t_j)$ of $S$ and $T$ respectively
are given, then for all $j,$ such that one of the numbers
$s_j$ and $t_j$ is negative and the other is positive, we replace the pair $(s_j,t_j)$ by two pairs $(s_j,0)$
and $(0,t_j)$ to get another pair of enumerations with the same distance between enumerations.
It clearly follows that $d(S,T) \leq d(S_+,T_+) + d(S_-,T_-).$ Proof of (b) is complete.

(a) For this, one can set $T=0$ in the equality in (b).
\end{proof}

\begin{cor} \label{C: S converges iff S(pm) converges} A sequence $S_n$ converges in $\euS_1(\mbR)$ if and only if the sequences
$S_{n,\pm}$ converge in $\euS_1(\mbR),$ and $\lim_{n \to \infty} S_n = \lim_{n \to \infty} S_{n,+} + \lim_{n \to \infty} S_{n,-}.$
\end{cor}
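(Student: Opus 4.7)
The plan is to read the corollary off directly from part (b) of Lemma \ref{L: S(pm) belong to euS(R)}, which gives the exact identity
$$
  d(S_n, S) = d(S_{n,+}, S_+) + d(S_{n,-}, S_-).
$$
No further structural analysis of $\euS_1(\mbR)$ is needed beyond this identity together with Lemma \ref{L: d(S1+S2,T1+T2)<...}.

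For the forward implication, assume $S_n \to S$ in $\euS_1(\mbR)$. Applying part (b) with $T = S$, both summands on the right hand side are nonnegative and their sum tends to $0$, hence each tends to $0$ individually. Thus $S_{n,+} \to S_+$ and $S_{n,-} \to S_-$. Since trivially $S = S_+ + S_-$, this yields the claimed decomposition of the limit.

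For the reverse implication, suppose $S_{n,+} \to T_+$ and $S_{n,-} \to T_-$ in $\euS_1(\mbR)$ for some $T_\pm \in \euS_1(\mbR)$, and set $T := T_+ + T_-$. By Lemma \ref{L: d(S1+S2,T1+T2)<...},
$$
  d(S_n, T) = d(S_{n,+} + S_{n,-}, T_+ + T_-) \leq d(S_{n,+}, T_+) + d(S_{n,-}, T_-) \longrightarrow 0,
$$
so $S_n \to T = \lim_n S_{n,+} + \lim_n S_{n,-}$.

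There is essentially no obstacle here; the corollary is an immediate two-line consequence of the two lemmas already proved. If one wanted to avoid invoking Lemma \ref{L: d(S1+S2,T1+T2)<...} in the reverse direction, one could alternatively verify that a limit of positive (resp.\ negative) rigged sets is again positive (resp.\ negative)---which follows because any negative point $-\varepsilon$ of a putative limit is at distance at least $\varepsilon$ from every non-negative point, including the accumulation point $0$---so that $(T)_\pm = T_\pm$, and then apply part (b) again in the reverse direction. Either route produces the claim with no real computation.
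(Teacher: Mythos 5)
Your proof is correct and is precisely the argument the paper has in mind: the corollary is stated without proof immediately after Lemma~\ref{L: S(pm) belong to euS(R)} because it follows at once from part~(b) of that lemma (forward direction) together with Lemma~\ref{L: d(S1+S2,T1+T2)<...} (reverse direction). Your sidebar observation that a limit of nonnegative rigged sets is nonnegative is a valid alternative for the reverse direction, but it is not needed given the sub-additivity lemma.
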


\begin{lemma}
If $S \colon [0,1] \to \euS_1(\mbR)$ is a continuous path, such that $S(0) = {\bf 0},$
then $\abs{S}, S_+, S_- \colon [0,1] \to \euS_1(\mbR)$ are also continuous.
\end{lemma}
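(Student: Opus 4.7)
The plan is to invoke Corollary \ref{C: S converges iff S(pm) converges} pointwise. Fix $a \in [0,1]$ and let $t_n \to a$; by continuity of $S$ one has $S(t_n) \to S(a)$ in $\euS_1(\mbR)$, and then the corollary forces $S(t_n)_\pm \to S(a)_\pm$. Since the sequence $t_n$ was arbitrary, both $S_+$ and $S_-$ are continuous at $a$. At the level of estimates this is just the identity
$$d(S(a), S(t_n)) = d(S(a)_+, S(t_n)_+) + d(S(a)_-, S(t_n)_-)$$
from Lemma \ref{L: S(pm) belong to euS(R)}(b): each non-negative summand on the right is dominated by the left-hand side, which tends to zero by hypothesis.

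For $\abs{S}$ I would write $\abs{S}(t) = S(t)_+ + \bigl(-S(t)_-\bigr)$, where $-T$ denotes the reflection of $T \in \euS_1(\mbR)$ through the origin. The reflection $x \mapsto -x$ of $\mbR$ fixes the distinguished point $0$ and is an isometry, so it induces an isometry of $\euS_1(\mbR)$; in particular $t \mapsto -S(t)_-$ inherits continuity from $S_-$. The corollary to Lemma \ref{L: d(S1+S2,T1+T2)<...} then yields continuity of the sum $S_+ + (-S_-) = \abs{S}$.

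The main obstacle, in truth, is essentially non-existent once the additive splitting of $d$ from Lemma \ref{L: S(pm) belong to euS(R)} is in hand: the argument above is a one-line deduction. What I would double-check during write-up is whether the hypothesis $S(0) = {\bf 0}$ plays any role, since my proof does not use it; I suspect it is included to match the way this lemma is later applied to spectral-flow paths starting from the trivial spectrum, rather than for any technical need in the present statement.
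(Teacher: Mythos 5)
Your proof is correct. For $S_\pm$ it coincides with the paper's argument: the paper simply cites the inequality $d(S_\pm,T_\pm)\leq d(S,T)$, which you derive explicitly from the additive splitting of Lemma \ref{L: S(pm) belong to euS(R)}(b) — same content, just made explicit. For $\abs{S}$ you take a slightly different route: the paper notes directly the Lipschitz estimate $d(\abs{S},\abs{T})\leq d(S,T)$ (which, like the $S_\pm$ estimate, is immediate once one observes that mapping each term $s_j\mapsto\abs{s_j}$ in a pair of enumerations can only shrink the distance between enumerations), whereas you build continuity of $\abs{S}$ compositionally from the continuity of $S_\pm$, the isometry induced on $\euS_1(\mbR)$ by $x\mapsto -x$, and the corollary to Lemma \ref{L: d(S1+S2,T1+T2)<...}. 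Both routes are sound and appeal only to material appearing earlier in the section; yours is a bit longer but has the advantage of not introducing a fresh ``obvious'' inequality. Your observation that the hypothesis $S(0)={\bf 0}$ is never used is accurate — the paper's own proof does not use it either, and the operations $S\mapsto S_\pm,\abs{S}$ are defined and well-behaved for arbitrary $S\in\euS_1(\mbR)$ — so it is indeed there to match the context in which the lemma is applied rather than for any technical necessity.
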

\begin{proof} Continuity of $\abs{S}$ follows from the obvious inequality $d(\abs{S},\abs{T}) \leq d(S,T).$
Continuity of $S_\pm$ follows from another obvious inequality $d(S_\pm,T_\pm) \leq d(S,T).$
\end{proof}


\begin{lemma} The metric space $\euS_1(\mbR)$ is contractible.
\end{lemma}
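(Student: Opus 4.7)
The plan is to exhibit an explicit contraction of $\euS_1(\mbR)$ onto the base point ${\bf 0}$ via the scaling homotopy
\[
  H\colon \euS_1(\mbR)\times [0,1] \to \euS_1(\mbR), \qquad H(S,t) = (1-t)\cdot S,
\]
where $(1-t)\cdot S$ denotes the rigged set obtained by multiplying each element of $S$ by the scalar $1-t,$ with the convention that $0$ always retains infinite multiplicity.

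First I would verify that $H$ is well-defined and has the correct endpoint behaviour. For $t\in[0,1),$ multiplication by $1-t$ is a homeomorphism of $\mbR$ fixing $0,$ so the only accumulation point of $(1-t)S$ remains $0;$ for $t=1$ every element of $S$ collapses to $0,$ which is absorbed by the infinite multiplicity at the base point, so the image is ${\bf 0}.$ Moreover, if $(s_j)$ is any enumeration of $S,$ then $((1-t)s_j)$ enumerates $(1-t)S,$ whence
\[
  d((1-t)S,{\bf 0}) \leq (1-t)\, d(S,{\bf 0}) < \infty,
\]
so $(1-t)S\in\euS_1(\mbR).$ Clearly $H(S,0)=S$ and $H(S,1)={\bf 0}.$

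Next I would check joint continuity using the triangle inequality
\[
  d(H(S,t),H(S',t')) \leq d((1-t)S,(1-t)S') + d((1-t)S',(1-t')S').
\]
Applying the scaling to any paired enumeration of $S$ and $S'$ bounds the first term by $(1-t)\,d(S,S'),$ while feeding a single enumeration $(s'_j)$ of $S'$ into both slots bounds the second term by $\abs{t-t'}\sum\abs{s'_j},$ whose infimum over enumerations is $\abs{t-t'}\,d(S',{\bf 0}).$ Along a convergent sequence $(S_n,t_n)\to (S,t),$ setting $S'=S$ yields
\[
  d(H(S_n,t_n),H(S,t)) \leq (1-t_n)\,d(S_n,S) + \abs{t_n-t}\,d(S,{\bf 0}),
\]
and both summands vanish because $d(S,{\bf 0})$ is finite.

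No step here presents a genuine obstacle; the only delicate point is the boundary $t=1,$ handled by the convention that $0$ has infinite multiplicity in every element of $\euS_\infty(\mbR),$ which ensures both that the image under $H(\cdot,1)$ equals ${\bf 0}$ on the nose and that the continuity bound remains valid at the endpoint.
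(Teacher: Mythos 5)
Your proof is correct and follows the same approach as the paper: a scaling homotopy (the paper writes $F(S,t)=tS$, which is just the reparametrization $t\mapsto 1-t$ of your $H$). The paper states the homotopy without verifying continuity; you supply that verification, and your estimates are sound.
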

\begin{proof} We have to show that there exists a continuous map $F \colon \euS_1(\mbR) \times [0,1] \to \euS_1(\mbR),$
such that $F(S,0) = S$ and $F(S,1) = 0.$ One can take $F(S,t) = tS.$
\end{proof}

Let $p \colon \mbR \to \mbT,$ $p(\theta) = e^{i\theta},$ and let
$$
  p \colon \euS_1(\mbR) \to \euS_1(\mbT)
$$
be the natural projection, induced by the function $p.$
Plainly, the mapping $p$ is continuous (moreover, it is contracting).

\begin{lemma} \label{L: sum is continuous} Let $\clX$ be a metric space and let $S_1, S_2 \colon \clX \to \euS_1(\mbX)$ be two continuous functions.
Then the function $(S_1 + S_2)(x) := S_1(x) + S_2(x)$ is also continuous.
\end{lemma}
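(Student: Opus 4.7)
The plan is to reduce the continuity of $x \mapsto S_1(x) + S_2(x)$ to the already-established subadditivity of the metric $d$ under addition, namely Lemma \ref{L: d(S1+S2,T1+T2)<...}. The statement there says
$$
  d(S_1 + S_2, T_1 + T_2) \leq d(S_1, T_1) + d(S_2, T_2),
$$
which is precisely the ``joint Lipschitz'' estimate one needs for a binary operation to be continuous on a metric space.

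Concretely, I would fix $x_0 \in \clX$ and an arbitrary sequence $x_n \to x_0$ in $\clX.$ Continuity of $S_1$ and $S_2$ gives $S_1(x_n) \to S_1(x_0)$ and $S_2(x_n) \to S_2(x_0)$ in $\euS_1(\mbX).$ Applying Lemma \ref{L: d(S1+S2,T1+T2)<...} with $S_i := S_i(x_n)$ and $T_i := S_i(x_0),$ I get
$$
  d\bigl(S_1(x_n) + S_2(x_n),\; S_1(x_0) + S_2(x_0)\bigr) \leq d(S_1(x_n), S_1(x_0)) + d(S_2(x_n), S_2(x_0)),
$$
and the right-hand side tends to zero. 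Hence $(S_1 + S_2)(x_n) \to (S_1 + S_2)(x_0)$ in $\euS_1(\mbX),$ which is the desired continuity.

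There is essentially no obstacle here: the work has already been absorbed into Lemma \ref{L: d(S1+S2,T1+T2)<...}, and the present lemma is just its pointwise-in-$x$ reformulation. The only thing to be slightly careful about is that $\euS_1(\mbX)$ is a metric space (so sequential continuity is equivalent to continuity), which was verified in the first lemma of this section.
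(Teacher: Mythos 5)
Your proof is correct and takes the same route as the paper, which simply invokes Lemma~\ref{L: d(S1+S2,T1+T2)<...} as a one-line justification. You have merely spelled out the epsilon-level details that the paper leaves implicit.
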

\begin{proof} This follows directly from Lemma \ref{L: d(S1+S2,T1+T2)<...}.
\end{proof}

The following example partly explains why it is desirable to assign to element $1$ infinite multiplicity.
\begin{example}
There is a sequence $\set{S_n}$ of elements of $\euS_1(\mbR)$ such that all $S_n$ contain a positive element,
$S_n$ converge to a strictly negative $S,$ but $S$ does not contain $0.$ Example: $S_n = S \cup \set{\frac 1n},$ where $S$ is any strictly negative sequence
from $\euS_1(\mbT).$ Clearly, the distance between $S_n$ and $S$ is $\frac 1n,$ so that $S_n \to S.$
\end{example}

\subsection{An estimate}
The following lemma is intuitively obvious, but its formal proof is quite lengthy,
despite of being elementary. I was able to find neither a short proof
nor a reference. The reader may wish to skip proof of this lemma.
\begin{lemma} \label{L: d(S,T)=dist(S theta,T theta)} Let $S$ and $T$ be two finite elements of $\euS_1(\mbT).$
There exists a pair of increasing enumerations $S^*$ and $T^*$ of $S$ and $T$ such that
$$
  d(S,T) = \dist(S^*,T^*).
$$

\end{lemma}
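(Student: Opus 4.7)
The strategy is to reduce the infimum defining $d(S,T)$ to a minimum over a finite set of pairings, and then apply an uncrossing argument to show this minimum is realised by a pair of increasing enumerations.

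\emph{Reduction to a finite minimum.} Set $N := \max(\rank S, \rank T)$. Since $S$ and $T$ are finite rigged sets, any two enumerations $(s_j)$, $(t_j)$ of $S$ and $T$ pair $(1,1)$ at all but finitely many positions, contributing zero to the sum. After a trivial reshuffle (which, by an argument similar to that of Lemma~\ref{L: d(S1+S2,T1+T2)<...}, does not change the distance between enumerations), I may assume that all non-$1$ entries of both enumerations appear in the first $N$ positions, with any deficit filled by $1$'s. The infimum in~(\ref{F: d(S,T) def of}) then reduces to the minimum of $\sum_{j=1}^N d(s_j, t_{\sigma(j)})$ over permutations $\sigma$ of $\{1,\ldots,N\}$, and so is attained by some pair $(s^*_j), (t^*_j)$.

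\emph{Uncrossing on $\mbT$.} For each $j$ I assign to the pair $(s^*_j, t^*_j)$ the shorter of the two arcs connecting $s^*_j$ to $t^*_j$ on $\mbT$ (with ties broken arbitrarily); call this the \emph{bond} of the pair. Two bonds \emph{cross} if they share an interior point but neither contains the other. The key step is to show that any two crossing bonds can be uncrossed---their endpoints re-paired---without increasing the total length. Given two crossing bonds, I pick $\theta_0 \in \mbT$ lying outside their union, cut $\mbT$ at $\theta_0$ to identify it with $[0, 2\pi)$, and observe that both bonds now lie inside $[0,2\pi)$ without wrapping around; the claim then follows from the classical rearrangement inequality on the line. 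Iterating strictly decreases the number of crossings, so terminates in an optimal pairing whose bonds are pairwise non-crossing. A further cutting argument (choosing $\theta_0$ in the longest gap of $\supp(S) \cup \supp(T)$, and arguing by induction on $\rank$ when this gap is short) then shows that the enumerations corresponding to a non-crossing pairing, read off starting at $\theta_0$, are both increasing.

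\emph{Main obstacle.} I expect the bulk of the work to lie in the uncrossing step, which is a version of the circular rearrangement inequality: while on the line the argument is a single short computation, on $\mbT$ one must handle pairs whose shortest arcs wrap around the cut point, and verify in a small case analysis that uncrossing remains length non-increasing in all configurations. The final passage from ``non-crossing'' to ``increasing'' also requires care, since the union of bonds may cover all of $\mbT$; one resolves this by induction on $\rank$, isolating the longest bond and unwrapping around it. This combinatorial bookkeeping accounts for the author's remark that the formal proof is ``quite lengthy'' despite being elementary.
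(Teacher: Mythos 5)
Your reduction to a finite minimum is sound, but the uncrossing step has a genuine gap that traces to the choice of \emph{undirected} bonds. Consider $S=\{a_1,a_2\}$, $T=\{b_2,b_1\}$ with $a_1<a_2<b_2<b_1$ (after cutting $\mbT$). The increasing enumerations give the pairing $(a_1,b_2),(a_2,b_1)$, whose bonds $[a_1,b_2]$ and $[a_2,b_1]$ overlap with neither containing the other: they \emph{cross} in your sense. Your uncrossing step would replace this with the nested pairing $(a_1,b_1),(a_2,b_2)$ (same total mass, as you can check), and a nested pair is non-crossing, so the process halts there. But the nested pairing corresponds to the $T$-enumeration $(b_1,b_2)$, which is decreasing. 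So ``non-crossing'' is neither necessary (monotone pairings can have crossing bonds) nor sufficient (nested pairings are non-crossing but not monotone) for being increasing, and the final step of your argument — reading off increasing enumerations from a non-crossing configuration — fails. More structurally: uncrossing an undirected crossing produces a nesting of equal mass and vice versa, so without more information the process can oscillate and in any case never distinguishes the monotone pairing from the nested one.

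The paper avoids this by working with \emph{directed} arcs (arrows from an $S$-point to its paired $T$-point, each of length $\leq\pi$ and not passing through $1$). The two operations are then: re-pair only arrows of \emph{opposite} orientation that overlap, which strictly decreases mass and hence terminates; and re-pair only \emph{same-orientation nested} arrows (``bad pairs''), which preserves mass and strictly decreases the number of such containments. After these, one can still have same-orientation overlapping arrows — and these are exactly what an increasing enumeration produces (as in the example above), so they must be kept. The final passage to increasing enumerations then goes via grouping arrows into ``islands'' and enumerating initial points in order, rather than via a cutting/induction argument. The key information you are missing is the orientation of each arc, which is what lets one separate the three cases (opposite-orientation overlap, same-orientation nesting, same-orientation overlap) and treat only the first two. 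You also leave unresolved the edge case where two arcs of length $\leq\pi$ jointly cover $\mbT$, so that no cut point $\theta_0$ exists; this too is handled in the paper by fixing once and for all that standard arrows never pass through $1$.
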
 \margcom{OK}
\begin{proof}
(A) We say that a rigged set $S$ is simple, if $\mult(x) \leq 1$ for any $x\in S.$
Observe that, by continuity of the metric $d,$ and by the fact that simple rigged sets are dense in $\euS_1(\mbT),$
it is sufficient to consider the case of simple finite rigged sets $S$ and $T.$
Moreover, for the same reason, we can and do assume that the rigged set $S+T$ is also simple.

(B) We introduce some terminology which will be used only in the proof of this lemma.
We consider elements of $S$ to be positive and elements of $T$ to be negative.
Given enumerations $(s_j)$ and $(t_j)$ of $S$ and $T,$ we call a pair $(s_j,t_j)$ an arrow,
and a pair of enumerations we call a set of arrows. We represent an arrow on the unit circle $\mbT$
by the arc-vector whose length is $\leq \pi.$ In case the length is equal to $\pi,$ we choose an arrow which passes through $1.$
In case of an arrow $(-1,1) \subset \mbT$ we choose the lower arrow to represent the pair.
If an arrow $(s_j,t_j)$ passes through $1$ we agree to split it into to arrows $(s_j,1)$ and $(1,t_j).$

We say that an arrow is positive, if it directs in counter-clockwise direction; otherwise we say that the arrow is negative.

Now, to any pair of enumerations of $S$ and $T$ we assign a set of arrows in the above described way.
These arrows will be called \emph{standard}. Thus, a standard arrow cannot have length larger than $\pi,$
and it cannot pass through $1 \in \mbT.$

Clearly, the distance between two enumerations of $S$ and $T$ is equal to the sum of lengths of all arrows from the corresponding set of arrows.
This sum will be called the mass of the set. We denote sets of arrows by capital Gothic letters $\mathfrak A,$ \ldots
The mass of a set of arrows $\mathfrak A$ we denote by $m(\mathfrak A).$

(C) We say that two arrows are intersecting if their intersection has positive length.
Given a set of arrows $\mathfrak A,$ we apply to it the following operation:
if there is a pair $(s_1,t_1)$ and $(s_2,t_2)$ of intersecting arrows with different signs in the set,
then we replace this pair by the pair $(s_1,t_2)$ and $(s_2,t_1).$

The set of arrows $\mathfrak B,$ thus obtained, represents a pair of enumerations of $S$ and $T,$ and it obviously has less mass: $m(\mathfrak B) \leq m(\mathfrak A).$
Since the rigged sets $S$ and $T$ are finite, this implies that after some finite number of operations of the type just described,
we obtain a set of arrows whose mass is less than or is equal to the mass of the original set and which possess the following property:
there is no pair of arrows of different sign which intersect.

So, from now on we can and do assume that the set of arrows possess this property.

(D) Assume that an arrow $(s,t)$ contains (by (A), necessarily strictly) another arrow $(s',t')$ (by ``strictly'' we mean that the arrows do not have a common end-point).
We say that such a pair of arrows is \emph{bad}.  According to (B), in a bad pair arrows must have the same direction. We agree to replace this pair of arrows
by the pair $(s,t')$ and $(s',t).$ None of the arrows in this new pair contain the other. Such pairs we call to be \emph{good}.

Clearly, this operation of replacing a bad pair of arrows by the good one does not change the mass of a set of arrows.

Claim: after a finite number of operations of this type we obtain a set of arrows which consists of only good pairs.

Proof of the claim. Let $\set{a,b}$ be a bad pair. If we apply the operation to this pair it becomes good.
Given a third arrow $c,$ there are $5+4+3+2+1$ ways it can be positioned with respect to the arrows $a$ and $b.$
A straightforward and simple check shows that the number of bad pairs in the set $\set{\set{a,c},\set{b,c}}$ does not increase.
It follows that the number of bad pairs decreases after each operation. The claim is proved.

(E) As a result, for any pair $\mathfrak A$ of enumerations of $S$ and $T$ (that is, a set of arrows) there exists
another set of arrows $\mathfrak B$ with mass less or equal to that of $\mathfrak A,$ and such that any pair of arrows in $\mathfrak B$ is good,
(that is, any pair of arrows either do not intersect or, if they intersect, they have the same direction and do not contain one another).
Such a set of arrows will be called a good one.

So, from now on we can and do assume that the set of arrows is a good one.

(F) We introduce an equivalence relation in a good set of arrows: two arrows $a$ and $b$ are equivalent, if there exists a finite sequence
of good pairs $\set{a,c_1}, \set{c_1,c_2}, \ldots, \set{c_n,b}.$ Clearly, this is an equivalence relation. The union of all (closed) arrows in an equivalence class
will be called an \emph{island}; we say that an arrow belongs to an island, if it is a subset (here I am not pedantic in the use of set-theoretic language) of the island.
Thus, an island is a connected (closed) arc. The distance between any two islands is necessarily positive.

(G) Claim: for any good set of arrows there exist enumerations
$$
  0 \leq \ldots \leq 0 < s_{-k} \leq \ldots \leq s_n < 2\pi \leq \ldots \leq 2\pi
$$
and
$$
  0 \leq \ldots \leq 0 < t_{-l} \leq \ldots \leq t_m < 2\pi \leq \ldots \leq 2\pi
$$
of $S$ and $T$ of the same length, which correspond to the given set of arrows (number of zeros
and of $2\pi$'s in the enumerations can be different).

Proof. We order islands in increasing order.
For every island, we enumerate initial points of the arrows, which belong to the island, in increasing order.
It is not difficult to see that terminal points of the arrows will become enumerated in increasing order as well.

Next, we juxtapose enumerations of islands and get the required enumerations of $S$ and $T.$

The claim is proved.

(H) We have proved so far, that in the equality
$d(S,T) = \inf_{S^*,T^*} \dist(S^*,T^*),$
where the infimum is taken over all enumerations $S^*$ and $T^*,$
the infimum can be replaced by infimum over all increasing enumerations $S^*$ and $T^*.$
Since the number of different pairs of increasing enumerations (which define a good set of arrows)
is obviously finite, the equality $d(S,T) = \dist(S^*,T^*)$ holds for such a pair.

The proof of the lemma is complete.
\end{proof}

For the notation, used in the following corollary, see subsection \ref{SS: S to fS}.
\begin{cor} \label{C: d = rho1} For any $S,T \in \euS_1^o(\mbT),$
\begin{equation} \label{F: d = rho}
  d(S,T) = \rho_1(f_S,f_T).
\end{equation}
\end{cor}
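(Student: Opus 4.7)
The plan is to reduce both $d(S,T)$ and $\rho_1(f_S, f_T)$ to a common quantity via the classical identity
\begin{equation*}
  \sum_{j=1}^{K} |s_j - t_j| \;=\; \int_0^{2\pi} \bigl|\tilde f_S(\theta) - \tilde f_T(\theta)\bigr|\,d\theta,
\end{equation*}
valid for any pair of sorted padded enumerations $0\leq s_1\leq\cdots\leq s_K\leq 2\pi$ and $0\leq t_1\leq\cdots\leq t_K\leq 2\pi$ of equal total length, where $\tilde f_S(\theta):=\#\{j:s_j>\theta\}$ and analogously for $\tilde f_T$. I would first verify this identity by a Fubini argument: writing $|s-t|=\int_0^{2\pi}\bigl|\mathbf 1_{\theta<s}-\mathbf 1_{\theta<t}\bigr|\,d\theta$ for $s,t\in[0,2\pi]$ and summing over $j$, it suffices to show that at every $\theta$ all nonzero terms $\mathbf 1_{\theta<s_j}-\mathbf 1_{\theta<t_j}$ have the same sign. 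A short interchange argument, assuming $s_{j_1}\leq\theta<t_{j_1}$ (sign $-1$) and $t_{j_2}\leq\theta<s_{j_2}$ (sign $+1$) and deriving a contradiction from the sortedness, handles this.

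Next, I apply Lemma~\ref{L: d(S,T)=dist(S theta,T theta)}: there exist sorted enumerations of $S$ and $T$ in $[0,2\pi]$ of equal total length $K$, padded by $0$'s and $2\pi$'s, such that $d(S,T)=\sum_{j=1}^{K}|s_j-t_j|$. Such padding is admissible because $1\in\mbT$ (corresponding to $0\equiv 2\pi$) has infinite multiplicity in every element of $\euS_\infty(\mbT)$. The lifts $\tilde f_S,\tilde f_T\in\tilde X_1^o$ associated to these padded sequences project to $f_S$ and $f_T$; hence by the identity above,
\begin{equation*}
  d(S,T) \;=\; \int_0^{2\pi}\bigl|\tilde f_S(\theta)-\tilde f_T(\theta)\bigr|\,d\theta \;\geq\; \rho_1(f_S,f_T).
\end{equation*}

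For the opposite direction, I would take arbitrary lifts $\tilde f,\tilde g\in\tilde X_1^o$ of $f_S,f_T$ and read off their jumps as sorted enumerations of $S$ and $T$, supplemented by $2\pi$'s determined by the boundary values $\tilde f(0^+),\tilde g(0^+)$ and by $0$'s on the shorter side to equalize total lengths. The same identity then gives
\begin{equation*}
  \int_0^{2\pi}\bigl|\tilde f(\theta)-\tilde g(\theta)\bigr|\,d\theta \;=\; \sum_{j=1}^{K}|s_j-t_j| \;\geq\; d(S,T),
\end{equation*}
and taking the infimum over lifts yields $\rho_1(f_S,f_T)\geq d(S,T)$.

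The main obstacle is the bookkeeping correspondence between the integer shift $n\in\mbZ$ in the definition of $\rho_1$ and the split of padding between $0$ and $2\pi$ in the associated enumerations: replacing $\tilde g$ by $\tilde g+1$ corresponds to moving one padding element from the $0$-side to the $2\pi$-side in the $T$ enumeration. Once this matching is made explicit, both $d(S,T)$ and $\rho_1(f_S,f_T)$ become infima of the same quantity over the same family of padded sorted enumerations, and the two infima agree.
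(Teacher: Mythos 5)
Your proposal is correct and takes essentially the same approach as the paper: both reduce the claim to the identity $\dist(S^*,T^*)=\int_0^{2\pi}|f_{S^*}-f_{T^*}|\,d\theta$ for sorted enumerations, combine it with Lemma~\ref{L: d(S,T)=dist(S theta,T theta)}, and use the bijection between lifts $\tilde f$ of $f_S$ and increasing (padded) enumerations of $S$ to match the two infima. You simply fill in more detail (the Fubini/interchange verification of the key identity, and the explicit correspondence between the integer shift $n$ in $\rho_1$ and the split of padding between $0$ and $2\pi$) that the paper leaves to remarks such as ``It is clear that...''.
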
 \margcom{OK2}
\begin{proof}
(A) Every representative of $f_S$ is a counting function of some increasing enumeration of $S.$
This is obvious.

(B) For any pair of increasing enumerations $S^*$ and $T^*$ of $S$ and $T$
$$
  \dist(S^*,T^*) = \int_0^{2\pi} \abs{f_{S^*}(\theta) - f_{T^*}(\theta)}\,d\theta,
$$
where $f_{S^*}$ is a counting function of $S^*.$ \margdetails

Proof. It is clear that $\dist(S^*,T^*)$ is equal to the integral of the absolute value of the current function $\mu$
of the pair which is equal to $f_{S^*}(\theta) - f_{T^*}(\theta).$

(C) It follows from (A) and (B) that
$$
  \rho_1(f_S,f_T) = \inf _{S^*,T^*} \dist(S^*,T^*),
$$
where the infimum is taken over all increasing enumerations of $S$ and $T.$
It follows from this and Lemma \ref{L: d(S,T)=dist(S theta,T theta)} that (\ref{F: d = rho}) holds.
\end{proof}

\begin{rems} Corollary \ref{C: d = rho1} shows that the metric space $\euS_1(\mbT)$
is in fact isometric to the metric space $X_1$ introduced in \cite{Pu01FA}.
\end{rems}

Note that $S_1 \leq S_2$ and $T_1 \leq T_2$ do not imply $d(S_1,T_1) \leq d(S_2,T_2);$
otherwise, $d(S,T) \leq d(S+T,S+T) = 0.$
But the following inequality holds. It is possible that this inequality holds in any metric space.
I was not able to find a reference for it or prove it. Anyway, we don't need this stronger result in this paper.

\begin{thm} \label{T: important estimate}
For any rigged sets $S_1, S, T_1, T \in \euS_1(\mbT),$ if $S_1 \leq S$ and $T_1 \leq T,$ then
\begin{equation}
  d(S - S_1, T - T_1) \leq d(S_1,T_1) + d(S,T).
\end{equation}
\end{thm}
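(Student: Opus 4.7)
The plan is to reduce to the finite-rank case via the truncation operation $S\mapsto S(\eps)$, and then exploit the isometric identification $\euS_1^o(\mbT)\leftrightarrow X_1^o$ established in Corollary~\ref{C: d = rho1}.

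First I would observe that for any $\eps>0$ the inequalities $S_1\le S$ and $T_1\le T$ pass to the truncations, and, directly from the definition of $S(\eps)$, one has
\begin{equation*}
  (S-S_1)(\eps)=S(\eps)-S_1(\eps),\qquad (T-T_1)(\eps)=T(\eps)-T_1(\eps).
\end{equation*}
Once the inequality is proved for the (finite) truncated sets, Lemmas \ref{L: S(eps) to S} and \ref{L: d(S(eps),T(eps)) to ...} together with continuity of $d$ will yield the general statement by letting $\eps\to 0$. So the real task reduces to finite $S,S_1,T,T_1$.

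For the finite case, I would use the correspondence of Subsection~\ref{SS: S to fS}: associate to each rigged set its counting function, and pick lifts $\tilde f_S,\tilde f_{S_1},\tilde f_T,\tilde f_{T_1}\in\tilde X_1^o$. Since $f_{S-S_1}=f_S-f_{S_1}$ and $f_{T-T_1}=f_T-f_{T_1}$, the differences $\tilde f_S-\tilde f_{S_1}$ and $\tilde f_T-\tilde f_{T_1}$ are admissible lifts of $f_{S-S_1}$ and $f_{T-T_1}$. By the formula cited from \cite[(3.13)]{Pu01FA}, there are integers $n_1,n_2\in\mbZ$ such that
\begin{equation*}
  d(S,T)=\int_0^{2\pi}\!|\tilde f_S-\tilde f_T+n_1|\,d\theta,\qquad
  d(S_1,T_1)=\int_0^{2\pi}\!|\tilde f_{S_1}-\tilde f_{T_1}+n_2|\,d\theta.
\end{equation*}
Then, using $n_1-n_2$ as the shift for the difference and applying the triangle inequality pointwise,
\begin{equation*}
  d(S-S_1,T-T_1)\le\int_0^{2\pi}\!\bigl|(\tilde f_S-\tilde f_T+n_1)-(\tilde f_{S_1}-\tilde f_{T_1}+n_2)\bigr|\,d\theta\le d(S,T)+d(S_1,T_1),
\end{equation*}
via Corollary~\ref{C: d = rho1}, which is precisely the desired inequality.

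The main conceptual obstacle is that a direct attempt with arbitrary enumerations fails: given enumerations $(s_j)$ of $S$ and $(t_j)$ of $T$ nearly realizing $d(S,T)$, one cannot in general select the same index set to simultaneously enumerate $S_1$ inside $(s_j)$ and $T_1$ inside $(t_j)$, so one cannot split pair-distances coherently. Passing to the functional (step-function) picture sidesteps this combinatorial obstruction, because subtraction of functions is unambiguous and the triangle inequality in $L^1$ is insensitive to the underlying matching. The remaining care is only bookkeeping: verifying the truncation identity above and checking that the correct integer shift $n_1-n_2$ may be used when bounding $\rho_1(f_{S-S_1},f_{T-T_1})$.
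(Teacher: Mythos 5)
Your proposal matches the paper's own proof essentially step for step: reduce to the finite case via the truncation $S\mapsto S(\eps)$ together with the identity $(S-S_1)(\eps)=S(\eps)-S_1(\eps)$ and Lemma~\ref{L: d(S(eps),T(eps)) to ...}, then transport to the counting-function picture via Corollary~\ref{C: d = rho1} and apply the pointwise triangle inequality in $L^1$ after choosing the right integer shift. The only cosmetic difference is that you fix realizing shifts $n_1,n_2$ up front (via \cite[(3.13)]{Pu01FA}) and use $n_1-n_2$, whereas the paper writes the same estimate directly as a chain of $\inf$'s; the underlying argument is identical.
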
 \margcom{OK2}
\begin{proof}
(A) \ Claim: the inequality holds in the case of finite rigged sets $S_1, S, T_1, T \in \euS_1(\mbT).$

Proof. By Corollary \ref{C: d = rho1}, it is enough to show that
$$
  \rho_1(f_{S - S_1}, f_{T - T_1}) \leq \rho_1(f_{S_1}, f_{T_1}) + \rho_1(f_S,f_T).
$$
We have
\begin{equation*}
  \begin{split}
  \rho_1(f_{S - S_1}, f_{T - T_1}) & = \rho_1(f_{S} - f_{S_1}, f_{T} - f_{T_1})
  \\ & = \inf_{m,n\in\mbZ} \int_0^{2\pi} \abs{\brs{f_{S}(\theta) - f_{S_1}(\theta)+m} - \brs{f_{T}(\theta) - f_{T_1}(\theta)+n}}\,d\theta
  \\ & \leq \inf_{m\in\mbZ} \int_0^{2\pi} \abs{f_{S_1}(\theta) - f_{T_1}(\theta)+m}\,d\theta + \inf_{m\in\mbZ} \int_0^{2\pi} \abs{f_{S}(\theta) - f_{T}(\theta)+n}\,d\theta
  \\ & = \rho_1(f_{S_1}, f_{T_1}) + \rho_1(f_S,f_T).
  \end{split}
\end{equation*}

(B) It follows from (\ref{F: S(eps) if finite}) and (A), that for any $\eps>0$
\begin{equation*}
  d(S(\eps) - S_1(\eps), T(\eps) - T_1(\eps)) \leq d(S_1(\eps),T_1(\eps)) + d(S(\eps),T(\eps)).
\end{equation*}
Since $(A-B)(\eps) = A(\eps)-B(\eps)$ for any $A,B \in \euS_1(\mbT),$
it follows that
\begin{equation*}
  d((S-S_1)(\eps), (T-T_1)(\eps)) \leq d(S_1(\eps),T_1(\eps)) + d(S(\eps),T(\eps)).
\end{equation*}
Now, by Lemma \ref{L: d(S(eps),T(eps)) to ...}, taking the limit $\eps \to 0$ completes the proof.
\end{proof}

\begin{lemma} \label{L: difference is continuous} If $\clX$ is a metric space and if $S,T \colon \clX \to \euS_1(\mbT)$ (or $\euS_1(\mbR)$)
are two continuous functions such that $S(x) \leq T(x)$ for all $x \in \clX,$ then the function
$T-S \colon \clX \to \euS_1(\mbT)$ (or $\euS_1(\mbR)$) is continuous.
\end{lemma}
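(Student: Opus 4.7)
The plan is to derive continuity of $T - S$ as a one-line corollary of Theorem \ref{T: important estimate}, which supplies exactly the additivity-type inequality one needs for handling differences.

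Fix $x \in \clX$ and let $x_n \to x$; the task reduces to controlling $d\bigl((T-S)(x_n), (T-S)(x)\bigr)$. I will invoke Theorem \ref{T: important estimate} with the substitutions: take $S(x_n), T(x_n), S(x), T(x)$ in place of the theorem's $S_1, S, T_1, T$ respectively. The required pointwise inclusions $S_1 \leq S$ and $T_1 \leq T$ become $S(x_n) \leq T(x_n)$ and $S(x) \leq T(x)$, both of which hold by hypothesis. The conclusion then reads
$$
  d\bigl((T-S)(x_n),\,(T-S)(x)\bigr) \leq d(S(x_n), S(x)) + d(T(x_n), T(x)),
$$
and the right-hand side tends to $0$ by continuity of $S$ and $T$ at $x$. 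This settles the $\euS_1(\mbT)$ case.

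For $\euS_1(\mbR)$, the same one-line argument works once Theorem \ref{T: important estimate} is transferred from $\mbT$ to $\mbR$. My preferred route is a reduction: since $S(y) \leq T(y)$ implies $S(y)_\pm \leq T(y)_\pm$ and $(T-S)(y)_\pm = T(y)_\pm - S(y)_\pm$ at the level of multiplicities, Corollary \ref{C: S converges iff S(pm) converges} reduces continuity of $T-S$ to continuity of $T_\pm - S_\pm$ separately. The maps $y \mapsto S(y)_\pm$ and $y \mapsto T(y)_\pm$ are themselves continuous thanks to the $1$-Lipschitz inequality $d(R_\pm, R'_\pm) \leq d(R,R')$ that falls out of Lemma \ref{L: S(pm) belong to euS(R)}. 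On purely positive (respectively, purely negative) rigged sets, the analogue of Theorem \ref{T: important estimate} is immediate: there is a canonical increasing enumeration, the counting function is uniquely determined with no constant-shift quotient, and the triangle-inequality step in the proof of Theorem \ref{T: important estimate} goes through verbatim.

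The only genuine obstacle is this $\mbR$-to-$\mbT$ transfer; everything else reduces to a single invocation of the main estimate already in hand.
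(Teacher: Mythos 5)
Your $\euS_1(\mbT)$ argument is exactly the paper's proof: one application of Theorem \ref{T: important estimate} with the substitutions you indicate gives
$$
  d\bigl((T-S)(x_n),(T-S)(x)\bigr) \leq d\bigl(S(x_n),S(x)\bigr) + d\bigl(T(x_n),T(x)\bigr),
$$
and continuity of $S$ and $T$ finishes it. The point you raise about the $\euS_1(\mbR)$ case is a real one: the paper's proof simply cites Theorem \ref{T: important estimate} for both spaces, while the theorem is stated (and proved, via $\rho_1$ and counting functions on $X_1^o$) only for $\euS_1(\mbT)$. Your reduction closes that gap correctly. The key identities you need — $S(x)\leq T(x)$ forces $S(x)_\pm\leq T(x)_\pm$, and $(T-S)(x)_\pm=T(x)_\pm-S(x)_\pm$ — are immediate from the definition of multiplicity; $1$-Lipschitzness of $R\mapsto R_\pm$ follows from Lemma \ref{L: S(pm) belong to euS(R)}(b); Corollary \ref{C: S converges iff S(pm) converges} then reduces the problem to $\euS_1(\mbR_+)$. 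On $\euS_1(\mbR_+)$, the distance equals the $L^1$-distance of the (uniquely determined, no constant-shift ambiguity) counting functions, and the pointwise triangle inequality gives $\int|(f_S-f_{S_1})-(f_T-f_{T_1})| \leq \int|f_{S_1}-f_{T_1}|+\int|f_S-f_T|$, which is the $\mbR_+$-analogue of part (A) of the proof of Theorem \ref{T: important estimate}. So your proposal is correct, takes the same essential route as the paper, and is more careful in supplying the $\euS_1(\mbR)$ version of the key estimate that the paper uses but never explicitly states.
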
 \margcom{OK}
\begin{proof} By Theorem \ref{T: important estimate}, for any $x_0, x \in \clX,$
$$
  d((T-S)(x_0),(T-S)(x)) \leq d(T(x_0),T(x)) + d(S(x_0),S(x)).
$$
\end{proof}

\subsection{Completeness of $\euS_1(\mbT)$}


\begin{lemma} \label{L: euS(mbR) is complete} The metric space $\euS_1(\mbR)$ is complete. \margcom{OK}
\end{lemma}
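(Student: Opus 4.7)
The plan is the standard recipe for metric completeness: extract coordinate-wise limits of canonical monotone enumerations and check that they assemble into a limit point. As a first step I would reduce to positive rigged sets. By Lemma \ref{L: S(pm) belong to euS(R)}(b) the maps $S \mapsto S_\pm$ are $1$-Lipschitz on $\euS_1(\mbR)$, so if $(S_n)$ is Cauchy then so are $(S_{n,+})$ and $(-S_{n,-})$; by Corollary \ref{C: S converges iff S(pm) converges} it suffices to produce limits for each, so from now on I assume all $S_n$ are positive.

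Second, I would establish the key identity
$$
d(S,T) = \sum_{j=1}^\infty |s_j(S) - s_j(T)|
$$
for positive $S,T \in \euS_1(\mbR)$, where $s_1(S) \geq s_2(S) \geq \cdots \geq 0$ is the decreasing enumeration of $S\setminus\{0\}$ (padded with zeros if of finite rank). The inequality $\leq$ is immediate from the definition of $d$. For the converse, the classical rearrangement principle applies: given any pair of enumerations $(\sigma_j)$ of $S$ and $(\tau_j)$ of $T$, an inversion (indices $i<j$ with $\sigma_i > \sigma_j$ and $\tau_i < \tau_j$) can be removed by transposing $\tau_i$ and $\tau_j$ without increasing $|\sigma_i - \tau_i| + |\sigma_j - \tau_j|$; a bubble-sort argument settles the finite-rank case, and the general case follows by truncation via $S(\eps)$ together with Lemmas \ref{L: S(eps) to S} and \ref{L: d(S(eps),T(eps)) to ...}.

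Given the identity, the remainder of the argument is essentially the completeness of $\ell^1$. Since it implies $|s_j(S_n) - s_j(S_m)| \leq d(S_n, S_m)$ for each fixed $j$, the real sequence $(s_j(S_n))_n$ is Cauchy in $[0,\infty)$, converging to some $s_j^\infty \geq 0$, and $s_j^\infty \geq s_{j+1}^\infty$ is preserved in the limit. Cauchy sequences are bounded, so $\sup_n d(S_n,{\bf 0}) = \sup_n \sum_j s_j(S_n) < \infty$; Fatou's lemma for series then gives $\sum_j s_j^\infty < \infty$, in particular $s_j^\infty \to 0$, so $S^\infty := \set{s_j^\infty}^*$ (with $0$ of infinite multiplicity) lies in $\euS_1(\mbR)$. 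A second application of Fatou yields
$$
d(S_n, S^\infty) = \sum_j |s_j(S_n) - s_j^\infty| \leq \liminf_{m \to \infty} \sum_j |s_j(S_n) - s_j(S_m)| = \liminf_{m \to \infty} d(S_n, S_m),
$$
and the right-hand side tends to $0$ as $n \to \infty$ by the Cauchy property.

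The main obstacle is the rearrangement identity in the second step: it is analogous in spirit to Lemma \ref{L: d(S,T)=dist(S theta,T theta)}, but substantially easier because there is no wrap-around on $\mbR$ and the natural order makes the optimal pairing unique. Once the identity is in hand, everything reduces to a routine coordinate-wise convergence argument in $\ell^1$.
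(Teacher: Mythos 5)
Your proposal is correct and follows essentially the same route as the paper: both reduce to positive rigged sets via the $S_\pm$ decomposition using Lemma \ref{L: S(pm) belong to euS(R)}(b), both rest on the identity that for positive sets $d(S,T)$ equals the $\ell^1$-distance between the non-increasing enumerations, and both conclude by transferring completeness from $\ell^1$. You simply fill in details (the rearrangement/bubble-sort argument and the two Fatou applications) that the paper leaves implicit behind the phrase ``follows from completeness of $l_1$.''
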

\begin{proof} (A) The metric space $\euS_1(\mbR_+)$ is complete.
Proof. We observe that the distance between two rigged sets $S$ and $T$ from $\euS_1(\mbR_+)$
is equal to the distance between non-increasing enumerations of $S$ and $T$
(this assertion is either obvious or can be proved using the same argument as in the proof of Lemma \ref{L: d(S,T)=dist(S theta,T theta)}).
Hence, the claim follows from completeness of $l_1.$

(B) If a sequence $\set{S_n}$ is Cauchy in $\euS_1(\mbR),$ then, by Lemma \ref{L: S(pm) belong to euS(R)}(b),
the sequences $\set{S_{n,\pm}}$ are Cauchy in $\euS_1(\mbR_+).$ By (A), $\set{S_{n,\pm}}$ converges to some $S_\pm$ in $\euS_1(\mbR_+).$
By Lemma \ref{L: S(pm) belong to euS(R)}(b), the sequence $\set{S_n}$ converges to $S_+ + S_-.$
\end{proof}

Given $S \in \euS_\infty(\mbT),$ we define the sequence of finite sets $S^{(1)}, S^{(2)}, S^{(3)}, \ldots$
as follows: we enumerate elements of $S$ so that distances of elements of $S$ to $1$ form a decreasing sequence,
and $S^{(N)}$ is by definition the set which contains the first $N$ elements of the enumeration.
In case when there are mutually conjugate elements we agree to take firstly the elements with positive imaginary part.
In this way, the sequence $S^{(1)}, S^{(2)}, S^{(3)}, \ldots$ becomes uniquely determined by $S.$

We define $\bar S^{(N)}$ as the complement of $S^{(N)}$ in $S.$

If $S \in \euS_1(\mbT),$ then obviously $$S^{(N)} \to S \ \text{in} \ \euS_1(\mbT)$$
and
$$
  d(\bar S^{(N)},{\bf 1}) \to 0
$$
as $N \to \infty.$

Let $z \in \mbT.$ We define multiplicity $k\in \set{0,1,2,\ldots,\infty}$ of $z$ with respect to a Cauchy sequence $\set{S_n}$ as follows:
there exists (sufficiently small) $\eps_0>0$ such that for all $\eps \in (0,\eps_0)$
there exists $N \in \mbN$ such that for all $n \geq N$
the $\eps$-neighbourhood of $z$ contains exactly $k$ elements of $S_n.$

\begin{lemma} \label{L: any x has mult-ty} Every point $z \in \mbT$ has multiplicity.
The rigged set of points with positive multiplicity belongs to $\euS_\infty(\mbT).$
\end{lemma}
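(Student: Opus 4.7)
The plan is to read off the multiplicity of each $z$ from the Cauchy sequence $\set{S_n}$ by pointwise stabilisation of the counting function in small neighbourhoods. For $z \in \mbT$ and $\eps>0$ let $U_\eps(z) := \set{w \in \mbT\colon d(w,z) < \eps}$ and
$$
  N_n(z,\eps) := \rank(S_n \cap U_\eps(z)).
$$
Since $N_n(1,\eps)=\infty$ for every $\eps$ and $n,$ the point $z=1$ automatically has multiplicity $\infty,$ and it suffices to treat $z \neq 1.$ The main technical tool is the estimate: if $d(S_m,S_n)<\alpha$ then for every $z \in \mbT$ and every $\eps>0,$
$$
  N_m(z,\eps-\alpha) \leq N_n(z,\eps) \leq N_m(z,\eps+\alpha),
$$
with $U_\eps(z)$ understood to be empty for $\eps\leq 0.$ I would prove this by choosing enumerations $(s^m_j)$ of $S_m$ and $(s^n_j)$ of $S_n$ with $\sum_j d(s^m_j,s^n_j) < \alpha,$ which exist by the definition of $d$ as an infimum. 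If some index $j$ satisfied $s^m_j \in U_{\eps-\alpha}(z)$ and $s^n_j \notin U_\eps(z),$ the triangle inequality would give $d(s^m_j,s^n_j)>\alpha,$ contradicting the bound on the sum. Thus the identity matching embeds $\set{j\colon s^m_j \in U_{\eps-\alpha}(z)}$ into $\set{j\colon s^n_j \in U_\eps(z)},$ giving the left inequality; the right one is symmetric.

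Fix $z \neq 1,$ set $r_0 := d(z,1)/4,$ and define
$$
  \phi^-(\eps) := \liminf_{n\to\infty} N_n(z,\eps), \qquad \phi^+(\eps) := \limsup_{n\to\infty} N_n(z,\eps), \quad \eps \in (0,r_0).
$$
Applied with $m=n_0$ fixed and $\alpha$ smaller than $r_0,$ the estimate gives $N_n(z,\eps) \leq N_{n_0}(z,2r_0)<\infty$ for all large $n,$ the upper bound being finite because $1$ is the only accumulation point of $\supp(S_{n_0}).$ So $\phi^\pm(\eps)$ are non-negative integers. For any $\alpha>0,$ choosing $N$ with $d(S_m,S_n)<\alpha$ whenever $m,n\geq N,$ the estimate combined with the appropriate one-sided limits yields
$$
  \phi^+(\eps-\alpha) \leq \phi^-(\eps) \leq \phi^+(\eps) \leq \phi^-(\eps+\alpha).
$$
Both $\phi^\pm$ are non-decreasing and integer-valued, so each is constant on some interval $(0,\eps_0);$ call these constants $k^\pm.$ Letting $\eps,\alpha \to 0^+$ in the chain above gives $k^+\leq k^-,$ while $k^- \leq k^+$ is trivial from $\phi^-\leq\phi^+.$ Hence $k := k^+ = k^-$ and $N_n(z,\eps) \to k$ for every $\eps \in (0,\eps_0),$ which is exactly the existence of $\mult(z)=k.$

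For the second assertion I would argue by contradiction. Suppose the rigged set $\clR$ of positive-multiplicity points either had an accumulation point $z^* \neq 1$ in its support or contained a point $z^* \neq 1$ of infinite multiplicity. Then for any $M$ and any $\eta \in (0,d(z^*,1)/8)$ one can select finitely many distinct $z_1,\ldots,z_r \in U_\eta(z^*)$ with $\sum_{i=1}^r \mult(z_i) \geq M,$ together with pairwise disjoint balls $U_{\eps_i}(z_i) \subset U_{2\eta}(z^*).$ By the multiplicity statement just proved, $N_n(z_i,\eps_i) = \mult(z_i)$ for $n$ large, so $N_n(z^*,2\eta) \geq M$ for $n$ large, forcing $\phi^-(z^*,2\eta) \geq M$ for every $M$ and contradicting the finiteness of $\phi^-(z^*,2\eta)$ established in the previous step. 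Hence $1$ is the only possible accumulation point of $\supp(\clR)$ and every $z\neq 1$ has finite multiplicity, so $\clR \in \euS_\infty(\mbT).$ The key step is the basic estimate; once it is in place, the remainder is routine bookkeeping with non-decreasing integer-valued functions, and I expect no serious obstacles beyond getting the one-sided limits in the chain for $\phi^\pm$ correct.
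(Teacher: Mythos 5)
Your proof is correct, and while it arrives at the same destination as the paper, the route is genuinely different in method. The paper argues via the topology of the disjoint union $\sum_n S_n$: it first shows this union has a locally finite set $S$ of accumulation points away from $1,$ then chooses an isolating neighbourhood of each such point and asserts that the count of $S_n$-elements inside it ``must stabilize.'' You bypass accumulation points of $\sum_n S_n$ entirely. Your key tool is the Lipschitz-type estimate $N_m(z,\eps-\alpha) \leq N_n(z,\eps) \leq N_m(z,\eps+\alpha)$ for $d(S_m,S_n)<\alpha,$ proved cleanly by noting that a near-optimal matching of enumerations cannot move a point by more than $\alpha.$ Feeding this into the non-decreasing integer-valued functions $\phi^\pm(\eps)$ and squeezing their common limit $k$ at $\eps \to 0^+$ gives the stabilization the paper takes on faith, and it does so uniformly enough that the second assertion (no accumulation of positive-multiplicity points away from $1$) drops out of the same finiteness bound on $\phi^-.$ What your approach buys is a fully quantitative replacement for the paper's informal ``must stabilize'' steps; what the paper's approach buys is brevity and a picture in terms of the limit support $S,$ which it then re-uses in the subsequent lemma. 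Both are sound; yours is tighter and would be easier to formalize.

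One small bookkeeping remark you may wish to make explicit when writing this up: the finiteness of $N_{n_0}(z,2r_0)$ (and of $N_n(z,\eps)$ generally for $\eps < d(z,1)$) follows not merely from $1$ being the unique accumulation point of $\supp S_{n_0},$ but more directly from $d(S_{n_0},\mathbf{1})<\infty$ — each element of $S_{n_0}$ in $U_{2r_0}(z)$ contributes at least $d(z,1)/2$ to that sum, so there are at most $2\,d(S_{n_0},\mathbf{1})/d(z,1)$ of them, counted with multiplicity. This also spares you from separately ruling out infinite multiplicity at $z^*$ in the second half, since the uniform bound $\sup_n d(S_n,\mathbf{1})<\infty$ already caps $\phi^-(z^*,2\eta)$ independently of $\eta.$
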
 \margcom{OK2}
\begin{proof} Obviously, the point $z = 1$ has multiplicity $\infty:$ one can take $\eps_0=1$ and $N=1$
in the definition.
So, assume that $z \neq 1.$
It is clear that multiplicity of a point $z \neq 1$ cannot be $\infty,$
since otherwise $z$ will be accumulation point of some $S_n.$

Let $U$ be a neighbourhood of $z$ which is separated from $1.$
Since $\set{S_n}$ is a Cauchy sequence, the distance from $S_n$ to ${\bf 1}$ is uniformly bounded.
It follows that there exists $k_0$ such that for all $n$ the number of elements of $S_n$
in $U$ is $\leq k_0.$ Since $\set{S_n}$ is Cauchy, it follows that the number of accumulation points of the sum
$\sum S_n$ in $U$ cannot be larger than $k_0.$

It follows that the set of accumulation points $S$ of the sum $\sum S_n$ cannot have accumulation points other than $1.$

Obviously, non-accumulation points of $\sum S_n$ have multiplicity $0.$

Now, let $z$ be an accumulation point of $\sum S_n$ not equal to $1;$ that is, let $1\neq z \in S.$ Choose a neighbourhood $U$ of $z$ which is separated from
other elements of $S.$ Again, since $\set{S_n}$ is Cauchy, the number $k$ of elements of $S_n$
in $U$ must stabilize as $n \to \infty.$ Since $z$ is the only accumulation point of $\sum S_n$ in $U,$
this number $k$ is the multiplicity of $z.$
\end{proof}

\begin{lemma}
  The rigged set of points $z$ with positive multiplicity belongs to $\euS_1(\mbT).$
\end{lemma}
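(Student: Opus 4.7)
The plan is to exploit the Cauchy hypothesis to bound $d(S^{(N)},{\bf 1})$ uniformly in $N$ and then pass to the limit. The previous lemma has already shown that $S \in \euS_\infty(\mbT)$, so what remains is to verify $d(S,{\bf 1}) < \infty.$ Since $\set{S_n}$ is Cauchy, there is a constant $M$ with $d(S_n,{\bf 1}) \leq M$ for all $n.$ Because ${\bf 1}$ consists of the single point $1$ with infinite multiplicity, the optimal pairing sends every element of $S_n$ to a copy of $1$, so this distance equals $\sum_{t \in S_n}|t-1|$ with multiplicities counted.

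Next I bound $d(S^{(N)},{\bf 1})$ by $M$. Fix $N$ and write the distinct points of $S^{(N)}$ as $z_1, \ldots, z_m$ with multiplicities $k_1,\ldots,k_m$ summing to $N.$ Given arbitrary $\eps > 0$, I apply the definition of multiplicity from the preceding lemma at each of the finitely many $z_i$ to pick $\eps_0 \in (0,\eps)$ such that the $\eps_0$-neighbourhoods of the $z_i$ are pairwise disjoint and miss $1$, and then choose $n$ large enough that each such neighbourhood contains exactly $k_i$ elements of $S_n.$ By the triangle inequality each matched element $t \in S_n$ satisfies $|t-1| \geq |z_i - 1| - \eps_0$, so these $N$ matched elements (with multiplicity) contribute at least $d(S^{(N)},{\bf 1}) - N\eps_0$ to the sum $\sum_{t \in S_n}|t-1| \leq M.$ Letting $\eps_0 \to 0$ yields $d(S^{(N)},{\bf 1}) \leq M.$ This matching step is where I expect the main technical care: one must extract enough uniformity from the pointwise definition of multiplicity to simultaneously locate $N$ elements of a single $S_n$ near the points of $S^{(N)}$, and to ensure their aggregate contribution to $d(S_n,{\bf 1})$ dominates $d(S^{(N)},{\bf 1})$ up to a controlled error.

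To conclude, with the canonical enumeration $s_1, s_2, \ldots$ of $S$ described before the preceding lemma (distances to $1$ non-increasing), the quantity $d(S^{(N)},{\bf 1})$ equals the partial sum $\sum_{j=1}^N |s_j - 1|.$ These partial sums are monotone increasing and uniformly bounded by $M$, so they converge, and their limit is $\sum_{j=1}^\infty |s_j - 1| = d(S,{\bf 1}) \leq M.$ This places $S$ in $\euS_1(\mbT)$, completing the proof.
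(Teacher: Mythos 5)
Your proof is correct and follows essentially the same strategy as the paper: bound $d(S^{(N)},{\bf 1})$ uniformly in $N$ using the Cauchy sequence and the uniform bound on $d(S_n,{\bf 1}),$ then pass to the limit $N\to\infty.$ The paper organizes the bound via the triangle inequality through $d(S^{(N)},S_n)$ (yielding $d(S,{\bf 1})\leq 2C$) and merely asserts the existence of a close rigged subset of $S_n,$ whereas you extract that subset explicitly from the multiplicity definition and obtain the slightly tighter bound $d(S,{\bf 1})\leq M$; these are minor reorganizations of the same argument.
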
 \margcom{OK2}
\begin{proof}
By Lemma \ref{L: any x has mult-ty}, the rigged set $S$ of points with positive multiplicity belongs to $\euS_\infty(\mbT).$

Let $N \in \mbN.$
For any $\eps>0$ there exists $S_n$ such that the distance between some rigged subset $A$ of $S_n$
and $S^{(N)}$ is less than $\eps.$ It follows from this and Lemma \ref{L: d(S1+S2,T1+T2)<...} that
\begin{equation*}
  \begin{split}
    d(S^{(N)},S_n) \leq d(S^{(N)},A) + d({\bf 1},S_n - A) < \eps + C,
  \end{split}
\end{equation*}
where $C = \sup d(S_n,{\bf 1})$ is finite, since $\set{S_n}$ is Cauchy. It follows from this and the triangle inequality that
$$
  d(S^{(N)},{\bf 1}) \leq d(S^{(N)},S_n) + d(S_n,{\bf 1}) < \eps + 2C,
$$
so that
$$
  d(S,{\bf 1}) = \sup d(S^{(N)},{\bf 1}) \leq 2C.
$$
Hence, $S \in \euS_1(\mbT).$
\end{proof}

\begin{thm}
  The metric space $\euS_1(\mbT)$ is complete.
\end{thm}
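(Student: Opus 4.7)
The plan is to show that the rigged set $S$ produced in the previous two lemmas --- the set of points of $\mbT$ of positive multiplicity with respect to the Cauchy sequence $\set{S_n}$ --- is the desired limit in $\euS_1(\mbT)$. Fix $\eps > 0$ and use $S \in \euS_1(\mbT)$ to choose $N$ large enough that $d(\bar S^{(N)}, {\bf 1}) < \eps/4$. Let $z_1, \ldots, z_k$ be the distinct points of $S^{(N)}$ with multiplicities $\mu_1, \ldots, \mu_k$.

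First I invoke the definition of multiplicity relative to a Cauchy sequence at each $z_j$ to select pairwise disjoint open neighbourhoods $U_j \ni z_j$, separated from $1$ and of arbitrarily small diameter, together with an index $N_0$ such that for every $n \geq N_0$ the neighbourhood $U_j$ contains exactly $\mu_j$ elements of $S_n$. Setting $A_n := S_n \cap \bigcup_j U_j$ and $B_n := S_n - A_n$, a sufficient shrinking of the diameters of the $U_j$'s guarantees $d(A_n, S^{(N)}) < \eps/4$ for all $n \geq N_0$. By Lemma \ref{L: d(S1+S2,T1+T2)<...} and the triangle inequality,
$$
  d(S_n, S) \leq d(A_n, S^{(N)}) + d(B_n, \bar S^{(N)}) < \eps/2 + d(B_n, {\bf 1}),
$$
so the problem reduces to establishing $d(B_n, {\bf 1}) < \eps/2$ for $n$ sufficiently large.

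This last bound is the main obstacle. The key idea is to exploit the elementary identity $d(R, {\bf 1}) = \sum_{w \in R} d(w, 1)$ (summing with multiplicity), which yields the additive decomposition $d(S_n, {\bf 1}) = d(A_n, {\bf 1}) + d(B_n, {\bf 1})$. Since $\abs{d(S_n, {\bf 1}) - d(S_m, {\bf 1})} \leq d(S_n, S_m)$, the real sequence $\set{d(S_n, {\bf 1})}$ is Cauchy with some limit $L$; and since every element of $A_n$ lies within a small neighbourhood of some $z_j$, $d(A_n, {\bf 1}) \to d(S^{(N)}, {\bf 1})$ up to a negligible error controlled by the diameters of the $U_j$'s. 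Thus $d(B_n, {\bf 1}) \to L - d(S^{(N)}, {\bf 1})$, and the proof will be complete once I verify the identity $L = d(S, {\bf 1})$, for then $L - d(S^{(N)}, {\bf 1}) = d(\bar S^{(N)}, {\bf 1}) < \eps/4$.

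The inequality $L \geq d(S, {\bf 1})$ follows by running the same splitting construction with $S^{(M)}$ in place of $S^{(N)}$ to obtain $L \geq d(S^{(M)}, {\bf 1})$, then letting $M \to \infty$. The reverse inequality $L \leq d(S, {\bf 1})$ is the subtlest point; I will prove it by a compactness argument on annuli of the form $\set{z \in \mbT \colon d(z, 1) \geq \delta}$. On such a compact set, separated from the sole accumulation point $1$, the Cauchy property combined with the definition of multiplicity forces the elements of $S_n$ at distance at least $\delta$ from $1$ to concentrate, for large $n$, on the finitely many points of $S$ in that annulus; meanwhile the contribution to $d(S_n, {\bf 1})$ from elements of $S_n$ closer than $\delta$ to $1$ is uniformly controlled by the Cauchy bound on the tails. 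Letting $\delta \to 0$ yields $\limsup_n d(S_n, {\bf 1}) \leq d(S, {\bf 1})$, which finishes the identity and hence the proof.
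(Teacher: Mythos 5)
Your approach genuinely diverges from the paper's. The paper's own proof is a two-line reduction: it partitions $\mbT$ into the half-circle $[i,-i]$ (away from $1$) and $(-i,i)$ (containing $1$), observes that the ``far'' parts $S_n'$ converge because they eventually lie in a compact region separated from $1$, identifies the ``near'' parts $S_n''$ with a Cauchy sequence in $\euS_1(\mbR)$, and invokes Lemma~\ref{L: euS(mbR) is complete} (completeness of $\euS_1(\mbR)$, itself established via the isometry of $\euS_1(\mbR_+)$ with a closed subspace of $\ell_1$ given by monotone enumerations). Your argument is instead a direct mass-bookkeeping proof using the additive identity $d(R,{\bf 1}) = \sum_{w\in R} d(w,1)$, reducing the metric convergence $S_n \to S$ to the scalar convergence $d(S_n,{\bf 1}) \to d(S,{\bf 1})$. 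This is a legitimate reformulation, not circular: once $L := \lim_n d(S_n,{\bf 1})$ is known to equal $d(S,{\bf 1})$, your splitting into $A_n$ near $S^{(N)}$ and the remainder $B_n$, together with the additivity of $d(\,\cdot\,,{\bf 1})$, yields $d(B_n,{\bf 1}) \to d(\bar S^{(N)},{\bf 1}) < \eps/4$ and hence $d(S_n,S) < \eps$ for large $n$. Your inequality $L \geq d(S,{\bf 1})$ via $L \geq d(S^{(M)},{\bf 1})$ and $M\to\infty$ is clean and correct.

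The reverse inequality $L \leq d(S,{\bf 1})$ is where the real content sits, and there you only sketch. What you need is a uniform tail bound: $\sup_{n} d(S_n(\delta),{\bf 1}) \to 0$ as $\delta\to 0$. This is the same fact the paper obtains essentially for free from $\ell_1$-completeness (Cauchy sequences in $\ell_1$ have uniformly summable tails), but in your route it must be proved directly. It can be done, but the natural attempt --- take an almost-optimal matching of $S_n$ with $S_N$, split the mates of small elements of $S_n$ into those near $1$ in $S_N$ (whose contribution is bounded by the tail of $S_N$) and those far from $1$ (which must be few because each pairing costs at least roughly $\delta_N/2$) --- requires care with the counting, and the error terms are not negligible without the extra observation $d(v,1) \leq d(v,w) + d(w,1) < d(v,w) + \delta$ applied to the far mates $v$. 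As written, ``uniformly controlled by the Cauchy bound on the tails'' is a slogan, not an argument, and this is precisely the step where a naive estimate would leave an error of order $\eta/\delta_N$ rather than $\eta$. So the strategy is sound and arguably more self-contained than the paper's (it avoids the detour through $\euS_1(\mbR)$ and its positive/negative decomposition), but to stand as a proof you must supply this tail estimate in full; without it, the proof is incomplete at its central point.
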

\begin{proof}
We shall show that any Cauchy sequence $\set{S_n}$ converges in $\euS_1(\mbT)$ to the rigged set $S$
of points with positive multiplicity.

Let $S_n'$ be the (finite) part of $S_n$ in the half-circle $[i,-i]$ and $S_n''$ be
the part of $S_n$ in the half-circle $(-i,i).$
Let $U'$ be a neighbourhood of $S'$ --- the part of $S$ in $[i,-i]$, and let $U''$ be a neighbourhood of $(-i,i)$
such that the distance between $U'$ and $U''$ is $\delta > 0$ (clearly, such neighbourhoods exist). 

Plainly, $S_n'$ converges to $S'.$ The sequence $S_n''$ is obviously also Cauchy, so by Lemma \ref{L: euS(mbR) is complete}
it converges to some $S''.$ It can be easily seen that for any $\eps>0$ $S''(\eps) = S(\eps) \cap [-i,i].$
It follows that $S = S'+S''.$ Consequently, the sequence $S_n = S_n'+S_n''$ converges to $S$ (by Lemma \ref{L: d(S1+S2,T1+T2)<...}).
\end{proof}

\subsection{Characterization of continuous maps $S \colon [0,1] \to \euS_1(\mbT)$}
Assume that there is a sequence of continuous functions $z_1(\cdot), z_2(\cdot), \ldots$ on $[0,1]$ which take values
in $\mbT,$ and such that the rigged set $\set{z_1(r),z_2(r),\ldots}^*$ belongs to $\euS_1(\mbT)$
for all $r \in [0,1].$ So, there is a function $S \colon [0,1] \to \euS_1(\mbT).$
A natural question is whether this function is continuous or not.
The answer is negative as the following example shows.
\begin{example} Write in a sequence all functions of the form $n^{-2}f(2^k x),$ $k,n \in \mbN,$ where $f(x) = -\sin x,$ if $x \in [\pi,2\pi],$ and $f(x) = 0,$
if otherwise. These functions are continuous, there values at each $x \in [0,1]$ form a rigged set which belong to $\euS_1(\mbR),$ still the corresponding function
$[0,1] \to \euS_1(\mbR)$ is not continuous at $0.$
\end{example}

It can be seen that the complete separable metric space $\euS_1(\mbT)$ is not compact.
For example, the sequence $\brs{S_n = \set{1,1/2,\ldots,1/n}}$ does not have a convergent subsequence.
But $\euS_1(\mbT)$ has the following property.
\begin{prop} \label{P: if K is compact ...} If $K$ is a compact subset of $\euS_1(\mbT),$ then
$$
  \lim_{\eps \to 0} \sup_{S \in K} d(\bar S(\eps), {\bf 1}) = 0.
$$
\end{prop}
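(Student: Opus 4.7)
The plan is to argue by contradiction and use compactness of $K$ to produce a limit point $S^* \in K$ which inherits the obstruction, and then to contradict Lemma~\ref{L: S(eps) to S} applied at $S^*.$

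Suppose, toward a contradiction, that there exist $\delta>0,$ a sequence $\eps_n \searrow 0,$ and $S_n \in K$ with $d(\bar S_n(\eps_n),{\bf 1}) \geq \delta.$ Since $K$ is compact, after extracting a subsequence I may assume $S_n \to S^*$ in $\euS_1(\mbT)$ for some $S^* \in K.$ Since $1$ is the only possible accumulation point of the support of $S^*,$ only finitely many support points of $S^*$ lie at distance $\geq \eps/2$ from $1$ for any $\eps>0,$ so I would pick a \emph{generic} $\eps>0$ together with a small gap $\gamma \in (0,\eps/2)$ such that no support point of $S^*$ lies in the open annulus $\set{z\in\mbT\colon \eps - \gamma < d(z,1) < \eps + \gamma}$; such $\eps$ are dense in $(0,+\infty).$ For $n$ large enough that $\eps_n < \eps,$ the monotonicity $\bar S_n(\eps_n) \leq \bar S_n(\eps)$ immediately gives $d(\bar S_n(\eps),{\bf 1}) \geq \delta.$

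The main step is the continuity claim $S_n(\eps) \to S^*(\eps)$ in $\euS_1(\mbT)$ for this generic $\eps.$ Choosing, for each large $n$ with $d(S_n,S^*) < \gamma/2,$ enumerations $(s^n_j)$ and $(s^*_j)$ of $S_n$ and $S^*$ of total cost $\sum_j \abs{s^n_j - s^*_j} < \gamma,$ every individual term is $< \gamma,$ so the gap hypothesis forces: when $d(s^*_j,1)\geq \eps + \gamma$ (i.e.\ $s^*_j$ contributes to $S^*(\eps)$) the matched $s^n_j$ satisfies $d(s^n_j,1) > \eps,$ and when $d(s^*_j,1) \leq \eps - \gamma$ (including all $s^*_j=1$ coming from the infinite multiplicity of $1$ in $S^*$) one has $d(s^n_j,1) < \eps.$ The gap excludes any intermediate case, so the enumerations split cleanly into a matching pair for $(S_n(\eps),S^*(\eps))$ with total cost $\leq \gamma,$ and a matching pair for $(\bar S_n(\eps),\bar S^*(\eps));$ in particular $d(S_n(\eps),S^*(\eps)) \leq \gamma.$

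Since every enumeration of ${\bf 1}$ is a sequence of $1$'s, the optimal matching to ${\bf 1}$ decomposes additively along $S = S(\eps) + \bar S(\eps),$ giving the identity $d(S,{\bf 1}) = d(S(\eps),{\bf 1}) + d(\bar S(\eps),{\bf 1}).$ Combined with continuity of $d(\cdot,{\bf 1})$ and the convergence $S_n(\eps) \to S^*(\eps),$ this yields $d(\bar S_n(\eps),{\bf 1}) \to d(\bar S^*(\eps),{\bf 1}).$ Hence $d(\bar S^*(\eps),{\bf 1}) \geq \delta$ for every generic $\eps,$ which are dense near $0,$ contradicting Lemma~\ref{L: S(eps) to S} applied to $S^*.$

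The main obstacle is the continuity claim $S_n(\eps) \to S^*(\eps)$: the map $S \mapsto S(\eps)$ is not continuous in general (a support point of $S^*$ at distance exactly $\eps$ from $1$ can let nearby $S_n$ place points on either side of the $\eps$-circle), but for generic $\eps$ the annular gap around distance $\eps$ forces the enumerations to respect the partition into far and close parts, which is the crux of the argument.
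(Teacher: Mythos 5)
Your proof is correct, and it is noticeably more careful than the one in the paper. Both arguments rest on the same two pillars -- use compactness of $K$ to pass to a convergent (sub)sequence $S_n\to S^*$, and show that the limiting behaviour forces the tails to vanish -- but the paper's proof is very terse: it asserts the existence of a sequence $S_n\in K_{1/n}$ (where $K_\eps$ is defined as the set of $S\in K$ with $\supp S\subset[e^{-i\eps},e^{i\eps}]$) with $d(S_n,{\bf 1})\to\alpha$, and it is not explained why such a sequence should exist in $K$ at all (for a generic compact $K$ these sets $K_{1/n}$ are empty), nor does the paper address the discontinuity of the truncation map $S\mapsto \bar S(\eps)$, which is precisely the point you identify as the crux and handle with the generic-$\eps$/gap device. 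Your argument -- choose $\eps$ so that $S^*$ has no mass in an annulus around the circle of radius $\eps$, observe that any low-cost pairing of enumerations of $S_n$ with $S^*$ then splits cleanly into a near/far decomposition, combine with the additivity $d(S,{\bf 1})=d(S(\eps),{\bf 1})+d(\bar S(\eps),{\bf 1})$, and contradict Lemma~\ref{L: S(eps) to S} at $S^*$ -- is rigorous. One small imprecision worth fixing: you derive $d(S_n(\eps),S^*(\eps))\leq\gamma$ but then invoke the \emph{convergence} $S_n(\eps)\to S^*(\eps)$. To get that, rerun the same splitting argument with the pairing of total cost $<\min\{\gamma,\,d(S_n,S^*)+1/n\}$ (available once $d(S_n,S^*)<\gamma$); the same gap argument gives $d(S_n(\eps),S^*(\eps))\leq d(S_n,S^*)+1/n\to 0$, which is what you need.
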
 \margcom{OK}
This is, in fact, a property of $l_1.$
\begin{proof} The set $K_\eps$ of all $S \in K$ such that $\supp S \subset [e^{-i\eps},e^{i\eps}] \subset \mbT$
is closed, and so is compact. 

Since the function of $\eps$ on the left hand side is decreasing, its limit $\alpha$ exists.
There exists a sequence $S_n \in K_{1/n}$ such that $d(S_n, {\bf 1}) \to \alpha.$
Since $K$ is compact, we can assume that $S_n$ converges to some $S \in K.$ It follows that $d(S_n, {\bf 1}) \to d(S, {\bf 1}) = \alpha.$
Since obviously $\supp S \subset \cap [-1/n,1/n],$ it follows that $S = {\bf 1},$ and so $\alpha = 0.$
\end{proof}

For a continuous function $S \colon [0,1] \to \euS_1(\mbT)$ with a continuous enumeration $\set{f_1,f_2,\ldots}^*,$
Proposition \ref{P: if K is compact ...} gives a necessary condition which the continuous enumeration must satisfy.
The following lemma shows that this necessary condition is also sufficient.
\begin{lemma} \label{L: cont-s f-ns imply cont-s set f-n}
Let $\clX$ be a metric space.
If $z_n \in C(\clX,\mbT),$ $n=1,2,\ldots,$ if the rigged set
$S(x) := \set{z_1(x), z_2(x), \ldots}^* \in \euS_1(\mbT)$ for any $x \in \clX$ 
and if
\begin{equation} \label{F: 1}
  \lim_{\eps \to 0} \sup _{x\in \clX} d\brs{\bar S(x)(\eps),{\bf 1}} = 0,
\end{equation}
then $S \colon \clX \to \euS_1(\mbT)$ is continuous.
\end{lemma}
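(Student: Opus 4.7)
Given $x_0 \in \clX$ and $\delta > 0$, the goal is to exhibit a neighborhood $U$ of $x_0$ on which $d(S(x), S(x_0)) < \delta$. The approach is a standard ``split at scale $\eps$'': use the uniform tail hypothesis (\ref{F: 1}) to choose $\eps$ so that the ``near'' part $\bar S(x)(\eps)$ is uniformly $l^1$-small, and use continuity of the finitely many individual $z_n$'s responsible for the ``far'' part at $x_0$ to handle the remainder.

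Concretely, pick by (\ref{F: 1}) a generic $\eps > 0$ (not among the summable values $\{d(z_n(x_0), 1)\}_n$) small enough that $\sup_{x \in \clX} d(\bar S(x)(\eps), \mathbf 1) < \delta/4$. The set $I := \{n : d(z_n(x_0), 1) > \eps\}$ is finite since $S(x_0) \in \euS_1(\mbT)$; say $|I| = N$. By continuity of each $z_n$, $n \in I$, shrink a neighborhood $U$ of $x_0$ so that $|z_n(x) - z_n(x_0)| < \delta/(4N)$ and $d(z_n(x), 1) > \eps$ for every $x \in U$ and $n \in I$. Decompose $S(x) = A(x) + B(x)$ with $A(x) := \{z_n(x)\}_{n \in I}^*$ and $B(x) := \{z_n(x)\}_{n \notin I}^*$, and likewise for $x_0$. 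Lemma \ref{L: d(S1+S2,T1+T2)<...}, followed by the triangle inequality through $\mathbf 1$ on the $B$-term, gives
\[
 d(S(x), S(x_0)) \leq d(A(x), A(x_0)) + d(B(x), \mathbf 1) + d(\mathbf 1, B(x_0)),
\]
where the $A$-term is bounded by the finite displacement sum $< \delta/4$, and $d(B(x_0), \mathbf 1) < \delta/4$ since all non-$1$ entries of $B(x_0)$ lie within $\eps$ of $1$ (by (\ref{F: 1}) at $x_0$).

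The remaining task, and the main obstacle of the proof, is to bound $d(B(x), \mathbf 1) = \sum_{n \notin I} d(z_n(x), 1)$. I would split this sum according to whether $d(z_n(x), 1) < \eps$ (a contribution $\leq d(\bar S(x)(\eps), \mathbf 1) < \delta/4$ via (\ref{F: 1}) at $x$) or $d(z_n(x), 1) \geq \eps$ (the ``drifted'' indices, which were near $1$ at $x_0$ but have moved past $\eps$ at $x$). For the drifted part, the generic choice of $\eps$ supplies a small gap $\alpha > 0$ with $d(z_n(x_0), 1) \leq \eps - \alpha$ for every $n \notin I$, so each drifting $n$ has displacement $|z_n(x) - z_n(x_0)| \geq \alpha$. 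Combining this with a further two-scale split of $I^c$ at a secondary threshold $\eps' < \eps$ (finitely many $n$ with $d(z_n(x_0), 1) \in [\eps', \eps]$ handled by individual continuity after a last shrinking of $U$, and the $l^1$-small tail with $d(z_n(x_0), 1) < \eps'$ controlled by (\ref{F: 1}) at $x_0$) yields drifted contribution $< \delta/4$. Summing the four pieces gives $d(S(x), S(x_0)) < \delta$, proving continuity.
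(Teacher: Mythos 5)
Your proposal uses the same ``split at scale $\eps$'' idea as the paper (near part uniformly small, far part finitely many and handled by pointwise continuity), but the two handle the far part differently and yours has a genuine gap in the drift control. The paper never introduces a ``drifted'' category at all: it invokes Lemma~\ref{L: rank on U is const} to find a neighborhood $W_0$ of $x_0$ on which the rank of $S(x)(\eps_0)$ is \emph{constant} (equal to some $N$). With the count pinned down, the near part $\bar S(x)(\eps_0)$ is uniformly $< \delta/3$ by~(\ref{F: 1}), the far part is a fixed list of $N$ continuous points handled by shrinking the neighborhood, and Lemma~\ref{L: d(S1+S2,T1+T2)<...} plus the triangle inequality finish. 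By contrast, you fix the index set $I$ once and for all from $x_0$ and then try to show that indices outside $I$ cannot contribute much after drifting across the threshold $\eps$. Your lower bound $|z_n(x) - z_n(x_0)| \geq \alpha$ on the displacement of each drifting index is correct, but it does not by itself bound the \emph{number} of drifted indices nor their total contribution $\sum d(z_n(x),1)$: hypothesis~(\ref{F: 1}) controls only the tail $\bar S(x)(\eps)$, i.e.\ the indices that \emph{stay} within $\eps$ of $1$ at $x$, and says nothing about those that have left. Your secondary-threshold two-scale split reduces the drifted set to those $n$ with $d(z_n(x_0),1) < \eps'$, but these indices each contribute at least $\eps$ to $d(B(x),\mathbf 1)$, and nothing in the argument forbids a large (depending on $x$) number of them: one needs precisely the constancy-of-rank statement, or some equivalent quantitative drift bound, to close the argument. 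As written, the final ``yields drifted contribution $< \delta/4$'' is asserted rather than proved, and this is the crux. The paper's use of Lemma~\ref{L: rank on U is const} (which itself rests on the reducing-open-set machinery) is what supplies the missing control; you should either import that lemma or prove a direct replacement for it.
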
 \margcom{OK}
\begin{proof}
%

Let $x_0 \in \clX.$ We show that $S$ is continuous at $x_0.$
Let $\delta > 0.$ From (\ref{F: 1}) it follows that there exists $\eps_0>0$ such that for any $x \in \clX$ and any $\eps\leq \eps_0$
\begin{equation} \label{F: 10}
  d(\bar S(x)(\eps),{\bf 1}) < \delta/3.
\end{equation}
Replacing $\eps_0$ by a slightly smaller number, if necessary, we can assume that for any $z \in S(x_0)(\eps_0)$
the distance between $z$ and $1$ is strictly larger than $\eps_0.$

By Lemma \ref{L: d(S1+S2,T1+T2)<...}, the triangle inequality and (\ref{F: 10}), for any $x \in \clX$
\begin{equation} \label{F: 11}
 \begin{split}
  d(S(x),S(x_0)) & \leq d(S(x)(\eps_0),S(x_0)(\eps_0)) + d(\bar S(x)(\eps_0), \bar S(x_0)(\eps_0))
  \\ & < d(S(x)(\eps_0),S(x_0)(\eps_0)) + 2\delta/3.
 \end{split}
\end{equation}
By Lemma \ref{L: rank on U is const} (clearly, we can assume that $\clX$ is connected), there exists a neighbourhood $W_0$ of $x_0$ and
there exists $N = N(\eps_0),$ such that for all $x \in W_0$ the number of elements of $S(x)(\eps_0)$ is equal to $N.$
For any element $z_j(x_0), \ j=1,2,\ldots,N,$ of $S(x_0)(\eps_0)$ there exists a neighbourhood $W_j$ of $x_0$ such that for all $x \in W_j$
$$
  \abs{z_j(x_0)-z_j(x)} < \frac{\delta}{3N}.
$$
It follows that for all $x \in W:=\bigcap_{j=0}^N W_j,$
$$
  d(S(x)(\eps_0), S(x_0)(\eps_0)) \leq \sum_{j=1}^N \abs{z_j(x) - z_j(x_0)} < \delta/3.
$$
Combining this with (\ref{F: 11}) we obtain that $d(S(x), S(x_0)) < \delta$ for any $x \in W.$
\end{proof}

\begin{cor} \label{C: subset of enumeration is continuous}
Let $\clX$ be a metric space.
If $\set{z_1, z_2, \ldots}^*$ is a continuous enumeration of a continuous function $S \colon \clX \to \euS_1(\mbT),$
then any rigged subset of $\set{z_1, z_2, \ldots}^*$ determines a continuous function $\clX \to \euS_1(\mbT)$ (which is clearly $\leq S$).
\end{cor}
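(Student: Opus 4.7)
The plan is to invoke Lemma \ref{L: cont-s f-ns imply cont-s set f-n} applied to the sub-enumeration. Write the given rigged subset as $T(x) := \set{z_j(x) \colon j \in I}^*$ for an index subset $I \subseteq \mbN$. Since $T(x) \leq S(x)$ for every $x$, we have $T(x) \in \euS_1(\mbT)$ at every point, and the functions $z_j$ with $j \in I$ are continuous by hypothesis. The heart of the matter is then to verify the uniform tail estimate (\ref{F: 1}) for the sub-enumeration.

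The key observation is a monotonicity of the distance to $\mathbf 1$: whenever $A \leq B$ in $\euS_1(\mbT)$, one has $d(A, \mathbf 1) \leq d(B, \mathbf 1)$. This is immediate from the definition of $d$, since any enumeration of $A$ extends to an enumeration of $B$ by adjoining the elements of $B - A$, each of which contributes a non-negative term $\abs{b-1}$ to the distance sum. Applying this with $A = \bar T(x)(\eps) \leq \bar S(x)(\eps) = B$ (which follows from $T(x) \leq S(x)$) gives
$$
   \sup_{x \in \clX} d(\bar T(x)(\eps), \mathbf 1) \;\leq\; \sup_{x \in \clX} d(\bar S(x)(\eps), \mathbf 1),
$$
so condition (\ref{F: 1}) for $T$ is inherited from the corresponding condition for $S$.

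The step I expect to be the main obstacle is establishing (\ref{F: 1}) for $S$ itself: a priori, continuity of $S$ with a continuous enumeration is only known to force the uniform tail condition locally. The clean case is when $\clX$ is compact, in which $S(\clX)$ is a compact subset of $\euS_1(\mbT)$ and Proposition \ref{P: if K is compact ...} gives (\ref{F: 1}) outright; for general $\clX$ one localises the argument of Lemma \ref{L: cont-s f-ns imply cont-s set f-n} at an arbitrary $x_0 \in \clX$, checking continuity of $T$ at $x_0$ by working on a neighbourhood whose image has the required uniform tail behaviour. Once (\ref{F: 1}) is available for $T$, Lemma \ref{L: cont-s f-ns imply cont-s set f-n} delivers continuity of $T \colon \clX \to \euS_1(\mbT)$, and the inequality $T \leq S$ is clear pointwise.
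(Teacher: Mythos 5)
Your proposal is correct and follows the intended route: apply Lemma \ref{L: cont-s f-ns imply cont-s set f-n} to the sub-enumeration, transferring the uniform tail condition (\ref{F: 1}) from $S$ to the rigged subset via the monotonicity of $d(\cdot,{\bf 1})$ under $\le$, with (\ref{F: 1}) for $S$ itself supplied (at least locally) by Proposition \ref{P: if K is compact ...}. Two small remarks. The monotonicity $d(A,{\bf 1})\le d(B,{\bf 1})$ for $A\le B$ is most transparently seen from the identity $d(A,{\bf 1})=\sum_a \mult(a;A)\,\abs{a-1}$; your phrasing about extending an enumeration of $A$ to one of $B$ points the inequality in the wrong direction a priori, although it lands correctly because for distance to ${\bf 1}$ every enumeration gives the same sum. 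Your observation that (\ref{F: 1}) is only available locally when $\clX$ is not compact is a genuine subtlety which the paper leaves implicit (in its applications $\clX$ is $[0,1]$ or $[0,1]^2$); since continuity is a local property, the localisation you sketch does close the argument — one obtains a locally uniform tail bound for $S$ near any $x_0$ directly from continuity of $S$ at $x_0$ by a short triangle-inequality argument, and the proof of Lemma \ref{L: cont-s f-ns imply cont-s set f-n} only ever uses (\ref{F: 1}) on a neighbourhood of $x_0$.
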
 \margcom{OK}

\subsection{Some lemmas}

\begin{lemma} \label{L: the sum is continuous}
If $\clX$ is a metric space and if \ $\tilde S \colon \clX \to \euS_1(\mbR)$ \ is a continuous function,
then the function $\clX \ni x \mapsto \sum \tilde S(x) \in \mbR$ is continuous.
\end{lemma}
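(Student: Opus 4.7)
The plan is to show that the summation functional $\Sigma \colon \euS_1(\mbR) \to \mbR$, defined by $\Sigma(S) := \sum_j s_j$ for any enumeration $(s_j)$ of $S$, is well-defined and $1$-Lipschitz. Continuity of $x \mapsto \sum \tilde S(x) = \Sigma(\tilde S(x))$ then follows at once by composition with the continuous $\tilde S$.

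First I would verify that $\Sigma(S)$ is well-defined for $S \in \euS_1(\mbR)$. Any enumeration of $\mathbf{0}$ is (up to reordering) the constant sequence $(0,0,\ldots)$, so for any enumeration $(s_j)$ of $S$ the quantity $\sum_j |s_j|$ is exactly the distance between enumerations of $S$ and $\mathbf{0}$. Since the zeros in the enumeration of $S$ contribute nothing, $\sum_j |s_j|$ equals the rearrangement-invariant quantity $\sum_{s \in \supp S} \mult(s)\,|s|$, which is independent of the enumeration and bounded in fact by $d(S,\mathbf{0}) < \infty$. Hence the series $\sum_j s_j$ converges absolutely and its sum does not depend on the chosen enumeration.

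Next I would establish the Lipschitz bound $|\Sigma(S)-\Sigma(T)| \leq d(S,T)$. Given $\eps > 0$, pick enumerations $(s_j)$ of $S$ and $(t_j)$ of $T$ with $\sum_j |s_j - t_j| < d(S,T) + \eps$. By the previous paragraph both $\sum s_j$ and $\sum t_j$ are absolutely convergent, so termwise subtraction is legitimate and
$$\left|\Sigma(S) - \Sigma(T)\right| = \left|\sum_j (s_j - t_j)\right| \leq \sum_j |s_j - t_j| < d(S,T) + \eps.$$
Letting $\eps \to 0$ gives the claimed estimate. Then $|\sum \tilde S(x) - \sum \tilde S(x_0)| \leq d(\tilde S(x),\tilde S(x_0)) \to 0$ as $x \to x_0$, proving continuity.

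There is no substantive obstacle in the argument; the only point requiring care is the absolute convergence of the enumeration-sum, which is however immediate from the very definition of $\euS_1(\mbR)$ via the requirement $d(S,\mathbf{0}) < \infty$. Once this is in place, both the well-definedness of $\Sigma$ and the termwise manipulation in the Lipschitz estimate are automatic.
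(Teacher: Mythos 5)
Your proof is correct and takes essentially the same route as the paper: both establish the $1$-Lipschitz bound $|\sum\tilde S(x)-\sum\tilde S(x_0)|\leq d(\tilde S(x),\tilde S(x_0))$ by comparing enumerations termwise and passing to the infimum. The only difference is that you spell out the well-definedness and absolute convergence of the sum, which the paper leaves implicit.
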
 \margcom{OK}
\begin{proof} If $f(x) = \sum \tilde S(x),$ then for any enumerations $(\theta_1(x),\theta_2(x),\ldots)$
and $(\theta_1(x_0),\theta_2(x_0),\ldots)$ of $\tilde S(x)$ and $\tilde S_0(x).$
$$
  \abs{f(x) - f(x_0)} \leq \abs{\sum \tilde S(x) - \sum \tilde S(x_0)} \leq \sum_{j=1}^\infty\abs{\theta_j(x) - \theta_j(x_0)}.
$$
It follows that $\abs{f(x) - f(x_0)} \leq d(\tilde S(x), \tilde S(x_0)).$ Hence, $f$ is continuous.
\end{proof}

This lemma shows that if $\tilde S \colon [0,1] \to \euS_1(\mbR)$ is continuous and $\set{\theta_j(x)}$
is a continuous enumeration of $\tilde S(x),$ then the function
$$
  \sum_{j=1}^\infty \abs{\theta_j(x)}
$$
must be continuous (since $|\tilde S|$ is also continuous), and, consequently, it also must be bounded.

\begin{lemma} \label{L: the sum for different ligtings are the same}
Let $\clX$ be an arc-wise connected metric space and let $x_0 \in \clX.$
If $\tilde S_1$ and $\tilde S_2$ are two different liftings of a continuous function $S \colon \clX \to \euS_1(\mbT),$
such that $\tilde S_1(x_0) = \tilde S_2(x_0) = 0,$
then for all $x \in \clX$ \margcom{OK2}
$$
  \sum \tilde S_1(x) = \sum \tilde S_2(x).
$$
\end{lemma}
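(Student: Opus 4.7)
My plan is to show that the difference
$$
  g(x) := \sum \tilde S_1(x) - \sum \tilde S_2(x)
$$
is a continuous function $\clX \to 2\pi\mbZ$ that vanishes at $x_0$; since $\clX$ is arc-connected and $2\pi\mbZ$ is discrete, this will force $g \equiv 0.$ Continuity of $g$ is immediate from Lemma \ref{L: the sum is continuous} applied to $\tilde S_1$ and $\tilde S_2$, and $g(x_0) = 0$ follows from the hypothesis $\tilde S_1(x_0) = \tilde S_2(x_0) = \mathbf 0.$

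The substantive step is thus the verification that $g(x) \in 2\pi\mbZ$ at each fixed $x.$ Since $\tilde S_1$ and $\tilde S_2$ are both liftings of $S,$ i.e.\ $p \circ \tilde S_i = S,$ the total multiplicity of every fiber $p^{-1}(s) \subset \mbR$ in $\tilde S_1(x)$ equals that in $\tilde S_2(x)$ (namely $\mult(s; S(x))$). This allows me to choose enumerations $(\tilde s_{1,j})$ of $\tilde S_1(x)$ and $(\tilde s_{2,j})$ of $\tilde S_2(x)$ with $p(\tilde s_{1,j}) = p(\tilde s_{2,j})$ for every $j,$ in which case $\tilde s_{1,j} - \tilde s_{2,j} = 2\pi k_j$ for some $k_j \in \mbZ.$ By the remark following Lemma \ref{L: the sum is continuous}, each $\sum_j \abs{\tilde s_{i,j}}$ is finite; hence
$$
  2\pi \sum_j \abs{k_j} \leq \sum_j \abs{\tilde s_{1,j}} + \sum_j \abs{\tilde s_{2,j}} < \infty,
$$
which for integer $k_j$ forces $k_j = 0$ for all but finitely many $j.$ Consequently $g(x) = 2\pi \sum_j k_j \in 2\pi\mbZ.$

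The main bookkeeping obstacle is the construction of a fiber-preserving pair of enumerations, in particular over $p^{-1}(1) = 2\pi \mbZ$ where both $\tilde S_i(x)$ carry $0$ with multiplicity $\infty$ alongside only finitely many non-zero lifts of $1$ (the finiteness being a consequence of $\tilde S_i(x) \in \euS_1(\mbR),$ which precludes a divergent $\sum_{n \neq 0} 2\pi\abs{n}\,\mult_i(2\pi n)$). Over $p^{-1}(1)$ I would first match the finitely many non-zero lifts present in $\tilde S_1(x)$ with excess copies of $0$ in $\tilde S_2(x)$ (and symmetrically), and then match the remaining copies of $0$ in each; over fibers $p^{-1}(s)$ with $s \neq 1$ the total multiplicity is finite and any bijective pairing suffices.

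With $g$ continuous, $2\pi\mbZ$-valued, and vanishing at $x_0,$ arc-connectedness of $\clX$ gives $g \equiv 0,$ which is the desired equality.
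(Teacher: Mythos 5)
Your proof is correct and follows essentially the same route as the paper's: set $g := \sum\tilde S_1 - \sum\tilde S_2$, observe that $g$ is continuous (via Lemma \ref{L: the sum is continuous}), $2\pi\mbZ$-valued, and vanishes at $x_0$, and conclude from (arc-)connectedness. The paper compresses the $2\pi\mbZ$-valuedness step to the single assertion ``since $p\circ f_1 = p\circ f_2$'' and then restricts to a path $x_r$ to invoke discreteness, whereas you actually justify that assertion by building a fiber-preserving pair of enumerations and using absolute summability to show only finitely many of the integer offsets $k_j$ are nonzero — this is worthwhile detail the paper leaves implicit. One small imprecision: the finiteness of $\sum_j|\tilde s_{i,j}|$ for a fixed $x$ follows simply from $\tilde S_i(x)\in\euS_1(\mbR)$ (i.e.\ $d(\tilde S_i(x),\mathbf 0)<\infty$), rather than from the remark after Lemma \ref{L: the sum is continuous}, which concerns boundedness of the sum as $x$ varies over a continuous enumeration; the fixed-$x$ fact is what you actually need.
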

\begin{proof} Let $f_j(x) = \sum \tilde S_j(x),$ $j=1,2.$ By Lemma \ref{L: the sum is continuous}, functions $f_1$ and $f_2$
are continuous; also, $f_1(x_0) = f_2(x_0).$

Let $x_1 \in \clX$ and let $x_r, r \in [0,1],$ be a continuous path which connects $x_0$ and $x_1.$
Continuous functions $f_1(x_r)$ and $f_2(x_r)$ must differ by an integer multiple of $2\pi,$ since $p\circ f_1 = p \circ f_2.$
It follows that $f_1(x_r) = f_2(x_r)$ for all $r \in [0,1],$ and in particular, $f_1(x_1) = f_2(x_1).$
\end{proof}

\begin{lemma} \label{L: sum theta converges uniformly}
Let $\clX$ be a compact metric space.
Let $\tilde S \colon \clX \to \euS_1(\mbR)$ be a continuous function.
If $\theta_1, \theta_2, \ldots$ is a continuous enumeration of $\tilde S,$
then the series
$$
  \sum_{j=1}^\infty \theta_j(x)
$$
is uniformly convergent.
\end{lemma}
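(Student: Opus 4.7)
The argument is of Dini type. The first step is to identify the tail of the series with a continuous distance. For any $\tilde S \in \euS_1(\mbR)$ and any enumeration $(\theta_j)$ of $\tilde S$, one has
\[
  d(\tilde S, 0) \;=\; \sum_{j=1}^\infty \abs{\theta_j},
\]
because on the right-hand side we are summing non-negative reals and any two enumerations of $\tilde S$ are rearrangements of a single absolutely convergent series, so the infimum in (\ref{F: d(S,T) def of}) is attained and is independent of the enumeration. Set $\sigma(x):=\sum_{j=1}^\infty \abs{\theta_j(x)} = d(\tilde S(x),0)$. Continuity of $\sigma$ on $\clX$ then follows from continuity of $\tilde S$ together with continuity of the metric $d$ (or, alternatively, by applying Lemma \ref{L: the sum is continuous} to the continuous function $\abs{\tilde S}$).

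Second, for each fixed $N$ the partial sum $\sigma_N(x) := \sum_{j=1}^N \abs{\theta_j(x)}$ is continuous on $\clX$, being a finite sum of continuous functions $\abs{\theta_j}$. The sequence $(\sigma_N)_{N\geq 1}$ is pointwise non-decreasing and converges pointwise to the continuous function $\sigma$ on the compact space $\clX$. By Dini's theorem the convergence $\sigma_N\to\sigma$ is uniform, i.e.
\[
  \sup_{x\in\clX}\,\sum_{j>N}\abs{\theta_j(x)} \;\longrightarrow\; 0 \quad \text{as } N\to\infty.
\]

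Third, the inequality $\bigl|\sum_{j>N}\theta_j(x)\bigr|\leq \sum_{j>N}\abs{\theta_j(x)}$ upgrades uniform absolute convergence to uniform convergence of $\sum_{j}\theta_j(x)$, which is the claim. The only step that is not pure bookkeeping is the identification $\sigma(x)=d(\tilde S(x),0)$; this rests on the special structure of $\euS_1(\mbR)$, where the distance to $0$ coincides with a plain $\ell_1$-sum, so it cannot be replaced by a direct analogue in $\euS_1(\mbT)$ — but for the present $\mbR$-valued lemma it goes through cleanly, after which Dini's theorem does all the work.
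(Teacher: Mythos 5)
Your proof is correct and takes essentially the same Dini-type approach as the paper: you reduce to monotone partial sums and apply Dini's theorem on the compact space $\clX$, using continuity of the limit. The only cosmetic difference is that you work with $|\theta_j|$ directly and invoke Dini once, whereas the paper splits the enumeration into positive and negative parts $\theta_j^\pm$ and applies a Dini-type lemma (citing \cite[Lemma 6.13]{Az3v4}) to each part separately.
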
 \margcom{OK}
\begin{proof} (A) Assume first that $\tilde S \geq 0.$
In this case, the sequence of partial sums of the above series
is increasing, consists of continuous functions and converges to a continuous function (by Lemma \ref{L: the sum is continuous}).
By \cite[Lemma 6.13]{Az3v4} (in that lemma $\clX =[0,1],$ but this is not essential for its proof),
it follows that the series converges uniformly.

(B) A continuous enumeration $\theta_1, \theta_2, \ldots$ of $\tilde S$ can be replaced by the continuous enumeration
$\theta_1^+,\theta_{1}^-,\theta_{2}^+,\theta_{2}^-,\ldots$ of $\tilde S,$
where $\theta_j^+ = \max(\theta_j,0)$ and $\theta_j^- = \min(\theta_j,0).$
 By (A), for enumerations $\theta_{1}^\pm,\theta_{2}^\pm,\theta_{3}^\pm,\ldots$ of $\tilde S_\pm$
(which also belong to $\euS_1(\mbT),$ by Lemma \ref{L: S(pm) belong to euS(R)}),
the series converges uniformly. It follows that the given series converges uniformly.
\end{proof}

\section{Selection Theorem}
The aim of this section is to prove the following selection theorem
(to the best of my knowledge, this theorem is new).
As is noted in \cite[Remark VII.3.11]{Kato}, these kind of theorems, despite of being intuitively obvious,
are non-trivial, --- even in the case of spectra of a holomorphic family of compact operators.
\begin{thm} \label{T: Selection Thm}
Let \ $S \colon [0,1] \to \euS_1(\mbT)$ \ be a continuous path, such that $S(0) = {\bf 1}.$
There exists a sequence of continuous functions
$$
  z_j\colon [0,1] \to \mbT, \ \ j=1,2,\ldots
$$
such that $z_j(0) = 1$ for all $j=1,2,\ldots$ and the sequence $(z_1(r), z_2(r), \ldots)$ is an enumeration of $S(r)$ for every $r \in [0,1].$
\end{thm}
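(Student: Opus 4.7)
The plan reduces the theorem to a question about continuous $\euS_1(\mbR)$-valued paths, where the linear order on $\mbR$ makes continuous enumeration automatic. The program has three steps: (i) construct a continuous lift of $S$ through the projection $p\colon\euS_1(\mbR)\to\euS_1(\mbT),$ (ii) enumerate the lifted path continuously in $\euS_1(\mbR),$ (iii) exponentiate to return to $\mbT.$

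For step (i), the target is a continuous $\tilde S\colon[0,1]\to\euS_1(\mbR)$ with $p\circ\tilde S=S$ and $\tilde S(0)={\bf 0}.$ By uniform continuity of $S$ on the compact interval $[0,1],$ I would fix a partition $0=r_0<r_1<\ldots<r_N=1$ fine enough that on each $[r_{k-1},r_k]$ the image $S([r_{k-1},r_k])$ stays in a small metric ball around $S(r_{k-1}).$ Setting $\tilde S(0):={\bf 0},$ one extends the lift inductively across each subinterval using local inverses of $p$: near any already-lifted element $\tilde s\in\tilde S(r_{k-1}),$ the map $p$ is a local homeomorphism (a local branch of $-i\log$), so the nearby elements of $S(r)$ admit unique nearby lifts. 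For the part of $S(r)$ close to~$1,$ the principal branch of $\log$ lifts it into a small neighborhood of $0,$ with uniform control across $r$ supplied by Proposition~\ref{P: if K is compact ...} applied to the compact image $S([0,1]).$

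For step (ii), given the lift, Lemma~\ref{L: S(pm) belong to euS(R)} together with the lemma that $\abs{S},S_+,S_-$ are continuous whenever $S\colon[0,1]\to\euS_1(\mbR)$ is continuous with $S(0)={\bf 0}$ imply that $\tilde S_\pm\colon[0,1]\to\euS_1(\mbR_\pm)$ are both continuous. By part~(A) of the proof of Lemma~\ref{L: euS(mbR) is complete}, the metric on $\euS_1(\mbR_+)$ equals the $\ell_1$-distance between non-increasing enumerations, so the non-increasing enumeration $\theta_1^+(r)\geq\theta_2^+(r)\geq\ldots\geq 0$ of $\tilde S_+(r)$ is continuous coordinate-wise, and likewise for the non-decreasing enumeration of $\tilde S_-.$ Interleaving these gives a continuous enumeration $\theta_1(r),\theta_2(r),\ldots$ of $\tilde S(r)$ with $\theta_j(0)=0.$ Step~(iii) is then immediate: setting $z_j(r):=e^{i\theta_j(r)}$ yields continuous functions with $z_j(0)=1$ whose values enumerate $S(r)=p(\tilde S(r)).$

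The hard part is step (i). The danger is an outer eigenvalue approaching the branch cut $-1\in\mbT,$ or two branches colliding, at which point local inverses of $p$ may fail to glue. My approach is to split each $S(r_{k-1})$ into its outer finite-rank part $S(r_{k-1})(\eps)$ (whose elements are bounded away from~$1$ and, thanks to Lemma~\ref{L: rank on U is const}, have locally constant multiplicity) and the inner tail $\bar S(r_{k-1})(\eps),$ which is uniformly close to~${\bf 1}$ by Proposition~\ref{P: if K is compact ...}. The inner tail is lifted by a single branch near $0\in\mbR;$ the outer part is lifted element by element using separate branches chosen adaptively to avoid any cut during each subinterval. Refining the partition and shrinking $\eps$ as necessary makes the two decompositions compatible across successive intervals and produces a globally continuous lift.
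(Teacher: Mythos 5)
Your overall strategy---lift $S$ through $p\colon\euS_1(\mbR)\to\euS_1(\mbT)$, enumerate the lift via the $\pm$ decomposition and monotone enumeration, then project back---is the same as the paper's, and steps~(ii) and~(iii) are essentially what the paper does (compare Lemma~\ref{L: continuous sections of path S}). But step~(i) has a genuine gap, and it sits precisely at the point you flag as "the hard part."

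The problem is that your inner/outer decomposition is defined in terms of $S$-distance from $1$ (the split into $S(r_{k-1})(\eps)$ and $\bar S(r_{k-1})(\eps)$), whereas to glue local inverses continuously you must split according to the \emph{lift arguments}. Consider a point of the $S$-path that leaves $1$, passes through $-1$, and returns toward $1$ from the other side: its lift argument grows from $0$ past $\pi$ to values near $2\pi$. When that point re-enters your ``inner tail'' region $\bar S(r_{k-1})(\eps)$, its correct lift argument is close to $2\pi$, but the principal branch of $\log$ assigns it an argument close to $0$---a jump of $2\pi$ that no amount of refining the partition or shrinking $\eps$ will cure, since it is a feature of the path and not of the mesh. (Such winding genuinely occurs: it is exactly what makes $\pi_1\brs{\euS_1(\mbT)}=\mbZ$ nontrivial, cf.~Corollary~\ref{C: fundamental group of euS(T)}.) The paper avoids this by defining the outer/finite part $Y_2$ in terms of the lift $\tilde S_0$, not $S(x_0)$ (it consists of lift-points with $\abs{\theta}\geq\pi/2$ not lying in $2\pi\mbZ$, together with their $2\pi$-translates; see Lemma~\ref{L: local lift exists}), and by replacing the principal branch on the inner part with the ``standard lifting'' of Definition~\ref{D: standard lifting}, whose support is constrained to $\brs{-\pi/2,\pi/2}\cup 2\pi\mbZ$ and which treats $2\pi\mbZ$ as absorbing. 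With that absorbing behaviour, a point arriving at $2\pi k$ is frozen there and subsequent motion is carried by fresh copies of $0$ (which is available because $0$ has infinite multiplicity in $\euS_1(\mbR)$); this is what makes the prolongation across the transition continuous (Lemma~\ref{L: prolongation of right half-circle paths}). To repair your argument you would need to (a) base the finite/outer part on the arguments of $\tilde S(r_{k-1})$ rather than on $S(r_{k-1})$, and (b) replace the principal branch on the inner part by a lift with the absorbing property at $2\pi\mbZ$. After that, the finite-cover induction you sketch can be made to work, though the paper's open-closed argument on the set of $r_0$ for which a lift on $[0,r_0]$ exists achieves the same end with the uniformity coming for free from compactness of $[0,1]$ rather than from an explicit partition.
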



The idea of the proof is to lift the function $S \colon [0,1] \to \euS_1(\mbT)$ to $\euS_1(\mbR),$
find a continuous enumeration of the lifting and then to project it back to $\euS_1(\mbT).$

The following show, that once it is shown that this lifting exists, then
it is a simple matter to find a continuous enumeration of the lifting.

\begin{lemma} \label{L: continuous sections of path S}
Let $\clX$ be a metric space and let \ $\tilde S \colon \clX \to \euS_1(\mbR)$ \ be a continuous function.
Let
$$
  \theta_1^+(x) = \sup \tilde S(x), \ \ldots, \theta_n^+(x) = \sup \set{\tilde S(x) - \set{\theta_1^+(x), \ldots, \theta_{n-1}^+(x)}^*}, \ \ldots,
$$
and
$$
  \theta_{1}^-(x) = \sup \tilde S(x), \ \ldots, \theta_{n}^-(x) = \sup \set{\tilde S(x) - \set{\theta_1^-(x), \ldots, \theta_{n-1}^-(x)}^*}, \ \ldots.
$$
All functions $\theta_1^+, \theta_1^-, \theta_2^+, \theta_2^-, \ldots$ are continuous and the rigged set
$$
  \set{\theta_1^+(x),\theta_1^-(x),\theta_2^+(x),\theta_2^-(x),\ldots}^*+{\bf 0}
$$ coincides with $S(x).$
\end{lemma}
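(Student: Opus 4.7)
The plan is first to read the definition of $\theta_n^-$ with $\sup$ replaced by $\inf$ (the statement as written appears to contain a typographical error, for otherwise $\theta_n^-$ would coincide with $\theta_n^+$), so that $\theta_n^+(x)$ and $\theta_n^-(x)$ denote the $n$-th largest and $n$-th smallest elements of $\tilde S(x)$ respectively. Since $\tilde S(x)\in\euS_1(\mbR)$ has $0$ as its only accumulation point, every strictly positive and every strictly negative element is isolated, so each successive extremum is actually attained. By Lemma~\ref{L: S(pm) belong to euS(R)}, the positive and negative parts $\tilde S_\pm\colon\clX\to\euS_1(\mbR)$ are continuous, and the two halves of the statement are symmetric; it therefore suffices to prove continuity of $\theta_n^+$ for a non-negative continuous function $\tilde S_+$, and then apply the same reasoning to $-\tilde S_-$ in order to handle $\theta_n^-$.

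The key ingredient is the fact, already used in the proof of Lemma~\ref{L: euS(mbR) is complete}(A), that for $S,T\in\euS_1(\mbR_+)$ one has $d(S,T)=\sum_j|s_j-t_j|$, where $(s_j)$ and $(t_j)$ are the non-increasing enumerations of $S$ and $T$ (padded with trailing zeros, whose infinite multiplicity is built into the definition of $\euS_\infty$). In particular $|s_n-t_n|\leq d(S,T)$ for every $n$. Since $\theta_n^+(x)$ is precisely the $n$-th entry of the non-increasing enumeration of $\tilde S_+(x)$ (equal to $0$ whenever $\tilde S_+(x)$ has fewer than $n$ strictly positive elements), continuity of $\tilde S_+$ gives
\begin{equation*}
  |\theta_n^+(x)-\theta_n^+(x_0)|\leq d(\tilde S_+(x),\tilde S_+(x_0))\to 0
\end{equation*}
as $x\to x_0$, which is the desired continuity. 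Applying the same reasoning to $-\tilde S_-$ handles $\theta_n^-$.

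For the rigged-set identity, I observe that by construction $\set{\theta_1^+(x),\theta_2^+(x),\ldots}^*$ enumerates every strictly positive element of $\tilde S(x)$ with its full multiplicity, $\set{\theta_1^-(x),\theta_2^-(x),\ldots}^*$ enumerates the strictly negative part symmetrically, and adding ${\bf 0}$ restores the infinite multiplicity of $0$ required by membership in $\euS_\infty(\mbR)$; hence the indicated rigged sum coincides with $\tilde S(x)$. The main obstacle is the optimality of the non-increasing enumeration in $\euS_1(\mbR_+)$, which is the half-line analogue of Lemma~\ref{L: d(S,T)=dist(S theta,T theta)}. On a half-line, however, the arrow-swapping argument collapses to the elementary rearrangement inequality $|a-c|+|b-d|\leq|a-d|+|b-c|$ whenever $a\leq b$ and $c\leq d$, so beyond this elementary observation (already implicit in the proof of Lemma~\ref{L: euS(mbR) is complete}(A)) no new difficulty arises.
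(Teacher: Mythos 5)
Your proposal is correct, and you are right that the printed statement contains a typo ($\theta_n^-$ should be defined with $\inf$ in place of $\sup$; the paper's own proof clearly treats $\theta_n^-$ as enumerating the negative part). The route, however, differs from the paper's. The paper establishes only the bound $\abs{\theta_1^+(x_1)-\theta_1^+(x_2)}\leq d(\tilde S(x_1),\tilde S(x_2))$ and then proceeds by induction: once $\theta_1^+,\ldots,\theta_{n-1}^+$ are known to be continuous, Lemma~\ref{L: difference is continuous} (which in turn rests on Theorem~\ref{T: important estimate}) gives continuity of $x\mapsto\tilde S(x)-\set{\theta_1^+(x),\ldots,\theta_{n-1}^+(x)}^*$, and one takes the sup again. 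You instead split off $\tilde S_\pm$ via Lemma~\ref{L: S(pm) belong to euS(R)} and use the fact that on $\euS_1(\mbR_+)$ the metric is the $\ell_1$-distance between sorted enumerations, which makes \emph{every} $\theta_n^\pm$ simultaneously $1$-Lipschitz with no induction and no appeal to the important estimate. Your version is cleaner and makes the uniform Lipschitz bound explicit, at the cost of leaning on the sorted-enumeration characterization, which the paper itself only asserts in passing (in the proof of Lemma~\ref{L: euS(mbR) is complete}(A)) rather than proves; your rearrangement-inequality remark fills exactly that gap. Both proofs arrive at the same $1$-Lipschitz estimate and identical rigged-set bookkeeping for the final identity.
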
 \margcom{OK}
\begin{proof}
Plainly, the rigged set $\set{\theta_1^+(x),\theta_2^+(x),\ldots}^*$ coincides with positive part of $\tilde S(x)$ (up to $\set{0}$);
similarly, the rigged set $\set{\theta_1^-(x),\theta_2^-(x),\ldots}^*$ coincides with negative part of $\tilde S(x)$ (up to $\set{0}$).
It is not difficult to see that for any $x_1, x_2 \in \clX,$
$$
  \abs{\theta_1^+(x_1) - \theta_1^+(x_2)} \leq d(\tilde S(x_1),\tilde S(x_2)),
$$
so that $\theta_1^+$ is continuous. It follows from this and Lemma \ref{L: difference is continuous}, that $\theta_2^+$ is continuous, etc.
Similarly, $\theta_1^-, \theta_2^-, \ldots$ are also continuous.
\end{proof}

The next two subsections will present some necessary preparatory material essential for the proof of Theorem \ref{T: Selection Thm}.

\subsection{Reducing open sets}

We introduce the following definition for convenience.
\begin{defn} Let $\clX$ be a metric space and let $S \colon \clX \to \euS_1(\mbT)$ or $\euS_1(\mbR)$ be continuous.
We say that an open subset $U_1$ of $\mbT$ (or $\mbR$) is \emph{reducing} for $S$ \emph{on a subset} $K$ of $\clX,$ if there exists another open subset $U_2$
such that the distance between $U_1$ and $U_2$ is positive and $\supp S(x) \subset U_1 \cup U_2$ for all $x \in K.$
If $K=\clX$ then we say that $U_1$ is \emph{reducing} for~$S.$

If two open sets $U_1$ and $U_2$ satisfy the above conditions then we say that $U_1$ and $U_2$ is a pair of reducing open sets.

If $U_1$ is reducing for $S,$ then \emph{restriction} of $S$ to $U_1$ is the function $S\big|_{U_1} \colon \clX \to \euS_1(\mbT)$ (or $\euS_1(\mbT)$)
defined by the formula
\begin{center}
  $(S\big|_ {U_1})(x) := S(x) \cap U_1.$
\end{center}
\end{defn}

For a pair of reducing open subsets $U_1$ and $U_2$ of $\mbT$ we have
$$
  S = S \big|_{U_1} + S \big|_{U_2}.
$$

The following lemmas are intuitively obvious and their proofs are elementary. The reader may wish to skip them.

\begin{lemma} \label{L: continuity of restriction} Let $\clX$ be a metric space, let $S \colon \clX \to \euS_1(\mbT)$ or $\euS_1(\mbR)$
be continuous. If an open subset $U$ of $\mbT$ (or $\mbR$) is reducing, then $S\big|_U$ is continuous.
\end{lemma}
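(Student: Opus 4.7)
The plan uses the positive gap $\delta := \dist(U_1, U_2) > 0$ built into the definition of a reducing set, which will force near-optimal pairings of enumerations to respect the decomposition $\supp S \subset U_1 \cup U_2.$ Given $x_n \to x$ in $\clX,$ continuity of $S$ yields $d(S(x_n), S(x)) \to 0,$ and for each $n$ I pick enumerations $(s_j)$ of $S(x)$ and $(s_j^n)$ of $S(x_n)$ with
\begin{equation*}
  \sum_{j=1}^\infty |s_j - s_j^n| \leq d(S(x), S(x_n)) + 1/n.
\end{equation*}
Since $x_0$ has infinite multiplicity in $S(x)$ and $\supp S(x) \subset U_1 \cup U_2$ with $U_1, U_2$ disjoint (as $\dist(U_1,U_2)>0$), the point $x_0$ must lie in exactly one of them; without loss of generality $x_0 \in U_1.$

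By a triangle-inequality swap (if an enumeration contains a pair $(s_j, x_0)$ and a pair $(x_0, s_k^n)$ with $s_j, s_k^n \neq x_0,$ replacing them by $(s_j, s_k^n)$ and $(x_0, x_0)$ only decreases the total sum) one can assume that all but finitely many pairs are $(x_0, x_0)$-pairs, contributing nothing. The remaining finitely many pairs involve non-$x_0$ elements lying in $U_1 \cup U_2.$ A pair whose two coordinates lie on opposite sides contributes at least $\delta$ to the sum, and the same is true of a pair $(x_0, s_k^n)$ with $s_k^n \in U_2,$ since $x_0 \in U_1.$ For $n$ large enough that $d(S(x), S(x_n)) + 1/n < \delta$ no such crossing pair can occur, and hence every pair is either entirely in $U_1$ (the $(x_0,x_0)$-pairs included) or entirely in $U_2.$

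Extracting the subsequence of pairs whose coordinates both lie in $U_1$ produces enumerations of $\bigl(S\big|_{U_1}\bigr)(x)$ and $\bigl(S\big|_{U_1}\bigr)(x_n),$ each carrying $x_0$ with the required infinite multiplicity. The distance between these sub-enumerations is bounded above by the full sum, so
\begin{equation*}
  d\bigl(S\big|_{U_1}(x),\, S\big|_{U_1}(x_n)\bigr) \leq \sum_{j=1}^\infty |s_j - s_j^n| \leq d(S(x), S(x_n)) + 1/n \longrightarrow 0
\end{equation*}
as $n \to \infty,$ which gives continuity of $S\big|_{U_1}$ at the arbitrary point $x.$ The argument is word-for-word identical in the $\euS_1(\mbR)$ case (with $x_0 = 0$).

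The main obstacle is the bookkeeping around the accumulation point $x_0,$ which has infinite multiplicity on both sides of each enumeration: one must be sure that the infinitely many $x_0$-copies can be efficiently paired among themselves, and that after extracting the $U_1$-pairs the resulting subsequence genuinely enumerates $S\big|_{U_1}$ with the correct $\infty$-multiplicity at $x_0.$ Once the placement of $x_0$ in $U_1$ (rather than $U_2$) is settled, these reduce to routine triangle-inequality swaps, after which the gap $\delta$ does all the real work.
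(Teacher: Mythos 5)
Your proof is essentially correct, and it is a cleaner and more uniform route than the one taken in the paper. The paper's proof is asymmetric: it first treats the side of the split away from the accumulation point (where $S(x_0)\cap U$ is finite), showing via the gap $\eps=\dist(U,U_2)$ that no crossings occur, hence cardinalities match and the restricted rigged sets are $\eps$-close; for the other side it then appeals to Theorem~\ref{T: important estimate} (the subtraction estimate $d(S-S_1,T-T_1)\leq d(S_1,T_1)+d(S,T)$) and writes $S\big|_{U} = S - S\big|_{U_2}.$ Your extraction argument makes that detour unnecessary: once crossings are ruled out, the $U_1$-side sub-enumeration is a valid pair of enumerations of the restrictions (including the $(x_0,x_0)$-pairs, which supply the required infinite multiplicity at $x_0$), and its cost is dominated by the full cost. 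That bound applies equally well whether $U_1$ is the infinite side or the finite side, so no separate lemma is needed.

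Two small points. First, the intermediate claim that after the triangle swap ``all but finitely many pairs are $(x_0,x_0)$-pairs'' is false: both $S(x)\cap U_1$ and $S(x_n)\cap U_1$ can contain infinitely many non-$x_0$ points accumulating at $x_0,$ and those give infinitely many non-trivial $U_1$-side pairs. Fortunately the claim is unused; the real work is done by the observation that any crossing pair (including any pair with one coordinate $x_0\in U_1$ and the other in $U_2$) contributes at least $\delta,$ so once the total cost is below $\delta$ there are none --- and this requires no finiteness. Second, you only state the conclusion for $S\big|_{U_1}$ with $x_0\in U_1;$ if the lemma is invoked with $U=U_2,$ the same extraction applied to the $U_2$-side pairs, with $(x_0,x_0)$-pairs re-appended so the result lies in $\euS_1,$ gives the required bound. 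You should say this explicitly rather than leaving it to the reader.
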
 \margcom{OK}
\begin{proof}
Let $\eps>0$ be the distance between $U$ and some other open set $U_2$ such that $\supp S \subset U\cup U_2.$
Let $x_0 \in \clX.$
One of the sets $U$ or $U_2$ contain $1$ (or $0$). It follows that one of the rigged sets $S(x_0) \cap U$ and $S(x_0) \cap U_2$
is finite. Assume first, that $S(x_0) \cap U$ is finite.
Since $S$ is continuous, there exists a neighbourhood $W$ of $x_0,$ such that
$$
  d(S(x),S(x_0)) < \eps
$$
for all $x$ from $W.$
This means that for all $x \in W$ there exist enumerations of $S(x_0)$ and $S(x)$ such that the distance between enumerations is less than $\eps.$
It follows from $d(S(x),S(x_0)) < \eps$ that for all $x \in W$ the number of elements in $S(x) \cap U$ must be the same as that of $S(x_0) \cap U$
and that the distance between $S(x_0) \cap U$ and $S(x) \cap U$ must be less $\eps.$ The last means continuity of $S\big|_U.$

Now, if it is the rigged set $S(x_0) \cap U_2$ which is finite, then by the above the function $S\big|_{U_2}$ is continuous.
It follows from Theorem \ref{T: important estimate} that $S\big|_{U} = S - S\big|_{U_2}$ is also continuous.
\end{proof}

Given two rigged sets $A$ and $B$ such that $\supp A \cap \supp B = \emptyset,$ we write $A \sqcup B$ instead of $A+B.$

\begin{lemma} \label{L: X1 and X2} Let $\clX$ be a metric space and let $f \colon \clX \to \euS_1(\mbT)$ be a continuous function.
Let $x_0 \in \clX$ and let $f(x_0) = X_1 \sqcup X_2,$ where $X_2$ is finite.
Then there exists a neighbourhood $W$ of the point $x_0,$ and there exist open sets $U_1 \supset \supp X_1,$
$U_2 \supset \supp X_2,$ such that the pair $U_1$ and $U_2$ is reducing for $f$ on $W.$
\end{lemma}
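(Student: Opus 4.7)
The plan is to separate $\supp X_1$ from $\supp X_2$ by disjoint open neighbourhoods of fixed size, and then use continuity of $f$ to propagate this separation to a neighbourhood of~$x_0$.

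First I would pin down where the point $1$ lives. Since $\sqcup$ denotes a sum with disjoint supports, we have $\mult(1;X_1)+\mult(1;X_2)=\mult(1;f(x_0))=\infty$, so infinite multiplicity at $1$ is concentrated in exactly one summand; as the rank of $X_2$ is finite, this forces $1\in\supp X_1$ (and $1\notin\supp X_2$). Because $f(x_0)\in\euS_1(\mbT)$, the only possible accumulation point of $\supp f(x_0)$ is $1$; hence $\supp X_1$ contains all of its accumulation points and is closed in $\mbT$. Now $\supp X_2$ is finite, hence compact, and disjoint from the closed set $\supp X_1$, so $\rho:=\dist(\supp X_1,\supp X_2)>0$.

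Next I would set $U_i:=\{z\in\mbT:\dist(z,\supp X_i)<\rho/3\}$ for $i=1,2$. These are open, contain $\supp X_i$ respectively, and $\dist(U_1,U_2)\geq\rho/3>0$. The complement $K:=\mbT\setminus(U_1\cup U_2)$ is compact and disjoint from $\supp f(x_0)=\supp X_1\cup\supp X_2$; crucially, $1\in\supp X_1\subset U_1$, so $K$ is also bounded away from $1$. Therefore $\eta:=\dist(K,\supp f(x_0))>0$. By continuity of $f$ at $x_0$, pick an open neighbourhood $W$ of $x_0$ with $d(f(x),f(x_0))<\eta/2$ for all $x\in W$.

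It remains to verify $\supp f(x)\subset U_1\cup U_2$ for $x\in W$, which will show that $U_1,U_2$ is a reducing pair on $W$. Suppose for contradiction that some $z\in\supp f(x)\cap K$ exists. Then $z$ appears at some index $j_0$ in every enumeration $(s_j)$ of $f(x)$. In any paired enumeration $(t_j)$ of $f(x_0)$, the partner $t_{j_0}$ is an element of $\supp f(x_0)$ (possibly a copy of $1$, but $1\in\supp f(x_0)$), so $|s_{j_0}-t_{j_0}|\geq\dist(z,\supp f(x_0))\geq\eta$. Consequently the distance between any two enumerations of $f(x)$ and $f(x_0)$ is at least $\eta$, contradicting $d(f(x),f(x_0))<\eta/2$.

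The only subtle point is the bookkeeping at $1$: because $1$ has infinite multiplicity in $f(x_0)$, one might fear that a ``stray'' point $z$ of $f(x)$ near $K$ can be matched with a copy of $1$ for free. This is exactly why it is essential to arrange $1\in U_1$, so that pairing a point of $K$ with $1$ still costs at least $\eta$; the placement $1\in\supp X_1$ (forced by $X_2$ being finite) makes this work. Everything else is a routine compactness argument.
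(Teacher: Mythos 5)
Your proof is correct and follows the same basic strategy as the paper's: choose open neighbourhoods $U_1\supset\supp X_1$, $U_2\supset\supp X_2$ at positive distance from each other, then use continuity of $f$ to push the inclusion $\supp f(x)\subset U_1\cup U_2$ to a neighbourhood $W$ of $x_0$. The paper states this in two sentences and leaves the two nontrivial points implicit; you make both explicit: why $\dist(\supp X_1,\supp X_2)>0$ at all (this requires the observation, which you correctly deduce, that $1\in\supp X_1$ so that $\supp X_1$ is closed while $\supp X_2$ is compact), and why the metric $d$ forces $\supp f(x)\subset U_1\cup U_2$ once $d(f(x),f(x_0))$ is small (your compactness-of-$K$ and enumeration-pairing argument). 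Both fills are sound, so this is essentially the paper's proof with the gaps closed.
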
 \margcom{OK}
\begin{proof} Let $U_1$ and $U_2$ be a pair of open sets, such that $U_1 \supset \supp X_1$ and $U_2 \supset \supp X_2,$
and such that the distance between $U_1$ and $U_2$ is positive. Since $\supp f(x_0) \subset U_1 \cup U_2$ and since $f$ is continuous, there exists a neighbourhood $W$ of the point $x_0,$
such that for any $x \in W$ the inclusion $f(x) \subset U_1 \cup U_2$ holds. So, the pair $U_1$ and $U_2$ is reducing for $f$ on $W.$
\end{proof}

\begin{lemma} \label{L: continuous liftings of reduced parts} Let $\clX$ be a metric space and let $S \colon \clX \to \euS_1(\mbT)$ be a continuous function.
Let $U_1$ and $U_2$ be a pair of reducing open subsets of $\mbT$ for $S$ on $\clX.$
For any continuous lifting $\tilde S \colon \clX \to \euS_1(\mbR)$ of $S$
there exist (unique) continuous liftings $\tilde S_1$ and $\tilde S_2$ of $S\big|_{U_1}$ and $S\big|_{U_2}$
such that $\tilde S = \tilde S_1 + \tilde S_2.$
\end{lemma}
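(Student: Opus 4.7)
The plan is to lift the reducing structure from $\mbT$ to $\mbR$ via the covering projection $p$, and then to invoke Lemma \ref{L: continuity of restriction} directly in $\euS_1(\mbR)$. Set $\tilde U_i := p^{-1}(U_i)$ and, as candidates for the claimed liftings, $\tilde S_i := \tilde S\big|_{\tilde U_i}$ for $i=1,2$. The first step is to verify that $\tilde U_1, \tilde U_2$ form a pair of reducing open subsets of $\mbR$ for $\tilde S$. The inclusion $\supp \tilde S(x) \subset \tilde U_1 \cup \tilde U_2$ is immediate from $p(\supp \tilde S(x)) = \supp S(x) \subset U_1 \cup U_2$. For the positive-distance condition, let $\delta>0$ denote the distance between $U_1$ and $U_2$ in $\mbT$: if $\theta_1 \in \tilde U_1$ and $\theta_2 \in \tilde U_2$, then $\min_{k\in\mbZ}|\theta_1 - \theta_2 - 2\pi k|$ is at least $\delta$ (since it equals the arc distance between $p(\theta_1) \in U_1$ and $p(\theta_2) \in U_2$), hence $|\theta_1 - \theta_2| \geq \delta$, and so the distance between $\tilde U_1$ and $\tilde U_2$ in $\mbR$ is at least $\delta > 0$.

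With this in hand, Lemma \ref{L: continuity of restriction} applied in $\euS_1(\mbR)$ yields continuity of both $\tilde S_1$ and $\tilde S_2$, while the definition of restriction gives the additive decomposition $\tilde S = \tilde S_1 + \tilde S_2$. To check that each $\tilde S_i$ is indeed a lifting of $S\big|_{U_i}$, apply $p$ to this decomposition to obtain $p(\tilde S_1) + p(\tilde S_2) = S = S\big|_{U_1} + S\big|_{U_2}$; since $\supp p(\tilde S_i) \subset U_i$ and $U_1 \cap U_2 = \emptyset$ (both following from the positive separation of $U_1$ and $U_2$), comparing terms forces $p(\tilde S_i) = S\big|_{U_i}$.

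For uniqueness, any other decomposition $\tilde S = \tilde S_1' + \tilde S_2'$ with $p \tilde S_i' = S\big|_{U_i}$ must satisfy $\supp \tilde S_i' \subset p^{-1}(\supp S\big|_{U_i}) \subset \tilde U_i$, and since $\tilde U_1 \cap \tilde U_2 = \emptyset$ one necessarily has $\tilde S_i' = \tilde S \cap \tilde U_i = \tilde S_i$. There is no genuine obstacle once this setup is identified; the only subtle point is that the covering map $p$ does not destroy the positive separation of the two open sets in passing from $\mbT$ to $\mbR$, which is handled by the elementary estimate in the first paragraph.
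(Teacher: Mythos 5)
Your proof is correct and follows essentially the same approach as the paper: set $\tilde S_i = \tilde S \cap p^{-1}(U_i)$, observe that $p^{-1}(U_1), p^{-1}(U_2)$ form a reducing pair for $\tilde S$, and apply Lemma~\ref{L: continuity of restriction}. You supply more detail than the paper (the elementary distance estimate for the lifted open sets, the verification that each $\tilde S_i$ is a lifting of $S\big|_{U_i}$, and the uniqueness argument), but these are all routine fills of steps the paper marks as ``clear.''
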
 \margcom{OK}
\begin{proof} Let $\tilde S_j = \tilde S \cap p^{-1}(U_j).$ Clearly, $\tilde S = \tilde S_1 + \tilde S_2$
and the pair of open subsets $p^{-1}(U_1)$ and $p^{-1}(U_2)$ is reducing for $\tilde S.$
It follows from Lemma \ref{L: continuity of restriction} that $\tilde S_1$ and $\tilde S_2$ are continuous.
\end{proof}

\begin{lemma} \label{L: rank on U is const}
Let $\clX$ be a connected metric space, let $S \colon \clX \to \euS_1(\mbT)$ (or $\euS_1(\mbR)$)
be a continuous function and let $U_1$ be an open subset of $\mbT$ (or $\mbR$) such that the distance between $U_1$
and $1$ (or $0$) is $>0.$ If $U_1$ is reducing for $S,$ then
$$
  \rank S\big|_{U_1} = \const.
$$
\end{lemma}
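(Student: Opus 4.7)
The plan is to show that $\rank S\big|_{U_1}$ is locally constant on $\clX$; since $\clX$ is connected and the rank is $\mbN$-valued, this forces it to be constant. I will work out the details for the $\mbT$ case; the $\mbR$ case is identical with $0$ in place of $1$.

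First I would set up the separation estimates. Since $U_1$ is reducing, pick a companion open set $U_2$ with $\eps := \dist(U_1,U_2) > 0$ and $\supp S(x) \subset U_1 \cup U_2$ for every $x \in \clX$. Let $\delta := \dist(U_1, 1) > 0$ (given by hypothesis). Because $1$ has infinite multiplicity in every element of $\euS_\infty(\mbT)$ and must lie in $U_1 \cup U_2$, yet $1 \notin U_1$, we have $1 \in U_2$. Since the unique possible accumulation point of $S(x)$ is $1$ and $U_1$ is separated from $1$, the rigged set $S(x) \cap U_1$ has only finitely many support points, each of finite multiplicity; in particular $\rank S\big|_{U_1}(x) < \infty$ for every $x$.

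Next I would fix $x_0 \in \clX$, set $n := \rank S\big|_{U_1}(x_0)$, and choose $\eta$ with $0 < \eta < \tfrac12 \min(\eps, \delta)$. By continuity of $S$, there is a neighborhood $W$ of $x_0$ with $d(S(x), S(x_0)) < \eta$ for all $x \in W$. For such $x$, pick enumerations $(a_j)$ of $S(x_0)$ and $(b_j)$ of $S(x)$ with $\sum_j |a_j - b_j| < \eta$; in particular $|a_j - b_j| < \eta$ for every $j$. The key claim is that $a_j \in U_1$ if and only if $b_j \in U_1$: if, say, $a_j \in U_1$, then $d(a_j, 1) \geq \delta$ and $d(a_j, U_2) \geq \eps$, hence $d(b_j, 1) > \delta - \eta > 0$ and $d(b_j, U_2) > \eps - \eta > 0$, so $b_j \notin U_2 \cup \{1\}$; since $\supp S(x) \subset U_1 \cup U_2 \cup \{1\}$, this forces $b_j \in U_1$. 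The reverse implication is symmetric.

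Counting indices $j$ then yields $\rank S\big|_{U_1}(x) = \#\{j : b_j \in U_1\} = \#\{j : a_j \in U_1\} = n$ for every $x \in W$, so $\rank S\big|_{U_1}$ is locally constant and, by connectedness of $\clX$, constant. The only subtlety worth watching is the possible interference from the infinite-multiplicity point $1 \in U_2$ appearing in both enumerations; but the pointwise bound $|a_j - b_j| < \eta$ on every index is what drives the argument, so the bookkeeping with infinite multiplicities (each enumeration simply contains infinitely many copies of $1$, which contribute zero to the sum whenever they are matched with each other in an optimal pairing) causes no difficulty.
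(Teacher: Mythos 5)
Your proposal is correct and takes essentially the same approach as the paper: both exploit the separation $\dist(U_1,U_2)>0$ together with continuity of $S$ to deduce that the rank is locally constant, then invoke connectedness of $\clX$. The paper phrases this as a proof by contradiction (two rank values would force sets with intersecting closures, then continuity gives a contradiction with the $\eps$-separation), whereas you run the same estimate directly to prove local constancy; you also spell out the enumeration bookkeeping that the paper leaves implicit.
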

\begin{proof} Assume the contrary: $\rank S(x)$ takes at least two values $N_1$ and $N_2.$
Let $U_2$ be an open subset such that the pair $U_1$ and $U_2$ is reducing and let $\eps = \dist(U_1, U_2) >0.$
Since $\clX$ is connected, the sets
$$
  K_1 = \set{x \in \clX \colon \rank S(x) = N_1} \ \text{and} \ K_2 = \set{x \in \clX \colon \rank S(x) \neq N_1}
$$
have intersecting closures.  Assume that some $x_0 \in K_1 \cap K_2$ belongs to $K_2.$
It follows that there exists a sequence $x_1, x_2, \ldots$ of elements of $K_1$ converging to an element $x_0 \in K_2.$
Since $S$ is continuous, there exists $N$ such that for all $n \geq N$ \ \ $d(S(x_n),S(x_0)) < \eps.$
Since $\dist(U_1,U_2) = \eps$ and since $\rank(S(x_n)) \neq \rank(S(x_0)),$ we get a contradiction.
\end{proof}

\begin{lemma} \label{L: one-point lift}
Let $\clX$ be a metric space and let $x_0 \in \clX.$
Let $U$ be a reducing open subset of $\mbT$ for a continuous function $S \colon \clX \to \euS_1(\mbT),$ such that $\dist(U,1) > 0$
and let $\supp S(x_0) = \set{e^{i\theta_0}} \in U.$ If $\tilde S_0 \in \euS_1(\mbR)$ is such that
$p \tilde S_0 = S(x_0),$ then there exists a continuous lifting $\tilde S \colon \clX \to \euS_1(\mbR)$
of $S$ such that $\tilde S(x_0) = \tilde S_0.$
\end{lemma}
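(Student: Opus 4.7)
The plan is to exploit the evenly-covered structure of $p\colon \mbR\to\mbT$ over $U$ together with the continuous-enumeration machinery of Lemma~\ref{L: continuous sections of path S}. First I would split using the reducing structure: let $U_2$ be the open set pairing with $U$, so $1\in U_2$, $\dist(U,U_2)>0$, and $\supp S(x)\subset U\cup U_2$ for all $x\in\clX$. By Lemma~\ref{L: continuity of restriction} we obtain continuous decompositions $S = S\big|_U + S\big|_{U_2}$, and correspondingly $\tilde S_0 = \tilde S_0^U + \tilde S_0^{U_2}$ where $\tilde S_0^U = \tilde S_0 \cap p^{-1}(U)$, $\tilde S_0^{U_2} = \tilde S_0 \cap p^{-1}(U_2)$. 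It then suffices to produce continuous lifts $\tilde S^U$ of $S\big|_U$ and $\tilde S^{U_2}$ of $S\big|_{U_2}$ matching the seeds at $x_0$, and set $\tilde S := \tilde S^U + \tilde S^{U_2}$ (continuous by Lemma~\ref{L: sum is continuous}).

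The core content is the construction of $\tilde S^U$. Because $\dist(U,1)>0$, the set $U$ is contained in an open arc of $\mbT$ of length less than $2\pi$ avoiding $1$, so $p^{-1}(U) = \bigsqcup_{k\in\mbZ}U^{(k)}$ with each $p|_{U^{(k)}}$ a homeomorphism onto $U$; choose the principal sheet $U^{(0)}$ to sit inside $\mbR_+$, bounded away from $0$. Let $\beta_0\in U^{(0)}$ denote the preimage of $e^{i\theta_0}$. Then $\tilde S_0^U$ is a finite rigged set with multiplicities $n_k$ at $\beta_0+2\pi k$ satisfying $\sum_k n_k = N$, where $N := \rank S\big|_U$ is constant on the component of $x_0$ by Lemma~\ref{L: rank on U is const} (I may and do assume $\clX$ is connected, else work component by component). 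Now pull back to the principal sheet: $\tilde T(x) := (p|_{U^{(0)}})^{-1}(S\big|_U(x))$, augmented implicitly with $\set{0}$ of infinite multiplicity, is a continuous map $\clX\to\euS_1(\mbR)$ (because $p|_{U^{(0)}}$ is a bi-Lipschitz homeomorphism onto $U$) of constant rank $N$, with $\tilde T(x_0)$ equal to $\set{\beta_0}^*$ of multiplicity $N$. Apply Lemma~\ref{L: continuous sections of path S} to obtain continuous functions $\theta_1,\dots,\theta_N\colon \clX\to U^{(0)}$ enumerating the positive part of $\tilde T$, with $\theta_j(x_0)=\beta_0$. Fix a partition $\set{1,\dots,N} = \sqcup_k I_k$ with $|I_k|=n_k$, and define $\tilde z_j(x) := \theta_j(x) + 2\pi k$ for $j\in I_k$. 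Then $\tilde S^U(x) := \set{\tilde z_1(x),\dots,\tilde z_N(x)}^*$ is continuous (directly from the inequality $d(\tilde S^U(x),\tilde S^U(y)) \le \sum_j |\tilde z_j(x)-\tilde z_j(y)|$), projects under $p$ to $S\big|_U(x)$, and agrees with $\tilde S_0^U$ at $x_0$.

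For $\tilde S^{U_2}$ we exploit that the non-trivial support of $S\big|_{U_2}(x_0)$ is empty by hypothesis, so $\tilde S_0^{U_2}$ is $\set{0}$ of infinite multiplicity plus finite multiplicities $m_k$ at $2\pi k\neq 0$. I would treat this piece by further reducing $U_2$: for each $\eps>0$, the annular region $V_\eps := \set{z\in U_2 : d(z,1)\ge \eps}$ is reducing for $S\big|_{U_2}$ (with partner an open ball of radius $<\eps$ around $1$) and satisfies $\dist(V_\eps,1)\ge \eps$, so the argument of the previous paragraph applies to produce continuous lifts of $S\big|_{V_\eps}$. These are assembled via the seed $\tilde S_0^{U_2}$, with the finite residues $m_k$ at $2\pi k\neq 0$ distributed into the lifts on successive $V_\eps$'s and the infinite multiplicity at $0$ absorbing any residual small mass near $1$.

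The main obstacle I anticipate is verifying that these lifts on the various $V_\eps$ glue into a globally continuous map $\clX\to\euS_1(\mbR)$: one has to simultaneously control the finite-rank enumerations on each annular piece and the concentration of residual mass at $0\in\mbR$. Lemma~\ref{L: sum theta converges uniformly} (uniform convergence of enumerating series) together with Proposition~\ref{P: if K is compact ...} supplies the uniformity needed to pass to the limit $\eps\to 0$, after which a standard approximation argument delivers the continuity of $\tilde S^{U_2}$, and hence of the full lift $\tilde S = \tilde S^U + \tilde S^{U_2}$.
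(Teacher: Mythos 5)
Your construction of $\tilde S^U$ is essentially the paper's proof: pull $S\big|_U$ back along the evenly-covered projection $p$ to a single sheet $\tilde U\subset(0,2\pi)$, obtain a finite continuous enumeration $\theta_1,\dots,\theta_N$ via Lemma~\ref{L: continuous sections of path S} (with $N$ constant by Lemma~\ref{L: rank on U is const}), and then translate by the multiples of $2\pi$ dictated by $\tilde S_0$. That part is correct.

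The two proofs diverge on the $U_2$-part. The paper disposes of it in one line: since $\supp S(x_0)\subset U$ and $U$ is reducing, it asserts $\supp S(x)\subset U$ for every $x$, so there is nothing to lift over $U_2$. You, more cautiously, keep $S\big|_{U_2}$ as a genuinely nontrivial piece (and indeed the paper's one-line deduction is not an automatic consequence of the definition of ``reducing'': elements of $\supp S(x)$ can emanate from $1$ into $U_2$ for $x\neq x_0$; the deduction is however justified in the single place the lemma is used, namely inside Lemma~\ref{L: local lift exists}, where it is invoked for $S\big|_{U_2}$ and $U=U_2$, so the whole non-trivial support is in $U$ by construction).

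The gap is in your treatment of $S\big|_{U_2}$. The claim that $V_\eps=\set{z\in U_2:d(z,1)\ge\eps}$ is reducing for $S\big|_{U_2}$ with partner a ball of radius $<\eps$ about $1$ is false: reducing requires $\supp S\big|_{U_2}(x)\subset V_\eps\cup W$ with $\dist(V_\eps,W)>0$, hence $W=B_\delta(1)$ with $\delta<\eps$, and then $\supp S\big|_{U_2}(x)$ must avoid the annulus $\set{\delta\le d(z,1)<\eps}$ for \emph{all} $x$ --- which nothing in the hypotheses guarantees, since the only control you have is at the single point $x_0$. (As written $V_\eps$ is also not open.) The last paragraph concedes that gluing the $V_\eps$-lifts is an unresolved obstacle, and the appeals to Lemma~\ref{L: sum theta converges uniformly} and Proposition~\ref{P: if K is compact ...} do not address it: those concern uniform behaviour of an \emph{already given} continuous path/family, whereas the problem here is to construct the continuous lift in the first place. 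So $\tilde S^{U_2}$ is not produced, and the argument does not close. The clean fix is to do what the paper (implicitly, via the stronger reading of the hypothesis) does: assume $\supp S(x)\subset U\cup\set{1}$ for all $x$, which is exactly the situation in which the lemma is applied, so that $S\big|_{U_2}={\bf 1}$ and $\tilde S^{U_2}$ can simply be the constant rigged set $\tilde S_0\cap p^{-1}(U_2)$.
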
 \margcom{OK}
\begin{proof} Since $\supp S(x_0) \subset U,$ and since $U$ is reducing for $S,$ it follows that $\supp S(x) \subset U$ for all $x \in \clX.$
Let $\tilde U$ be a subset of $(0,2\pi)$ which corresponds to $U$ under the map $p,$ so that $p^{-1} \colon U \simeq \tilde U.$
Since $\dist(U,1)>0,$ the function $\tilde T \colon \clX \to \euS_1(\mbR),$ \ $\tilde T = p^{-1} \circ S,$ thus obtained,
is continuous. By Lemma \ref{L: continuous sections of path S}, the function $\tilde T$ admits a continuous enumeration
$\theta_1, \ldots, \theta_N.$ Now, all we need to do is to add to each of these functions one of the numbers $\set{ 2\pi \left[\frac \theta {2\pi}\right] \colon \theta \in \tilde S_0}^*.$
Plainly (or by Lemma \ref{L: cont-s f-ns imply cont-s set f-n}), values of the resulting functions determine a continuous function $\euS_1(\mbR)$ with required properties.
\end{proof}

\subsection{Prolongation to $[0,r_0]$}
\begin{lemma} \label{L: prolongation of finite paths}
  Let $U$ be an open subset of $\mbT$ such that the distance between $U$ and $1$ is not zero. Let $r_0>0.$
  Let $S \colon [0,r_0] \to \euS_1(\mbT)$ be a continuous path such that $\supp S(r) \subset U$ for all $r \in [0,r_0].$
  If there exists a continuous lifting $\tilde S \colon [0,r_0) \to \euS_1(\mbR)$ of $S$
  on $[0,r_0),$ then this lifting can be continuously prolonged to $[0,r_0].$
\end{lemma}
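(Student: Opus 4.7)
The plan is to extend $\tilde S$ to $r_0$ by analyzing the behaviour of $\tilde S(r)$ near each point of $\supp S(r_0)$ as $r\to r_0^-$. Since $[0,r_0]$ is connected and $U$ is reducing for $S$ on $[0,r_0]$ (pair it with a small open neighbourhood of $1$ disjoint from $\overline U$), Lemma \ref{L: rank on U is const} gives $\rank S(r)\equiv N$ constant. In particular $S(r_0)$ is finite of rank $N$; write $S(r_0)=\sum_{l=1}^m n_l\cdot\set{w_l}^*$ with distinct $w_l\in U$ and positive integers $n_l$ summing to $N$.

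I would next fix pairwise disjoint open neighbourhoods $V_l\subset U$ of the $w_l$, each at positive distance from $1$. Using Lemma \ref{L: X1 and X2} (with $X_1={\bf 1}$ and $X_2=S(r_0)\setminus{\bf 1}$) and shrinking the partner neighbourhood of $1$ so that it is disjoint from $U$, one obtains $\eps_0>0$ such that $\supp S(r)\subset\bigsqcup_l V_l$ for every $r\in[r_0-\eps_0,r_0]$; Lemma \ref{L: rank on U is const} applied to each reducing $V_l$ on this connected subinterval then gives $\rank(S(r)\cap V_l)\equiv n_l$. Because $d(U,1)>0$, each preimage $p^{-1}(V_l)$ splits as $\bigsqcup_{k\in\mbZ}\tilde V_l^{(k)}$, a pairwise-disjoint family of $2\pi k$-translates; distinct sets $\tilde V_l^{(k)}$ are separated in $\mbR$ by a uniform positive distance and each is bounded away from $2\pi\mbZ$. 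On $[r_0-\eps_0,r_0)$ decompose the given lifting as $\tilde S(r)=\sum_{l,k}\tilde S_{l,k}(r)$ with $\tilde S_{l,k}(r):=\tilde S(r)\cap\tilde V_l^{(k)}$. Lemma \ref{L: continuity of restriction} gives continuity of each $\tilde S_{l,k}$, and Lemma \ref{L: rank on U is const} then yields constants $N_{l,k}:=\rank\tilde S_{l,k}(r)$ with $\sum_k N_{l,k}=n_l$ and only finitely many $N_{l,k}$ nonzero.

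The heart of the argument is the convergence as $r\to r_0^-$. By Lemma \ref{L: continuity of restriction}, $S|_{V_l}$ is continuous at $r_0$, so $S(r)\cap V_l\to n_l\cdot\set{w_l}^*$ in $\euS_1(\mbT)$, which forces every element of $S(r)\cap V_l$ to lie within a vanishing neighbourhood of $w_l$. Since $p$ restricts to a homeomorphism (bi-Lipschitz on sufficiently small arcs) from $\tilde V_l^{(k)}$ onto $V_l$, and $p\tilde S_{l,k}(r)$ is a rank-$N_{l,k}$ rigged subset of $S(r)\cap V_l$, this concentration lifts to $\tilde S_{l,k}(r)\to N_{l,k}\cdot\set{\tilde w_l^{(k)}}^*$ in $\euS_1(\mbR)$, where $\tilde w_l^{(k)}$ is the unique lift of $w_l$ in $\tilde V_l^{(k)}$. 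Iterating Lemma \ref{L: d(S1+S2,T1+T2)<...} over the finitely many nonzero terms yields $\tilde S(r)\to\sum_{l,k}N_{l,k}\cdot\set{\tilde w_l^{(k)}}^*$ in $\euS_1(\mbR)$. Setting $\tilde S(r_0)$ equal to this limit gives an element of $\euS_1(\mbR)$ with $p\tilde S(r_0)=\sum_l n_l\cdot\set{w_l}^*=S(r_0)$, so the extended map $\tilde S\colon[0,r_0]\to\euS_1(\mbR)$ is a continuous lifting.

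The main technical point I expect to require care is the well-definedness and constancy of the ranks $N_{l,k}$: one has to rule out the possibility that elements of $\tilde S$ migrate between different translates $\tilde V_l^{(k)}$ as $r$ varies in $[r_0-\eps_0,r_0)$. This relies on the uniform positive separation of the $\tilde V_l^{(k)}$ (a consequence of $d(U,1)>0$) together with continuity of $\tilde S$ and Lemma \ref{L: rank on U is const} applied on the connected subinterval; once this localization near $r_0$ is in place, the remainder is bookkeeping.
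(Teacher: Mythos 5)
Your proof is correct, and it takes a somewhat different route than the paper's. The paper's argument is an induction on the constant rank $N$: when $S(r_0)$ has distinct points it splits off reducing pieces and invokes the inductive hypothesis; the ``all points identical'' and $N=1$ cases are handled directly by explicitly prescribing $\tilde S(r_0)$ via the appropriate multiples of $2\pi$. You instead unroll this induction in one pass: you decompose $\supp S(r_0)=\set{w_1,\dots,w_m}$ into disjoint neighbourhoods $V_l$ bounded away from $1$, lift these to their $2\pi\mbZ$-translates $\tilde V_l^{(k)}$ (which are uniformly separated since $d(U,1)>0$), use Lemmas \ref{L: continuity of restriction} and \ref{L: rank on U is const} to pin down constant ranks $N_{l,k}$ on a left neighbourhood of $r_0$, and then prove convergence of each piece $\tilde S_{l,k}(r)\to N_{l,k}\cdot\set{\tilde w_l^{(k)}}^*$ directly from continuity of $S|_{V_l}$ and bi-Lipschitzness of $p$ on small arcs. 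Both proofs rest on the same machinery (constancy of rank on connected sets, continuity of restrictions, additivity of the metric); yours is more explicit about why the convergence holds in the base and degenerate cases, which the paper leaves largely to the reader, while the paper's induction is shorter to state. One small point of care you already flag: the finiteness of the nonzero $N_{l,k}$ and the well-definedness of the splitting $\tilde S(r)=\sum_{l,k}\tilde S_{l,k}(r)$ both depend on the uniform positive separation of the translates $\tilde V_l^{(k)}$, and this is exactly where the hypothesis $d(U,1)>0$ enters; that step is handled correctly.
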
 \margcom{OK}
\begin{proof} We prove this lemma by induction. Let $N$ be the number of elements in $S(r).$
By Lemma \ref{L: rank on U is const}, the number $N$ does not depend on $r.$

If $N=1$ then the prolongation exists: it is the argument of that only number.

Assume that the claim is proved for the case $<N.$ If not all numbers in $S(r_0)$ are identical,
then there exists a pair of reducing open sets $U_1$ and $U_2$ for $S$ in some left neighbourhood $(r_1,r_0],$
such that $U_1$ and $U_2$ contain non-empty parts of $S(r_0).$
By Lemma \ref{L: continuous liftings of reduced parts}, there exist continuous liftings of $S\big|_{U_1}$ and $S\big|_{U_2}$ on $[0,r_0).$
By induction assumption, continuous liftings of $S\big|_{U_1}$ and $S\big|_{U_2}$ can be prolonged continuously to $r_0.$
By Lemma \ref{L: sum is continuous}, the sum of those prolongations is continuous, and obviously, it is a prolongation of $\tilde S.$

If all numbers in $S(r_0)$ are identical and equal to $e^{i\theta_0},$ then there exists a reducing neighbourhood $U$ of $e^{i\theta_0}$
for $S$ on some left neighbourhood $W$ of $r_0.$ We set $\tilde S(r_0) := \set{\theta_0+2\pi n_1, \ldots, \theta_0 + 2\pi n_N}^*,$
where $\set{n_1,\ldots,n_N}^* = \set{[\theta/(2\pi)] \cdot 2 \pi \colon \theta \in S(r)}^*$ and $r$ is any number from $W.$


The proof is complete.
\end{proof}

Let
$$
  2\pi \mbZ := \set{2\pi k\colon k \in \mbZ} = p^{-1}(\set{1}).
$$
In the proof of existence of a continuous lifting of continuous functions $\colon \clX \to \euS_1(\mbT),$
we treat points $2\pi \mbZ$ as absorbing (sticky):
once a point reaches one of the points $2\pi k,$ $k \in \mbZ,$ it stays there forever.
So, we introduce the following formal definition.

\begin{defn} \label{D: standard lifting}
Let $\clX$ be a metric space, let $x_0 \in \clX,$ and let
$$S \colon \clX \to \euS_1(\mbT)$$ be a continuous function
such that $\supp S(x) \subset (-i,i)$ for all $x \in \clX.$

Let $\tilde S_0 \in \euS_1(\mbR)$ be such that $p \circ \tilde S_0 = S(x_0)$ and
$$
  \supp \tilde S_0 \subset \brs{-\frac \pi 2,\frac \pi 2} \cup 2\pi \mbZ.
$$

We define the \emph{standard lifting} of $S$ as that unique continuous function $\tilde S \colon \clX \to \euS_1(\mbR),$
such that
$$
  \supp \tilde S(x) \subset \brs{-\frac \pi 2,\frac \pi 2} \cup 2\pi \mbZ,
$$
$p\circ \tilde S(x) = S(x)$ for all $x \in \clX$ and $\tilde S(x_0) = \tilde S_0.$

The standard lifting map $S \mapsto \tilde S$ thus defined is obviously isometric.
\end{defn}

The following lemma is quite trivial.
\begin{lemma} The standard lifting exists and is unique.
\end{lemma}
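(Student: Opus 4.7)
The plan is to decompose any candidate lift $\tilde S$ according to its support on the two disjoint pieces $[-\pi/2,\pi/2]$ and $2\pi\mbZ\setminus\{0\}$, and build / compare these pieces separately. The key geometric fact is that $p$ restricts to a homeomorphism of the open interval $(-\pi/2,\pi/2)$ onto the open arc $(-i,i)\subset\mbT$, with the elementary Lipschitz estimate $|e^{i\alpha}-e^{i\beta}|\le|\alpha-\beta|$ on that arc.

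For existence, I would use this homeomorphism. Since $\supp S(x)\subset(-i,i)$ for all $x$, applying the principal branch of $p^{-1}$ element-wise (sending the infinite multiplicity of $1\in\mbT$ to $0\in\mbR$) produces a function $\tilde S'\colon\clX\to\euS_1(\mbR)$ with $\supp\tilde S'(x)\subset[-\pi/2,\pi/2]$; continuity of $\tilde S'$ follows from the Lipschitz bound, which shows that the induced map on rigged sets is $1$-Lipschitz. Let $\tilde S_0''$ be the part of $\tilde S_0$ supported in $2\pi\mbZ\setminus\{0\}$, which is a finite rigged set since $\tilde S_0\in\euS_1(\mbR)$ and $2\pi\mbZ\setminus\{0\}$ is bounded away from $0$. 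Set $\tilde S(x):=\tilde S'(x)+\tilde S_0''$. By Lemma~\ref{L: sum is continuous} this sum is continuous, its support lies in $[-\pi/2,\pi/2]\cup 2\pi\mbZ$, and the hypotheses $p\circ\tilde S_0=S(x_0)$ together with $\supp\tilde S_0\subset[-\pi/2,\pi/2]\cup 2\pi\mbZ$ force $\tilde S'(x_0)$ to equal the $[-\pi/2,\pi/2]$-part of $\tilde S_0$, so that $\tilde S(x_0)=\tilde S_0$.

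For uniqueness, let $\tilde S_a,\tilde S_b$ be two standard liftings. Their supports lie in $[-\pi/2,\pi/2]\cup 2\pi\mbZ$, and the pair formed by a slight enlargement of $(-\pi/2,\pi/2)$ and a small neighbourhood of any $2\pi k$ with $k\neq 0$ is reducing for both; Lemma~\ref{L: continuity of restriction} then splits each $\tilde S_j$ into a continuous principal part and continuous sticky parts. The principal parts coincide because $p$ is injective on $(-\pi/2,\pi/2)$, so the lift of every non-$1$ element of $S(x)$ is forced, and $1\in\mbT$ lifts to $0\in[-\pi/2,\pi/2]$ only (any other preimages sit in $2\pi\mbZ\setminus\{0\}$, i.e.\ in the sticky part). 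For each fixed $k\neq 0$, the multiplicity at $2\pi k$ is an integer-valued continuous function of $x$ on $\clX$ (Lemma~\ref{L: rank on U is const} applied to the connected component of $x_0$), hence locally constant; it coincides with that of $\tilde S_0$ at $x_0$, so the sticky parts agree on the path-component of $x_0$, which is all one needs in the applications (where $\clX$ is an interval).

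The only mildly nontrivial ingredient is the continuity of the principal-lift map $S\mapsto \tilde S'$; once that is in hand through the Lipschitz estimate on the arc, everything else is either immediate from the support constraint or a direct application of lemmas already proved in the section. I do not expect any real obstacle.
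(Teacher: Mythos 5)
The paper provides no written proof---it declares the lemma ``quite trivial''---so there is no argument of the author's to compare against; I am reviewing your proposal on its own. Your decomposition of a candidate lift into its \emph{principal} part supported in $[-\pi/2,\pi/2]$ and its \emph{sticky} part supported in $2\pi\mbZ\setminus\{0\}$ is exactly the natural move, and the argument is substantively correct. Two small remarks.

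First, a slip of direction in the existence step: the inequality you quote, $|e^{i\alpha}-e^{i\beta}|\le|\alpha-\beta|$, is the statement that $p$ is $1$-Lipschitz, whereas what your construction needs is that $p^{-1}\colon(-i,i)\to(-\pi/2,\pi/2)$ is Lipschitz. The required bound is the reverse one: for $\alpha,\beta\in[-\pi/2,\pi/2]$ one has $|\alpha-\beta|\le\frac{\pi}{2}\,|e^{i\alpha}-e^{i\beta}|$, which follows from $\sin t\ge\frac{2}{\pi}t$ on $[0,\pi/2]$. This gives that $S(x)\mapsto\tilde S'(x)$ is $\frac{\pi}{2}$-Lipschitz in the metrics of $\euS_1(\mbT)$ and $\euS_1(\mbR)$, which is the continuity you want. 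The slip is cosmetic, since the reverse estimate is equally elementary.

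Second, you correctly flag that your uniqueness argument (via Lemma~\ref{L: rank on U is const}) fixes the multiplicities at the sticky points $2\pi k$, $k\ne 0$, only on the connected component of $x_0$. In fact the lemma as stated, for a general metric space $\clX$, is false without some such hypothesis: if $\clX$ has a component not containing $x_0$, one can add a point at $2\pi$ to $\tilde S(x)$ there, leaving the $p$-image and the support constraint unchanged. In the paper's applications $\clX$ is always an interval, so the author's casual phrasing is harmless, but your proof makes the latent connectedness hypothesis visible, which is a point in its favour.
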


The following intuitively obvious lemma (as many other lemmas in this section) belongs to the category of statements,
which are probably easier to reprove for oneself than to read a proof written by others.
\begin{lemma} \label{L: prolongation of right half-circle paths}
  Let $S \colon [0,r_0] \to \euS_1(\mbT)$ be a continuous path such that $\supp S(r) \subset (-i,i)$ for all $r \in [0,r_0].$
  Let $\tilde S \colon [0,r_0) \to \euS_1(\mbR)$ be a continuous lifting of $S$ on $[0,r_0).$
  Assume that all points $z \in S(r_0),$ which are not equal to $1,$ have the property: there exists a reducing for $S$ neighbourhood $U$ of $z$
  on a left neighbourhood $(r_1,r_0]$ of $r_0$ such that for all $r \in (r_1,r_0)$
  \begin{equation} \label{F: a formula}
    \supp \tilde S\big|_{p^{-1}(U)}(r) \subset \brs{-\frac\pi 2, \frac \pi 2}.
  \end{equation}
  Then the lifting $\tilde S$ can be continuously prolonged to $[0,r_0].$
\end{lemma}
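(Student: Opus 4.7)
The plan is to decompose $\tilde S(r)$ into a finite-rank piece concentrated near $2\pi\mbZ\setminus\{0\}$ and a residual piece supported in $I_0:=(-\pi/2,\pi/2)$, and to extend each separately to $r_0$. Since $\supp S(r)\subset(-i,i)$ on $[0,r_0]$, we have $\supp\tilde S(r)\subset\bigcup_{k\in\mbZ}I_k$ where $I_k:=(2\pi k-\pi/2,2\pi k+\pi/2)$, and each $I_k$ is reducing for $\tilde S$ on $[0,r_0)$ (paired with $\bigcup_{j\neq k}I_j$ at distance $\pi$). Lemma \ref{L: continuity of restriction} gives continuity of each $\tilde S|_{I_k}$ on $[0,r_0)$; for $k\neq 0$, Lemma \ref{L: rank on U is const} yields a constant rank $N_k$, and since each element of $\tilde S(r)\cap I_k$ contributes at least $2\pi|k|-\pi/2$ to the finite distance $d(\tilde S(r),{\bf 0})$, only finitely many $N_k$ can be nonzero --- say those with $0<|k|\leq K$.

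The key step is to show that each continuous strand $\theta\colon[0,r_0)\to I_k$ ($k\neq 0$), extracted from $\tilde S|_{I_k}$ via the sup-iteration construction of Lemma \ref{L: continuous sections of path S}, satisfies $\theta(r)\to 2\pi k$ as $r\to r_0^-$. Any cluster value $q\in\mbT$ of $p\circ\theta$ at $r_0$ lies in $\supp S(r_0)$ by continuity of $S$. If $q\neq 1$, the hypothesis furnishes a reducing neighborhood $U_q$ on a left neighborhood of $r_0$ with $\supp\tilde S|_{p^{-1}(U_q)}(r)\subset[-\pi/2,\pi/2]$, which would force $\theta(r_n)\in[-\pi/2,\pi/2]$ along a subsequence --- impossible since $\theta(r)\in I_k$ with $k\neq 0$. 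Therefore $q=1$ is the unique cluster value, so $p\circ\theta\to 1$, and the homeomorphism $p|_{I_k}\colon I_k\to(-i,i)$ sending $2\pi k$ to $1$ yields $\theta(r)\to 2\pi k$. Consequently $\tilde S|_{I_k}(r)\to N_k\{2\pi k\}^*$, and by Lemma \ref{L: sum is continuous} the finite sum $\tilde S_\infty(r):=\sum_{0<|k|\leq K}\tilde S|_{I_k}(r)$ extends continuously to $[0,r_0]$.

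For the residual $\tilde S_0(r):=\tilde S(r)-\tilde S_\infty(r)$ --- continuous on $[0,r_0)$ with support in $I_0$ by Lemma \ref{L: difference is continuous} --- we have $p\tilde S_\infty(r)\leq S(r)$ and both converge at $r_0$, so Theorem \ref{T: important estimate} yields convergence of $p\tilde S_0(r)=S(r)-p\tilde S_\infty(r)$ in $\euS_1(\mbT)$. Since $\tilde S_0$ is the standard lifting (Definition \ref{D: standard lifting}) of $p\tilde S_0$ and the standard lifting map is an isometry, this convergence transfers to $\tilde S_0(r)$ in $\euS_1(\mbR)$. Lemma \ref{L: sum is continuous} then combines the two extensions into a continuous prolongation of $\tilde S$ to $[0,r_0]$, and $p\tilde S(r_0)=S(r_0)$ holds by construction.

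The principal obstacle is the cluster-value argument in the second paragraph, where I must rule out the possibility that an $I_k$-strand ($k\neq 0$) accumulates at a point of $\supp S(r_0)\setminus\{1\}$; this is precisely where the hypothesis (\ref{F: a formula}) on reducing neighborhoods of non-$1$ points enters essentially. The remaining steps are a systematic repackaging of the decomposition and continuity lemmas already established.
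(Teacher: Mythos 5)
Your proof is correct and follows essentially the same strategy as the paper's: split off the finite-rank part $\tilde S_\infty$ of $\tilde S$ concentrated near $2\pi\mbZ\setminus\{0\}$ (the paper calls it $\tilde S_1 = \set{\theta\in\tilde S(r):|\theta|\geq\pi/2}^*$, which coincides with your $\tilde S_\infty$ since $\supp S(r)\subset(-i,i)$), use the hypothesis to rule out cluster values of this part outside $2\pi\mbZ$, apply Theorem \ref{T: important estimate} to get continuity of $S - p\tilde S_\infty$ at $r_0$, and finish by the standard lifting for the residual piece in $(-\pi/2,\pi/2)$. Your additional bookkeeping --- decomposing $\tilde S_\infty$ into the $I_k$-pieces, invoking Lemma \ref{L: rank on U is const} to get constant ranks $N_k$, observing that summability forces all but finitely many $N_k$ to vanish, and tracking individual strands $\theta\colon[0,r_0)\to I_k$ via Lemma \ref{L: continuous sections of path S} --- makes explicit what the paper leaves implicit (that $\tilde S_1(r)$ stays uniformly bounded and has constant total rank, so that "all cluster values lie in $2\pi\mbZ$" does indeed yield convergence in $\euS_1(\mbR)$), but the underlying argument is the same.
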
 \margcom{OK \\ But read \\ again!}
\begin{proof}
For $r \in [0,r_0),$ let
$$
  \tilde S_1(r) := \set{\theta \in \tilde S(r) \colon \abs{\theta} \geq \frac \pi
  2}^*, \quad \tilde S_2(r) := \tilde S(r) - \tilde S_1(r).
$$
By Lemma \ref{L: continuity of restriction}, the functions $\tilde S_1$ and $\tilde S_2$ are continuous on $[0,r_0).$
The set $\tilde S_1(r)$ converges to a rigged subset of $2\pi \mbZ.$ Indeed, otherwise, there exists a sequence $r_1, r_2, \ldots$
converging to $r_0,$ and for every $n=1,2,\ldots$ there exist $\theta_n \in \tilde S_1(r_n),$ such that $\theta_n$ converge to some $\theta_0 \notin 2 \pi \mbZ.$
This implies that $z=e^{i\theta_0} \in S(r_0)$ and $z \neq 1;$ the point $z$ does not satisfy conditions of the lemma.

Further, since the continuous function $\tilde S_1$ can be continuously prolonged to $[0,r_0],$ the function $p \circ \tilde S_1$
is also continuous on $[0,r_0].$
By Theorem \ref{T: important estimate}, the function
$$
  S_2 = S - p \circ \tilde S_1
$$
is also continuous on $[0,r_0].$ Since the continuous lift $\tilde S_2$ of $S_2$ on $[0,r_0)$
takes values in $\brs{-\frac \pi 2,\frac \pi 2},$ it can be continuously prolonged to $[0,r_0]$ by the standard prolongation.
By Lemma \ref{L: sum is continuous}, it follows that $\tilde S = \tilde S_1 + \tilde S_2$ can also be continuously prolonged to $r_0.$

The proof is complete.
\end{proof}

\begin{lemma} \label{L: prolongation to [0,r]}
  Let $S \colon [0,1] \to \euS_1(\mbT)$ be a continuous path and let $r_0 \in (0,1].$
  Any continuous lifting $\tilde S \colon [0,r_0) \to \euS_1(\mbR)$ of $S$
  on $[0,r_0)$ can be continuously prolonged to $[0,r_0].$
\end{lemma}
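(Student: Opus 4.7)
The plan is to reduce the problem in two stages: first peel off the (finitely many) eigenvalues of $S(r_0)$ bounded away from $1$ by applying Lemma \ref{L: prolongation of finite paths}, then further split the remaining near-$1$ lifting into a finite-rank non-principal-branch piece (handled by an $\euS_1(\mbR)$-analogue of that lemma) and a principal-branch residue (handled by Lemma \ref{L: prolongation of right half-circle paths}, whose hypothesis will be satisfied vacuously by construction).

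Fix $\eps>0$ small enough that $V:=\set{e^{i\theta}\colon|\theta|<\delta}$, with $\delta:=2\arcsin(\eps/2)<\pi/2$, is contained in $(-i,i)$ and that the rigged set $X:=\set{x\in S(r_0)\colon d(x,1)\geq\eps}$ is finite. Pick an open set $U_2\supset\supp X$ with $\dist(V,U_2)>0$. By continuity of $S$, the pair $(V,U_2)$ is reducing for $S$ on some left neighbourhood $W=(r_1,r_0]$ of $r_0$. By Lemma \ref{L: continuous liftings of reduced parts}, on $W\cap[0,r_0)$ the given lifting decomposes continuously as $\tilde S=\tilde S_V+\tilde S_2$, with $\tilde S_V$ and $\tilde S_2$ lifting $S\big|_V$ and $S\big|_{U_2}$ respectively. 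Since $U_2$ is separated from $1$, Lemma \ref{L: prolongation of finite paths} (applied on some $[r_1',r_0]\subset W$) extends $\tilde S_2$ continuously to $r_0$.

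For $\tilde S_V$, decompose $p^{-1}(V)=\bigsqcup_{k\in\mbZ}W_k$ with $W_k:=(2\pi k-\delta,2\pi k+\delta)$; each $W_k$ is reducing for $\tilde S_V$ (the complementary open set being $\bigsqcup_{k'\neq k}W_{k'}$), so the pieces $\tilde S_V^{(k)}:=\tilde S_V\cap W_k$ are continuous by Lemma \ref{L: continuity of restriction}. For $k\neq 0$, Lemma \ref{L: rank on U is const} gives each $\tilde S_V^{(k)}$ a constant finite rank $N_k$; and since $d(\tilde S_V(r),\mathbf{0})<\infty$ while each point of $W_k$ has absolute value at least $2\pi|k|-\delta$, only finitely many $N_k$ are positive. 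By a straightforward $\euS_1(\mbR)$-analogue of Lemma \ref{L: prolongation of finite paths}, proved by the same induction on rank (with the base case $N=1$ handled by the observation that a bounded continuous lift in $\mbR$ whose $\mbT$-projection converges cannot oscillate between different $2\pi$-shifts of the limit), each $\tilde S_V^{(k)}$ with $k\neq 0$ extends continuously to $r_0$; their finite sum $\tilde S_V^{(*)}:=\sum_{k\neq 0}\tilde S_V^{(k)}$ extends as well by Lemma \ref{L: sum is continuous}.

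Finally, set $\tilde S_V^{(0)}:=\tilde S_V-\tilde S_V^{(*)}$, which is continuous with support in $W_0=(-\delta,\delta)\subset(-\pi/2,\pi/2)$ and lifts $T:=S\big|_V-p\tilde S_V^{(*)}$; the function $T$ is continuous on $[r_1',r_0]$ by Lemma \ref{L: difference is continuous}, with support in $V\subset(-i,i)$. For any $z\in T(r_0)$ with $z\neq 1$ and any sufficiently small reducing neighbourhood $V_z\subset V$ of $z$, the restriction $\tilde S_V^{(0)}\big|_{p^{-1}(V_z)}$ is automatically contained in $W_0\subset[-\pi/2,\pi/2]$, so the hypothesis of Lemma \ref{L: prolongation of right half-circle paths} holds vacuously; that lemma therefore extends $\tilde S_V^{(0)}$ continuously to $r_0$. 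Combining, $\tilde S=\tilde S_V^{(0)}+\tilde S_V^{(*)}+\tilde S_2$ extends continuously to $r_0$ by Lemma \ref{L: sum is continuous}. The main obstacle is establishing the $\euS_1(\mbR)$-analogue of Lemma \ref{L: prolongation of finite paths} used for the non-principal-branch pieces $\tilde S_V^{(k)}$; once this is in place, the rest is an organised application of the reducing-set machinery together with the right half-circle tool.
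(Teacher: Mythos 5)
Your overall architecture is close to the paper's, but your key auxiliary step has a real gap. The paper also peels off a finite set of eigenvalues of $S(r_0)$ that can be handled by Lemma \ref{L: prolongation of finite paths}, and leaves a residue to which Lemma \ref{L: prolongation of right half-circle paths} applies. The crucial difference is \emph{how} that finite set is chosen: the paper defines $X_2$ \emph{a posteriori} as the (necessarily finite, hence bounded away from $1$) set of $z\in S(r_0)$ whose lifted copies leave $[-\pi/2,\pi/2]$ near $r_0$. That choice buys two things simultaneously: $U_2\supset X_2$ is a reducing open set for $S$ near $r_0$ bounded away from $1$, so $S\big|_{U_2}$ is \emph{known} to be continuous up to $r_0$ --- precisely the hypothesis of Lemma \ref{L: prolongation of finite paths}; and the residue $S\big|_{U_1}$ automatically meets the hypothesis of Lemma \ref{L: prolongation of right half-circle paths}. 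You instead cut geometrically ($V$ a fixed small arc around $1$), and then decompose the near-$1$ lift by branches $W_k$ of $p$.

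The gap is in your claimed $\euS_1(\mbR)$-analogue of Lemma \ref{L: prolongation of finite paths}. As stated ($\tilde T$ continuous on $(r_1,r_0)$, support in $W_k$, constant finite rank) it is false: take $\tilde T(r)=\set{2\pi k+\tfrac{\delta}{2}\sin(1/(r_0-r))}^*$, which does not converge. The missing hypothesis is convergence of the $\mbT$-projection, and --- this is the crux --- $p\tilde S_V^{(k)}$ is \emph{not} a priori convergent, because the branch decomposition of $\tilde S_V$ is not a decomposition of $S\big|_V$ by reducing open subsets of $\mbT$; the pieces $p\tilde S_V^{(k)}(r)$ are sub-rigged-sets of the continuous $S\big|_V(r)$ whose membership is chosen by the lift, not by a fixed spatial separation in $\mbT$. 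Your proposed induction on rank also cannot mimic the paper's proof: that proof branches on the structure of $S(r_0)$, whereas here $\tilde S_V^{(k)}(r_0)$ is exactly what you are trying to construct. The needed statement is in fact true in your situation, but it requires a different argument: by Lemma \ref{L: continuous sections of path S} write $\tilde S_V^{(k)}(r)=\set{\theta_1(r),\dots,\theta_{N_k}(r)}^*$ with each $\theta_j$ continuous and confined to the bounded interval $W_k$; if $\theta_j$ did not converge, then by the intermediate value theorem it hits every point of a nondegenerate subinterval infinitely often near $r_0$, so every $e^{ic}\neq 1$ with $c$ in that subinterval is a subsequential limit of $p(\theta_j(r))\in S\big|_V(r)$ and therefore lies in $S\big|_V(r_0)$ (a small lemma, using $d(S\big|_V(r),S\big|_V(r_0))\to 0$), contradicting the countability of $\supp S\big|_V(r_0)$. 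Once you supply this, the rest of your plan is sound; the paper's choice of $X_2$ avoids the extra lemma entirely, which is what makes it shorter.
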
 \margcom{OK \\ But read \\ again!}
\begin{proof}
Let $z \in S(r_0),$ $z \neq 1.$
Since $S$ is continuous, by Lemma \ref{L: X1 and X2} there exists a reducing neighbourhood $U_z$ of $z$ for $S$ in some left neighbourhood of $r_0,$
such that $U_z$ contains no other points of $S(r_0)$ except copies of $z$ itself.
By Lemma \ref{L: continuous liftings of reduced parts}, there exists a continuous lift $\tilde S\big|_{U_z}$ of the corresponding restriction.

Now, let $X_2$ be the set of all points $z$ of $S(r_0)$ not equal to $1,$ such that for all $r$ close enough to $r_0$
the rigged set $\tilde S\big|_{p^{-1}(U_z)}(r)$ contains at least one number
which does not belong to $\brs{-\frac \pi 2,\frac \pi 2}.$
The rigged set $X_2$ is finite, since otherwise the set $\tilde S(r)$ will not belong to $\euS_1(\mbR).$

It follows from Lemma \ref{L: X1 and X2} that for some small enough left neighbourhood $(r_1,r_0]$ of $r_0$
there exist a pair of reducing open subsets $U_1$ and $U_2$ of $\mbT$ for $S$ on $(r_1,r_0],$
such that $U_1 \supset X_1$ and $U_2 \supset X_2,$ where $X_1 := S(r_0) - X_2.$

It follows from Lemma \ref{L: continuous liftings of reduced parts} that there exist continuous liftings $\tilde S_1$ and $\tilde S_2$ of continuous
(by Lemma \ref{L: continuity of restriction}) functions $S\big|_{U_1}$ and $S\big|_{U_2}$ on $(r_1,r_0).$

It follows from Lemma \ref{L: prolongation of finite paths}, that $\tilde S_2$ admits continuous prolongation to $r_0.$
It also follows from Lemma \ref{L: prolongation of right half-circle paths}, that $\tilde S_1$ admits continuous prolongation to $r_0$ too.

Since $\tilde S = \tilde S_1 + \tilde S_2,$ it follows from
Lemma \ref{L: sum is continuous}, that $\tilde S$ admits continuous prolongation to $r_0.$

The proof is complete.
\end{proof}

\subsection{Completion of the proof}

\begin{lemma} \label{L: local lift exists} Let $\clX$ be a metric space, and let $x_0 \in \clX.$ Let $S \colon \clX \to \euS_1(\mbT)$
be a continuous function and let $\tilde S_0 \in \euS_1(\mbR)$ be such that $ p\circ \tilde S_0 = S(x_0).$
There exists a neighbourhood $W$ of $x_0,$ such that restriction of $S$ to $W$ admits a continuous lifting
$\tilde S \colon W \to \euS_1(\mbR)$ such that $\tilde S(x_0) = \tilde S_0.$
\end{lemma}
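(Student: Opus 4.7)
The plan is to split $\tilde S_0$ into a bulk piece concentrated near $0 \in \mbR$, to be lifted by the standard lifting of Definition \ref{D: standard lifting}, plus a finite exotic remainder, to be handled pointwise via Lemma \ref{L: one-point lift}, and then to add the two contributions using Lemma \ref{L: sum is continuous}. Concretely I would write $\tilde S_0 = \tilde A + \tilde B$ with $\tilde A := \tilde S_0 \cap (-\pi/2,\pi/2)$ absorbing the infinite multiplicity at $0$ and every point of $\tilde S_0$ in the central interval, and $\tilde B := \tilde S_0 \setminus \tilde A$ supported in $\mbR \setminus (-\pi/2,\pi/2)$. Since $0$ is the only accumulation point of $\tilde S_0$, the remainder $\tilde B$ is finite, so its off-lattice subset $\tilde B \setminus 2\pi\mbZ$ is separated from $2\pi\mbZ$ by some positive distance. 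I would then choose $\eps \in (0,\pi/2)$ smaller than this separation so that
\begin{equation*}
  \tilde B \cap \bigcup_{k \in \mbZ}(2\pi k - \eps, 2\pi k + \eps) \subset 2\pi\mbZ.
\end{equation*}
This quantitative choice is the crux of the argument and is exactly what makes the standard lifting applicable later.

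Next I would split $S(x_0) = X_1 \sqcup X_2$ by setting $X_1 := S(x_0) \cap \set{e^{i\theta}\colon |\theta| < \eps/2}$ and $X_2 := S(x_0) - X_1$. Since $1$ is the only accumulation point of $S(x_0)$ and $X_2$ is bounded away from $1$, the set $X_2$ is finite. Lemma \ref{L: X1 and X2} then supplies a neighbourhood $W$ of $x_0$ together with a reducing pair of open sets $U \supset \supp X_1$, which I arrange so that $U \subset \set{e^{i\theta}\colon |\theta| < \eps} \subset (-i,i)$, and $U' \supset \supp X_2$; after further shrinking, $U'$ decomposes as a disjoint union of small reducing neighbourhoods $V_z$ of the individual $z \in \supp X_2$. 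Lemma \ref{L: continuity of restriction} then makes each restriction $S\big|_{V_z}$ and $S\big|_U$ continuous on $W$.

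For each $z \in \supp X_2$ I apply Lemma \ref{L: one-point lift} to $S\big|_{V_z}$ with initial value $\tilde S_0 \cap p^{-1}(V_z)$, which is finite because $V_z$ is bounded away from $1$; this gives a continuous lift $\tilde T_z \colon W \to \euS_1(\mbR)$. For $S\big|_U$, the choice of $\eps$ ensures that $\tilde S_0 \cap p^{-1}(U)$ is supported in $(-\eps,\eps) \cup 2\pi\mbZ \subset [-\pi/2,\pi/2] \cup 2\pi\mbZ$: the $k=0$ sheet contributes only points of $\tilde A$ in $(-\eps,\eps)$, and for $k \neq 0$ the only points of $\tilde S_0$ in $(2\pi k - \eps, 2\pi k + \eps)$ are (by the choice of $\eps$) forced to sit exactly at $2\pi k$. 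Hence Definition \ref{D: standard lifting} applies and produces a continuous $\tilde T_U \colon W \to \euS_1(\mbR)$ with $\tilde T_U(x_0) = \tilde S_0 \cap p^{-1}(U)$. Setting $\tilde S := \tilde T_U + \sum_{z \in \supp X_2} \tilde T_z$ and invoking Lemma \ref{L: sum is continuous} gives continuity; the identity $p \circ \tilde S = S\big|_W$ follows from the reducing cover; and disjointness of $p^{-1}(U)$ and the $p^{-1}(V_z)$'s together with $\supp \tilde S_0 \subset p^{-1}(U) \cup \bigcup_z p^{-1}(V_z)$ yields $\tilde S(x_0) = \tilde S_0$.

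The main obstacle is meeting the support hypothesis of the standard lifting on the near-$1$ piece: its initial datum must lie in $[-\pi/2,\pi/2] \cup 2\pi\mbZ$, whereas the given $\tilde S_0$ may spread anywhere in $\mbR$. The quantitative choice of $\eps$ in the very first step is exactly what arranges for the exotic points of $\tilde B$ (those off the lattice $2\pi\mbZ$) to be siphoned into the pointwise lifts via Lemma \ref{L: one-point lift}, leaving a standard initial datum for the near-$1$ standard lifting.
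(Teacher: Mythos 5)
Your proof is correct and follows essentially the same architecture as the paper's: split $S(x_0)$ into a near-$1$ bulk piece and a finite exotic piece, handle the bulk via the standard lifting, handle the finite piece via Lemma \ref{L: one-point lift}, and add the lifts using Lemma \ref{L: sum is continuous}. The only real difference is in how the split is produced: the paper defines the exotic set intrinsically on $\mbR$ (those $\theta\in\tilde S_0$ lying outside $(-\pi/2,\pi/2)\cup 2\pi\mbZ$, together with their lattice-translates inside $\tilde S_0$) and then projects, whereas you split on $\mbT$ by angular distance to $1$ using a quantitatively chosen threshold $\eps$ that keeps the off-lattice points of $\tilde B$ out of every $\eps$-collar of $2\pi\mbZ$; either way the preimage over the near-$1$ neighbourhood lands in $(-\pi/2,\pi/2)\cup 2\pi\mbZ$, which is what the standard lifting needs.
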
 \margcom{OK}
\begin{proof}
(A) Let $$A = \set{\theta \in \tilde S_0 \colon \abs{\theta} \geq \pi/2 \ \text{and} \ \theta \notin 2\pi \mbZ}^*$$
and let
$$
  Y_2 = A + \set{\theta \in \tilde S_0 \colon \theta - 2\pi k \in A \ \text{for some} \ k \in \mbZ}^*.
$$
Let $Y_1 = \tilde S_0 - Y_2,$ $X_j = p(Y_j).$
Clearly, $S(x_0) = X_1 \sqcup X_2,$ and $X_2$ is finite.

%
%

By Lemma \ref{L: X1 and X2}, there exists a reducing pair of open neighbourhoods $U_1$ and $U_2$ of $X_1$ and $X_2$ respectively
for $S$ in some right neighbourhood $W = [r_0,r_1)$ of $r_0.$ The neighbourhood $U_1$ can be chosen so that $U_1 \subset (-i,i).$
By Lemma \ref{L: continuity of restriction}, the corresponding restrictions $S\big|_{U_1}$ and $S\big|_{U_2}$ are continuous on $W$
and $S = S\big|_{U_1} + S\big|_{U_2}.$

Since
$$
  \supp (\tilde S_0 \cap p^{-1}U_1) \subset \brs{-\frac \pi 2,\frac \pi 2} \cup 2\pi \mbZ,
$$
the function $S\big|_{U_1}$ has a continuous lifting $\tilde S\big|_{U_1}$ to $W$ by standard prolongation,
such that
$$
  \tilde S \big|_{U_1} (x_0) = \tilde S_0 \cap p^{-1} U_1.
$$

(B) Claim: the function $S\big|_{U_2}$ has a continuous lifting $\tilde S\big|_{U_2}$ to $W,$
such that
$$
  \tilde S \big|_{U_2} (x_0) = \tilde S_0 \cap p^{-1} U_2.
$$
Proof of (B). Since the rigged set $X_2$ is finite, we obviously can assume that it consists of only one point and its copies.
This one point case follows from Lemma \ref{L: one-point lift}. 

Combining (A) and (B), it follows from Lemma \ref{L: sum is continuous} that the function $\tilde S:=\tilde S\big|_{U_1} + \tilde S\big|_{U_2}$
gives continuous lifting of $S$ to $W,$ such that $\tilde S(x_0) = \tilde S_0.$
\end{proof}

\begin{thm} \label{T: Hurevich bundle} The triple $\brs{\euS_1(\mbR),\euS_1(\mbT),p}$ is a Hurevich bundle;
that is, if $\tilde S_0 \in \euS_1(\mbR),$ if $S \colon [0,1] \to \euS_1(\mbT)$ is continuous and if $p(\tilde S_0) = S(0),$
then there exists a continuous path $\tilde S \colon [0,1] \to \euS_1(\mbR),$ such that $\tilde S(0) = \tilde S_0$
and $S = p \circ \tilde S.$
\end{thm}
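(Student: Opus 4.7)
The plan is to reduce the theorem to a standard maximal-element argument built from the two key technical inputs already in hand: local existence of lifts at an arbitrary point (Lemma \ref{L: local lift exists}) and continuous prolongation of a lift from $[0,r_0)$ to $[0,r_0]$ (Lemma \ref{L: prolongation to [0,r]}).

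First I introduce the poset $\mathcal P$ whose elements are pairs $(r,\tilde S)$, where $r \in [0,1]$ and $\tilde S \colon [0,r] \to \euS_1(\mbR)$ is a continuous lift of $S\big|_{[0,r]}$ satisfying $\tilde S(0) = \tilde S_0$, partially ordered by $(r_1,\tilde S_1) \preceq (r_2,\tilde S_2)$ iff $r_1 \leq r_2$ and $\tilde S_2\big|_{[0,r_1]} = \tilde S_1$. The pair $(0,\tilde S_0)$ shows $\mathcal P$ is non-empty. I then plan to apply Zorn's lemma to $\mathcal P$ and show that the maximal element must satisfy $r=1$.

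Next I would verify that every chain $\{(r_\alpha,\tilde S_\alpha)\}$ in $\mathcal P$ admits an upper bound. Setting $r^\sharp := \sup_\alpha r_\alpha$, chain compatibility makes the assignment $\tilde S^\sharp(r) := \tilde S_\alpha(r)$ (for any $\alpha$ with $r_\alpha \geq r$) well-defined on $[0,r^\sharp)$; continuity at any $r < r^\sharp$ follows because $\tilde S^\sharp$ coincides with a fixed $\tilde S_\alpha$ throughout a full neighborhood of $r$. Lemma \ref{L: prolongation to [0,r]} then extends $\tilde S^\sharp$ continuously to the closed interval $[0,r^\sharp]$, yielding the required upper bound in $\mathcal P$.

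With chains bounded, Zorn's lemma produces a maximal element $(r^*,\tilde S^*)$. The closing step is to rule out $r^* < 1$. If $r^* < 1$, Lemma \ref{L: local lift exists} applied at the point $r^*$ with initial datum $\tilde S^*(r^*)$ furnishes a neighborhood $W \subset [0,1]$ of $r^*$ and a continuous lift $\tilde T \colon W \to \euS_1(\mbR)$ with $\tilde T(r^*) = \tilde S^*(r^*)$. Choosing $\delta > 0$ so that $[r^*,r^*+\delta] \subset W$ and concatenating $\tilde S^*$ with $\tilde T\big|_{[r^*,r^*+\delta]}$ (they agree at $r^*$ by construction) yields a strictly larger element of $\mathcal P$, contradicting maximality. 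Hence $r^* = 1$ and $\tilde S^*$ is the desired lift.

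The only real subtlety I anticipate is the bookkeeping in the chain step — one must be convinced that gluing the totally ordered family produces a genuinely continuous function on $[0,r^\sharp)$ — but this reduces to choosing, for each $r$, any index $\alpha$ with $r_\alpha > r$, since the real crossing of the limit point $r^\sharp$ has been absorbed into Lemma \ref{L: prolongation to [0,r]}. Everything else is formal manipulation of the lemmas already established.
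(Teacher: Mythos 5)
Your proof is correct, but it takes a structurally different route from the paper's. The paper argues by connectedness of $[0,1]$: it introduces the set $A \subset [0,1]$ of parameters $r_0$ for which a continuous lift of $S\big|_{[0,r_0]}$ exists, observes that $A$ is non-empty (contains $0$), open (by Lemma \ref{L: local lift exists}, any lift on $[0,r_0]$ prolongs to a right neighbourhood), and closed (by Lemma \ref{L: prolongation to [0,r]}), and concludes $A = [0,1]$. You instead organize partial lifts into a poset and apply Zorn's lemma, using Lemma \ref{L: prolongation to [0,r]} to bound chains and Lemma \ref{L: local lift exists} to show a maximal lift cannot stop short of $r = 1$. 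The two arguments invoke exactly the same pair of lemmas at the exactly analogous places, so the mathematical content is the same; the paper's open-and-closed argument is marginally tidier for a connected parameter interval since it avoids the transfinite machinery and the gluing bookkeeping, whereas your version would generalize more readily to a base that is not an interval. One small omission worth flagging in your chain step: if the supremum of a chain is already attained by some $r_\alpha$, the glued function is already defined on the closed interval and Lemma \ref{L: prolongation to [0,r]} is not needed (and indeed its hypothesis, a lift on the half-open interval, is not what you have); you should treat that degenerate case separately before invoking the prolongation lemma.
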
 \margcom{OK}
\begin{proof}
Let $A$ be the set of all $r_0 \in [0,1],$ such that the restriction of the path $S$ to $[0,r_0]$ has a continuous lifting.
Clearly, $0 \in A,$ so that $A \neq \emptyset.$

It follows from Lemma \ref{L: local lift exists} that if $r_0 \in A$ and $\tilde S$ is a lifting of $S$
on $[0,r_0],$ then there exists a right neighbourhood $W = [r_0,r_1)$ of $r_0,$ such that $\tilde S$
can be continuously prolonged to $W.$ Hence, the set $A$ is open.
It follows from Lemma \ref{L: prolongation to [0,r]} that $A$ is closed. Consequently,
$A$ is a non-empty closed and open subset of $[0,1].$ It follows that $A = [0,1].$
\end{proof}

The path $\tilde S,$ existence of which is proved in Theorem \ref{T: Hurevich bundle}, is called a lifting of the path $S.$
This lifting is not unique, in general. In this regard, it is desirable to study some relationship between different liftings.


\bigskip

Now we are in position to prove Theorem \ref{T: Selection Thm}.

{\it Proof of Theorem \ref{T: Selection Thm}.} By Theorem \ref{T: Hurevich bundle},
there exists a continuous lift $\tilde S \colon [0,1] \to \euS_1(\mbR)$ of the path $S.$
By Lemma \ref{L: continuous sections of path S}, there exists a sequence of continuous
functions $\theta_1, \theta_2, \ldots \colon [0,1] \to \mbR$
such that $\set{\theta_1(r),\theta_2(r),\ldots}^* = \tilde S(r).$ Obviously, the functions
$z_j(r) := e^{2\pi i \theta_j(r)}$ are continuous, and $\set{z_1(r),z_2(r),\ldots}^* = S(r).$
$\Box$

\section{$\mu$-invariant of a continuous path in $\euS_1(\mbT)$}

\subsection{Definition of $\mu$-invariant}
\subsubsection{The number $[\theta; \theta_1,\theta_2]$}
Let $\theta \in (0,2\pi).$ For any $\theta_1, \theta_2 \in \mbR,$ such that $\theta_1 < \theta_2,$
we define
$$
  [\theta; \theta_1, \theta_2] = \frac 12 \brs{\#\set{k \in \mbZ \colon \theta_1 < \theta+2\pi k < \theta_2} + \#\set{k \in \mbZ \colon \theta_1 \leq \theta+2\pi k \leq \theta_2}}.
$$
This number is equal to the number of times the point $e^{it}$ cross $e^{i\theta}$
in anti-clockwise direction as $t$ moves from $\theta_1$ to $\theta_2.$
If $\theta_2 < \theta_1,$ we let
$$
  [\theta; \theta_1, \theta_2] := - [\theta; \theta_2, \theta_1].
$$
Clearly, for any three numbers $\theta_1, \theta_2, \theta_2$
\begin{equation} \label{F: additivity of [theta;...]}
  [\theta; \theta_1, \theta_3] := [\theta; \theta_1, \theta_2] + [\theta; \theta_2, \theta_3].
\end{equation}

\subsubsection{Definition of $\mu$-invariant}
If we have a continuous path in $\euS_1(\mbT),$ it is desirable to know how many points in total cross a particular
point $\theta$ on the unit circle in counterclockwise direction and how many points cross that point in clockwise direction.
Actually, as it is easy to see, only difference of the above two numbers can be correctly defined. This number can be considered
as spectral flow in the case when the path in $\euS_1(\mbT)$ represents the changing spectra of a path of unitary operators of the class
$1+\clL_1(\hilb).$ This spectral flow was called $\mu$-invariant by A.\,Pushnitski who introduced this notion in \cite{Pu01FA}.

The definition of the $\mu$-invariant which follows is based on
Selection Theorem \ref{T: Selection Thm} and as such it differs from the one given in \cite{Pu01FA}.

\begin{defn} Let $a<b$ be two real numbers. Let $S \colon [a,b] \to \euS_1(\mbT)$ be a continuous path
and let $\tilde S \colon [a,b] \to \euS_1(\mbR)$ be any continuous lifting of the path $S.$
Let $\theta_1(\cdot), \theta_2(\cdot), \ldots$ be a continuous enumeration of $\tilde S.$ The $\mu-invariant$ of the path $S$
is a function
$$
  \mu(\theta; a,b) \colon (0,2\pi) \to \mbZ \cup \frac 12 \mbZ,
$$
defined by the formula
$$
  \mu(\theta; a,b) = \sum_{j=1}^\infty [\theta; \theta_j(a),\theta_j(b)].
$$
\end{defn}
This definition assumes that if a point of the path $S(r)$ arrives to $e^{i\theta}$ (in anticlockwise direction) it adds $\frac 12$ to the $\mu$-invariant and when
the point leaves $e^{i\theta}$ it adds another $\frac 12;$ as a result, when a point crosses $e^{i\theta}$ it adds $1$ to the $\mu$-invariant.

Since the rigged set $\set{\theta_j(r)}_{j=1}^\infty$ belongs to $\euS_1(\mbR),$
the above sum is finite for every $\theta \in (0,2\pi).$

Note that for any fixed $r$ the $\mu$-invariant is a locally constant function of $\theta,$ whose jumps occur at
points of $S(r).$ At discontinuity points the $\mu$-invariant takes half-integer values (which include integers too),
at continuity points the $\mu$-invariant is integer-valued. So, the $\mu$-invariant is essentially integer-valued.
Though $\mu$-invariant can take half-integer values as well, we shall usually ignore this.

Obviously, one needs to prove correctness of the definition of the $\mu$-invariant given above;
that is, to show that the definition does not depend on the choice of continuous enumeration $\theta_1, \theta_2, \ldots.$
The proof of the correctness which follows is a routine and straightforward check.

\begin{prop} \label{P: mu is fine} The $\mu$-invariant is correctly defined; that is, it does not depend on the choice of lifting $\tilde S$
and it does not depend on the choice of enumeration $\theta_1, \theta_2, \ldots$ of $\tilde S.$
\end{prop}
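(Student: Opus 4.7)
The plan is to identify $\mu(\theta; a, b)$ with the difference of values of a single functional $F$ evaluated at the endpoint multisets $\tilde S(a)$ and $\tilde S(b)$, and to derive lifting-independence from the translation invariance $[\theta; \alpha + 2\pi n, \beta + 2\pi n] = [\theta; \alpha, \beta]$.

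First I would introduce a step function $g_\theta \colon \mbR \to \tfrac 12 \mbZ$ taking the midpoint value at each of its jumps, defined so that $g_\theta(x)$ counts (with signs) the points of $\theta + 2\pi \mbZ$ lying strictly between $0$ and $x$. From the definition of $[\theta; \theta_1, \theta_2]$ and the additivity \eqref{F: additivity of [theta;...]}, one verifies the pointwise identity
$$
  [\theta; \alpha, \beta] = g_\theta(\beta) - g_\theta(\alpha), \quad \alpha,\beta \in \mbR.
$$
Since $g_\theta(0) = 0$ and $g_\theta$ vanishes on $(-\delta(\theta), \delta(\theta))$ with $\delta(\theta) := \min(\theta, 2\pi - \theta) > 0$, the functional
$$
  F(\tilde T) := \sum_{\xi \in \tilde T} g_\theta(\xi)
$$
(summed over the multiset with multiplicities) is a finite sum on any $\tilde T \in \euS_1(\mbR)$: the infinite multiplicity at $0$ is absorbed, and only finitely many elements lie outside $(-\delta(\theta), \delta(\theta))$.

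Substituting the pointwise identity into the defining sum yields
$$
  \mu(\theta; a, b) = \sum_{j=1}^\infty \brs{g_\theta(\theta_j(b)) - g_\theta(\theta_j(a))} = F(\tilde S(b)) - F(\tilde S(a)),
$$
which depends only on the multisets $\tilde S(a), \tilde S(b)$. This settles independence of the continuous enumeration of $\tilde S$. For independence of the lifting, given a second continuous lifting $\tilde S'$, I would fix a continuous enumeration $\{\theta_j\}$ of $\tilde S$ and produce a matched enumeration $\{\theta_j'\}$ of $\tilde S'$ by lifting each continuous path $e^{i \theta_j} \colon [a,b] \to \mbT$ to $\mbR$ starting from an initial value inside $\tilde S'(a)$ chosen to exhaust $\tilde S'(a)$. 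Then $(\theta_j' - \theta_j)/(2\pi)$ is a continuous integer-valued function on $[a,b]$, hence a constant $n_j \in \mbZ$, with only finitely many nonzero since $\tilde S, \tilde S' \in \euS_1(\mbR)$. The translation identity (immediate from the definition via $k \mapsto k - n$) then gives $[\theta; \theta_j'(a), \theta_j'(b)] = [\theta; \theta_j(a), \theta_j(b)]$ termwise.

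The main obstacle is boundary bookkeeping: the $\tfrac 12$-averaging in the definition of $[\theta; \cdot, \cdot]$ means $g_\theta$ must take the midpoint value at each jump $\theta + 2\pi k$ for the pointwise identity to hold exactly when an endpoint $\theta_j(a)$ or $\theta_j(b)$ coincides with a jump. A secondary technical step is verifying that the matched enumeration $\{\theta_j'\}$ of the alternative lifting $\tilde S'$ actually exhausts $\tilde S'(r)$ at every $r \in [a,b]$; this follows from standard covering-space uniqueness applied branch-by-branch, combined with the requirement $\{\theta_j'(a)\}^* = \tilde S'(a)$ at the initial point.
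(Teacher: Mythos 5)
Your reduction of enumeration-independence to the ``potential function'' identity $[\theta;\alpha,\beta]=g_\theta(\beta)-g_\theta(\alpha)$ and the resulting formula $\mu(\theta;a,b)=F(\tilde S(b))-F(\tilde S(a))$ is correct, and it is arguably cleaner than the paper's treatment of that part: the paper instead proves correctness by a connectedness (open-and-closed) argument on the set of $r_0$ for which the restriction to $[0,r_0]$ is enumeration-independent, analyzing separately the cases $e^{i\theta}\in S(r_0)$ and $e^{i\theta}\notin S(r_0)$.

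However, your argument for lifting-independence has a genuine gap, precisely at the step you flag as ``secondary.'' The map $p\colon\euS_1(\mbR)\to\euS_1(\mbT)$ is not a covering map: lifts of a path $S$ are \emph{not} unique even when the initial value is prescribed (the paper says this explicitly after Theorem \ref{T: Hurevich bundle}). Branch-by-branch covering-space uniqueness in $p\colon\mbR\to\mbT$ tells you that each individual $\theta_j'$ is uniquely determined by $e^{i\theta_j}$ and $\theta_j'(a)$; it does \emph{not} tell you that the resulting family exhausts $\tilde S'(r)$ for $r>a$. Concretely, take $S(r)=\set{e^{2\pi i r},e^{-2\pi i r}}^*$ on $[0,1]$. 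One lift is $\tilde S(r)=\set{2\pi r,-2\pi r}^*$ with enumeration $\theta_1(r)=2\pi r$, $\theta_2(r)=-2\pi r$. Another lift with the same starting multiset $\set{0,0}^*$ is $\tilde S'(r)=\set{2\pi r,-2\pi r}^*$ for $r\le\tfrac12$ and $\tilde S'(r)=\set{2\pi(1-r),-2\pi(1-r)}^*$ for $r\ge\tfrac12$ (the two branches ``swap'' at the collision point $r=\tfrac12$). Lifting $e^{i\theta_1},e^{i\theta_2}$ from $\tilde S'(0)=\set{0,0}^*$ reproduces $\tilde S$, not $\tilde S'$; in particular $\tilde S(1)=\set{2\pi,-2\pi}^*\ne\set{0,0}^*=\tilde S'(1)$. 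So the matched enumeration you construct enumerates \emph{some} lift starting at $\tilde S'(a)$, but not necessarily $\tilde S'$ itself, and the translation identity only compares $\tilde S$ with that third lift. (In this example the $\mu$-invariants do agree --- both give $0$ --- but that agreement is exactly the content of the proposition, not something your construction delivers.) To close the gap you would need an argument that two lifts with the same starting point yield the same value of $F(\cdot\, b)-F(\cdot\, a)$; the paper accomplishes this with the local analysis near collision points (its case $e^{i\theta}\in S(r_0)$) together with the open-and-closed argument, and I do not see a way to avoid that kind of local bookkeeping.
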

\begin{proof}
It is obvious that the $\mu$-invariant does not depend on rearrangement of functions in a given enumeration $\set{\theta_j(\cdot)}.$

(A) Let $A$ be the set of all $r_0 \in [0,1]$ such that for the restriction of the path $S(\cdot)$
to the interval $[0,r_0]$ definition of the $\mu$-invariant is correct; that is, that is does not depend on the choice
of the continuous enumeration $\set{\theta_j(r)}.$

We shall prove that $A = [0,1].$ Obviously, $0 \in A,$ so that $A \neq \emptyset.$
%
%
From now on we assume that $\theta \in (0,2\pi)$ is fixed.

(B) Here we show that $A$ is open.

So, let $r_0 \in A,$ that is, for $r \leq r_0$ definition of $\mu$-invariant at $\theta$ is correct for all $r \leq r_0.$
If $e^{i\theta} \notin S(r_0),$ then there exists a neighbourhood $U$ of $e^{i\theta}$
such that the distance between $U$ and $S(r_0)$ is positive, and, in particular, $U$ contains
no elements of $S(r_0).$ Since $S(\cdot)$ is continuous in $\euS_1(\mbT),$ it follows that for all $r$ close enough to $r_0$
the support of the set $S(r)$ also does not intersect with $U.$ It follows that none of the points of $S(r)$ cross $e^{i\theta};$
that is, values of all functions $\theta_j(\cdot)$ are not equal to any of the numbers $\theta +2\pi k,$ $k \in \mbZ.$
It follows that the value of $\mu$-invariant does not change for all $r$ close enough to $r_0,$ independently of the enumeration.

Now assume that $e^{i\theta} \in S(r_0).$ Then there exists a neighbourhood $U$ of $e^{i\theta}$ which contains only (copies of) one point from $S(r_0),$
and such that the distance between $U$ and $S(r_0) \setminus \set{e^{i\theta}}$ is positive. Let (without loss of generality) $\theta'_1(\cdot), \ldots, \theta'_N(\cdot)$
be those and only functions of the first enumeration, for which $e^{i\theta'_j(r_0)} = e^{i\theta},$ and let $\theta''_1(\cdot), \ldots, \theta''_N(\cdot)$
be the corresponding functions of the second enumeration with $e^{i\theta''_j(r_0)} = e^{i\theta}$ (clearly, the number $N$ of the functions is the same).
It follows that there exists a neighbourhood $W$ of $r_0,$
such that for all $r \in W$ the set $U$ contains only points $e^{i\theta'_1(r)}, \ldots, e^{i\theta'_N(r)}$ of $S(r);$ these points coincide with
$e^{i\theta''_1(r)}, \ldots, e^{i\theta''_N(r)}.$ For every $r \in W$ the number of numbers from $\theta'_1(\cdot), \ldots, \theta'_N(\cdot)$
which are larger than $\theta$ minus the number of numbers from $\theta'_1(\cdot), \ldots, \theta'_N(\cdot)$
which are less than $\theta$ represent the change of the $\mu$-invariant. Since this difference is clearly the same for the functions
$\theta''_1(\cdot), \ldots, \theta''_N(\cdot),$ the value of the $\mu$-invariant does not depend on enumeration for all $r \in W.$

Proof of (B) is complete.

(C) Let $r_0 \in (0,1].$ Assume that for all $r \in [0,r_0)$ the definition of the $\mu$-invariant is correct, that is $[0,r_0) \subset A.$
We show that $r_0 \in A,$ and this will complete the proof.

Again, we consider two cases: (1) $e^{i\theta} \notin S(r_0)$ and (2) $e^{i\theta} \in S(r_0).$

First case: $e^{i\theta} \notin S(r_0).$ In this case there exists a neighbourhood $U$ of $e^{i\theta}$
such that the distance between $U$ and  $S(r_0)$ is positive. It follows that there exists a neighbourhood $W$ of $r_0,$
such that for all $r \in W$ the set $U$ does not intersect with $S(r).$ This clearly implies that the $\mu$-invariant is the same
for $r_0$ and any $r \in W$ for both enumerations.

Second case: $e^{i\theta} \in S(r_0).$ Let $N$ be the multiplicity of $e^{i\theta}$ in $S(r_0).$
Let, as in part (B), $\theta'_1(\cdot), \ldots, \theta'_N(\cdot)$
be those and only functions of the first enumeration, for which $e^{i\theta'_j(r_0)} = e^{i\theta},$ and let $\theta''_1(\cdot), \ldots, \theta''_N(\cdot)$
be the corresponding functions of the second enumeration with $e^{i\theta''_j(r_0)} = e^{i\theta}$ (clearly, the number $N$ of the functions is the same).

In this case there exists a neighbourhood $U$ of $e^{i\theta}$
such that the distance between $U$ and  $S(r_0) \setminus \set{e^{i\theta}}$ is positive. It follows that there
exists a neighbourhood $W$ of $r_0$ such that for all $r \in W$ the set $U$ contains only points $e^{i\theta'_1(r)}, \ldots, e^{i\theta'_N(r)},$
which coincide with points $e^{i\theta''_1(r)}, \ldots, e^{i\theta''_N(r)}.$ By assumption, for any fixed $r < r_0$ the $\mu$-invariant have the same value
for both enumerations. The change of the $\mu$-invariant on the interval $[r,r_0]$ is represented by
$$
  \# \set{j \in 1\ldots N: \theta'_j(r) > \theta'_j(r_0)} - \# \set{j \in 1\ldots N: \theta'_j(r) < \theta'_j(r_0)}.
$$
Since this number is clearly the same for both enumerations $\set{\theta'_j}$ and $\set{\theta''_j},$ it follows that $r_0 \in A.$

The proof is complete.
\end{proof}

%
%
%

\begin{prop} \label{P: mu is additive} Let $S \colon [a,b] \to \euS_1(\mbT)$ be a continuous
path. The $\mu$-invariant of $S$ is additive in the sense that for any $a,c,b \in \mbR$
$$
  \mu(\theta; a,b) = \mu(\theta; a,c) + \mu(\theta; c,b).
$$
\end{prop}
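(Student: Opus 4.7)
The plan is to fix a single lifting and a single enumeration and then reduce the proposition to the term-by-term additivity of the bracket symbol $[\theta;\cdot,\cdot]$ that was established in equation (\ref{F: additivity of [theta;...]}).

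More precisely, I would first invoke Theorem \ref{T: Hurevich bundle} to obtain a continuous lifting $\tilde S\colon[a,b]\to\euS_1(\mbR)$ of $S$, and then apply Lemma \ref{L: continuous sections of path S} to obtain a continuous enumeration $\theta_1(\cdot),\theta_2(\cdot),\ldots$ of $\tilde S$ on the whole interval $[a,b]$. By Proposition \ref{P: mu is fine}, the three $\mu$-invariants $\mu(\theta;a,b)$, $\mu(\theta;a,c)$, $\mu(\theta;c,b)$ may all be computed from the restrictions of this single enumeration to the appropriate subintervals, since the definition does not depend on the choice of lifting or enumeration.

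Once the enumeration is in place, the proof becomes a one-line term-by-term application of (\ref{F: additivity of [theta;...]}): for every $j$,
$$
  [\theta;\theta_j(a),\theta_j(b)] = [\theta;\theta_j(a),\theta_j(c)] + [\theta;\theta_j(c),\theta_j(b)],
$$
and summing over $j$ yields the desired identity, provided all three series converge absolutely so that the sums can be split. Absolute convergence is not an issue: since $\tilde S(r)\in\euS_1(\mbR)$ for each $r$, only finitely many $j$ satisfy $|\theta_j(a)|\geq\pi$ or $|\theta_j(b)|\geq\pi$ or $|\theta_j(c)|\geq\pi$, and for all remaining $j$ at most one integer $k$ can lie in the relevant interval, so all three sums have only finitely many nonzero terms. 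The (minor) obstacle is checking the case where the ordering of $a,c,b$ is not the standard one; this is handled by the sign convention $[\theta;\theta_1,\theta_2]=-[\theta;\theta_2,\theta_1]$, under which the bracket additivity extends to all orderings, and hence so does the additivity of $\mu$.

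I do not expect a real difficulty here: the statement is essentially the telescoping of a pointwise-additive cocycle on a continuously parametrized family. The only subtlety worth spelling out is that we must choose the \emph{same} enumeration for all three computations; this is precisely what Proposition \ref{P: mu is fine} permits us to do.
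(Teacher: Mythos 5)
Your proof is correct and takes essentially the same route as the paper, which simply states that the result "directly follows from the definition of $\mu$-invariant and from~(\ref{F: additivity of [theta;...]})"; you have merely spelled out what the paper leaves implicit, namely that a single lifting and enumeration over $[a,b]$ serves for all three invariants (justified by Proposition~\ref{P: mu is fine}). One small inaccuracy in your finiteness remark: the condition $|\theta_j(a)|,|\theta_j(b)|<\pi$ does not by itself make the bracket vanish (one value of $k$ could still lie between them); what actually kills all but finitely many terms is that $\theta_j(a),\theta_j(b)\to 0$, so for large $j$ both lie in $(-\eps,\eps)$ with $\eps<\min(\theta,2\pi-\theta)$, and this is exactly the observation made in the text immediately after the definition of the $\mu$-invariant, so nothing new needs proving.
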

\begin{proof} Directly follows from the definition of $\mu$-invariant and from (\ref{F: additivity of [theta;...]}).
\end{proof}

As Corollary \ref{C: d = rho1} shows, definitions of the $\mu$-invariant given here and in \cite{Pu01FA} coincide.
At the same time, it is often more convenient to work with separate continuous eigenvalue-functions. While definition of $\mu$-invariant,
given here, might be a bit lengthy and technical, once introduced, it clarifies and simplifies many things.

\subsection{Homotopy invariance}
An important property of the $\mu$-invariant is its homotopy invariance.
Proof of homotopy invariance is also standard, see e.g. proof of homotopy invariance of spectral flow in \cite{Ph96CMB,Ph97FIC}.

%
%
%
%
%

\begin{thm} \label{T: mu-invariant is homotopically invariant}
The $\mu$-invariant is homotopically invariant. That is, if $F \colon [a,b] \times [0,1] \to \euS_1(\mbT)$
is a continuous function, such that $F(a,t) = 1$ for all $t \in [0,1]$ and $F(b,t) = F_1 = \const,$ then the paths $S_a(\cdot) = F(\cdot,0)$
$S_b(\cdot) = F(\cdot,1)$ have the same $\mu$-invariants.
\end{thm}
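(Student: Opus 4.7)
The plan is to exploit the Hurewicz bundle structure $p\colon \euS_1(\mbR) \to \euS_1(\mbT)$ from Theorem~\ref{T: Hurevich bundle} and show that any jointly continuous two-parameter lift of $F$ takes constant values on the two vertical edges of the square.

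First I would upgrade Theorem~\ref{T: Hurevich bundle} to a two-parameter statement: there exists a jointly continuous lift $\tilde F \colon [a,b] \times [0,1] \to \euS_1(\mbR)$ of $F$ with $\tilde F(a, t) = \mathbf{0}$ for every $t \in [0,1]$. A natural way to obtain it is to first apply Theorem~\ref{T: Hurevich bundle} to the bottom path $r \mapsto F(r, 0)$ starting from $\mathbf{0}$, and then for each fixed $r$ to lift the vertical path $t \mapsto F(r, t)$ starting from $\tilde F(r, 0)$. Joint continuity is then verified by imitating the open/closed argument of Theorem~\ref{T: Hurevich bundle}: Lemma~\ref{L: local lift exists} supplies local lifts on a neighbourhood of each point of the square, and these patch together because any two continuous lifts that agree at a single point of a connected open set must agree throughout it.

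Next I would observe that the fibre $p^{-1}(F_1) \subset \euS_1(\mbR)$ is a discrete subset. Indeed, any two lifts of $F_1$ differ, at each eigenvalue, by an integer multiple of $2\pi$, and only finitely many of these multiples may be nonzero (by summability). A direct comparison of enumerations shows that the $\euS_1(\mbR)$-distance between two distinct lifts of $F_1$ is at least $2\pi$. Since $t \mapsto \tilde F(b, t)$ is continuous, $[0,1]$ is connected, and its image lies in this discrete fibre, the value $\tilde F(b, t) \equiv \tilde F_1$ is independent of $t$.

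Finally, for each $t \in \{0, 1\}$ Lemma~\ref{L: continuous sections of path S} furnishes a continuous enumeration $\theta^{(t)}_1, \theta^{(t)}_2, \ldots$ of the lifted path $r \mapsto \tilde F(r, t)$. Since $\tilde F(a, t) = \mathbf{0}$ we have $\theta^{(t)}_j(a) = 0$ for all $j$, so by the definition of the $\mu$-invariant together with Proposition~\ref{P: mu is fine},
\begin{equation*}
  \mu(\theta; S_t) = \sum_j [\theta; 0, \theta^{(t)}_j(b)].
\end{equation*}
The rigged multiset $\{\theta^{(t)}_j(b)\}^{*} = \tilde F(b, t) = \tilde F_1$ coincides for $t = 0$ and $t = 1$, so the two sums are equal up to a permutation of summands, which Proposition~\ref{P: mu is fine} says does not affect the value. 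This yields $\mu(\theta; S_a) = \mu(\theta; S_b)$. The main obstacle is Step~1: the paper establishes only a one-parameter Hurewicz lift, and joint continuity of the two-parameter lift requires a careful rerun of the open/closed patching in both coordinates. Once that is in hand, Steps~2 and~3 are essentially automatic consequences of the discreteness of the fibre and the endpoint-only dependence of the bracket $[\theta;\cdot,\cdot]$.
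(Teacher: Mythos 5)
Your route --- lift the whole square to $\euS_1(\mbR)$ and read the $\mu$-invariant off the right edge --- is genuinely different from the paper's, which establishes local constancy of the $\mu$-invariant of restricted paths on small rectangles (case analysis on whether $e^{i\theta}\in F(x_0)$) and then patches via additivity, Proposition~\ref{P: mu is additive}, never constructing a global lift. Unfortunately Step~2 contains a real error. It is not true that distinct lifts of a fixed $F_1 \in \euS_1(\mbT)$ are at $\euS_1(\mbR)$-distance at least $2\pi$: the metric $d$ is an infimum over \emph{all} pairs of enumerations, not only the ``aligned'' one matching each eigenvalue with its own copy, and crossing pairings can be cheaper. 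For instance, take $F_1 = \set{e^{i\theta_1}}^*$ with $\theta_1\in(\pi,2\pi)$; the lifts $\tilde S = \set{\theta_1}^* + \mathbf 0$ and $\tilde S' = \set{\theta_1-2\pi,\,2\pi}^* + \mathbf 0$ both project to $F_1$, and Lemma~\ref{L: S(pm) belong to euS(R)}(b) gives $d(\tilde S,\tilde S') = d(\tilde S_+,\tilde S_+') + d(\tilde S_-,\tilde S_-') = (2\pi-\theta_1)+(2\pi-\theta_1) = 2(2\pi-\theta_1) < 2\pi$. Worse, if $F_1$ has eigenvalues $e^{i\theta_n}$ with $\theta_n\to 2\pi^-$, the same manoeuvre produces from a fixed lift distinct lifts at distance $2(2\pi-\theta_n)\to 0$, so $p^{-1}(F_1)$ is not even topologically discrete and the ``continuous image of a connected set in a discrete space is a point'' argument simply fails.

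The strategy can be repaired, because what you actually need is that the $\mu$-invariant at a fixed $\theta$ is locally constant on the fibre, not that the lift is locally constant. Fix $\theta\in(0,2\pi)$ with $e^{i\theta}\notin F_1$; since $F_1$ accumulates only at $1$, the set $p^{-1}(\supp F_1)\subset\mbR$ is at some positive distance $\delta$ from $\theta+2\pi\mbZ$, and every lift of $F_1$ is supported in $p^{-1}(\supp F_1)$. If two lifts $\tilde T,\tilde T'$ of $F_1$ satisfy $d(\tilde T,\tilde T')<\delta$, choose enumerations with $\sum_j|\theta_j-\theta_j'|<\delta$; then no interval with endpoints $\theta_j,\theta_j'$ can contain a point of $\theta+2\pi\mbZ$, so $[\theta;\theta_j',\theta_j]=0$ for all $j$ and $\sum_j[\theta;0,\theta_j]=\sum_j[\theta;0,\theta_j']$. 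This locally constant integer is therefore constant along $t\mapsto\tilde F(b,t)$, which is all the conclusion requires; $\tilde F(b,t)$ itself need not be constant in $t$. Separately, Step~1 is also not a formality: Theorem~\ref{T: Hurevich bundle} proves only path lifting for $[0,1]\to\euS_1(\mbT)$, not a homotopy lifting property, and obtaining a jointly continuous lift of a map from $[a,b]\times[0,1]$ is strictly more --- you would need either a two-dimensional open/closed patching (requiring a local uniqueness-of-lifts lemma the paper does not state) or a proof that the vertical lifts starting at $\tilde F(r,0)$ depend continuously on $r$. The paper's local-plus-additivity proof is constructed precisely to sidestep this.
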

\begin{proof}
%
%
%
%

Let $\theta \in (0,2\pi)$ be fixed.

(A) Let $x_0 \in [a,b] \times [0,1].$ There exists a neighbourhood $W$ of $x_0$ such that for any two points $x_1, x_2 \in W$
and any two paths $I_1$ and $I_2$ in $U$ which begin at $x_1$ and end at $x_2,$ the equality
$$
  \mu(\theta; F\big|_{I_1}) = \mu(\theta; F\big|_{I_2})
$$
holds.

Proof of (A). 1 case: $\theta \notin F(x_0).$ In this case there exists a neighbourhood $W$ of $x_0$ and a neighbourhood $U$ of $\theta$
which is reducing for $F$ on $W;$ that is, for any $x \in W$
$$
  \supp F(x) \cap U = \emptyset.
$$
Plainly, this implies that $\mu(\theta; F\big|_{I_1}) = \mu(\theta; F\big|_{I_2}).$

2 case: $\theta \in F(x_0).$ In this case there exists a neighbourhood $W$ of $x_0$ and a neighbourhood $U$ of $\theta$
which is reducing for $F$ on $W$ and such that $\supp F(x_0) \cap U$ contains only $\theta.$
This restriction, defined on $W,$ admits a (standard) continuous lifting $\tilde F\big|_{U}$ which admits a continuous enumeration
$\theta_1(\cdot), \ldots, \theta_N(\cdot),$ where $N$ is the multiplicity of $x_0.$
(by Lemma \ref{L: continuous sections of path S}). Restrictions of the functions
$\theta_1(\cdot), \ldots, \theta_N(\cdot)$ to $I_1$ and $I_2$ give continuous enumerations $\theta'_j(\cdot)$ and $\theta''_j(\cdot)$ of $F\big|_{I_1}$
and $F\big|_{I_2}.$ Clearly,
$$
  [\theta; \theta'_j(x_1), \theta'_j(x_2)] = [\theta; \theta''_j(x_1), \theta''_j(x_2)].
$$
It follows that $\mu(\theta; F\big|_{I_1}) = \mu(\theta; F\big|_{I_2}).$

(B) The rest of the proof is standard: it uses (A) and additivity property (Proposition \ref{P: mu is additive}) of the $\mu$-invariant.
\end{proof}

\subsection{A property of $\mu$}

\begin{prop} \label{P: mu is constant, if ...}
If $S$ is a continuous path $[0,1] \to \euS_1(\mbT)$
which begins and ends at $1,$
then the $\mu$-invariant of $S$ is a constant function.
\end{prop}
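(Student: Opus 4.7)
The plan is to use the Selection Theorem together with the Hurewicz bundle property (Theorem \ref{T: Hurevich bundle}) to reduce the $\mu$-invariant to an explicit, $\theta$-independent count.

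First I would lift $S$ to a continuous path $\tilde S \colon [0,1] \to \euS_1(\mbR)$ with $\tilde S(0) = {\bf 0}$; this is possible by Theorem \ref{T: Hurevich bundle} since $p({\bf 0}) = S(0) = {\bf 1}$. Because $S(1) = {\bf 1}$, the fiber condition $p\circ\tilde S(1) = S(1)$ forces $\supp \tilde S(1) \subset 2\pi\mbZ$. By Lemma \ref{L: continuous sections of path S}, the path $\tilde S$ admits a continuous enumeration $\theta_1, \theta_2, \ldots \colon [0,1] \to \mbR$. Each $\theta_j$ satisfies $\theta_j(0) = 0$ and $\theta_j(1) = 2\pi k_j$ for some $k_j \in \mbZ$, and since $\tilde S(1) \in \euS_1(\mbR)$, only finitely many $k_j$ are nonzero.

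Next I would compute $[\theta; 0, 2\pi k_j]$ directly from the definition. Fix $\theta \in (0, 2\pi)$. Suppose first that $k_j > 0$. Then $k \in \mbZ$ satisfies $0 < \theta + 2\pi k < 2\pi k_j$ exactly when $k \in \{0, 1, \ldots, k_j - 1\}$, giving $k_j$ values. Because $\theta \notin \{0, 2\pi k_j\}$ (as $\theta \in (0,2\pi)$ and $k_j \geq 1$), the non-strict count is the same, so $[\theta; 0, 2\pi k_j] = k_j$. The case $k_j < 0$ is symmetric and yields $k_j$ by the antisymmetry convention $[\theta;\theta_1,\theta_2] = -[\theta;\theta_2,\theta_1]$; the case $k_j = 0$ trivially gives $0$. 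Hence, independently of $\theta \in (0, 2\pi)$,
\begin{equation*}
  [\theta; \theta_j(0), \theta_j(1)] = k_j.
\end{equation*}

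Finally, summing over $j$ and using correctness of the definition of $\mu$ (Proposition \ref{P: mu is fine}), I obtain
\begin{equation*}
  \mu(\theta; 0, 1) = \sum_{j=1}^\infty [\theta; \theta_j(0), \theta_j(1)] = \sum_{j=1}^\infty k_j,
\end{equation*}
which is a finite integer independent of $\theta$. There is no real obstacle; the only point to watch is to make sure the chosen lift starts at ${\bf 0}$ (so the endpoint values $\theta_j(1) = 2\pi k_j$ give the clean formula above) and that $\theta \in (0, 2\pi)$ strictly, so that no boundary effects spoil the equality $[\theta; 0, 2\pi k_j] = k_j$.
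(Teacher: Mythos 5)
Your proof is correct, but it takes a more computational route than the paper's. The paper's proof is a one-liner: from the definition, the function $\theta \mapsto [\theta;\theta_1,\theta_2]$ can only jump when $\theta$ is congruent mod $2\pi$ to $\theta_1$ or $\theta_2$, so the jumps of $\mu(\theta;S)$ all lie in $S(0) \cup S(1) = \{\mathbf 1\}$, which does not meet $(0,2\pi)$; hence $\mu$ has no jumps and is constant. Your argument instead pins down the constant explicitly: after lifting with $\tilde S(0) = \mathbf 0$, the endpoint condition forces $\theta_j(1) = 2\pi k_j$, and a direct count (which you carry out correctly, including the sign conventions for $k_j < 0$ and $k_j = 0$) gives $[\theta; 0, 2\pi k_j] = k_j$ for every $\theta \in (0,2\pi)$, so $\mu(\theta;S) = \sum_j k_j$ is the total winding number, visibly $\theta$-independent. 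What the paper's argument buys is generality (it locates the jump set of $\mu$ for \emph{any} path, not only loops at $\mathbf 1$, and so also yields Corollary \ref{C: mu-inv. = const} with no extra work); what yours buys is an explicit closed form for the constant, which is essentially the content of Lemma \ref{L: SS3 int mu = sum theta j} specialized to loops. Both are correct; yours is a legitimate alternative.
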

\begin{proof} $\mu$-invariant is a step-function on $(0,2\pi);$ it follows from the definition
of the $\mu$-invariant that its jumps occur at points which belong either
to $S(0)$ or $S(1).$ The claim follows.
\end{proof}

\begin{cor} \label{C: mu-inv. = const}
If $S$ and $T$ are two continuous paths $[0,1] \to \euS_1(\mbT)$
such that $S(0) = T(0) = {\bf 1}$ and $S(1) = T(1),$
then the difference
$$
  \mu(\theta; S) - \mu(\theta; T)
$$
is constant (does not depend on $\theta$).
\end{cor}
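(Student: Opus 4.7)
The plan is to reduce this corollary to Proposition \ref{P: mu is constant, if ...} by concatenating $S$ with the reverse of $T$ to form a loop based at ${\bf 1}$, and then exploiting additivity together with the sign behaviour of the symbol $[\theta;\theta_1,\theta_2]$ under reversal of endpoints.

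Concretely, I would first define the reverse path $\bar T \colon [0,1] \to \euS_1(\mbT)$ by $\bar T(r) := T(1-r)$, which is continuous, starts at $\bar T(0) = T(1) = S(1)$ and ends at $\bar T(1) = T(0) = {\bf 1}$. Concatenating, I would form the continuous path $R \colon [0,2] \to \euS_1(\mbT)$ defined by $R(r) = S(r)$ for $r \in [0,1]$ and $R(r) = \bar T(r-1)$ for $r \in [1,2]$; it is continuous because both pieces agree at $r=1$. By construction, $R(0) = R(2) = {\bf 1}$, so Proposition \ref{P: mu is constant, if ...} (applied, after rescaling, to this loop) implies that $\theta \mapsto \mu(\theta; R; 0,2)$ is constant on $(0,2\pi)$.

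Next I would apply additivity (Proposition \ref{P: mu is additive}) to split this $\mu$-invariant at $r=1$:
\[
  \mu(\theta; R; 0,2) = \mu(\theta; S; 0,1) + \mu(\theta; \bar T; 0,1).
\]
Finally, I would show that $\mu(\theta; \bar T; 0,1) = -\mu(\theta; T; 0,1)$. Fix a continuous lifting $\tilde T$ of $T$ with a continuous enumeration $\theta_1(\cdot),\theta_2(\cdot),\ldots$; then $\theta_j(1-\cdot)$ gives a continuous enumeration of a continuous lifting of $\bar T$. From the identity $[\theta;\theta_j(1),\theta_j(0)] = -[\theta;\theta_j(0),\theta_j(1)]$ (an immediate consequence of the definition of $[\theta;\theta_1,\theta_2]$), summing over $j$ yields the desired sign flip.

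Combining the three displayed equalities gives
\[
  \mu(\theta; S) - \mu(\theta; T) = \mu(\theta; R; 0,2) = \mathrm{const},
\]
which is the claim. There is no real obstacle here; the only thing to verify carefully is that reversal of a lifting yields a valid continuous enumeration, but this is immediate from continuity of $\theta_j(\cdot)$ and the symmetry of the bracket $[\theta;\cdot,\cdot]$, so the argument is essentially a formal concatenation-and-reversal computation.
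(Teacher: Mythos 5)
Your proposal is correct and is precisely the argument the paper's one‑line proof is invoking: concatenate $S$ with the reverse of $T$ to obtain a loop based at ${\bf 1}$, apply Proposition~\ref{P: mu is constant, if ...} to the loop, and use additivity (Proposition~\ref{P: mu is additive}) together with the sign‑flip of $[\theta;\cdot,\cdot]$ under reversal to split the loop's constant $\mu$-invariant into $\mu(\theta;S)-\mu(\theta;T)$. You have simply written out in full the concatenation-and-reversal computation that the paper leaves implicit.
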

\begin{proof} This follows from additivity of the $\mu$-invariant (Proposition \ref{P: mu is additive})
and Proposition~\ref{P: mu is constant, if ...}.
\end{proof}

\begin{lemma} \label{L: (f(j)=-1) finite}
  Let $S \colon [0,1] \to \euS_1(\mbT)$ be a continuous function such that $S(0) = {\bf 1}.$
  For any continuous enumeration $\set{z_1, z_2, \ldots}^*$ of $S,$ the set
  $$
    \set{j \in \mbN \colon z_j(r) = -1 \ \text{for some} \ r \in [0,1]}
  $$
  is finite.
\end{lemma}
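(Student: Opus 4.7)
The plan is to argue by contradiction and exploit the fact that elements of $\euS_1(\mbT)$ have $1$ as their only accumulation point, forcing $-1$ to be isolated in $\supp S(r^*)$ at any limit time $r^*$. Suppose the set in question is infinite. Then I can pick distinct indices $j_1, j_2, \ldots$ and times $r_k \in [0,1]$ with $z_{j_k}(r_k) = -1$, and, by compactness of $[0,1]$, pass to a subsequence (not relabelled) so that $r_k \to r^* \in [0,1]$. Since $S(r^*) \in \euS_1(\mbT)$, the multiplicity $m := \mult(-1; S(r^*))$ is a finite non-negative integer, and there exists $d_0 > 0$ such that the only elements of $\supp S(r^*)$ within distance $d_0$ of $-1$ are the $m$ copies of $-1$ itself.

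Next I would do the bookkeeping at the level of the enumeration. Let $j'_1, \ldots, j'_m$ denote the indices with $z_{j'_i}(r^*) = -1$. Since the $j_k$'s are pairwise distinct, at most $m$ of them can lie in $\set{j'_1, \ldots, j'_m}$, so after discarding finitely many terms I may assume $j_k \notin \set{j'_1, \ldots, j'_m}$, and hence $z_{j_k}(r^*) \neq -1$, for every $k$. Fix $\eps \in (0, d_0/2)$. By continuity of each function $z_{j'_i}$ at $r^*$, for every sufficiently large $k$ one has $\abs{z_{j'_i}(r_k) - (-1)} < \eps$ for $i = 1, \ldots, m$, while $z_{j_k}(r_k) = -1$ exactly. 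The $m+1$ pairwise distinct indices $j_k, j'_1, \ldots, j'_m$ then contribute to $S(r_k)$ at least $m+1$ elements (counted with multiplicity) inside the $\eps$-ball around $-1$, whereas $S(r^*)$ contains exactly $m$ such elements because $\eps < d_0$.

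To finish, I would exhibit the contradiction from the $d$-metric. In any pair of enumerations of $S(r_k)$ and $S(r^*)$, at least one of the $m+1$ near-$-1$ positions on the $S(r_k)$-side must be matched to a position on the $S(r^*)$-side whose value lies at distance $\geq d_0$ from $-1$; by the reverse triangle inequality this single pair contributes at least $d_0 - \eps > d_0/2$ to the sum, so the distance between the two enumerations is bounded below by $d_0/2$. Taking the infimum over enumerations yields $d(S(r_k), S(r^*)) \geq d_0/2$ for all large $k$, contradicting the continuity of $S$ at $r^*$. I expect the main obstacle to be justifying the matching argument cleanly in the presence of possibly coinciding points; the simplest way is to first reduce to simple finite rigged sets via the density argument of part~(A) of the proof of Lemma~\ref{L: d(S,T)=dist(S theta,T theta)}, where the counting becomes entirely elementary.
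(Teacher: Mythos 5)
Your proof is correct, and it takes a genuinely different route from the paper's. The paper's argument lifts $S$ to a continuous path $\tilde S \colon [0,1]\to\euS_1(\mbR)$ via Theorem~\ref{T: Hurevich bundle}, takes a continuous enumeration $\set{\theta_n}$ of $\tilde S$ (Lemma~\ref{L: continuous sections of path S}), and concludes from the uniform convergence of $\sum_n\abs{\theta_n(r)}$ (Lemma~\ref{L: sum theta converges uniformly}) that only finitely many of the $\theta_n$ can ever reach modulus $\geq\pi$; an infinite set of indices hitting $-1$ would then supply distinct $n_j$ with $\abs{\theta_{n_j}(r_j)}\geq\pi$, a contradiction. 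That derivation is short but leans on the entire lifting machinery, and, as written, it leaves implicit the identification of the \emph{given} enumeration $\set{z_j}$ with the one projected from the lifting (i.e., that the individual lifts $\theta_j$ of the $z_j$ with $\theta_j(0)=0$ assemble into a continuous path in $\euS_1(\mbR)$), which is what guarantees the indices $n_j$ are distinct. Your argument sidesteps lifting entirely: it works directly in $\euS_1(\mbT)$ using compactness of $[0,1]$, the isolation of $-1$ in $S(r^*)$ (a consequence of $1$ being the unique accumulation point), and a pigeonhole bound on the metric $d$. It is thereby more elementary and self-contained, and in fact it does not even use the hypothesis $S(0)={\bf 1}$, which is needed only to ensure a continuous enumeration exists. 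One minor remark: the hedging at the end about reducing to simple finite rigged sets is unnecessary. The pigeonhole step in your last paragraph is already valid for arbitrary enumerations of arbitrary elements of $\euS_1(\mbT)$, because it only compares the numbers of enumeration slots whose values lie inside a fixed neighbourhood of $-1$ that is separated from $1$, and both such counts are finite by the definition of $\euS_\infty(\mbT)$.
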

\begin{proof} Let $\tilde S \colon [0,1] \to \euS_1(\mbR)$ be a continuous lifting of $S$
(such a lifting exists by Theorem \ref{T: Hurevich bundle})
and let $\set{\theta_1,\theta_2,\ldots}^*$ be a continuous enumeration of $\tilde S$
(it exists by Lemma \ref{L: continuous sections of path S}).
If we assume the contrary to the claim of the lemma, then there exists a sequence $r_1,r_2,\ldots \in [0,1]$
and a sequence $n_1, n_2, \ldots$ of indices such that $\abs{\theta_{n_j}(r_j)} \geq \pi.$
But this contradicts to Lemma \ref{L: sum theta converges uniformly}.
\end{proof}

\begin{prop} \label{P: homotopic imply the same mu-invariant} Let $S$ and $T$ be two continuous paths $[0,1] \to \euS_1(\mbT)$
such that $S(0) = T(0) = S(1) = T(1) = {\bf 1}.$ If $S$ and $T$ have the same $\mu$-invariants (which are necessarily constant, by Corollary \ref{C: mu-inv. = const}),
then $S$ and $T$ are homotopic.
\end{prop}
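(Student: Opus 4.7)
The plan is to show that every loop $S\colon[0,1]\to\euS_1(\mbT)$ based at ${\bf 1}$ is homotopic to a standard loop $L_n$ with $n=\mu(\theta;S),$ where $L_n(r):=\set{e^{2\pi inr}}^*+{\bf 1}$ denotes the loop in which a single point winds $n$ times. Once this is established, homotopy invariance of $\mu$ (Theorem~\ref{T: mu-invariant is homotopically invariant}) combined with $\mu(\theta;L_n)=n$ gives the proposition, for under the hypothesis $\mu(S)=\mu(T)$ we obtain $S\sim L_{\mu(S)}=L_{\mu(T)}\sim T.$

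First, I would lift $S$ via Theorem~\ref{T: Hurevich bundle} to $\tilde S\colon[0,1]\to\euS_1(\mbR)$ with $\tilde S(0)={\bf 0},$ and pick a continuous enumeration $\set{\theta_j}_{j=1}^\infty$ of $\tilde S$ via Lemma~\ref{L: continuous sections of path S}. Each $\theta_j(0)=0;$ since $p(\tilde S(1))={\bf 1}$ we have $\theta_j(1)=2\pi n_j$ for some $n_j\in\mbZ;$ and by Lemma~\ref{L: (f(j)=-1) finite} only finitely many $n_j$ are nonzero (for $n_j\neq 0,$ continuity of $\theta_j$ forces it to cross an odd multiple of $\pi,$ so $z_j=e^{i\theta_j}$ passes through $-1$). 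Relabel so $n_1,\ldots,n_k$ are the nonzero winding numbers; then $\mu(\theta;S)=\sum_{j=1}^k n_j.$

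Next, I would homotope $\tilde S$ in $\euS_1(\mbR)$ to the straight-line path $\tilde L(r):=\set{2\pi n_jr}_{j=1}^k+{\bf 0}$ via
$$
  \tilde H(r,s):=\set{(1-s)\theta_j(r)+2\pi sn_jr}_{j=1}^k+(1-s)\cdot\set{\theta_j(r)}_{j>k}^*+{\bf 0},
$$
where $(1-s)\cdot A:=\set{(1-s)x\colon x\in A}^*.$ The finite-rank first summand is continuous by iterated use of Lemma~\ref{L: sum is continuous}; the tail $\set{\theta_j(r)}_{j>k}^*+{\bf 0}$ is continuous by Corollary~\ref{C: subset of enumeration is continuous}, and scalar multiplication by $(1-s)$ is jointly continuous (the uniform smallness of tails required by Lemma~\ref{L: cont-s f-ns imply cont-s set f-n} being furnished by Lemma~\ref{L: sum theta converges uniformly}); joint continuity of $\tilde H$ then follows from Lemma~\ref{L: d(S1+S2,T1+T2)<...}. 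Since $\tilde H(0,s)={\bf 0}$ and $\tilde H(1,s)=\tilde L(1)$ for every $s,$ composing with $p$ yields a homotopy rel endpoints from $S$ to $L:=p\circ\tilde L$ in $\euS_1(\mbT).$

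Finally, I would sequentialize the $k$ simultaneously winding points of $L(r)=\set{e^{2\pi in_jr}}_{j=1}^k+{\bf 1}$ by the homotopy
$$
  F(r,s):=\set{e^{2\pi in_j\phi_j(r,s)}}_{j=1}^k+{\bf 1},\qquad\phi_j(r,s):=(1-s)r+s\cdot\max(0,\min(1,kr-j+1)).
$$
Then $F(\cdot,0)=L,$ $F(0,s)=F(1,s)={\bf 1},$ and at $s=1$ only the $j$-th coordinate moves on $[(j-1)/k,j/k]$ (winding $n_j$ times) while all other coordinates sit at $1.$ Hence $F(r,1)$ equals $\set{w(r)}^*+{\bf 1}$ for a continuous $w\colon[0,1]\to\mbT$ with $w(0)=w(1)=1$ and total winding $\sum n_j=\mu(\theta;S);$ a standard reparametrization of the argument of a continuous lift of $w$ then deforms this loop to $L_{\mu(S)}.$

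The main obstacle is the joint continuity of $F$ at the hand-off instants $r=j/k,$ where the $j$-th coordinate reaches $e^{2\pi in_j}=1$ while the $(j+1)$-st begins to leave $1.$ This reduces via Lemma~\ref{L: sum is continuous} to continuity of each single-coordinate function $r\mapsto e^{2\pi in_j\phi_j(r,s)}\in\mbT,$ but one must verify that the ``incoming'' and ``outgoing'' unit values at such transitions are absorbed into and released from the background ${\bf 1}$ without producing any discontinuity in the rigged-set value---which follows from the infinite multiplicity of $1$ in ${\bf 1},$ so that adjoining or deleting a finite number of copies of $1$ does not change the point of $\euS_1(\mbT).$
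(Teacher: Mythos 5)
Your proof is correct, but it follows a route that differs from the paper's in two respects. The paper works downstairs in $\euS_1(\mbT)$: it splits $S$ into the finite part $S_1$ consisting of those eigenvalue curves that reach $-1$ and the remainder $S_2$, contracts $S_2$ by scaling arguments in $(-\pi,\pi)$, normalizes the curves of $S_1$ to have arguments valued in $[0,2\pi]$ or $[-2\pi,0]$ with endpoint values $\{0,\pm2\pi\}$, cancels pairs of opposite ``type,'' and arrives at the model $r\mapsto\set{e^{2\pi ir},\ldots,e^{2\pi ir}}^*$ ($N$ copies each winding once). You instead lift to $\euS_1(\mbR)$, apply a linear homotopy $\tilde H(r,s)$ that straightens the finitely many nontrivially winding lifts and uniformly shrinks the remaining tail, project by $p$, and then serialize the resulting $k$ simultaneous loops into a single point winding $\sum n_j=\mu(S)$ times. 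What your version buys is that the continuity verifications become mechanical applications of Lemma~\ref{L: d(S1+S2,T1+T2)<...}, Lemma~\ref{L: cont-s f-ns imply cont-s set f-n}, and Lemma~\ref{L: sum theta converges uniformly} (or Proposition~\ref{P: if K is compact ...}), and you sidestep entirely the paper's somewhat delicate ``without loss of generality'' normalization of the arguments to lie in $[0,2\pi]$ or $[-2\pi,0]$ --- that normalization is not automatic when a curve $\theta_j$ winds more than once, and the paper leaves the reduction unjustified. Your terminal model $L_{\mu(S)}$ (one point winding $\mu(S)$ times) and the paper's model ($\mu(S)$ points each winding once) are themselves easily homotopic, so the two proofs are fully compatible; the serialization step $F(r,s)$ is the explicit bridge between them. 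The worry you flag about joint continuity of $F$ at the hand-off instants is, as you then observe, a non-issue: each $\phi_j$ is globally continuous, so each singleton-valued function $(r,s)\mapsto\set{e^{2\pi in_j\phi_j(r,s)}}^*$ is continuous, and finite sums of such are continuous by Lemma~\ref{L: sum is continuous}; the fact that copies of $1$ are absorbed into ${\bf 1}$ is exactly what makes $F(r,1)$ equal a single-point path $\set{w(r)}^*+{\bf 1}$.
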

\begin{proof} (A) Let $\mu(\theta) = N.$ W.l.o.g, we assume that $N \geq 0.$
We show that $S$ is homotopic to the continuous path
\begin{equation} \label{F: set(exp(2pi x))}
  [0,1] \ni r \mapsto \set{e^{2\pi i r}, e^{2\pi i r}, \ldots, e^{2\pi i r}}^*,
\end{equation}
where the number $e^{2\pi i r}$ appears $N$ times.
This will imply that $T$ is also homotopic to $1;$ consequently $S$ and $T$ are homotopic.

(B) By Theorem \ref{T: Selection Thm}, there exists a continuous enumeration $\set{z_1, z_2, \ldots}^*$ of $S.$
Without loss we can assume that functions $z_1,z_2,\ldots$ have the form $e^{i\theta_j(r)}$ where functions $\theta_j$ take their values either in $[0,2\pi]$
or in $[-2\pi,0],$ and that the values $-2\pi,0,2\pi$ are attained only at end-points of $[0,1].$ \margdetails

We split the set $S(r)$ into two parts $S_1(r)$ and $S_2(r),$ where $S_1(r)$
consists of those $z_j(r),$ for which the function $z_j(\cdot)$ takes value $-1,$
and $S_2(r) := S(r) - S_1(r).$ By Corollary \ref{C: subset of enumeration is continuous},
functions $S_1$ and $S_2$ are continuous. By Lemma \ref{L: (f(j)=-1) finite}, the rigged set $S_1(r)$
is finite.

Let $S_2=\set{g_1, g_2, \ldots}^*$ and $S_1=\set{h_1,h_2,\ldots,h_N}^*.$

(C) Since continuous arguments $\theta''_j$ of functions $g_j$, which form $S_2,$ do not take values $\pm \pi,$
so that $\theta_j''(0) = \theta_j''(1) = 0,$
we get a homotopy of $S_2$ with $1,$ if we let $F_2(r,t) = \set{e^{it\theta_j''(r)}, j=1,2,\ldots}^*.$
It follows from Lemma \ref{L: cont-s f-ns imply cont-s set f-n} that the function $F_2 \colon [0,1]^2 \to \euS_1(\mbT)$
is continuous, so $F_2$ is indeed a homotopy.

(D) Functions $h_j,$ which form $S_1,$ can be represented in the form $h_j(r) = e^{i\theta'_j(r)},$ where the argument $\theta'_j(\cdot)$ is a continuous non-negative function
with values in $[0,2\pi],$ of one of three types: (1) \ $\theta'_j(0) = \theta'_j(1) = 0,$ \ (2) \ $\theta'_j(0) = 0, $ $\theta'_j(1) = 2\pi,$  and (3) \ $\theta'_j(0) = 2\pi, $ $\theta'_j(1) = 0.$

If the rigged set $\set{\theta'_1, \theta'_2, \ldots, \theta'_N}^*$ contains a pair functions $\theta'_k$ and $\theta'_m,$ of types (2) and (3),
then we obviously can replace this pair by two functions of the first type. In the end we get some number of functions
of the first type and $N$ functions of the second or third. We can assume that the functions are of the second type.
Functions of the first type form a continuous subfunction of $S_1$ which is homotopic to $1$ just as in (C).

Each function $h_j$ of the second (or third) type is obviously homotopic to the function $e^{2\pi ir}.$
By Lemma \ref{L: cont-s f-ns imply cont-s set f-n}, these homotopies define a (continuous) homotopy $F_1(\cdot,\cdot)$ of $S_1$ to the set function
$$
  \set{e^{2\pi i r}, e^{2\pi i r}, \ldots, e^{2\pi i r}}^*,
$$
By Lemma \ref{L: sum is continuous}, the function $F:=F_1 + F_2$ is continuous.
Clearly, $F$ is a homotopy of $S$ and the path (\ref{F: set(exp(2pi x))}).

The proof is complete.
\end{proof}

Since the $\mu$-invariant of the path (\ref{F: set(exp(2pi x))}) is obviously constant and is equal to $N,$
Proposition \ref{P: homotopic imply the same mu-invariant} immediately implies
\begin{cor} \label{C: fundamental group of euS(T)} The fundamental group $\pi_1\brs{\euS_1(\mbT)}$ of the space $\euS_1(\mbT)$ is equal to $\mbZ.$
\end{cor}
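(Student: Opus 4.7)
The plan is to construct an explicit isomorphism $\Phi\colon \pi_1(\euS_1(\mbT), {\bf 1}) \to \mbZ$ using the $\mu$-invariant, and then read off bijectivity from the results already established in this section.

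First I would define $\Phi$ as follows. Given a loop $S\colon [0,1] \to \euS_1(\mbT)$ with $S(0) = S(1) = {\bf 1}$, Proposition \ref{P: mu is constant, if ...} (together with Corollary \ref{C: mu-inv. = const}, applied with $T$ the trivial loop) tells us that $\mu(\theta;S)$ does not depend on $\theta \in (0,2\pi)$ and takes an integer value. I would set $\Phi([S])$ equal to this common value. Well-definedness on homotopy classes is exactly the content of Theorem \ref{T: mu-invariant is homotopically invariant}; additivity under concatenation of loops is immediate from Proposition \ref{P: mu is additive}, so $\Phi$ is a group homomorphism.

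Next I would check surjectivity by exhibiting an explicit preimage for each $N \in \mbZ$. For $N \geq 0$, the loop
$$
  S_N(r) := \set{e^{2\pi i r}, e^{2\pi i r}, \ldots, e^{2\pi i r}}^*
$$
(with $N$ copies) is continuous into $\euS_1(\mbT)$ by Lemma \ref{L: cont-s f-ns imply cont-s set f-n}, begins and ends at ${\bf 1}$, and has $\mu$-invariant equal to $N$ directly from the definition of $\mu$. For $N<0$ one takes the reversed loop, whose $\mu$-invariant is $-N$ by the additivity of $\mu$ in Proposition \ref{P: mu is additive}. Injectivity is exactly Proposition \ref{P: homotopic imply the same mu-invariant}: two loops based at ${\bf 1}$ with the same constant $\mu$-invariant are homotopic (keeping the endpoints at ${\bf 1}$).

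The only genuinely non-trivial ingredient is injectivity, which however is already done in Proposition \ref{P: homotopic imply the same mu-invariant}; all other steps here are formal bookkeeping. Consequently $\Phi$ is a bijective homomorphism, hence an isomorphism, and $\pi_1(\euS_1(\mbT)) \cong \mbZ$.
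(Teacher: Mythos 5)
Your proposal is correct and takes essentially the same approach as the paper: the paper's proof is a one-liner appealing to Proposition \ref{P: homotopic imply the same mu-invariant} and the observation that the explicit loops $\set{e^{2\pi i r},\ldots}^*$ realize each integer, while you spell out the map $\Phi = \mu(\cdot)$, its well-definedness (Theorem \ref{T: mu-invariant is homotopically invariant}), its additivity (Proposition \ref{P: mu is additive}), surjectivity (via the explicit loops), and injectivity (Proposition \ref{P: homotopic imply the same mu-invariant}). The extra detail is accurate bookkeeping, not a different route.
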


%
%
%
%
%

\subsection{Another property of $\mu$}
The following simple equality will be used twice.
\begin{lemma} \label{L: SS3 int mu = sum theta j}
Let $S \colon [0,1] \to \euS_1(\mbT)$ be a continuous path, such that $S(0) = {\bf 1}.$
Let $\tilde S \colon [0,1] \to \euS_1(\mbR)$ be its continuous lift such that $\tilde S(0) = {\bf 0},$
and let $\set{\theta_1(\cdot),\theta_2(\cdot),\ldots}^*$
be its continuous enumeration.
The $\mu$-invariant of the path $S$ is a summable function and it satisfies the equality
the equality
  \begin{equation} \label{F: int of mu = sum of theta}
    \int_0^{2\pi} \mu(\theta; S)\,d\theta = \sum_{j=1}^\infty \theta_j(1).
  \end{equation}
\end{lemma}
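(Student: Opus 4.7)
The plan is to reduce the claim to a single-function computation by using that $\tilde S(0)={\bf 0}$, then evaluate the resulting integral by a change-of-variables trick, and finally justify the interchange of sum and integral by absolute summability.

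Since $\tilde S(0)={\bf 0}$, the chosen continuous enumeration satisfies $\theta_j(0)=0$ for every $j$, so by definition of the $\mu$-invariant
\begin{equation*}
  \mu(\theta;S) \;=\; \sum_{j=1}^\infty [\theta;0,\theta_j(1)].
\end{equation*}
The key computation, which I would do first, is the one-term identity
\begin{equation*}
  \int_0^{2\pi} [\theta;0,a]\,d\theta \;=\; a \qquad \text{for every } a\in\mbR.
\end{equation*}
For $a>0$ this follows from the observation that $[\theta;0,a]$ counts integers $k$ with $\theta+2\pi k\in(0,a)$ (the half-integer boundary contributions occur on a set of measure zero in $\theta$ and can be ignored). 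The map $(\theta,k)\mapsto \theta+2\pi k$ is a measure-preserving bijection from $(0,2\pi)\times\mbZ$ to $\mbR$, so Fubini (or a direct change of variables) gives $\int_0^{2\pi}\sum_{k\in\mbZ}\mathbf 1_{(0,a)}(\theta+2\pi k)\,d\theta=\int_\mbR \mathbf 1_{(0,a)}(\phi)\,d\phi=a$. The case $a<0$ follows from the antisymmetry $[\theta;0,a]=-[\theta;a,0]$, and $a=0$ is trivial.

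It remains to interchange sum and integral. Because $\tilde S(1)\in\euS_1(\mbR)$, the distance $d(\tilde S(1),{\bf 0})$ is finite, and evaluating it on the enumeration $\{\theta_j(1)\}$ gives $\sum_{j=1}^\infty |\theta_j(1)|<\infty$. Since $\int_0^{2\pi}|[\theta;0,\theta_j(1)]|\,d\theta=|\theta_j(1)|$ by the same computation (applied to $|a|$), Tonelli's theorem gives summability of $\theta\mapsto\sum_j|[\theta;0,\theta_j(1)]|$, hence of $\mu(\theta;S)$, and Fubini then yields
\begin{equation*}
  \int_0^{2\pi}\mu(\theta;S)\,d\theta \;=\; \sum_{j=1}^\infty \int_0^{2\pi}[\theta;0,\theta_j(1)]\,d\theta \;=\; \sum_{j=1}^\infty \theta_j(1),
\end{equation*}
which is the desired equality.

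The main technical point is just the one-term identity $\int_0^{2\pi}[\theta;0,a]\,d\theta=a$; everything else is bookkeeping. The only mild subtlety is that the definition of $[\theta;\theta_1,\theta_2]$ uses a symmetric $\frac12$-count at endpoints, but these contribute to a set of $\theta$ of Lebesgue measure zero and so drop out of the integral.
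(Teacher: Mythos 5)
Your proof is correct and follows essentially the same route as the paper's: reduce $\mu(\theta;S)=\sum_j[\theta;0,\theta_j(1)]$ to a one-term integral, establish $\int_0^{2\pi}[\theta;0,a]\,d\theta=a$, and use absolute summability of $\sum_j|\theta_j(1)|$ to interchange sum and integral. The only difference is cosmetic: the paper computes the one-term integral by splitting at $\theta_j\bmod 2\pi$ and tracking the winding number, whereas you obtain it more cleanly via the measure-preserving bijection $(\theta,k)\mapsto\theta+2\pi k$ and Fubini.
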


\begin{proof}
Let $\theta_j$ be the number from $[0,2\pi)$ which differs from $\theta_j(1)$ by a multiple of $2\pi.$
If $\theta_j(r)$ makes $k \in \mbZ$ windings around the unit circle
in anticlockwise direction as $r$ changes from $0$ to $1,$
then
\begin{multline*}
  \int_0^{2\pi} \SqBrs{\frac {\theta-\theta_j(1)}{2\pi}}\,d\theta = \int_0^{\theta_j} + \int_{\theta_j}^{2\pi}
   \\ = - \theta_j\cdot(k+1) - (2\pi - \theta_j)k = -2\pi k - \theta_j = -\theta_j(1).
\end{multline*}
It follows that for all $j=1,2,\ldots$
$$
  \abs{\theta_j(1)} =
  \abs{\int_0^{2\pi} \SqBrs{\frac {\theta-\theta_j(1)}{2\pi}}\,d\theta}
    = \int_0^{2\pi} \abs{\SqBrs{\frac {\theta-\theta_j(1)}{2\pi}}}\,d\theta.
$$
Since the series
$\sum_{j=1}^\infty \theta_j(1)$
is absolutely convergent, it follows that the series
$$
  \sum_{j=1}^\infty \int_0^{2\pi} \abs{\SqBrs{\frac {\theta-\theta_j(1)}{2\pi}}}\,d\theta
$$
is convergent. It follows that the function
$$
  (0,2\pi) \ni \theta \mapsto \sum_{j=1}^\infty \abs{\SqBrs{\frac {\theta-\theta_j(1)}{2\pi}}}
$$
is summable.
It follows that the function $(0,2\pi) \ni \theta \mapsto \mu(\theta; S)$ is also summable and
\begin{equation*}
 \begin{split}
  \int_0^{2\pi} \mu(\theta; S)\,d\theta
    & = - \int_0^{2\pi} \sum_{j=1}^\infty \SqBrs{\frac {\theta-\theta_j(1)}{2\pi}} \,d\theta
    \\ & = - \sum_{j=1}^\infty \int_0^{2\pi} \SqBrs{\frac {\theta-\theta_j(1)}{2\pi}} \,d\theta
    \\ & = \sum_{j=1}^\infty \theta_j(1).
 \end{split}
\end{equation*}
The proof is complete.
\end{proof}

\section{$\mu$-invariant for paths of unitary operators}
\newcommand{\spec}{\mathrm{spec}}
\subsection{The mapping $\spec\colon \clU_1 \to \euS_1$\!}
Let $p \in [1,\infty].$
We denote by $\clU_p(\hilb)$ the group of all unitary operators of the form $1+A,$ where $A \in \clL_p(\hilb),$
on a Hilbert space $\hilb.$ 

The group $\clU_p(\hilb)$ is endowed with $\clL_p(\hilb)$ topology:
a net $U_\alpha$ from $\clU_p(\hilb)$ converges to $U \in \clU_p(\hilb),$ if $U_\alpha - U$ converges to $0$
in $\clL_p(\hilb).$ In this paper $p$ will be equal to $1.$

We shall consider continuous paths of operators $U \colon [0,1] \to \clU_p(\hilb).$

Eigenvalues $e^{i\theta_j}$ of an operator $U \in \clU_p(\hilb)$ form a subset of the unit circle $\mbT \subset \mbC$
with only one possible accumulation point $1.$
In an obvious way, spectrum $\spec(U)$ of an operator $U \in \clU_1(\hilb)$ is a rigged set from $\euS_\infty(\mbT).$
Since $\norm{U-1}_1 = d(\spec(U),1),$ it follows that spectrum defines a mapping from $\clU_1(\hilb)$
to $\euS_1(\mbT):$
$$
  \spec \colon \clU_1(\hilb) \to \euS_1(\mbT).
$$
In \cite{Pu01FA} this mapping is denoted by $\eta_1.$

\begin{lemma} \label{L: sum abs(f(j),A f(j))<...} Let $\hilb$ be a Hilbert space (finite or infinite dimensional) and let $A$ be a trace class
operator on $\hilb.$ Then for any orthonormal basis $(\psi_j)$ of $\hilb$
$$
  \sum_{j=1}^\infty \abs{\scal{\psi_j}{A\psi_j}} \leq \norm{A}_1.
$$
\end{lemma}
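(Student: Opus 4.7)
The plan is to reduce the inequality to the standard trace–duality estimate $|\tr(BC)| \le \|B\|_\infty \|C\|_1$ by inserting a phase-diagonal operator.

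First, for each $j$ choose a unimodular scalar $\alpha_j \in \mbT$ so that $\alpha_j \scal{\psi_j}{A\psi_j} = \abs{\scal{\psi_j}{A\psi_j}}$ (take $\alpha_j=1$ whenever the diagonal entry vanishes). Because $(\psi_j)$ is an orthonormal basis and the sequence $(\alpha_j)$ is bounded by $1$, the formula $D\psi_j := \alpha_j \psi_j$ extends by linearity and continuity to a bounded operator $D$ on $\hilb$ with operator norm $\|D\|_\infty \le 1$ (in fact $D$ is a unitary, but we will only need the norm bound).

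Next I would rewrite the desired sum as a trace. For each $N$,
\begin{equation*}
  \sum_{j=1}^N \abs{\scal{\psi_j}{A\psi_j}} = \sum_{j=1}^N \alpha_j \scal{\psi_j}{A\psi_j} = \sum_{j=1}^N \scal{\psi_j}{D A \psi_j},
\end{equation*}
where the last equality uses that $D^*\psi_j = \bar\alpha_j \psi_j$, so $\scal{\psi_j}{DA\psi_j} = \scal{D^*\psi_j}{A\psi_j} = \alpha_j \scal{\psi_j}{A\psi_j}$. Since $A$ is trace class and $D$ is bounded, the product $DA$ is trace class, so letting $N\to\infty$ the partial sums converge (absolutely, by construction) to $\tr(DA)$. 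Thus
\begin{equation*}
  \sum_{j=1}^\infty \abs{\scal{\psi_j}{A\psi_j}} = \tr(DA).
\end{equation*}

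Finally, applying the standard inequality $|\tr(DA)| \le \|D\|_\infty \|A\|_1$ (which is the dual pairing of $\clL_\infty$ and $\clL_1$, or equivalently follows from $\|DA\|_1 \le \|D\|_\infty \|A\|_1$ together with $|\tr(\cdot)| \le \|\cdot\|_1$) together with $\|D\|_\infty \le 1$ gives the required bound $\sum_j \abs{\scal{\psi_j}{A\psi_j}} \le \|A\|_1$. I do not anticipate a real obstacle here; the only technical point to watch is that $D$ is genuinely bounded on all of $\hilb$ (not just defined on the algebraic span of $(\psi_j)$), which is immediate because $\sup_j|\alpha_j|=1$ and $(\psi_j)$ is a basis, and that the ordering of the sum matches the trace computed in this basis, which it does by construction.
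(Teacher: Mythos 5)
Your argument is correct and is essentially the same as the paper's: both insert a diagonal unimodular operator to make the diagonal entries nonnegative and then bound $\abs{\Tr(\cdot)}$ by the trace norm. The paper multiplies on the right ($AU$) and invokes $\Tr(AU)\le\norm{AU}_1=\norm{A}_1$, while you multiply on the left ($DA$) and use trace duality $\abs{\Tr(DA)}\le\norm{D}\,\norm{A}_1$; these are the same idea.
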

\begin{proof} There exists a diagonal (in the basis $(\psi_j)$) unitary operator $U$ such that
the diagonal entries of the operator $AU$ (in the same basis) are non-negative.
It follows that
$$
  \sum_{j=1}^\infty \abs{\scal{\psi_j}{A\psi_j}} = \Tr(AU) \leq \norm{AU}_1 = \norm{A}_1.
$$
\end{proof}

The formula proved in the following lemma is well-known (for self-adjoint operators) in quantum mechanics (see e.g. \cite{LL3}).
\begin{lemma} \label{L: velocity of eigenvalue} If $\lambda_j(r_0)$ is an isolated eigenvalue of a real-analytic path of unitary operators $U(r)$ at $r = r_0$
corresponding to an eigenvector $\psi_j(r_0),$ then
$$
  \frac {d\lambda_j(r)}{dr}  \big|_{r=r_0} = \scal{\psi_j(r_0)}{\dot U_r \big|_{r=r_0} \psi_j(r_0)}.
$$
\end{lemma}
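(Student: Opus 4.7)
The plan is to apply Kato's analytic perturbation theory to obtain a real-analytic family of eigenvalue-eigenvector pairs near $r_0,$ then differentiate the eigenvalue equation and take an inner product with the eigenvector, using unitarity to cancel the derivative of the eigenvector.

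First, since $\lambda_j(r_0)$ is an isolated point of the spectrum of $U(r_0)$ and the path $U(r)$ is real-analytic in the trace-class topology (hence in norm), standard analytic perturbation theory (see Kato, \cite{Kato}) produces a neighbourhood of $r_0$ on which there exists a real-analytic eigenvalue $\lambda_j(r)$ and a real-analytic unit eigenvector $\psi_j(r)$ satisfying
\begin{equation*}
  U(r)\psi_j(r) = \lambda_j(r)\psi_j(r),
\end{equation*}
with $\lambda_j(r_0)$ and $\psi_j(r_0)$ the prescribed initial data.

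Next, I differentiate this identity at $r=r_0:$
\begin{equation*}
  \dot U(r_0)\psi_j(r_0) + U(r_0)\dot\psi_j(r_0) = \dot\lambda_j(r_0)\psi_j(r_0) + \lambda_j(r_0)\dot\psi_j(r_0).
\end{equation*}
Taking the inner product with $\psi_j(r_0)$ and using $\|\psi_j(r_0)\|=1$ gives
\begin{equation*}
  \scal{\psi_j(r_0)}{\dot U(r_0)\psi_j(r_0)} + \scal{\psi_j(r_0)}{U(r_0)\dot\psi_j(r_0)} = \dot\lambda_j(r_0) + \lambda_j(r_0)\scal{\psi_j(r_0)}{\dot\psi_j(r_0)}.
\end{equation*}
Now unitarity of $U(r_0)$ together with $U(r_0)\psi_j(r_0) = \lambda_j(r_0)\psi_j(r_0)$ yields $U(r_0)^*\psi_j(r_0) = \overline{\lambda_j(r_0)}\,\psi_j(r_0),$ so
\begin{equation*}
  \scal{\psi_j(r_0)}{U(r_0)\dot\psi_j(r_0)} = \scal{U(r_0)^*\psi_j(r_0)}{\dot\psi_j(r_0)} = \lambda_j(r_0)\scal{\psi_j(r_0)}{\dot\psi_j(r_0)}.
\end{equation*}
The two terms involving $\dot\psi_j(r_0)$ therefore cancel, leaving the desired formula $\dot\lambda_j(r_0) = \scal{\psi_j(r_0)}{\dot U(r_0)\psi_j(r_0)}.$

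The only non-routine input is the existence of the real-analytic eigenvector branch $\psi_j(r),$ which is the main obstacle only in the sense that it relies on quoting Kato's analytic perturbation theorem; once this is invoked, the rest is a one-line cancellation exploiting $U^*U=1.$ If one wished to avoid quoting Kato, an alternative is to write $\lambda_j(r)$ via the Riesz projection $P_j(r) = \frac{1}{2\pi i}\oint (\zeta-U(r))^{-1}\,d\zeta$ around $\lambda_j(r_0)$ and compute $\lambda_j(r) = \tfrac{1}{\dim P_j}\operatorname{Tr}(U(r)P_j(r)),$ differentiate, and note that terms containing $\dot P_j(r_0)$ vanish since $P_j(r_0)U(r_0)P_j(r_0) = \lambda_j(r_0)P_j(r_0)$ and $P_j(r)^2 = P_j(r)$ forces $P_j\dot P_j P_j = 0.$
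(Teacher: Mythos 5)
Your argument is correct and is essentially the paper's proof: both invoke Kato's analytic perturbation theory for the eigenvector branch, differentiate, and use unitarity (i.e.\ $U(r_0)^*\psi_j(r_0)=\bar\lambda_j(r_0)\psi_j(r_0)$) to cancel the term involving $\dot\psi_j$. The paper differentiates the scalar identity $\lambda_j(r)=\scal{\psi_j(r)}{U(r)\psi_j(r)}$ and additionally uses $\tfrac{d}{dr}\|\psi_j(r)\|^2=0$, whereas you differentiate the vector eigenvalue equation and project onto $\psi_j(r_0)$, a cosmetically different but equivalent bookkeeping.
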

\begin{proof} 
Using the fact that $\bar \lambda_j(r)$ is an eigenvalue of $U^{-1}(r)=U^*(r),$ one obtains
(recall that $\scal{\cdot}{\cdot}$ is anti-linear with respect to the first argument)
\begin{equation*}
  \begin{split}
    \frac {d\lambda_j(r)}{dr}
    & = \frac {d}{dr} \scal{\psi_j(r)}{U(r)\psi_j(r)} \\
    & = \scal{\psi_j'(r)}{U(r)\psi_j(r)} + \scal{\psi_j(r)}{U(r)\psi_j'(r)} + \scal{\psi_j(r)}{U'(r)\psi_j(r)} \\
    & = \lambda_j(r)\scal{\psi_j'(r)}{\psi_j(r)} + \lambda_j(r)\scal{\psi_j(r)}{\psi_j'(r)} + \scal{\psi_j(r)}{U'(r)\psi_j(r)}.
  \end{split}
\end{equation*}
Since $\scal{\psi_j'(r)}{\psi_j(r)} + \scal{\psi_j(r)}{\psi_j'(r)} = \brs{\scal{\psi_j(r)}{\psi_j(r)}}' = 0,$ the claim follows.
\end{proof}

\begin{lemma} \label{L: velocity of eigenvalues}
  Let $\set{U(r), r \in [0,1]}$ be an analytic path of unitary operators in finite-dimensional Hilbert space. 
  Let $\lambda_1(r), \ldots, \lambda_N(r)$ be eigenvalues of $U(r),$  which are analytic functions of $r.$
  Then
  $$
    \sum_{j=1}^N \abs{\frac{d \lambda_j(r)}{dr}} \leq \norm{\frac{dU(r)}{dr}}_1.
  $$
\end{lemma}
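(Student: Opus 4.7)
The plan is to combine the two preceding lemmas. By Lemma \ref{L: sum abs(f(j),A f(j))<...}, for any trace class operator $A$ and any orthonormal basis $(\psi_j)$ of $\hilb$, one has $\sum_j \abs{\scal{\psi_j}{A\psi_j}} \leq \norm{A}_1$. By Lemma \ref{L: velocity of eigenvalue}, for an analytic path of unitaries, the derivative of an isolated eigenvalue $\lambda_j(r)$ equals $\scal{\psi_j(r)}{\dot U(r)\psi_j(r)}$, where $\psi_j(r)$ is the corresponding unit eigenvector. So if we can produce, for each $r \in [0,1]$, an orthonormal basis of eigenvectors $\psi_1(r),\ldots,\psi_N(r)$ of $U(r)$ such that each $\psi_j(r)$ corresponds to $\lambda_j(r)$, then combining the two lemmas at a single value of $r$ gives
\begin{equation*}
  \sum_{j=1}^N \abs{\frac{d\lambda_j(r)}{dr}} = \sum_{j=1}^N \abs{\scal{\psi_j(r)}{\dot U(r) \psi_j(r)}} \leq \norm{\dot U(r)}_1,
\end{equation*}
which is the assertion.

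So the real content is to justify existence of an eigenvector basis adapted to the analytic eigenvalue branches. First I would treat the regular case: fix $r_0 \in [0,1]$ at which all eigenvalues $\lambda_j(r_0)$ are distinct. Then each $\lambda_j$ is an isolated eigenvalue of $U(r_0)$, and by the Riesz projection (or directly the implicit function theorem) there is an analytic choice of one-dimensional spectral projection $P_j(r)$ near $r_0$, hence an analytic unit eigenvector $\psi_j(r)$. These are mutually orthogonal for $r$ near $r_0$ (eigenvectors of a unitary corresponding to distinct eigenvalues), and form an orthonormal basis. At such $r$, Lemma \ref{L: velocity of eigenvalue} applies to each branch and Lemma \ref{L: sum abs(f(j),A f(j))<...} finishes the estimate.

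The main obstacle is the exceptional set of $r$ where two or more of the analytic functions $\lambda_j$ collide. This set is discrete in $[0,1]$ by analyticity (the eigenvalues are either identically equal or differ off a discrete set). The standard fix, which I would invoke here, is the Rellich–Kato theorem: for a real-analytic family of self-adjoint (or, by the Cayley-transform / spectral-theory argument, unitary) operators in finite dimensions, there exists a globally real-analytic choice of orthonormal eigenvectors $\psi_1(r),\ldots,\psi_N(r)$ whose eigenvalues are precisely the given analytic branches $\lambda_1(r),\ldots,\lambda_N(r)$ (after possibly re-indexing the $\lambda_j$, which does not affect the inequality). With this basis in hand, Lemma \ref{L: velocity of eigenvalue} applies at every $r \in [0,1]$ since the eigenvector-eigenvalue identity is preserved analytically, and Lemma \ref{L: sum abs(f(j),A f(j))<...} then yields the desired bound pointwise.

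Finally, I would note that the inequality is not sensitive to which indexing one uses for the branches: the right-hand side is independent of any labelling, and re-ordering the $\lambda_j$ only permutes the summands on the left. Thus the asserted inequality holds for every $r \in [0,1]$, completing the proof.
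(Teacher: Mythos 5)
Your proposal is correct and follows essentially the same route as the paper: combine Lemma \ref{L: velocity of eigenvalue} and Lemma \ref{L: sum abs(f(j),A f(j))<...} using the fact (Rellich--Kato) that an analytic unitary family admits a real-analytic orthonormal basis of eigenvectors adapted to the analytic eigenvalue branches. The paper states this last fact in one clause; you correctly identified it as the crux and justified it, which is where all the content lies.
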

\begin{proof}
This directly follows from Lemmas \ref{L: velocity of eigenvalue}, \ref{L: sum abs(f(j),A f(j))<...}
and the fact that analytic vectors $\psi_1(r),\ldots,\psi_N(r)$ form an orthonormal basis.
\end{proof}

\begin{lemma} \label{L: spec distance estimate} Let $a<b.$ If $U \colon [a,b] \to \clU_1(\mbC^N)$ is a real-analytic path, then
\begin{equation} \label{F: spec distance estimate}
  d(\spec(U(a)),\spec(U(b))) \leq \max_{r \in [a,b]} \norm{\frac {dU(r)}{dr}}_1 (b-a).
\end{equation}
\end{lemma}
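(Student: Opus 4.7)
The plan is to combine Rellich's theorem on analytic selection of eigenvalues of a real-analytic family of normal (here unitary) matrices with the infinitesimal bound already established in Lemma \ref{L: velocity of eigenvalues}, and then integrate.

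First, since $U\colon[a,b]\to\clU_1(\mbC^N)$ is a real-analytic family of unitary (in particular normal) matrices, by Rellich's theorem (see Kato, \cite{Kato}) the eigenvalues of $U(r)$ can be chosen as real-analytic functions $\lambda_1(r),\ldots,\lambda_N(r)$ on $[a,b]$, corresponding to a real-analytic orthonormal basis of eigenvectors. In particular, $(\lambda_1(a),\ldots,\lambda_N(a))$ and $(\lambda_1(b),\ldots,\lambda_N(b))$ give enumerations of $\spec(U(a))$ and $\spec(U(b))$, respectively.

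Next, by the definition \eqref{F: d(S,T) def of} of the metric $d$, the distance is bounded by the distance between any specific pair of enumerations; hence
$$
  d(\spec(U(a)),\spec(U(b))) \leq \sum_{j=1}^N \abs{\lambda_j(a)-\lambda_j(b)}.
$$
By the fundamental theorem of calculus for each of the real-analytic functions $\lambda_j$, we have $\abs{\lambda_j(a)-\lambda_j(b)} \leq \int_a^b\abs{d\lambda_j(r)/dr}\,dr$, so
$$
  \sum_{j=1}^N \abs{\lambda_j(a)-\lambda_j(b)} \leq \int_a^b \sum_{j=1}^N \abs{\frac{d\lambda_j(r)}{dr}}\,dr.
$$

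Finally, Lemma \ref{L: velocity of eigenvalues} gives the pointwise bound $\sum_{j=1}^N\abs{d\lambda_j(r)/dr}\leq\norm{dU(r)/dr}_1$, whence
$$
  \int_a^b \sum_{j=1}^N \abs{\frac{d\lambda_j(r)}{dr}}\,dr \leq \int_a^b \norm{\frac{dU(r)}{dr}}_1\,dr \leq \max_{r\in[a,b]}\norm{\frac{dU(r)}{dr}}_1(b-a),
$$
which is exactly \eqref{F: spec distance estimate}.

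The only delicate point is the first step: one must insist on \emph{analytic} (not merely continuous) eigenvalue selection to pair up eigenvalues at $r=a$ with those at $r=b$ in a way that makes the pointwise velocity bound integrable. This is the classical Rellich selection theorem for analytic families of normal matrices, and is the single non-elementary ingredient; the rest of the argument is routine.
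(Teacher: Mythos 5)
Your proof is correct and follows essentially the same route as the paper: choose analytic eigenvalue branches (Rellich), bound the metric $d$ by the sum of pairwise distances, and invoke Lemma \ref{L: velocity of eigenvalues}. The only cosmetic difference is that you integrate the pointwise velocity bound via the fundamental theorem of calculus, whereas the paper invokes ``the mean value theorem''; your version is in fact slightly more careful, since integrating keeps all the derivatives evaluated at a common point $r$ before applying Lemma \ref{L: velocity of eigenvalues}.
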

\begin{proof}
Plainly,
$$
  d(\spec(U(a)),\spec(U(b))) \leq \sum_{j=1}^N d(\lambda_j(U(a)),\lambda_j(U(b))),
$$
where the functions $\lambda_j(U(r))$ can be chosen to be analytic (since the path $U(\cdot)$ is unitary and analytic).
So, the mean value theorem and Lemma \ref{L: velocity of eigenvalues} complete the proof.
\end{proof}

We introduce metric in $\clU_1(\hilb)$ by the formula
$$
  d_1(U_0,U_1) = \inf \max_{r \in [a,b]} \norm{\frac {dU(r)}{dr}}_1 (b-a)
$$
where the infimum is taken over all piecewise analytic paths $U \colon [a,b] \to \clU_1(\hilb)$ such that $U(a) = U_0$
and $U(b) = U_1.$ Plainly, $\brs{\clU_1(\hilb),d_1}$ is a metric space.

\begin{lemma} \label{L: d1(UU1,UU2)=d1(U1,U2)} For any $U_1, U_2, U \in 1 + \clL_1(\hilb),$ the equality
$$
  d_1(UU_1, UU_2) = d_1(U_1, U_2).
$$
holds.
\end{lemma}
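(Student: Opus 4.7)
The plan is to exploit the unitary invariance of the trace norm and the fact that left multiplication by a fixed element of $1+\clL_1(\hilb)$ is a homeomorphism of $\clU_1(\hilb)$ onto itself.

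First I would establish the inequality $d_1(UU_1,UU_2) \leq d_1(U_1,U_2)$. Given any piecewise real-analytic path $V\colon[a,b]\to\clU_1(\hilb)$ with $V(a)=U_1$ and $V(b)=U_2$, I consider the path $W(r):=UV(r)$. Writing $U=1+A$ and $V(r)=1+B(r)$ with $A$ and $B(r)$ trace class, one has $W(r)-1 = A+B(r)+AB(r)\in\clL_1(\hilb)$, so $W(r)\in\clU_1(\hilb)$; and $W$ is plainly piecewise real-analytic with $W(a)=UU_1$, $W(b)=UU_2$. Its derivative is $\dot W(r)=U\dot V(r)$, and since $U$ is unitary the trace norm satisfies $\|U\dot V(r)\|_1=\|\dot V(r)\|_1$. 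Therefore
$$
 \max_{r\in[a,b]}\|\dot W(r)\|_1\,(b-a) \;=\; \max_{r\in[a,b]}\|\dot V(r)\|_1\,(b-a),
$$
and taking the infimum over $V$ yields the desired inequality.

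Next I would obtain the reverse inequality by applying the same argument with $U$ replaced by $U^{-1}=U^*$. One checks that $U^*\in\clU_1(\hilb)$: from $U^*U=1$ and $U=1+A$ we get $U^*-1=-U^*A\in\clL_1(\hilb)$. Any piecewise real-analytic path $W$ from $UU_1$ to $UU_2$ gives a piecewise real-analytic path $U^*W$ from $U_1$ to $U_2$ in $\clU_1(\hilb)$, with $\|\frac{d}{dr}(U^*W(r))\|_1 = \|U^*\dot W(r)\|_1 = \|\dot W(r)\|_1$. This gives $d_1(U_1,U_2)\leq d_1(UU_1,UU_2)$, completing the proof.

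There is no substantive obstacle: the entire argument rests on two elementary facts, namely unitary invariance of the trace norm and closedness of $1+\clL_1(\hilb)$ under multiplication. The only things to verify carefully are that $r\mapsto UV(r)$ remains in $\clU_1(\hilb)$ and that multiplication by a fixed operator preserves piecewise real-analyticity, both of which are immediate from the definitions.
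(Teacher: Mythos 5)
Your proof is correct and follows essentially the same route as the paper: both establish $d_1(UU_1,UU_2)\leq d_1(U_1,U_2)$ by composing paths with left-multiplication by $U$ and using unitary invariance of the trace norm, then obtain the reverse inequality by applying the same bound with $U^{-1}$. You simply spell out a few verifications (that $UV(r)\in\clU_1(\hilb)$ and that $U^*\in\clU_1(\hilb)$) that the paper leaves implicit.
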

\begin{proof} If $U(r)$ is a piecewise analytic path connecting $U_1$ and $U_2$ with trace-class derivative $ \leq 1,$
then $U \cdot U(r)$ is a piecewise analytic path connecting $UU_1$ and $UU_2$ with trace-class derivative $ \leq 1.$
It follows that $d_1(UU_1, UU_2) \leq d_1(U_1, U_2).$
Using this, we obtain
$$
  d_1(U_1, U_2) = d_1(U^{-1}U U_1, U^{-1}U U_2) \leq d_1(U U_1, U U_2).
$$
\end{proof}

\begin{lemma} \label{L: d1 and norm1 are equiv} Metrics $\norm{U_1-U_0}_1$ and $d_1(U_1,U_0)$ are equivalent.
\end{lemma}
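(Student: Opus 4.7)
The plan is to establish the two metrics generate the same topology by proving a global one-sided bound and a local two-sided bound. The easy inequality $\norm{U_1 - U_0}_1 \leq d_1(U_0, U_1)$ follows by integration: for any piecewise real-analytic path $U \colon [a,b] \to \clU_1(\hilb)$ joining $U_0$ to $U_1$, the Banach-space fundamental theorem of calculus applied on each analytic segment gives
\[
  U_1 - U_0 = \int_a^b \dot U(r)\,dr \quad \text{in } \clL_1(\hilb),
\]
whence $\norm{U_1 - U_0}_1 \leq \int_a^b \norm{\dot U(r)}_1\,dr \leq (b-a)\max_{r\in[a,b]}\norm{\dot U(r)}_1$. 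Taking the infimum over admissible paths yields the claim.

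For the reverse direction, I reduce to a local comparison at the identity by exploiting left-invariance. By Lemma \ref{L: d1(UU1,UU2)=d1(U1,U2)} and the unitary invariance of $\norm{\cdot}_1$, both $d_1(U_0, U_1)$ and $\norm{U_1 - U_0}_1$ depend only on $V := U_0^{-1}U_1$: explicitly, $d_1(U_0, U_1) = d_1(1, V)$ and $\norm{U_1 - U_0}_1 = \norm{V - 1}_1$. Hence it is enough to bound $d_1(1, V)$ by $\norm{V-1}_1$ when $V$ lies in a trace-norm neighbourhood of $1$, since the equivalence elsewhere will then follow from the left-invariance of both metrics.

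For such $V$, say with $\norm{V-1}_1 < 1/2$, one also has $\norm{V-1} < 1/2$ in operator norm, so $-1$ lies in the resolvent set of $V$ and the principal logarithm
\[
  H := -i\log V = -i \sum_{n=1}^\infty \frac{(-1)^{n+1}}{n}(V-1)^n
\]
is given by a series which converges in $\clL_1(\hilb)$, with $\norm{H}_1 \leq 2\norm{V-1}_1$. Applying $^*$ term-by-term and using $V^* = V^{-1}$ shows that $\log V$ is skew-adjoint, so $H$ is self-adjoint. The path $U(r) := e^{irH}$, $r \in [0,1]$, is then real-analytic in $\clU_1(\hilb)$, joins $1$ to $V$, and satisfies $\dot U(r) = iH e^{irH}$, so $\norm{\dot U(r)}_1 = \norm{H}_1$ is constant in $r$. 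Consequently
\[
  d_1(1, V) \;\leq\; \norm{H}_1 \cdot 1 \;\leq\; 2\,\norm{V-1}_1,
\]
finishing the argument.

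The main technical point is the existence of a self-adjoint trace-class logarithm $H$ satisfying the linear estimate $\norm{H}_1 \leq C\norm{V-1}_1$; near the identity this is purely a power-series calculation. The only conceptual obstacle is that this construction is local, but left-invariance of both metrics bypasses this by reducing every comparison to one at the identity.
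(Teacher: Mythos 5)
Your proof is correct and self-contained, but it takes a genuinely different route for the reverse bound and yields a formally weaker (though, for the paper's purposes, sufficient) conclusion. The paper constructs the logarithm $H$ of $U=e^{iH}$ by functional calculus, so that $\spec(H) \subset [-\pi,\pi]$ holds for \emph{every} $U \in \clU_1(\hilb)$; the constant in $\norm{H}_1 \le C\norm{e^{iH}-1}_1$ is then just $\sup_{x\in[0,\pi]}x/\abs{e^{ix}-1}$, and the result is a \emph{global} Lipschitz equivalence $\norm{U_1-U_2}_1 \le d_1(U_1,U_2) \le C\norm{U_1-U_2}_1$. You instead build $H$ from the Mercator power series, which forces the restriction $\norm{V-1}_1<1/2$; combined with left-invariance this gives a local two-sided bound around every point, hence \emph{uniform} (not Lipschitz) equivalence. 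That weaker statement is still enough for the paper's downstream uses (same topology, same Cauchy sequences, hence completeness and separability transfer), but it is worth being explicit that you are not matching the global Lipschitz bound of the paper. One small point of precision: you describe your goal as showing the metrics ``generate the same topology,'' but the corollary immediately following the lemma uses the equivalence to transfer \emph{completeness}, which requires uniform equivalence rather than mere topological equivalence; your argument does in fact deliver uniform equivalence (the local estimate is at the identity and both metrics are left-invariant, so the modulus is uniform in the basepoint), so you should state that as the conclusion.
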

\begin{proof} (A) Claim:
$$
  \norm{U_1-U_0}_1 \leq d_1(U_1,U_0).
$$

Proof. Let $U \colon [0,1] \to \clU_1(\hilb)$ be any differentiable path,
connecting $U_0$ and $U_1.$
Since
$$
  U_1 - U_0 = \int_0^1 U'(r)\,dr,
$$
(where the integral is taken in $\clL_1$-norm),
it follows that
$$
  \norm{U_1-U_0}_1 \leq \int_0^1 \norm{U'(r)}_1\,dr.
$$
It follows that $\norm{U_1-U_0}_1 \leq d_1(U_1,U_0).$

(B) Claim: there exists an absolute constant $C>0$ such that
$$
  d_1(U_1,U_2) \leq C \norm{U_1-U_2}_1.
$$

Proof. By Lemma \ref{L: d1(UU1,UU2)=d1(U1,U2)}, $d_1(U_1,U_2) = d_1(1,U),$ where $U=U_2U_1^{-1}.$
Since also $\norm{U_2-U_1}_1 = \norm{U_2U_1^{-1}-1}_1,$ we can assume that $U_1 = 1$ and $U_2 = U.$

Let $U = e^{iH},$ where $\norm{H} \leq \pi$ and $H \in \clL_1(\hilb).$
The path $U(r) = e^{irH}$ connects $1$ and $U.$
Further,
$$
  \frac {d U(r)} {dr} = i H e^{irH},
$$
so that
$$
  d_1(1,U) \leq \norm{H}_1.
$$
So, it is enough to show that
$$
  \norm{H}_1 \leq C \cdot \norm{e^{iH}-1}_1.
$$
If $\lambda_1, \lambda_2, \ldots$ are eigenvalues (counting multiplicities) of $H,$ then, by the formula $\norm{H}_1 = \sum_{j=1}^\infty \abs{\lambda_j},$
it is enough to show that
$$
  \abs{\lambda_j} \leq C \cdot \abs{e^{i\lambda_j}-1},
$$
with $C$ which does not depend on $j.$
Since the function $f(x) := \frac x{\abs{e^{ix}-1}}$ is (can be made) continuous on $[0,\pi],$ the constant $C$ exists.
\end{proof}
This lemma shows that we can work with either of the two metrics in $\clU_1(\hilb),$ whichever is more convenient.

\begin{cor} The metric space $\brs{\clU_1(\hilb),d_1}$ is separable and complete.
\end{cor}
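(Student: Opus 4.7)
The plan is to reduce the claim to the corresponding statement for the trace-norm metric and then invoke standard facts about $\clL_1(\hilb)$.

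By the immediately preceding lemma, the metrics $d_1(U_0,U_1)$ and $\norm{U_1 - U_0}_1$ on $\clU_1(\hilb)$ are equivalent. Since separability and completeness depend only on the equivalence class of a metric, it suffices to establish both properties with respect to the trace-norm metric $\norm{U_1 - U_0}_1$, which views $\clU_1(\hilb) = 1 + \set{A \in \clL_1(\hilb) \colon 1+A \ \text{unitary}}$ as a subset of the affine space $1 + \clL_1(\hilb).$

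For separability, $\clL_1(\hilb)$ is separable (assuming $\hilb$ separable, which is standing throughout the paper), so any subset, and in particular $\clU_1(\hilb) - 1$, is separable in trace norm; translating by $1$ gives separability of $\clU_1(\hilb)$.

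For completeness, let $\set{U_n}$ be a Cauchy sequence in trace norm. Then $\set{U_n - 1}$ is Cauchy in $\clL_1(\hilb)$, which is complete, so $U_n - 1 \to A$ in trace norm for some $A \in \clL_1(\hilb)$. Set $U := 1 + A.$ Since $\norm{\cdot}_{\mathrm{op}} \leq \norm{\cdot}_1,$ we also have $U_n \to U$ in operator norm. The set of unitary operators is closed in the operator-norm topology on $\mathcal{B}(\hilb)$ (as both $U^*U = 1$ and $UU^* = 1$ pass to the operator-norm limit), so $U$ is unitary, and $U \in 1 + \clL_1(\hilb) = \clU_1(\hilb).$ Hence $U_n \to U$ in $\clU_1(\hilb)$.

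No part of the argument presents a genuine obstacle: the entire content of the corollary is already packaged in the equivalence of the two metrics proved in the previous lemma, together with the completeness of $\clL_1(\hilb)$ and the closedness of the unitary group in operator norm.
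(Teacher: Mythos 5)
Your proof is correct and follows exactly the same route as the paper: reduce to the trace-norm metric via the preceding equivalence lemma and conclude from separability and completeness of $(\clU_1(\hilb),\norm{\cdot}_1)$. The only difference is that the paper takes the latter for granted, while you spell out the short verification (separability and completeness of $\clL_1(\hilb)$, plus closedness of the unitary group under operator-norm limits, which is dominated by the trace norm); that is a harmless expansion, not a different method.
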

\begin{proof} Since the metric space $\brs{\clU_1(\hilb),\norm{U_1-U_2}_1}$ is separable and complete,
this follows from Lemma \ref{L: d1 and norm1 are equiv}
\end{proof}

In regard of the following theorem see also \cite[Proposition 3.6 and Lemma 3.2]{Pu01FA}.
\begin{thm} \label{T: spec is continuous} The mapping
$$
  \spec \colon \clU_1(\hilb) \to \euS_1(\mbT)
$$
is continuous. Moreover,
$$
  d(\spec(U_1),\spec(U_2)) \leq d_1\brs{U_1, U_2}.
$$
\end{thm}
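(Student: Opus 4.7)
By unfolding the definition of $d_1$, it suffices to show that for every piecewise real-analytic path $U\colon[a,b]\to\clU_1(\hilb)$ joining $U_1$ to $U_2$,
\begin{equation*}
  d(\spec U(a),\spec U(b))\leq \int_a^b\norm{U'(r)}_1\,dr;
\end{equation*}
taking the infimum after majorizing the integral by $\max_r\norm{U'(r)}_1\cdot(b-a)$ yields the claimed inequality, and continuity of $\spec$ then follows via Lemma~\ref{L: d1 and norm1 are equiv}. By subdividing $[a,b]$ and using the triangle inequality for $d$ together with additivity of the integral on the right, we may assume $U$ is real-analytic on the whole of $[a,b]$.

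For such a real-analytic path I would invoke Kato--Rellich perturbation theory for the compact analytic family $r\mapsto U(r)-1$ to obtain, away from a discrete exceptional set of branch points (absorbed by further subdivision), real-analytic eigenvalue branches $\lambda_j(r)=e^{i\theta_j(r)}$ together with analytic orthonormal eigenvectors $\psi_j(r)$; extending by a basis of $\ker(U(r)-1)$ if necessary, $(\psi_j(r))$ becomes an orthonormal basis of $\hilb$. Lemma~\ref{L: velocity of eigenvalue} then gives
\begin{equation*}
  \lambda_j'(r)=\scal{\psi_j(r)}{U'(r)\psi_j(r)}
\end{equation*}
at every regular $r$, and Lemma~\ref{L: sum abs(f(j),A f(j))<...} applied to the trace-class operator $U'(r)\in\clL_1(\hilb)$ yields $\sum_j|\lambda_j'(r)|\leq\norm{U'(r)}_1$. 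Integration together with monotone convergence produces $\sum_j|\lambda_j(b)-\lambda_j(a)|\leq\int_a^b\norm{U'(r)}_1\,dr$, and because $(\lambda_j(a))_j$ and $(\lambda_j(b))_j$ are enumerations of $\spec U_1$ and $\spec U_2$ respectively, the defining formula~(\ref{F: d(S,T) def of}) of $d$ gives the required bound on the left-hand side.

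The main obstacle is the global analytic enumeration in infinite dimensions, since Kato's selection is only locally analytic and infinitely many branches accumulate at $1$. I would circumvent this in one of two ways: either by a finite-dimensional truncation, restricting attention to the rigged cut-offs $S^{(N)}$ defined earlier, invoking Lemma~\ref{L: spec distance estimate} in the appropriate finite-dimensional analytic setting and passing $N\to\infty$ using $d(\bar S^{(N)},\mathbf{1})\to 0$ to control the tail; or by appealing to the Selection Theorem~\ref{T: Selection Thm} to obtain a globally continuous (though no longer analytic) enumeration of a continuous lift of $\spec\circ U$ to $\euS_1(\mbR)$, which is regular enough to carry out the bookkeeping against $\int_a^b\norm{U'(r)}_1\,dr$.
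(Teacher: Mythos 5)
Your reduction and the core estimate are exactly right, and they match the paper: you correctly reduce to a bound of the form $d(\spec U(a),\spec U(b))\leq\int_a^b\norm{U'(r)}_1\,dr$ along a real-analytic piece, and you correctly identify Lemma~\ref{L: velocity of eigenvalue} and Lemma~\ref{L: sum abs(f(j),A f(j))<...} as the engine, which together are just the content of Lemma~\ref{L: spec distance estimate}. You also correctly locate the genuine obstacle: in infinite dimensions there is no globally analytic enumeration of the eigenvalue branches, since infinitely many of them accumulate at $1$.

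Where the proposal has a real gap is in both of your proposed ways around this obstacle. Option (b), appealing to the Selection Theorem~\ref{T: Selection Thm} applied to $\spec\circ U$, is circular: that theorem takes a \emph{continuous} path in $\euS_1(\mbT)$ as input, and continuity of $r\mapsto\spec(U(r))$ in $\euS_1(\mbT)$ is precisely what Theorem~\ref{T: spec is continuous} is supposed to establish. (Moreover, a merely continuous enumeration would no longer be differentiable, so the velocity lemma could not be applied to it directly.) Option (a) is too vague to work as stated: the spectral cut-off $r\mapsto S^{(N)}(U(r))$, which keeps the $N$ eigenvalues farthest from $1$, is not itself a finite-dimensional \emph{analytic family} of unitaries --- the identity of the ``$N$ farthest'' eigenvalues changes along $r$ --- so Lemma~\ref{L: spec distance estimate} cannot be invoked ``along the cut-off path''.

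The paper avoids both problems by truncating only at the \emph{two endpoints}, not along the path. Step (A) verifies the inequality for finite-dimensional $U_1,U_2$ (where Lemma~\ref{L: spec distance estimate} applies cleanly). Step (B) then replaces arbitrary $U_1,U_2\in\clU_1(\hilb)$ by finite-rank unitaries $U_1',U_2'$ obtained by sending to $1$ all eigenvalues of $U_j$ sufficiently close to $1$ (i.e.\ discarding the small Schmidt summands of $U_j-1$), so that $d_1(U_j,U_j')<\eps/2$ and $d(\spec U_j,\spec U_j')<\eps/2$. Two triangle inequalities --- one in $d_1$ and one in $d$ --- transfer the finite-dimensional bound to the original pair, and $\eps\to0$ finishes. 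This sidesteps any need to produce a continuous or analytic \emph{family} of truncations. You should reorganize your finite-dimensional reduction along these lines: approximate the two endpoint operators rather than the path.
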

\begin{proof}
(A) It follows from Lemma \ref{L: spec distance estimate}, that the theorem holds for finite-dimensional operators $U_1$ and $U_2.$

(B) Any operator $U$ from $\clU_1(\hilb)$ can be approximated (in $\clL_1$-norm, or, which is the same, in $d_1$-metric) by finite-dimensional operators from
$\clU_1(\hilb),$ which are constructed by throwing away all small enough summands in the Schmidt representation of $U-1,$
so that, in particular, the spectrum of the finite-dimensional approximations are subsets (in rigged sense) of the spectrum of $U.$

Let $\eps>0.$
For any $U_1$ and $U_2$ from $\clU_1(\hilb)$ there exist the above mentioned finite-dimensional operators
$U_1'$ and $U_2' \in \clU_1(\hilb)$ such that
$$
  d_1\brs{U_1,U_1'} < \frac \eps 2 \quad \text{and} \quad d_1\brs{U_2,U_2'} < \frac \eps 2,
$$
so that, by the triangle inequality,
\begin{equation} \label{F: d1(U1',U2')<...}
  d_1\brs{U_1',U_2'} < d_1\brs{U_1,U_2} + \eps.
\end{equation}
Let $N = \min\brs{\rank(U_1'-1),\rank(U_2'-1)}.$ If $N$ is large enough then
$$d(\spec(U_j),\spec(U_j')) < \eps /2, \ \ j =1,2.$$
It follows from this and the triangle inequality that
$$
  d(\spec(U_1),\spec(U_2)) < d(\spec(U_1'),\spec(U_2')) + \eps.
$$
It follows from this and (A) that
$$
  d(\spec(U_1),\spec(U_2)) < d_1\brs{U_1',U_2'} + \eps.
$$
Combining this with (\ref{F: d1(U1',U2')<...}), we obtain
$$
  d(\spec(U_1),\spec(U_2)) < d_1\brs{U_1,U_2} + 2\eps.
$$
Since $\eps>0$ is arbitrary, the claim follows.
\end{proof}

\subsection{Continuous paths of unitary operators}
As an immediate consequence of Theorem \ref{T: Selection Thm}, we have the operator version of that theorem.
\begin{thm} \label{T: Operator Selection Thm}
Let $U \colon [0,1] \to \clU_1(\hilb)$ be a continuous map, such that $U(0) = 1.$
It is possible to enumerate (counting multiplicities) eigenvalues
$e^{i\theta_j(x)}, j=1,2,\ldots,$ of $U(x)$ in such a way that the functions $\theta_j(x)$
are continuous and $\theta_j(0) = 0$ for all $j=1,2,\ldots$
\end{thm}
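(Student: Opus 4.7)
The plan is to derive this as an immediate corollary of the Selection Theorem \ref{T: Selection Thm} by applying the spectrum map. First I would form the composition $S := \spec \circ U \colon [0,1] \to \euS_1(\mbT)$. Theorem \ref{T: spec is continuous} gives continuity of $\spec$, so $S$ is a continuous path in $\euS_1(\mbT)$, and since $U(0) = 1$ we have $S(0) = \spec(1) = \mathbf{1}$.

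Next I would invoke the lifting machinery underlying Theorem \ref{T: Selection Thm}. Specifically, I would apply Theorem \ref{T: Hurevich bundle} to produce a continuous lift $\tilde S \colon [0,1] \to \euS_1(\mbR)$ with $\tilde S(0) = \mathbf{0}$, which is permitted since $p(\mathbf{0}) = \mathbf{1} = S(0)$. Then Lemma \ref{L: continuous sections of path S} supplies a continuous real enumeration $\theta_1, \theta_2, \ldots \colon [0,1] \to \mbR$ of $\tilde S$, and because every element of $\tilde S(0) = \mathbf{0}$ equals $0$, the sup-based construction in that lemma automatically yields $\theta_j(0) = 0$ for all $j$. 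Setting $z_j(x) := e^{i\theta_j(x)}$ gives continuous functions whose collection enumerates $\spec(U(x))$, i.e.\ the eigenvalues of $U(x)$ counting multiplicities.

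There is no real obstacle here; every step directly invokes a result already established in the paper. The only subtlety worth flagging is that the normalization $\theta_j(0) = 0$ is not automatic from Theorem \ref{T: Selection Thm} as stated (which only provides $z_j(0) = 1$), but falls out as soon as one initializes the Hurevich-bundle lift at $\mathbf{0}$ rather than at some other fibre over $\mathbf{1}$.
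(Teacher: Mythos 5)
Your proof is correct and follows essentially the same route as the paper's, which simply cites Theorem~\ref{T: spec is continuous} and Theorem~\ref{T: Selection Thm}. The extra detail you give — going through Theorem~\ref{T: Hurevich bundle} with the lift initialized at $\mathbf 0$ and then Lemma~\ref{L: continuous sections of path S} — is a sound unpacking of the proof of Theorem~\ref{T: Selection Thm} and is needed precisely to justify the normalization $\theta_j(0)=0$, which, as you correctly flag, is not literally part of the statement of the Selection Theorem but is produced in its proof.
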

\begin{proof} This follows directly from Theorem \ref{T: spec is continuous} and Theorem \ref{T: Selection Thm}.
\end{proof}

\begin{defn} $\mu$-invariant of a continuous path of operators $U \colon [a,b] \to \clU_1(\hilb),$ such that $U(a) = 1,$
is a function of the angle variable $\theta \in (0,2\pi)$ defined by
$$
  \mu(\theta; U) = \mu(\theta; \spec\, U) \ \in \mbZ. 
$$
That is, $\mu$-invariant of the path of unitary operators $U(r)$ is, by definition, the $\mu$-invariant of the path $\spec\, U(r) \in \euS_1(\mbT).$
\end{defn}
If functions $\theta_j(x)$ are chosen as in Theorem \ref{T: Operator Selection Thm}, then we have
$$
  \mu(\theta; U) = - \sum_{j=1}^\infty \SqBrs{\frac{\theta - \theta_j(b)}{2\pi}} \ \in \mbZ.
$$

The above sum is finite for all $\theta \in (0,2\pi).$ It also follows that the sum is independent from
rearrangement of numbers $\theta_j(x).$

\begin{prop} Let $U$ be a unitary operator from $\clU_1(\hilb).$
If two paths $U_1, U_2 \colon [0,1] \to \clU_1(\hilb),$ which connect $1$ and $U \in \clU_1(\hilb),$
are homotopic, then their $\mu$-invariants coincide:
$$
  \mu(\cdot; U_1) = \mu(\cdot; U_2).
$$
\end{prop}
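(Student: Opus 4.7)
The plan is to push the homotopy from $\clU_1(\hilb)$ down to $\euS_1(\mbT)$ via the spectrum map and invoke the already-proved homotopy invariance of the $\mu$-invariant on paths in $\euS_1(\mbT)$ (Theorem \ref{T: mu-invariant is homotopically invariant}). By the very definition of the $\mu$-invariant of a path of unitaries, $\mu(\theta;U_i) = \mu(\theta;\spec\circ U_i)$ for $i=1,2$, so it suffices to show that the two spectral paths $\spec\circ U_1$ and $\spec\circ U_2$ have equal $\mu$-invariants.

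To that end, let $F\colon[0,1]\times[0,1]\to\clU_1(\hilb)$ be a homotopy rel endpoints between $U_1$ and $U_2$, so that $F(\cdot,0)=U_1$, $F(\cdot,1)=U_2$, $F(0,t)=1$ and $F(1,t)=U$ for every $t\in[0,1]$. Form the composition
$$
  \tilde F := \spec\circ F \colon [0,1]\times[0,1] \to \euS_1(\mbT).
$$
Theorem \ref{T: spec is continuous} gives the global Lipschitz estimate $d(\spec(V_1),\spec(V_2))\leq d_1(V_1,V_2)$, and this together with joint continuity of $F$ on the compact square makes $\tilde F$ continuous. By construction $\tilde F(0,t)={\bf 1}$ and $\tilde F(1,t)=\spec(U)$ are independent of $t$, so the hypotheses of Theorem \ref{T: mu-invariant is homotopically invariant} are met (with $[a,b]=[0,1]$ and the constant end $F_1=\spec(U)$). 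Applying that theorem to the spectral paths $S_a(\cdot)=\tilde F(\cdot,0)=\spec\circ U_1$ and $S_b(\cdot)=\tilde F(\cdot,1)=\spec\circ U_2$ gives $\mu(\theta;\spec\circ U_1)=\mu(\theta;\spec\circ U_2)$ for every $\theta\in(0,2\pi)$, which is precisely the claim.

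There is essentially no obstacle: all the content has been loaded into Theorem \ref{T: spec is continuous} and Theorem \ref{T: mu-invariant is homotopically invariant}. The only minor verification is that the global Lipschitz bound for $\spec$ indeed upgrades separate continuity of $F$ to continuity of $\tilde F$ as a map into the metric space $\euS_1(\mbT)$, but this is immediate from the estimate above.
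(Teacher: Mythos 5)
Your argument is exactly the paper's: push the operator homotopy down to $\euS_1(\mbT)$ via $\spec$ and apply Theorem \ref{T: mu-invariant is homotopically invariant}; the paper simply cites that theorem in one line. Your write-up just makes explicit the (routine) verification that $\spec\circ F$ is continuous via the Lipschitz bound from Theorem \ref{T: spec is continuous}.
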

\begin{proof} This follows from Theorem \ref{T: mu-invariant is homotopically invariant}.
\end{proof}

\begin{rems*} It is known (Kuiper's theorem) that the unitary group of an infinite dimensional Hilbert space
is homotopically equivalent to a point. At the same time, the group $\clU_1(\hilb)$ is
homotopically non-trivial, as it follows from Corollary \ref{C: fundamental group of euS(T)} and Theorem \ref{T: spec is continuous}.
\end{rems*}


\section{Preliminaries on operators on a framed Hilbert space}
Details and proofs, regarding the material of this subsection, can be found in \cite{Az3v4}.

Recall that a \emph{frame} (cf. \cite{Az3v4}) in a Hilbert space $\hilb$ is a Hilbert-Schmidt operator with trivial kernel~$F,$
acting from $\hilb$ to possibly another Hilbert space $\clK,$ of the form
$$
  F = \sum_{j=1}^\infty \kappa_j \scal{\phi_j}{\cdot}\psi_j,
$$
where $(\kappa_j)$ is the sequence of $s$-numbers of~$F,$ $(\phi_j)$ is an orthonormal basis of $\hilb$
and $(\psi_j)$ is an orthonormal basis of $\clK.$

Let $\euD$ be the manifold of finite linear combinations of vectors $\phi_1,\phi_2,\ldots .$
The frame~$F,$ considered as Hilbert-Schmidt rigging (cf. \cite{BeShu}), generates a pair of Hilbert spaces
$\hilb_1$ and $\hilb_{-1}$ with scalar products $\scal{\cdot}{\cdot}_{\hilb_1}$ and
$\scal{\cdot}{\cdot}_{\hilb_{-1}}$ respectively, defined by formula
$$
  \scal{f}{g}_{\hilb_{\alpha}} = \scal{\abs{F}^{-\alpha}f}{\abs{F}^{-\alpha}g}, \ \ f,g \in \euD.
$$
The Hilbert space $\hilb_\alpha$ is the completion of $\euD$ endowed with the scalar product $\scal{\cdot}{\cdot}_{\hilb_{\alpha}}.$
There is a natural pairing $\scal{f}{g}_{1,-1}$ for $f \in \hilb_1$ and $g \in \hilb_{-1},$
and natural Hilbert-Schmidt inclusions
$$
  \hilb_1 \subset \hilb \subset \hilb_{-1}.
$$
The operator $\abs{F},$ considered as an operator $\hilb_{\alpha-1} \to \hilb_{\alpha},$ $\alpha = 0,1,$ is unitary.
So, it gives a natural isomorphism of Hilbert spaces
\begin{equation} \label{F: |F|:hilb equiv hilb1}
  \abs{F} \colon \hilb_{\alpha-1} \cong \hilb_{\alpha}.
\end{equation}
If $f,g \in \hilb_1 \subset \hilb,$ then $\scal{f}{g}_{1,-1} = \scal{f}{g}.$

Let $H_0$ be a self-adjoint operator on a framed Hilbert space $(\hilb,F).$
The standard set of full Lebesgue measure $$\LambHF{H_0},$$ associated with a frame~$F,$
is defined as the set of all those points $\lambda \in \mbR$ for which
\\ \mbox{ } \quad (1) the operator
$FR_{\lambda+iy}(H_0)F^*$ has a limit~$FR_{\lambda+i0}(H_0)F^*$ in $\clL_2$-norm as $y \to 0^+$
and
\\ \mbox{ } \quad (2) the operator
$F \Im R_{\lambda+iy}(H_0) F^*$ has a limit~$F \Im R_{\lambda+i0}(H_0)F^*$ in $\clL_1$-norm as~\mbox{$y \to 0^+.$}

\noindent That the set $\LambHF{H_0}$ is indeed a full set follows from the limiting absorption principle,
cf. \cite[Theorems 6.1.5, 6.1.9]{Ya}.

It follows from this definition, that for any $\lambda \in \LambHF{H_0}$ the limit
$$
  R_{\lambda + i0}(H_0) \colon \hilb_1 \to \hilb_{-1}
$$
exists in the Hilbert-Schmidt norm and the limit
$$
  \Im R_{\lambda + i0}(H_0) \colon \hilb_1 \to \hilb_{-1}
$$
exists in the trace-class norm.

For any $\lambda \in \Lambda(H_0;F)$ one can define the non-negative trace-class matrix
$$
  \phi(\lambda) := (\phi_{ij}(\lambda)) = \frac 1\pi \brs{\kappa_i\kappa_j \scal{\phi_i}{\Im R_{\lambda+i0}(H_0) \phi_j}}.
$$
The value $\phi_j(\lambda)$ of the vector $\phi_j$ at $\lambda \in \LambHF{H_0}$
is defined by the formula
$$
  \phi_j(\lambda) = \kappa_j^{-1} \eta_j(\lambda).
$$
The (fiber) Hilbert space $\hlambda = \hlambda(H_0; F) \subset \ell_2$ is, by definition, the closure of the linear span
of $\phi_j(\lambda),$ $j=1,2,\ldots$

The family $\set{\phi_j(\lambda), \ j=1,2,\ldots}$ of vector-functions form a measurability base of the direct integral of Hilbert spaces
$$
  \euH := \int_{\LambHF{H_0}}^\oplus \mathfrak h_\lambda\,d\lambda,
$$
where the case of~$\dim \hlambda = 0$ is not excluded.

For any $f \in \hilb_1(F),$ (so that
$$
  f = \sum\limits_{j=1}^\infty \kappa_j \beta_j \phi_j,
$$
where $(\beta_j) \in \ell_2,$) 
we define the value $f(\lambda)$ of $f$ at $\lambda$ by the formula
$$
  f(\lambda) := \euE_\lambda f := \sum_{j=1}^\infty \kappa_j \beta_j \phi_j(\lambda).
$$
The series here absolutely converges in $\ell_2.$
The operator
$$
  \euE \colon \hilb_1 \to \euH,
$$
defined by the formula $(\euE f)(\lambda) = \euE_\lambda f,$ is Hilbert-Schmidt,
and, considered as an operator from~$\hilb$ to $\euH,$ it is bounded,
vanishes on the singular subspace~$\hilb^{(s)}(H_0)$ of~$H_0,$ it is isometric on the absolutely continuous subspace~$\hilb^{(a)}(H_0)$
of~$H_0$ with the range $\euH$
and is diagonalizing for~$H_0.$ That is, given a frame $F$ in the Hilbert space $\hilb,$
the operator $\euE$ gives a natural isomorphism of the absolutely continuous subspace of $\hilb$
to the direct integral $\euH:$
$$
  \euE \colon \hilb^{(a)} \cong \euH,
$$
and
$$
  \euE_\lambda (H_0f) = \lambda\euE_\lambda (f)
$$
for a.e. $\lambda \in \Lambda(H_0;F).$

Let $H_0$ be a self-adjoint operator on a framed Hilbert space $(\hilb,F)$
and let $V$ be a trace class self-adjoint operator on $\hilb,$
of the form
$$
  V = F^*JF,
$$
where $J \in \clB(\clK).$ Using natural isomorphisms (\ref{F: |F|:hilb equiv hilb1}),
such an operator $V$ can also be considered
as a bounded operator
$$
  V \colon \hilb_{-1} \to \hilb_1.
$$
Let $$H_r = H_0+V_r, \ r \in \mbR,$$ where
$$
  V_r = F^*J_rF
$$
and $J_r$ is a piecewise analytic path of bounded operators in $\clK.$

Let $\lambda \in \LambHF{H_0}.$
The \emph{resonance set} $R(\lambda,H_0,V;F)$
can be defined as the set of all those $r \in \mbR$ for which $\lambda \in \LambHF{H_r}.$
Another way to characterize the resonance set is this
$$
  R(\lambda,H_0,V;F) = \set{r \in \mbR \colon \text{the operator} \ 1 + J_rFR_{\lambda+i0}(H_0)F^* \ \text{is not invertible}}.
$$
This second description of the resonance set $R(\lambda,H_0,V;F)$ implies that it is  a discrete subset of $\mbR,$
cf e.g. \cite{Az3v4}.

For any $\lambda \in \Lambda(H_0;F) \cap \Lambda(H_r;F),$ one can define operators
\begin{equation} \label{F: def of w pm}
  w_\pm(\lambda) \colon \hlambda(H_0) \to \hlambda(H_r)
\end{equation}
as an (unique) operator, which satisfies the formula
$$
  \scal{\euE_\lambda(H_1) f}{w_\pm(\lambda; H_1,H_0) \euE_\lambda(H_0) g} = \scal{f}{\mathfrak a_\pm(\lambda;H_1,H_0)g}_{1,-1},
$$
where the operator
$$
  \mathfrak a_\pm(\lambda;H_1,H_0) \colon \hilb_1 \to \hilb_{-1}
$$
is defined by the formula
$$
  \mathfrak a_\pm(\lambda;H_1,H_0) := \SqBrs{1- R_{\lambda \mp i0}(H_1)V} \cdot \frac 1\pi \Im R_{\lambda + i0}(H_0).
$$
The operators (\ref{F: def of w pm}) are unitary operators.

So, for any $\lambda \in \Lambda(H_0;F) \cap \Lambda(H_r;F)$ one can define the scattering matrix by the formula
$$
  S(\lambda;H_r,H_0) = w^*_+(\lambda;H_r,H_0)w_-(\lambda;H_r,H_0).
$$
For any $\lambda \in \LambHF{H_0} \cap \LambHF{H_r}$ the stationary formula
\begin{equation} \label{F: stationary formula for S}
  S(\lambda;H_r,H_0) = 1_\lambda - 2\pi i \euE_\lambda V_r(1 + R_{\lambda+i0}(H_0)V_r)^{-1} \euE^\diamondsuit_\lambda
\end{equation}
holds, where $\euE^\diamondsuit_\lambda = \abs{F}^{-2} \euE^*_\lambda  \colon \hlambda \to \hilb_{-1}(F).$

\section{Singular spectral shift and Pushnitski $\mu$-invariant}
This section very heavily relies on my paper \cite{Az3v4}.
I refer to this paper, instead of giving all the necessary definitions and results from it here.

\subsection{Absolutely continuous part of Pushnitski $\mu$-invariant}

The aim of this subsection is to introduce the absolutely continuous part $\mua(\theta,\lambda;H_r,H_0)$
of Pushnitski $\mu$-invariant (cf. \cite{Pu01FA}) and to prove Theorem \ref{T: xia = 1/2pi int}.

Let $H_0$ be a self-adjoint operator on a Hilbert space with frame operator $F.$
Let $\set{V_r}$ be a continuous piecewise analytic path of trace-class self-adjoint operators,
such that \cite[Assumption 5.1]{Az3v4} holds and $V_0 = 0.$ Let $H_r = H_0+V_r.$ Note that for a path
$V_r = rV$ with any trace-class self-adjoint operator $V$ there exists a frame $F$ such that the assumption holds.

Let $\lambda \in \LambHF{H_0}.$ It is proved in \cite{Az3v4},
that in this case the scattering matrix $S(\lambda; H_r,H_0)$
exists for all $r \in \mbR$ except a discrete resonance set $R(\lambda; H_0,V;F),$
and that the function
$$
  \mbR \ni r \mapsto S(\lambda; H_r,H_0)
$$
admits analytic continuation to a neighbourhood of $\mbR.$
Also, it follows from (\ref{F: stationary formula for S}) that $S(\lambda; H_r,H_0)$ takes values in $1 + \clL_1(\hlambda).$
This allows to introduce the $\mu$-invariant of the scattering matrix $S(\lambda; H_r,H_0).$

\begin{defn} \label{D: a.c. Push. inv-t}
Let $\lambda \in \LambHF{H_0}.$ The \emph{absolutely continuous part of Pushnitski $\mu$-invariant}
of the pair $(H_0,H_r)$ is the $\mu$-invariant of the continuous path
$$
  [0,s] \ni r \mapsto S(\lambda; H_r, H_0) \in 1 + \clL_1(\hlambda(H_0)).
$$
The absolutely continuous part of Pushnitski $\mu$-invariant will be denoted by $\mua(\theta,\lambda; H_r,H_0).$
\end{defn}

By Theorem \ref{T: Operator Selection Thm},
it is possible to choose eigenvalues $e^{i\theta^*_j(\lambda,r)} \in \mbT,$ $j=1,2,\ldots$ of the scattering matrix
$S(\lambda;H_r,H_0)$ in such a way that for all $j=1,2,\ldots$ \ $\theta_j^*(0) = 0$ and $\theta^*_j(\lambda,\cdot)$ is continuous.
Numbers $\theta^*_j(\lambda,r)$ are also called scattering phases.

We shall always assume such a choice of $\theta_j^*$'s.

By definition of the $\mu$-invariant, for all $\theta \in (0,2\pi)$ and all $\lambda \in \LambHF{H_0},$
\begin{equation} \label{F: def of a.c. mu}
  \mua(\theta,\lambda; H_r, H_0) = - \sum_{j=1}^\infty \SqBrs{\frac{\theta - \theta^*_j(\lambda,r)}{2\pi}} \ \ \in \mbZ,
\end{equation}
where $[x]$ denotes the integer part of $x \in \mbR.$ The right hand side does not depend on the choice of functions $\set{\theta_j^*(\lambda,r)}.$

Note that, by Lemma \ref{L: the sum is continuous}, for any $\lambda \in \LambHF{H_0}$
\begin{equation} \label{F: sum theta j(r) is continuous}
   \text{the function} \ \
      \mbR \ni r \mapsto \sum\limits_{j=1}^\infty \theta^*_j(\lambda,r) \ \ \text{is continuous.}
\end{equation}

The following lemma follows directly from Lemma \ref{L: SS3 int mu = sum theta j}.
\begin{lemma} \label{L: int mua = sum theta j}
Let $\lambda \in \LambHF{H_0}.$
The function $\theta \mapsto \mua(\theta,\lambda;H_r,H_0)$ is summable and the equality
  \begin{equation} \label{F: int of ac mu = sum of theta*}
    \int_0^{2\pi} \mua(\theta,\lambda; H_r,H_0)\,d\theta = \sum_{j=1}^\infty \theta^*_j(\lambda,r)
  \end{equation}
  holds.
\end{lemma}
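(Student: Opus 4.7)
The plan is to apply Lemma \ref{L: SS3 int mu = sum theta j} directly to the path of scattering matrices. By Definition \ref{D: a.c. Push. inv-t}, the quantity $\mua(\theta,\lambda;H_r,H_0)$ is by construction the $\mu$-invariant of the continuous path
$$
  [0,r] \ni s \mapsto \spec\brs{S(\lambda; H_s, H_0)} \in \euS_1(\mbT),
$$
and this path begins at $S(\lambda;H_0,H_0)=1_\lambda,$ i.e.\ at ${\bf 1} \in \euS_1(\mbT).$ Continuity of this path in $\euS_1(\mbT)$ follows from Theorem \ref{T: spec is continuous} combined with the (piecewise) analytic dependence of $S(\lambda; H_s, H_0)$ on $s$ in $1+\clL_1(\hlambda)$ stated before Definition \ref{D: a.c. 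Push. inv-t} (which is the main input from \cite{Az3v4}).

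Next, Theorem \ref{T: Selection Thm} (or equivalently Theorem \ref{T: Operator Selection Thm}) provides a continuous enumeration of the eigenvalues of $S(\lambda; H_s, H_0)$ as $e^{i\theta_j^*(\lambda, s)}$ with $\theta_j^*(\lambda, 0)=0.$ Lifting this enumeration through the covering $p\colon \mbR \to \mbT$ gives a continuous enumeration $\set{\theta_j^*(\lambda,\cdot)}_{j=1}^\infty$ of a continuous lift $\tilde S \colon [0,r] \to \euS_1(\mbR)$ with $\tilde S(0)={\bf 0}.$ These are precisely the objects to which Lemma \ref{L: SS3 int mu = sum theta j} applies.

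Invoking Lemma \ref{L: SS3 int mu = sum theta j} with this choice of lift and enumeration yields summability of $\theta \mapsto \mua(\theta, \lambda; H_r, H_0)$ on $(0,2\pi)$ together with the identity
$$
  \int_0^{2\pi} \mua(\theta, \lambda; H_r, H_0)\,d\theta = \sum_{j=1}^\infty \theta_j^*(\lambda, r),
$$
which is the claim. No obstacle is expected beyond verifying that the hypotheses of Lemma \ref{L: SS3 int mu = sum theta j} are met, namely continuity in $\euS_1(\mbT)$ of the spectral path and the initial condition $S(\lambda; H_0, H_0) = 1_\lambda;$ both have already been established. Note that absolute convergence of $\sum_j \theta_j^*(\lambda, r)$ is not required (and need not hold in general) — only the convergence of the series in the order furnished by the continuous enumeration, which is guaranteed by the lift belonging to $\euS_1(\mbR).$
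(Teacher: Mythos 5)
Your proof is correct and takes exactly the paper's approach: the paper disposes of this lemma with the one-line remark that it follows directly from Lemma~\ref{L: SS3 int mu = sum theta j}, which is precisely what you carry out by verifying that $r \mapsto \spec\brs{S(\lambda;H_r,H_0)}$ is a continuous path in $\euS_1(\mbT)$ starting at ${\bf 1}$ and applying that lemma to a lift with continuous enumeration $\set{\theta_j^*(\lambda,\cdot)}.$ One small correction to your closing parenthetical: absolute convergence of $\sum_j \theta_j^*(\lambda,r)$ \emph{does} hold and is in fact automatic, since $\tilde S(r) \in \euS_1(\mbR)$ means $d(\tilde S(r),{\bf 0}) = \sum_j \abs{\theta_j^*(\lambda,r)} < \infty;$ indeed the paper's proof of Lemma~\ref{L: SS3 int mu = sum theta j} relies explicitly on this absolute convergence to justify interchanging the sum and the integral.
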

\begin{thm} \label{T: xia = 1/2pi int} Let $\lambda \in \LambHF{H_0} \cap \LambHF{H_1}.$
The equality
\begin{equation} \label{F: xia = 1/2pi int}
  \xia(\lambda; H_1,H_0) = - \frac 1{2\pi} \int_0^{2\pi} \mua(\theta,\lambda; H_1,H_0)\,d\theta
\end{equation}
holds.
\end{thm}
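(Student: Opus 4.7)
The plan is to combine Lemma \ref{L: int mua = sum theta j} with the coupling-constant representation of $\xia$ established in \cite{Az3v4}. First I would apply Lemma \ref{L: int mua = sum theta j}, which immediately rewrites the right-hand side of (\ref{F: xia = 1/2pi int}) as $-\frac{1}{2\pi}\sum_{j=1}^\infty \theta_j^*(\lambda, 1)$, where $\set{\theta_j^*(\lambda, \cdot)}$ is the continuous enumeration of scattering phases with $\theta_j^*(\lambda, 0) = 0$ supplied by Theorem \ref{T: Operator Selection Thm} applied to $r \mapsto S(\lambda; H_r, H_0)$. The theorem thus reduces to the identity
$$
\xia(\lambda; H_1, H_0) = -\frac{1}{2\pi} \sum_{j=1}^\infty \theta_j^*(\lambda, 1).
$$

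The natural route is then to differentiate both sides in the coupling parameter $r$. Off the discrete resonance set $R(\lambda; H_0, V; F)$, the scattering matrix is real-analytic in $r$ with trace-class derivative given by the stationary formula (\ref{F: stationary formula for S}). Lemma \ref{L: velocity of eigenvalue}, with the aid of Lemma \ref{L: sum abs(f(j),A f(j))<...} to control the infinite sum of diagonal matrix elements, then yields at every non-resonance point
$$
\frac{d}{dr}\sum_{j=1}^\infty \theta_j^*(\lambda, r) = -i\, \mathrm{Tr}\bigl(S^*(\lambda; H_r, H_0)\, \partial_r S(\lambda; H_r, H_0)\bigr).
$$
Substituting (\ref{F: stationary formula for S}) into the right-hand side and simplifying, this trace should be identified with $-2\pi$ times the infinitesimal absolutely continuous spectral shift density from \cite{Az3v4}, whose integral from $0$ to $1$ is, by definition, $\xia(\lambda; H_1, H_0)$.

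To conclude, I would integrate the displayed identity over $r \in [0,1]$. The continuity of the function $r \mapsto \sum_j \theta_j^*(\lambda, r)$ recorded in (\ref{F: sum theta j(r) is continuous}), together with $\theta_j^*(\lambda, 0) = 0$ and the uniform convergence supplied by Lemma \ref{L: sum theta converges uniformly} (applied on each compact $r$-interval between resonances), justifies interchanging summation and integration and yields $\sum_j \theta_j^*(\lambda, 1)$ on one side and $-2\pi\, \xia(\lambda; H_1, H_0)$ on the other, which is the required equality.

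The main obstacle is the identification step connecting $-i\,\mathrm{Tr}(S^* \partial_r S)$ with the infinitesimal a.c. SSF of \cite{Az3v4}, and making this rigorous across the discrete resonance set where $S(\lambda; H_r, H_0)$ itself is undefined. Both the phase sum and (by continuity) $\xia(\lambda; H_r, H_0)$ extend continuously through resonances, so the pointwise derivative formula, valid off a discrete set, integrates correctly; nonetheless, verifying that no hidden jumps occur at resonance points is the delicate part and relies essentially on the continuity results of Section 2 together with the resonance analysis of \cite{Az3v4}.
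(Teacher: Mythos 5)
Your reduction via Lemma \ref{L: int mua = sum theta j} is exactly what the paper does, but the way you then try to establish the identity
$\xia(\lambda;H_1,H_0)=-\tfrac{1}{2\pi}\sum_j\theta_j^*(\lambda,1)$ diverges from the paper and leaves a genuine gap. The paper's argument for this identity is short and global: apply the Lidskii theorem to get $\det S(\lambda;H_r,H_0)=\exp\bigl(i\sum_j\theta_j^*(\lambda,r)\bigr)$, invoke the continuity in $r$ of the phase sum recorded in (\ref{F: sum theta j(r) is continuous}), and cite \cite[Theorem 9.8]{Az3v4}, which is precisely the absolutely continuous Birman--Kre\u{\i}n formula linking $\det S(\lambda;H_r,H_0)$ to $\xia(\lambda;H_r,H_0)$; together with $\xia(\lambda;H_0,H_0)=0=\sum_j\theta_j^*(\lambda,0)$ this pins down the branch of the argument and the identity follows at once, with no differentiation and no case analysis at resonance points.

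Your route instead differentiates the phase sum in $r$ and proposes to identify $-i\,\mathrm{Tr}\bigl(S^*\partial_r S\bigr)$ with $-2\pi$ times the infinitesimal a.c.\ spectral shift density. This identification is the heart of the matter and you do not establish it; you only say it ``should be identified'' with the density from \cite{Az3v4}. But that identification \emph{is} the content (in differentiated form) of \cite[Theorem 9.8]{Az3v4}, so as written you are presupposing the very fact the paper cites. Moreover, your use of Lemma \ref{L: velocity of eigenvalue} (which concerns isolated eigenvalues of a real-analytic family) to pass a termwise derivative through an infinite sum with an accumulation point at $1$ requires a separate dominated-convergence or uniform-bound argument that is not supplied by Lemma \ref{L: sum abs(f(j),A f(j))<...} alone; and the matching of the pointwise derivative across the discrete resonance set is, as you acknowledge, not carried out. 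None of these issues arise in the paper's proof precisely because it works with the determinant globally rather than with its logarithmic derivative locally. To close the gap you would need either to prove the trace identity and the cross-resonance matching from scratch (essentially reproving \cite[Theorem 9.8]{Az3v4}), or simply to cite that theorem and switch to the paper's Lidskii argument.
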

\begin{proof} By the Lidskii theorem (see e.g. \cite{GK,SimTrId})
\begin{equation*}
  \det S(\lambda;H_r,H_0) = \prod\limits_{j=1}^\infty e^{i\theta^*_j(\lambda,r)} = \exp\brs{i\sum\limits_{j=1}^\infty \theta_j^*(r,\lambda)}.
\end{equation*}
It follows from this, (\ref{F: sum theta j(r) is continuous}) and \cite[Theorem 9.8]{Az3v4} that
$$
  \xia(\lambda; H_r,H_0) = - \frac 1{2\pi} \sum\limits_{j=1}^\infty \theta_j^*(r,\lambda).
$$
Now, Lemma \ref{L: int mua = sum theta j} completes the proof.
\end{proof}

{\bf Remark}. Note that definition of the $\mu$-invariant does not depend on the choice of the path $\set{H_r}$
connecting the end-point operators. Indeed, any two $\mu$-invariants must differ by a constant integer.
Since definition of $\xia$ is path-independent \cite{Az3v4}, it follows from the previous theorem
that different paths give the same $\mu$-invariant.

\subsection{Pushnitski $\mu$-invariant}
Let $H_0$ be a self-adjoint operator on $\hilb,$
let $F$ be a frame operator on $\hilb,$ let $V_r \in F^*J_rF,$ where $J_r \in \clB(\clK),$
and let $\set{H_r=H_0+V_r}$ satisfy Assumption 5.1 of \cite{Az3v4}. In this subsection $r$ will be fixed,
so it is in fact not important what the path $\set{H_r}$ is which connects $H_0$ and $H_1.$

Let $z \in \mbC,$ $\Im z > 0.$ Following \cite{Pu01FA}, we define the
$\tilde S$-function by the formula
\begin{equation} \label{F: def of tilde S(z,r)}
 \begin{split}
   \tilde S(z,r) & = \tilde S(z; H_r,H_0;F)
     \\ & = 1 - 2i \sqrt{\Im T_0(z)} J_r (1 + T_0(z)J_r)^{-1} \sqrt{\Im T_0(z)} \ \ \in \ 1 + \clL_1(\clK),
 \end{split}
\end{equation}
where
$$
  T_0(z) = F R_z(H_0) F^*.
$$
It is not difficult to verify that $\tilde S(z;
H_r,H_0;F)$ is a unitary operator. Hence,
$$
  \tilde S(z; H_r,H_0;F) \in \clU_1(\clK).
$$
If $\lambda \in \Lambda(H_0,F) \cap \Lambda(H_r,F),$ then the limit
\begin{equation} \label{F: tilde S(l+i0,r)}
 \begin{split}
   \tilde S(\lambda+i0,r) & = \tilde S(\lambda+i0; H_r,H_0;F)
     \\ & = 1 - 2i \sqrt{\Im T_0(\lambda+i0)} J_r (1 + T_0(\lambda+i0)J_r)^{-1} \sqrt{\Im T_0(\lambda+i0)} \ \ \in \ \clU_1(\clK)
 \end{split}
\end{equation}
exists in $\clL_1(\clK)$-norm: from $\lambda \in \Lambda(H_0,F)$ it follows existence of $\Im T_0(\lambda+i0)$
in $\clL_2(\clK)$-norm and from $\lambda \in \Lambda(H_r,F)$ it follows that the operator $(1 + T_0(\lambda+i0)J_r)^{-1}$
is invertible.

When $y \to +\infty,$ the operator $\tilde S(\lambda+iy,r)$ goes to $1.$
So, we have a continuous (in fact, real-analytic) path of unitary operators in $\clU_1(\clK):$
\begin{equation} \label{F: tilde S on [0,infty]}
  \tilde S(\lambda+i\,\cdot,r) \colon [0,\infty] \to \clU_1(\clK).
\end{equation}
\begin{defn} \label{D: Push. inv-t}
Let $\lambda \in \LambHF{H_0}\cap \LambHF{H_r}.$ \emph{Pushnitski $\mu$-invariant}
of the pair $(H_0,H_r)$ is the $\mu$-invariant of the continuous path (\ref{F: tilde S on [0,infty]}).
Pushnitski $\mu$-invariant will be denoted by $\mu(\theta,\lambda; H_r,H_0).$
\end{defn}
The following theorem was proved in \cite{Pu01FA}.
\begin{thm} (Pushnitski formula) \label{T: xi = -average of mu} For a.e. $\lambda \in \LambHF{H_0}\cap \LambHF{H_r}$ the equality
$$
  \xi(\lambda; H_r,H_0) = - \frac 1{2\pi} \int_0^{2\pi} \mu(\theta,\lambda; H_r,H_0)
$$
holds.
\end{thm}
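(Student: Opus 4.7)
The plan is to follow the same template used in the proof of Theorem~\ref{T: xia = 1/2pi int}, but now applied to the continuous path of unitaries $y \mapsto \tilde S(\lambda+iy, r)$ from (\ref{F: tilde S on [0,infty]}), rather than the path $r \mapsto S(\lambda; H_r, H_0)$. After reparametrizing $[0,+\infty]$ onto $[0,1]$ so that $+\infty \mapsto 0$ and $0 \mapsto 1$, Theorem~\ref{T: Operator Selection Thm} produces continuous phases $\tilde\theta_j(\lambda+iy, r)$, $j = 1,2,\ldots$, with $\tilde\theta_j(\lambda + i\infty, r) = 0$. Applying Lemma~\ref{L: SS3 int mu = sum theta j} to the reparametrized path then gives
$$
  \int_0^{2\pi}\mu(\theta,\lambda;H_r,H_0)\,d\theta \;=\; \sum_{j=1}^\infty \tilde\theta_j(\lambda+i0, r).
$$

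Next, I would compute $\det \tilde S(\lambda+iy, r)$ in two different ways. By the Lidskii theorem applied to $\tilde S - 1 \in \clL_1(\clK)$, one has $\det \tilde S(\lambda+iy, r) = \exp\bigl(i\sum_j \tilde\theta_j(\lambda+iy, r)\bigr)$. On the other hand, starting from (\ref{F: def of tilde S(z,r)}), using $2i\,\Im T_0(z) = T_0(z) - T_0(\bar z)$ together with the Sylvester identity $\det(1 + XY) = \det(1+YX)$, a direct calculation gives
$$
  \det \tilde S(z, r) \;=\; \frac{\det\bigl(1 + T_0(\bar z) J_r\bigr)}{\det\bigl(1 + T_0(z) J_r\bigr)} \;=\; \frac{\overline{\Delta(z;H_r,H_0)}}{\Delta(z;H_r,H_0)},
$$
where $\Delta(z; H_r,H_0) := \det(1 + J_r T_0(z)) = \det(1 + V_r R_z(H_0))$ is the perturbation determinant. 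Since $\Delta(\cdot\,;H_r,H_0)$ is analytic and non-vanishing on $\set{\Im z > 0}$ with $\Delta(z) \to 1$ as $\Im z \to +\infty$, a continuous branch of $\arg \Delta(\lambda+iy;H_r,H_0)$ is well-defined and vanishes at $y = +\infty$.

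Finally, Krein's theorem asserts that, for a.e.\ $\lambda$, $\xi(\lambda; H_r, H_0) = \frac{1}{\pi}\lim_{y\to 0^+}\arg \Delta(\lambda+iy; H_r, H_0)$. The two functions $y \mapsto \sum_j \tilde\theta_j(\lambda+iy, r)$ and $y \mapsto -2\arg \Delta(\lambda+iy; H_r,H_0)$ are both continuous on $(0, +\infty]$, both vanish at $+\infty$, and their exponentials agree (by the two determinant computations), so they coincide identically. Letting $y \to 0^+$ yields $\sum_j \tilde\theta_j(\lambda+i0, r) = -2\pi\,\xi(\lambda;H_r,H_0)$, which combined with the first identity produces the desired formula.

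The main obstacle is precisely this matching of continuous branches: Lidskii's sum initially determines $\log \det \tilde S$ only modulo $2\pi i$, and $\arg \Delta$ is similarly ambiguous. Resolving the ambiguity requires pinning down both branches at $y = +\infty$, where the strong continuity provided by Theorem~\ref{T: spec is continuous} and Theorem~\ref{T: Operator Selection Thm} enters essentially. A minor technical point is verifying the boundary behavior of the $\tilde S$-path at $y = 0^+$ and $y = +\infty$, which uses $\lambda \in \Lambda(H_0;F) \cap \Lambda(H_r;F)$ together with (\ref{F: tilde S(l+i0,r)}).
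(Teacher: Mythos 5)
The paper does not actually prove this theorem; it states it with the attribution that it ``was proved in \cite{Pu01FA}'' and remarks that an alternative proof following \cite{Az2} ``will appear elsewhere.'' There is therefore no in-paper proof to compare your argument against, but your proof is correct and, structurally, it mirrors the paper's own proof of Theorem~\ref{T: xia = 1/2pi int}: select continuous phases along the reparametrized path (\ref{F: tilde S on [0,infty]}) via Theorem~\ref{T: Operator Selection Thm}, integrate the $\mu$-invariant via Lemma~\ref{L: SS3 int mu = sum theta j}, and pin $\sum_j\tilde\theta_j(\lambda+i0,r)$ to $-2\pi\xi(\lambda)$ by a determinant computation. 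Your key identity $\det\tilde S(z,r)=\overline{\Delta(z)}/\Delta(z)$, with $\Delta(z)=\det(1+V_rR_z(H_0))$, is correctly derived from $2i\,\Im T_0(z)=T_0(z)-T_0(\bar z)$, Sylvester's identity $\det(1+XY)=\det(1+YX)$, and self-adjointness of $J_r$. The branch-matching argument is also sound: both $y\mapsto\sum_j\tilde\theta_j(\lambda+iy,r)$ and $y\mapsto-2\arg\Delta(\lambda+iy)$ are continuous on $(0,\infty]$, vanish at $y=+\infty$, and have identical exponentials, hence coincide. The only step you invoke without reference is Krein's boundary-value formula $\xi(\lambda)=\frac 1\pi\lim_{y\to 0^+}\arg\Delta(\lambda+iy)$, together with the identification of the $\xi$ of \cite{Az3v4} (defined by a different route) with the classical spectral shift function defined through the perturbation determinant; both facts are standard for trace-class perturbations (cf.\ \cite{Ya}) but deserve to be made explicit.
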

It is possible to give another proof of this theorem, which follows a proof in \cite{Az2}.
This proof will appear elsewhere.

\subsection{The singular part of Pushnitski $\mu$-invariant}
The scattering matrix $S(\lambda; H_r,H_0)$ for $\lambda \in \LambHF{H_0} \cap \LambHF{H_r}$ is a unitary operator of the class
$1 + \clL_1(\hlambda).$ So, the spectrum of $S(\lambda; H_r,H_0)$ is discrete with only possible accumulation point at $1,$
and its eigenvalues belong to the unit circle $\mbT.$ The eigenvalues of $S(\lambda; H_r,H_0)$ (so called scattering phases)
can be send to $1$ in two essentially different ways. The first way is to connect $S(\lambda; H_r,H_0)$  with the identity operator
by letting the coupling constant $r$ move from $1$ to $0.$ This is possible to do, since $S(\lambda; H_r,H_0)$
is continuous for all $r \in \mbR.$ Now, observe that
the operators $S(\lambda; H_r,H_0)$ and $\tilde S(\lambda+i0; H_r,H_0)$ have the same eigenvalues.
So, the second way to send scattering phases to $1$ is to move $y$ from $0$ to $+\infty$ in $\tilde S.$ In both ways,
scattering phases go to $1$ continuously. Nevertheless, it is possible that these two ways
are not homotopic in that an eigenvalue can make a different number of windings around the unit circle
as it is sent to $1.$ The Pushnitski invariant $\mu(\theta,\lambda;H_r,H_0)$ and its absolutely continuous part
$\mua(\theta,\lambda;H_r,H_0)$ measure the spectral flow of the scattering phases through $e^{i\theta}$
in two different ways, corresponding to the above mentioned two ways of connecting the scattering phases
with $1.$ So, the difference $\mu(\theta,\lambda;H_r,H_0) - \mua(\theta,\lambda;H_r,H_0)$ presumably should not depend on $\theta.$
This difference measures the difference of winding numbers.

\begin{defn} Let $\lambda \in \LambHF{H_0} \cap \LambHF{H_r}.$
The singular part of Pushnitski $\mu$-invariant of the pair $(H_0,H_r)$ is the function
$$
  \mus(\theta,\lambda;H_r,H_0) := \mu(\theta,\lambda;H_r,H_0) - \mua(\theta,\lambda;H_r,H_0).
$$
\end{defn}

\begin{thm} \label{T: mus(theta)=const}
The singular part of Pushnitski $\mu$-invariant $\mus(\theta,\lambda; H_r,H_0)$ does not depend on the angle variable $\theta.$
\end{thm}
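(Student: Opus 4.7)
The plan is to realize both $\mu$ and $\mu_a$ as $\mu$-invariants (in the sense of Section 4) of two continuous paths in $\euS_1(\mathbb T)$ that share the same initial point $\mathbf 1$ and the same terminal point, and then apply Corollary \ref{C: mu-inv. = const}.

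First, using the continuity statement of Theorem \ref{T: spec is continuous}, the two paths of unitary operators
\[
  \alpha \colon [0,1] \ni r \mapsto S(\lambda; H_r,H_0) \in \clU_1(\hlambda), \qquad
  \beta \colon [0,\infty] \ni y \mapsto \tilde S(\lambda+i(\infty{-}y); H_1,H_0) \in \clU_1(\clK)
\]
(reparametrized so that $y=0$ corresponds to the identity and $y=\infty$ to $\tilde S(\lambda+i0)$) project under $\mathrm{spec}$ to continuous paths in $\euS_1(\mathbb T)$ starting at the trivial element~$\mathbf 1$. By Definitions \ref{D: a.c. Push. inv-t} and \ref{D: Push. inv-t}, the $\mu$-invariants of these projected paths are exactly $\mu_a(\theta,\lambda;H_1,H_0)$ and $\mu(\theta,\lambda;H_1,H_0)$ respectively.

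The key step is to identify the terminal points. The stationary formula (\ref{F: stationary formula for S}) together with the expression (\ref{F: tilde S(l+i0,r)}) shows that the operators $S(\lambda; H_1,H_0)$ and $\tilde S(\lambda+i0; H_1,H_0)$, although acting on different Hilbert spaces ($\hlambda$ and $\clK$), have the same non-trivial spectra in $\mathbb T$ counted with multiplicity; this is a standard consequence of the identity $AB - \mu$ and $BA - \mu$ having the same nonzero eigenvalues applied to a Birman--Schwinger-type factorization of the $T$-operator. Since the element~$1 \in \mathbb T$ has infinite multiplicity by definition of $\euS_1(\mathbb T)$, this means
\[
  \mathrm{spec}\bigl(S(\lambda; H_1,H_0)\bigr) \;=\; \mathrm{spec}\bigl(\tilde S(\lambda+i0; H_1,H_0)\bigr) \;=\!:\Sigma
\]
as elements of $\euS_1(\mathbb T)$, so the two projected paths end at the common point $\Sigma$. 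Applying Corollary~\ref{C: mu-inv. = const} then yields that $\mu(\theta,\lambda;H_1,H_0) - \mu_a(\theta,\lambda;H_1,H_0)$ is independent of $\theta$, as required.

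The main obstacle is twofold. First, the coupling-constant path $r \mapsto S(\lambda; H_r,H_0)$ is only defined off the discrete resonance set $R(\lambda,H_0,V;F) \cap [0,1]$, so one must invoke the meromorphic/analytic extension from \cite{Az3v4} to justify that $\mathrm{spec}\bigl(S(\lambda; H_r,H_0)\bigr)$ still defines a continuous path in $\euS_1(\mathbb T)$ across those points; this follows because the scattering matrix remains unitary of the class $1+\clL_1$ on the extension and Theorem~\ref{T: spec is continuous} applies. Second, one must carefully verify the spectral identification of $S$ and $\tilde S$ at $\lambda+i0$, tracking multiplicities through the factorization; this is a routine (but technical) matrix-algebra calculation using that $1 + T_0(\lambda+i0)J_r$ is invertible for $\lambda \in \Lambda(H_0;F)\cap\Lambda(H_r;F)$.
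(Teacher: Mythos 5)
Your proposal is correct and takes essentially the same approach as the paper: both proofs identify the terminal points of the two paths by showing that $S(\lambda;H_r,H_0)$ and $\tilde S(\lambda+i0;H_r,H_0)$ have the same spectrum in $\euS_1(\mbT)$ via the identity $\spec(AB)\cup\{0\}=\spec(BA)\cup\{0\}$ combined with the stationary formula (\ref{F: stationary formula for S}) and (\ref{F: tilde S(l+i0,r)}), and then invoke Corollary \ref{C: mu-inv. = const}. The technical caveats you flag (analytic continuation across the resonance set, tracking multiplicities) are indeed the points the paper delegates to \cite{Az3v4}.
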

\begin{proof}
(A) We use the well-known equality $\spec(AB) \cup \set{0} = \spec(BA)\cup \set{0}.$
This equality, the stationary formula (\ref{F: stationary formula for S}) for the scattering matrix,
formula (\ref{F: tilde S(l+i0,r)})
and the equality
$$
  \euE_\lambda^\diamondsuit(H_0)\euE_\lambda(H_0) = \frac 1 \pi \Im R_{\lambda+i0}(H_0)
$$
imply that the spectra of operators $S(\lambda; H_r,H_0)$ and $\tilde S (\lambda; H_r,H_0;F)$ coincide
as elements of $\euS_1(\mbT).$

(B) It follows from (A) and Corollary \ref{C: mu-inv. = const} that the singular part of the $\mu$-invariant does not depend on $\theta.$

\end{proof}

The proof, given here, unlike the one given in \cite{Az2},
does not use a study of the behaviour of specific eigenvalue-functions of the $\tilde S$-operator.
Still, investigation of the eigenvalue-functions is of separate interest. It will be given elsewhere.

\begin{cor} \label{C: xis = -mus} The following formula holds
$$
  \xis(\lambda) = - \mus(\lambda).
$$
Consequently, the singular part of the spectral shift function $\xis(\lambda)$ is integer-valued.
\end{cor}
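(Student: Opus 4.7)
The proof plan is to combine the two integral formulas already at our disposal: Pushnitski's formula
$$\xi(\lambda;H_1,H_0)=-\frac{1}{2\pi}\int_0^{2\pi}\mu(\theta,\lambda;H_1,H_0)\,d\theta$$
(Theorem on $\xi$) and its absolutely continuous analogue
$$\xia(\lambda;H_1,H_0)=-\frac{1}{2\pi}\int_0^{2\pi}\mua(\theta,\lambda;H_1,H_0)\,d\theta$$
(Theorem \ref{T: xia = 1/2pi int}). Subtracting the second from the first and using the decomposition $\xi=\xia+\xis$, we obtain
$$\xis(\lambda)=-\frac{1}{2\pi}\int_0^{2\pi}\mus(\theta,\lambda;H_1,H_0)\,d\theta.$$

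Now the key input is Theorem \ref{T: mus(theta)=const}, which tells us that $\mus(\theta,\lambda;H_1,H_0)$ is independent of the angle variable $\theta$. Therefore the integrand on the right-hand side is constant in $\theta$, and the integral collapses to $2\pi\,\mus(\lambda)$. This immediately yields
$$\xis(\lambda)=-\mus(\lambda).$$

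For the integer-valuedness claim, I would observe that both $\mu(\theta,\lambda;H_1,H_0)$ and $\mua(\theta,\lambda;H_1,H_0)$ take integer values at continuity points in $\theta$ (half-integer values only at the jump set, which is countable and of measure zero for a.e.\ fixed $\lambda$). Since $\mus=\mu-\mua$ is independent of $\theta$ by Theorem \ref{T: mus(theta)=const}, we may evaluate it at any point of common continuity, where it is then an integer. Consequently $\xis(\lambda)=-\mus(\lambda)\in\mbZ$ for a.e.\ $\lambda\in\LambHF{H_0}\cap\LambHF{H_1}$.

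There is no real obstacle here: all the substantive work has been done in proving Theorems \ref{T: xia = 1/2pi int} and \ref{T: mus(theta)=const}. The only minor point to be careful about is the exceptional set on which $\mu$ or $\mua$ might take half-integer values; picking $\theta$ outside this (at most countable) set makes the integer-valuedness of $\mus(\lambda)$ immediate.
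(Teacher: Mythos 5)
Your proof is correct and follows essentially the same route as the paper: subtract the formula of Theorem \ref{T: xia = 1/2pi int} from Pushnitski's formula (Theorem \ref{T: xi = -average of mu}) to obtain $\xis$ as an average of $\mus$, then invoke Theorem \ref{T: mus(theta)=const} to collapse the integral. Your extra remark about evaluating $\mus$ at a point of common continuity to get an integer is a sensible way to make the integer-valuedness explicit, though the paper leaves this implicit since it already observed earlier that the $\mu$-invariant is essentially integer-valued.
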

\begin{proof} 
It follows from Theorems \ref{T: xia = 1/2pi int} and \ref{T: xi = -average of mu} that
$$
  \xis(\lambda) = -\frac 1{2\pi}\int_0^{2\pi} \mus(\theta,\lambda;H_r,H_0)\,d\theta.
$$
So, Theorem \ref{T: mus(theta)=const} completes the proof.
\end{proof}

\begin{cor} {\rm (Birman-\Krein\ formula)} \ Let $H_0$ be a self-adjoint operator
on a framed Hilbert space $(\hilb,F)$
and let $V$ be a trace class self-adjoint operator, such that $V$ is bounded from $\hilb_{-1}$ to $\hilb_1.$
Then for all $\lambda \in \LambHF{H_0} \cap \LambHF{H_1}$
$$
  e^{-2\pi i \xi(\lambda;H_1,H_0)} = \det S(\lambda;H_1,H_0),
$$
where $H_1 = H_0 + V.$
\end{cor}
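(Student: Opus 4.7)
The plan is to decompose $\xi = \xia + \xis$ and treat the two pieces separately, cashing in the integer-valuedness of $\xis$ proved in Corollary \ref{C: xis = -mus} and the explicit series formula relating $\xia$ to the scattering phases established in the previous subsection.

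First I would write
$$
  e^{-2\pi i \xi(\lambda;H_1,H_0)} = e^{-2\pi i \xia(\lambda;H_1,H_0)} \cdot e^{-2\pi i \xis(\lambda;H_1,H_0)}.
$$
By Corollary \ref{C: xis = -mus}, the singular part $\xis(\lambda)$ takes values in $\mbZ,$ hence
$$
  e^{-2\pi i \xis(\lambda;H_1,H_0)} = 1,
$$
which reduces the identity to $e^{-2\pi i \xi} = e^{-2\pi i \xia}.$

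Next, I would choose a piecewise analytic path $\set{H_r}$ (for instance $H_r = H_0 + rV$) connecting $H_0$ and $H_1$ in such a way that Assumption 5.1 of \cite{Az3v4} holds, and pick continuous scattering phases $\theta_j^*(\lambda,r)$ with $\theta_j^*(\lambda,0)=0,$ as provided by Theorem \ref{T: Operator Selection Thm} applied to the $\clU_1$-continuous path $r \mapsto S(\lambda; H_r,H_0).$ By the formula
$$
  \xia(\lambda; H_1,H_0) = -\frac{1}{2\pi} \sum_{j=1}^\infty \theta_j^*(\lambda,1)
$$
(used already in the proof of Theorem \ref{T: xia = 1/2pi int}, which combines \cite[Theorem 9.8]{Az3v4} with the continuity assertion (\ref{F: sum theta j(r) is continuous})), one obtains
$$
  e^{-2\pi i \xia(\lambda;H_1,H_0)} = \exp\Bigl(i\sum_{j=1}^\infty \theta_j^*(\lambda,1)\Bigr).
$$
Finally, Lidskii's theorem applied to the trace class operator $S(\lambda;H_1,H_0)-1$ gives
$$
  \det S(\lambda;H_1,H_0) = \prod_{j=1}^\infty e^{i\theta_j^*(\lambda,1)} = \exp\Bigl(i\sum_{j=1}^\infty \theta_j^*(\lambda,1)\Bigr),
$$
the absolute convergence of the sum being guaranteed by $S(\lambda;H_1,H_0) \in 1+\clL_1(\hlambda).$ Combining the last two displays with the reduction in the first step yields the Birman-\Krein\ formula.

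The only potentially delicate point is making sure that the chosen enumeration of scattering phases is consistent between the two inputs: the formula for $\xia$ and the Lidskii product. Both, however, are manifestly invariant under reordering, so no obstacle actually arises; once $\xis$ is known to be integer valued the remainder is bookkeeping.
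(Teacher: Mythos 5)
Your proof is correct and takes essentially the same route as the paper. The paper's proof is a one-liner citing \cite[Corollary 9.9]{Az3v4} together with Corollary \ref{C: xis = -mus}; what you have done is unpack the content of that external citation, namely the ``absolutely continuous Birman--Krein identity'' $e^{-2\pi i \xia(\lambda)} = \det S(\lambda;H_1,H_0),$ by reassembling the ingredients that already appear in the proof of Theorem \ref{T: xia = 1/2pi int} (the scattering-phase formula $\xia = -\tfrac{1}{2\pi}\sum_j \theta_j^*(\lambda,1)$ coming from \cite[Theorem 9.8]{Az3v4} and (\ref{F: sum theta j(r) is continuous}), plus Lidskii's theorem). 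This makes the argument more self-contained than the paper's terse citation, but it is not a different proof; the key new ingredient in both cases is identical, namely the integer-valuedness of $\xis$ from Corollary \ref{C: xis = -mus}.
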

\begin{proof} This follows from \cite[Corollary 9.9]{Az3v4} and Corollary \ref{C: xis = -mus}.
\end{proof}

A non-trivial example of a pair $H_0$ and $H_1,$ for which $\xis \neq 0$ on the absolutely continuous spectrum of $H_0,$
can be found in \cite{Az6}.

%

%

\margcom{Edit \\ references}


\rndef{\emph}[1]{{\it #1}}

\mathsurround 0pt
\ndef{\AndSoOn}{$\dots$}



\end{document}